\def\D{\mathcal{D}}
\def\L{\mathcal{L}}
\def\RR{\mathcal{R}}
\def\N{\mathbb N}
\def\R{\mathbb R}
\def\Om{\Omega}
\def\a{\alpha}
\def\b{\beta}
\def\g{\gamma}
\def\de{\delta}
\def\e{\varepsilon}
\def\k{\kappa}
\def\l{\lambda}
\def\s{\sigma}
\def\om{\omega}
\def\vphi{\varphi}
\def\Lip{{\rm Lip}}
\def\Div{{\rm div}\,}
\def\Id{{\rm Id}\,}
\def\weak{\rightharpoonup}
\def\ov{\overline}
\def\pa{\partial}
\def\X{\mathcal{X}}
\def\ac{\mathcal{AC}}
\def\AC{\mathcal{AC}_{\e}}
\def\Err{\mathrm{E}}
\newtheorem{theorem}{Theorem}[section]
\newtheorem{remark}{Remark}[section]
\newtheorem{lemma}[theorem]{Lemma}
\newtheorem{corollary}[theorem]{Corollary}
\numberwithin{equation}{section}
\numberwithin{figure}{section}
\title{Uniform stability \\ in the Euclidean isoperimetric problem \\ for the Allen--Cahn energy}
\author{Francesco Maggi}
\address{Department of Mathematics, The University of Texas at Austin, 2515 Speedway, Stop C1200, Austin TX 78712-1202, United States of America}
\email{maggi@math.utexas.edu}
\author{Daniel Restrepo}
\email{daerestrepomo@math.utexas.edu}
\begin{document}
	
\begin{abstract} {\rm We consider the isoperimetric problem defined on the whole $\R^n$ by the Allen--Cahn energy functional. For non-degenerate double well potentials, we prove sharp quantitative stability inequalities of quadratic type which are uniform in the length scale of the phase transitions. We also derive a rigidity theorem for critical points analogous to the classical Alexandrov's theorem for constant mean curvature boundaries.}
\end{abstract}
		
\maketitle
	
\tableofcontents

\section{Introduction} \subsection{Overview}\label{section overview} In this paper we study the following family of ``Euclidean isoperimetric problems'' on $\R^n$, $n\ge 2$,
\begin{equation}
\label{main problem}
\Psi(\s,m)=\inf \Big\{ \ac_\s(u):  \int_{ \R^n} V(u)=m\,, u \in H^1(\R^n;[0,1])\Big\}\,,\qquad\s\,,m>0\,,
\end{equation}
associated to the Allen-Cahn energy functionals of a non-degenerate double-well potential $W$ (see \eqref{W basic} and \eqref{W normalization} below)
\begin{equation}
\label{AC eps}
\ac_\s(u)=\s\,\int_{ \R^n}|\nabla u|^2+\frac1\s\,\int_{ \R^n}W(u)\,,\qquad\s>0\,.
\end{equation}
We analyze in particular the relation of these problems to the classical Euclidean isoperimetric problem
\begin{equation}
\label{euclidean isoperimetric}
\Psi_{\rm iso}(m)=\inf\Big\{P(E):E\subset\R^n\,,|E|=m\Big\}=n\,\om_n^{1/n}\,m^{(n-1)/n}\,,\qquad m>0\,,
\end{equation}
in the natural regime where the phase transition length scale $\s$ and the volume constraint $m$ satisfy
\begin{equation}
	\label{natural regime}
	0<\s<\e_0\,m^{1/n}\,,
	\end{equation}
for some sufficiently small (dimensionless) constant $\e_0=\e_0(n,W)$. The volume constraint in $\Psi(\s,m)$ is prescribed by means of the potential $V(t)=(\int_0^t\sqrt{W})^{n/(n-1)}$. This specific choice is natural in light of the classical estimate obtained by combining Young's inequality with the $BV$-Sobolev inequality/Euclidean isoperimetry, and showing that, if $u\in H^1(\R^n;[0,1])$, then, for $\Phi(t)=\int_0^t\sqrt{W}$,
\begin{equation}\label{deduction mass constraint}
\ac_\s(u)\geq 2\int_{\R^n} |\nabla u|\sqrt{W(u)}=2\int_{\R^n} |\nabla \Phi(u)| > 2\,n\,\omega_n^{1/n}\,\Big(\int_{\R^n}V(u)\Big)^{(n-1)/n}\,.
\end{equation}
In particular, by our choice of $V$, $\Psi(\s,m)$ is always non-trivial\footnote{Obviously, this is not always true with others choices of $V$. For example, setting $V(t)=t$ in \eqref{main problem}, which is the most common choice in addressing diffuse interface capillarity problems in bounded containers, one has $\Psi(\s,m)=0$ by a simple scaling argument. Among the possible choices that make $\Psi(\s,m)$ non-trivial, our has of course the advantage of appearing naturally in the lower bound \eqref{deduction mass constraint}. For this reason, and in the interest of definiteness and simplicity, we have not considered more general options here.}, with
\begin{equation}
  \label{first lb}
  \Psi(\s,m)>2\,\Psi_{{\rm iso}}(m)\,,\qquad\forall\s,\,m>0\,.
\end{equation}
(The strict sign does not follow from \eqref{deduction mass constraint} alone, but also requires the existence of minimizers in \eqref{deduction mass constraint}.) By combining \eqref{first lb} with a standard construction of competitors for $\Psi(\s,m)$, one sees immediately that
\begin{equation}
\label{convergence of}
\lim_{\s \to 0^+} \Psi(\s,m)=2\,\Psi_{\rm iso}(m)\,,\qquad\forall m>0\,.
\end{equation}
The relation between the Allen--Cahn energy and the perimeter functional is of course a widely explored subject (without trying to be exhaustive, see, for example, \cite{modicamortola,modicaARMA1,sternberg88,luckhausmodica89,hutchtoneCVPDE,rogertoneCVPDE,le2011second,tonewickra2012,dalmasofonsecaleoni15,le2015second,gaspar2020second}), and so is the the relation between the ``volume constrained'' minimization of $\ac_\s$ and relative isoperimetry/capillarity theory in bounded or periodic domains (e.g. \cite{modicacontact,SternZum1,SternZum2,pacardritore,carlencarvalhoespositolebowitzmarra2006,bellettinigellietcCVPDE,LeoniMurray}). The goal of this paper is exploring in detail the proximity of $\Psi(\s,m)$ to the classical Euclidean isoperimetric problem $\Psi_{{\rm iso}}(m)$ in connection with two fundamental properties of the latter:

\medskip

\noindent (i) the validity of the sharp quantitative Euclidean isoperimetric inequality \cite{fuscomaggipratelli}: if $E\subset\R^n$ has finite perimeter $P(E)$ and positive and finite volume (Lebesgue measure) $\L^n(E)$, then
\begin{equation}
  \label{euclidean isoperimetric stability inq}
  C(n)\,\sqrt{\frac{P(E)}{n\,\om_n^{1/n}\,\L^n(E)^{(n-1)/n}}-1}\ge\inf_{x_0\in\R^n}\frac{\L^n(E\Delta B_r(x_0))}{\L^n(E)}\,,\qquad r=\Big(\frac{\L^n(E)}{\om_n}\Big)^{1/n}\,,
\end{equation}
where $\om_n$ denotes the volume of the unit ball in $\R^n$;

\medskip

\noindent (ii) Alexandrov's theorem \cite{alexandrov} (see \cite{delgadinomaggiAPDE} for a distributional version): a bounded open set whose boundary is smooth and has constant mean curvature is a ball; in other words, among bounded sets, the only volume-constrained critical points of the perimeter functional are its (global) volume-constrained minimizers.

\medskip

Concerning property (i), the natural question in relation to $\Psi(\s,m)$ is if a sharp stability estimate similar to \eqref{euclidean isoperimetric stability inq} holds {\it uniformly} with respect to the ratio $\s/m^{1/n}\in(0,\e_0)$ for $\Psi(\s,m)$. Uniformity in $\s/m^{1/n}$ seems indeed a necessary feature for a stability estimate of this kind to be physically meaningful and interesting.

\medskip

Concerning property (ii), we notice that the notion of smooth, volume-constrained critical point of $\Psi(\s,m)$ is that of a non-zero function $u\in C^2(\R^n;[0,1])$ such that the semilinear PDE
\begin{equation}
  \label{semilinear PDE}
  -2\,\s^2\,\Delta u=\s\,\lambda\,V'(u)-W'(u)\qquad\mbox{on $\R^n$}\,,
\end{equation}
holds for a Lagrange multiplier $\lambda\in\R$. The boundedness assumption in Alexandrov's theorem is crucial to avoid examples of non-spherical constant mean curvature boundaries, like cylinders and unduloids. This is directly translated, for solutions of \eqref{semilinear PDE}, into the requirement that $u(x)\to 0$ as $|x|\to\infty$, without which semilinear PDEs like \eqref{semilinear PDE} are known to possess non-radial solutions modeled on the aforementioned examples of unbounded constant mean curvature boundaries, see e.g. \cite{pacardritore}.

\medskip

Under the decay assumption $u(x)\to 0$ as $|x|\to\infty$, and without further constraints on $\s$ and $\lambda$, every solution of \eqref{semilinear PDE} will be radial symmetric thanks to the moving planes method \cite{gidas1981symmetry}. However, even in presence of symmetry, possible solutions to \eqref{semilinear PDE} will have a geometric meaning (and thus a chance of being exhausted by the family of global minimizers of $\Psi(\s,m)$) only if the parameters $\s$ and $\lambda$ are taken in the ``geometric regime'' where $\s\,\lambda$ is small. To explain why we consider such regime geometrically significant, we notice that the Lagrange multiplier $\lambda$ in \eqref{semilinear PDE} has the dimension of an inverse length, which, geometrically, is the dimensionality of curvature. For $\s$ to be the length of a phase transition  around an interface of curvature $\lambda$, it must be that
\begin{equation}
  \label{natural regime sigma ell}
  0<\s\,\lambda<\nu_0\,,
\end{equation}
for some sufficiently small (dimensionless) constant $\nu_0=\nu_0(n,W)$, Notice that since inverse length is ${\rm volume}^{-1/n}=m^{-1/n}$, \eqref{natural regime sigma ell} is compatible with \eqref{natural regime}. We conclude that a natural generalization of Alexandrov's theorem to the Allen--Cahn setting is showing the existence of constants $\e_0$ and $\nu_0$, depending on $n$ and $W$ only, such that, if $u\in C^2(\R^n;[0,1])$ vanishes at infinity and solves \eqref{semilinear PDE} for $\s$ and $\lambda$ as in \eqref{natural regime sigma ell}, then $u$ is a minimizer of $\Psi(\s,m)$ for some value $m$ such that \eqref{natural regime} holds.

\subsection{Statement of the main theorem} We start by setting the following notation and conventions:

\medskip

\noindent {\bf Assumptions on $W$:} The double-well potential $W\in C^{2,1}[0,1]$ satisfies the standard set of non-degeneracy assumptions
	\begin{equation}
	\label{W basic}
	\mbox{$W(0)=W(1)=0$}\,,\quad\mbox{$W>0$ on $(0,1)$}\,,\quad\mbox{$W''(0),W''(1)>0$}\,,
	\end{equation}
	as well as the normalization
	\begin{equation}
	\label{W normalization}
	\int_0^1\sqrt{W}=1\,.
	\end{equation}
	Correspondingly to $W$, we introduce the potential $V$ used in imposing the volume constraint in $\Psi(\s,m)$, by setting
	\begin{equation}
	\label{choice of V}
	V(t)=\Phi(t)^{n/(n-1)}\,,\qquad \Phi(t)=\int_{0}^{t} \sqrt{W}\,,\qquad t\in[0,1]\,.
	\end{equation}
	Notice that both $V$ and $\Phi$ are strictly increasing on $[0,1]$, with $V(1)=\Phi(1)=1$ and $\Phi(t)\approx t^2$ and  $V(t)\approx t^{2n/(n-1)}$ as $t\to 0^+$. All the relevant properties of $W$, $\Phi$ and $V$ are collected in section \ref{subsection W}.

\medskip
	
\noindent {\bf Classes of radial decreasing functions:} We say that $u:\R^n\to\R$ is radial if $u(x)=\zeta(|x|)$ for some $\zeta:[0,\infty)\to\R$, and that $u$ is radial decreasing if, in addition, $\zeta$ is decreasing. We denote by
\[
\RR_0\,,\qquad \RR_0^*\,,
\]
the family of radial decreasing and radial strictly decreasing functions. For the sake of simplicity, when $u$ is radial we shall simply write $u$ in place of $\zeta$, that is, we shall use indifferently $u(x)$ and $u(r)$ to denote the value of $u$ at $x$ with $|x|=r$. Similarly, we shall write $u'$, $u''$, etc. for the radial derivatives of $u$.

\medskip
	
\noindent {\bf Universal constants and rates:} We say that a real number is a {\bf universal constant} it is positive and can be defined in terms of the dimension $n$ and of the double-well potential $W$ only. Following a widely used convention, we will use the latter $C$ for a generically ``large'' universal constant, and $1/C$ for a generically ``small'' one. We will use $\e_0$, $\de_0$, $\nu_0$, $\ell_0$, etc. for small universal constants whose value will be typically ``chosen'' at the end of an argument to make products like $C\,\e_0$ ``sufficiently small''. Finally, given $k\in\N$, we will write ``$f(\e)={\rm O}(\e^k)$ as $\e\to 0^+$'' if there exists a universal constant $C$ such that $|f(\e)|\le C\,\e^k$ for every $\e\in(0,1/C)$; similar definitions are given for ``${\rm O}(t)$ as $t\to\infty$'', etc.
	
\medskip

\begin{theorem}[Main theorem]\label{theorem main}
  If $n\ge 2$ and $W\in C^{2,1}[0,1]$ satisfies \eqref{W basic} and \eqref{W normalization}, then there exists a universal constant $\e_0$ such that, setting,
  \[
  \X(\e_0)=\Big\{(\s,m):0<\s<\e_0\,m^{1/n}\Big\}\,,
  \]
  the following holds:

  \medskip

  \noindent {\bf (i):} for every $(\s,m)\in\X(\e_0)$ there exists a minimizer $u_{\s,m}$ of $\Psi(\s,m)$ such that $u_{\s,m}\in\RR_0^*\cap C^2(\R^n;(0,1))$, every other minimizer of $\Psi(\s,m)$ is obtained from $u_{\s,m}$ by translation, and the Euler--Lagrange equation
  \begin{equation}
    \label{main EL}
    -2\,\s^2\,\Delta u_{\s,m}=\s\,\Lambda(\s,m)\,V'(u_{\s,m})-W'(u_{\s,m})
  \end{equation}
  holds on $\R^n$ for some $\Lambda(\s,m)>0$.

  \medskip

  \noindent {\bf (ii):} $\Psi$ is continuous on $\X(\e_0)$ and
  \begin{eqnarray}\label{main psi strict concave and differentiable}
  &&\mbox{$\Psi(\s,\cdot)$ is strictly concave, strictly increasing,}
  \\\nonumber
  &&\hspace{1.3cm}\mbox{and continuously differentiable on $((\s/\e_0)^n,\infty)$}\,,
  \\\label{main psi strictly decreasing in m}
  &&\mbox{$\Lambda(\s,\cdot)=\frac{\pa\Psi}{\pa m}(\s,\cdot)$ is strictly decreasing and continuous on $((\s/\e_0)^n,\infty)$}\,,
  \\\label{main psi strictly increasing in eps}
  &&\mbox{$\Psi(\cdot,m)$ is strictly increasing on $(0,\e_0\,m^{1/n})$}\,.
  \end{eqnarray}
  Moreover, setting $\e=\s/m^{1/n}$, we have
  \begin{eqnarray}\label{main Psi expansion}
    \frac{\Psi(\s,m)}{m^{(n-1)/n}}&=&2\,n\,\om_n^{1/n}+2\,n\,(n-1)\,\omega_n^{2/n}\,\k_0\,\e+{\rm O}(\e^2)\,,
    \\\label{main Lambda expansion}
    m^{1/n}\,\Lambda(\s,m)&=&2\,(n-1)\,\omega_n^{1/n}+{\rm O}(\e)\,,
  \end{eqnarray}
  as $\e\to 0^+$ with $(\s,m)\in\X(\e_0)$. Here $\kappa_0$ is the universal constant defined by
  \begin{equation}\label{main tau0 tau1}
  \kappa_0=\int_\R \big(V'(\eta)\,\eta'+ W(\eta)\big)\,s\,ds\,,
  \end{equation}
  and $\eta$ is the unique solution to $\eta'=-\sqrt{W(\eta)}$ on $\R$ with $\eta(0)=1/2$.

  \medskip

  \noindent {\bf (iii)-uniform stability:} for every $(\s,m)\in\X(\e_0)$ and $u\in H^1(\R^n;[0,1])$ with $\int_{\R^n}V(u)=m$ we have, for a universal constant $C$,
  \begin{equation}\label{global quantitative estimate}
  C\,\sqrt{\frac{\ac_\s(u)}{\Psi(\s,m)}-1}\ge \inf_{x_0\in\R^n}\,\frac1m
  \int_{\R^n} \big|\Phi(u) -\Phi(T_{x_0}u_{\s,m})\big|^{n/(n-1)}
  \end{equation}
  where $T_{x_0}u_{\s,m}(x)=u_{\s,m}(x-x_0)$, $x\in\R^n$;

  \medskip

  \noindent {\bf (iv)-rigidity of critical points:} there exists a universal constant $\nu_0$ such that, if $\s>0$, $u\in C^2(\R^n;[0,1])$, $u(x)\to 0^+$ as $|x|\to\infty$, and $u$ is a solution of
  \begin{equation}\label{critical point}
    -2\,\s^2\,\Delta u=\s\,\lambda\,V'(u)-W'(u)\qquad\mbox{on $\R^n$}\,,
  \end{equation}
  for a parameter $\lambda$ such that
  \begin{equation}
    \label{alexandrov constraints}
    0<\s\,\lambda<\nu_0\,,
  \end{equation}
  then there exist $x_0\in\R^n$ and $m>0$ such that
  \[
  \s<\e_0\,m^{1/n}\,,\qquad \lambda=\Lambda(\s,m)\,,\qquad u=T_{x_0}u_{\s,m}\,.
  \]
  In particular, $u$ is a minimizer of $\Psi(\s,m)$.
\end{theorem}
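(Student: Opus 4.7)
The guiding picture is that, in the regime $\e := \s/m^{1/n} \to 0^+$, minimizers of $\Psi(\s,m)$ concentrate their gradient on a thin transition layer around a sphere of radius $R = (m/\om_n)^{1/n}$, with radial profile modeled on the heteroclinic $\eta' = -\sqrt{W(\eta)}$. Each of (i)--(iv) reduces, after radial symmetrization, to an ODE problem controlled by this profile, the key structural fact being that the choice $V = \Phi^{n/(n-1)}$ is precisely what makes the volume constraint match the $BV$-Sobolev threshold in \eqref{deduction mass constraint}.

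\textbf{Existence and symmetry (part (i)).} Since $\Phi$ is strictly increasing, symmetric decreasing rearrangement preserves both $\int V(u) = m$ and $\int W(u)$ while strictly decreasing $\int|\nabla u|^2$ unless $u$ is already a translate of a radial decreasing function. This would reduce existence to compactness on $\RR_0 \cap H^1$, which I would get from the $L^{2n/(n-1)}$ control embedded in $\int V(u) = m$ combined with standard radial decay estimates; uniqueness modulo translation then follows from the rigidity of P\'olya--Szeg\H{o}. The Euler--Lagrange equation \eqref{main EL} with elliptic regularity yields $u_{\s,m} \in C^2(\R^n;(0,1))$, while strict radial monotonicity (placing $u_{\s,m} \in \RR_0^*$) follows from the radial ODE once one checks that $\Lambda(\s,m)>0$, which in turn is a consequence of the monotonicity of $\Psi(\s,\cdot)$ (proved by a scaling competitor argument).

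\textbf{Properties and expansions of $\Psi$ (part (ii)).} The expansion \eqref{main Psi expansion} is obtained by inserting the ansatz $u_{\s,m}(r) = \eta((r-R)/\s) + \s\,\psi(r) + {\rm O}(\s^2)$ into $\ac_\s$: the zeroth-order term $2n\om_n^{1/n} m^{(n-1)/n}$ uses $\int_\R ((\eta')^2 + W(\eta))\,ds = 2\int_0^1\sqrt{W} = 2$ from \eqref{W normalization}, and the curvature correction $2n(n-1)\om_n^{2/n}\kappa_0\e$ arises by expanding $r^{n-1} = R^{n-1}(1 + (n-1)(r-R)/R + \cdots)$ inside the radial integral, with $\kappa_0$ as in \eqref{main tau0 tau1}. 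The expansion \eqref{main Lambda expansion} then follows by differentiating in $m$. Strict concavity of $\Psi(\s,\cdot)$ (equivalently, strict monotonicity of $\Lambda(\s,\cdot)$) I would prove by radial phase-plane comparison of two minimizers $u_{\s,m_0}, u_{\s,m_1}$: their trajectories must be globally ordered, which forces the corresponding Lagrange multipliers to be strictly ordered. Monotonicity of $\Psi(\cdot,m)$ uses \eqref{first lb} and the dilation $u(x)\mapsto u(x/\mu)$.

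\textbf{Stability and rigidity (parts (iii)--(iv)).} For (iii), the plan is to chain $\ac_\s(u) \geq 2\int|\nabla \Phi(u)| = 2\int_0^1 P(\{\Phi(u)>s\})\,ds$ via \eqref{deduction mass constraint} and coarea, then apply the sharp quantitative isoperimetric inequality \eqref{euclidean isoperimetric stability inq} at each level to control the $L^{n/(n-1)}$ distance of $\Phi(u)$ from a radial indicator; the remaining discrepancy with $\Phi(T_{x_0}u_{\s,m})$ is the transition layer of thickness $\s$, which is absorbed into the universal constant. For (iv), the moving planes method \cite{gidas1981symmetry} applied under the decay hypothesis forces $u$ to be radial decreasing about some $x_0$; strict monotonicity of $\Lambda(\s,\cdot)$ from (ii) then identifies the unique $m$ with $\Lambda(\s,m)=\lambda$, and a radial ODE comparison shows $u=T_{x_0}u_{\s,m}$, provided \eqref{alexandrov constraints} is calibrated with \eqref{natural regime} via \eqref{main Lambda expansion}. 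The principal obstacles I foresee are (a) the uniform-in-$\e$ selection of the center $x_0$ coherently across all levels in (iii)---likely via a selection-principle argument in the spirit of Cicalese--Leonardi---and (b) the joint tuning of $\nu_0$ and $\e_0$ in (iv) so that every admissible radial decaying solution falls inside the minimizer family parameterized by (i)--(ii).
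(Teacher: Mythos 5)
Your outline reproduces the correct heuristic picture, but it has three genuine gaps, each of which is precisely where the paper has to do its hardest work. First, in (i) you derive uniqueness modulo translations from ``the rigidity of P\'olya--Szeg\H o.'' No such rigidity holds: by Brothers--Ziemer, equality in $\int|\nabla u|^2\ge\int|\nabla u^*|^2$ does \emph{not} force $u$ to be a translate of $u^*$ unless one first knows $(u^*)'<0$ a.e.; and even granting that (which the paper proves via the radial ODE), P\'olya--Szeg\H o only shows every minimizer is a translate of \emph{some} radial decreasing minimizer -- it says nothing about two distinct radial decreasing minimizers. Uniqueness in the paper is instead a consequence of the Fuglede-type quadratic stability estimate (Theorem \ref{theorem fuglede estimate}) applied to the difference of two radial minimizers, which in turn needs the resolution theorem (Theorem \ref{theorem asymptotics of minimizers}) to verify its smallness hypotheses. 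The same Fuglede estimate is what gives continuity of $\Lambda(\s,\cdot)$, which your sketch uses implicitly in (iv) (to argue that the range of $\Lambda(\s,\cdot)$ is an interval containing $(0,\nu_0/\s)$) but never establishes.

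Second, and most seriously, your plan for (iii) -- coarea plus the quantitative isoperimetric inequality levelwise, with the transition layer ``absorbed into the universal constant'' -- cannot produce an estimate that is uniform in $\e=\s/m^{1/n}$. That route controls the asymmetry by $C\sqrt{\ac_\s(u)-2\Psi_{\rm iso}(m)}$, whereas the deficit in \eqref{global quantitative estimate} is $\ac_\s(u)-\Psi(\s,m)$, and by \eqref{main Psi expansion} these differ by a term of order $\e\,m^{(n-1)/n}$. When $\ac_\s(u)-\Psi(\s,m)\ll \e\,m^{(n-1)/n}$ (the regime of competitors very close to $u_{\s,m}$), your bound degenerates to $C\sqrt{\e}$, which does not dominate the asymmetry in the required quadratic way; the $O(\e)$ error is not absorbable. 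This is exactly why the paper separates two stability mechanisms: the quantitative symmetrization of \cite{fuscomaggipratelli,fuscomaggipratelliBV} reduces to radial decreasing functions (and only there does \eqref{euclidean isoperimetric stability inq} enter), and then the Cicalese--Leonardi selection principle transfers the Fuglede estimate -- which sees the optimal transition profile at scale $\e$, not just the limiting ball -- to all of $\RR_0$. Your ``obstacle (a)'' misidentifies the difficulty as choosing centers coherently across levels. Finally, in (iv), ``a radial ODE comparison shows $u=T_{x_0}u_{\s,m}$'' is not a proof: uniqueness of radial decaying solutions of $-\Delta u=f(u)$ is a delicate classical problem; the paper invokes Peletier--Serrin and the smallness $0<\s\lambda<\nu_0$ is used essentially to verify its monotonicity hypothesis (d), not merely to ``calibrate'' parameters.
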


\subsection{Relation of Theorem \ref{theorem main}--(iii) to Euclidean isoperimetric stability} We start with some remarks connecting the $(\s,m)$-uniform stability estimate \eqref{global quantitative estimate} to the sharp quantitative Euclidean isoperimetric inequality \eqref{euclidean isoperimetric stability inq}. To this end, it will be convenient to introduce the unit volume problem
\[
\psi(\e)=\Psi(\e,1)=\inf\Big\{\AC(u):\int_{\R^n}V(u)=1\,,u \in H^1(\R^n;[0,1])\Big\}\,,\qquad\e>0\,,
\]
and correspondingly set
\[
\l(\e)=\Lambda(\e,1)=\frac{\pa\Psi}{\pa m}(\e,1)\,,\qquad u_\e=u_{\e,1}\,,\qquad\e>0\,.
\]
Notice that all the information about $\Psi(\s,m)$, $u_{\s,m}$, and $\Lambda(\s,m)$, is contained in $\psi(\e)$, $u_\e$ and $\l(\e)$, thanks to the identities
\[
\frac{\Psi(\s,m)}{m^{(n-1)/n}}=\psi\Big(\frac\s{m^{1/n}}\Big)\,,\qquad m^{1/n}\,\Lambda(\s,m)=\l\Big(\frac{\s}{m^{1/n}}\Big)\,,\qquad u_{\s,m}(x)=u_{\s/m^{1/n}}\Big(\frac{x}{m^{1/n}}\Big)\,,
\]
which are easily proved by a scaling argument (see \eqref{scaling potential} and \eqref{scaling AC}).

\medskip

With this terminology at hand, we start by noticing that the right-hand side of \eqref{global quantitative estimate} is bounded from above by $C(n)$ thanks to the volume constraint $\int_{\R^n}V(u)=m$. Therefore, in proving \eqref{global quantitative estimate} with, say, $(\s,m)=(\e,1)$, one can directly assume that $u$ is a ``low energy competitor for $\psi(\e)$'', in the sense that, for a suitably small universal constant $\ell_0$,
\begin{equation}
  \label{low energy for psieps}
  \AC(u)\le\psi(\e)+\ell_0\,.
\end{equation}
Now, if $u$ is such a low energy competitor $u$, then $f=\Phi(u)$ is $(\ell_0+C\,\e)$-close to be an equality case for $BV$-Sobolev inequality
\begin{equation}
  \label{sobolev BV}
  |Df|(\R^n)\ge n\,\om_n^{1/n}\,,\qquad\mbox{if $\int_{\R^n}|f|^{n/(n-1)}=1$}\,,
\end{equation}
(where $|Df|$ denotes the total variation measure of $f\in BV(\R^n)$, and $|Df|=|\nabla f|\,dx$ if $f\in W^{1,1}(\R^n)$; see \cite{AFP}). Indeed, by an elementary comparison argument, we have
\begin{equation}
\label{basic energy upper bound}
\psi(\e)\le 2\,n\,\om_n^{1/n}+ C\,\e\,,\qquad\forall \e<\e_0\,,
\end{equation}
while \eqref{deduction mass constraint} gives
\begin{equation}
  \label{mm identity}
  \AC(u)-2\,n\,\om_n^{1/n}=
  \int_{\R^n}\Big(\sqrt\e\,|\nabla u|-\sqrt{\frac{W(u)}\e}\Big)^2
  +2\,\Big\{\int_{\R^n}|\nabla[\Phi(u)]|-n\,\om_n^{1/n}\Big\}\,,
\end{equation}
so that the combination of \eqref{low energy for psieps}, \eqref{basic energy upper bound} and \eqref{mm identity} gives
\[
\int_{\R^n}|\nabla[\Phi(u)]|-n\,\om_n^{1/n}\le C\,\big(\ell_0+\e\big)\,,
\]
while, clearly, $\int_{\R^n}f^{n/(n-1)}=\int_{\R^n}V(u)=1$.

\medskip

It is well-known that \eqref{sobolev BV} boils down to the Euclidean isoperimetric inequality if $f=1_E$ is the characteristic function of $E\subset\R^n$, and that equality holds in \eqref{sobolev BV} if and only if $f=a\,1_{B_r(x_0)}$ for some $r,a\ge0$. A sharp quantitative version of \eqref{sobolev BV} was proved in \cite{fuscomaggipratelli} on sets, and then in \cite[Theorem 1.1]{fuscomaggipratelliBV} on functions, and takes the following form: if $n\ge 2$, $f\in BV(\R^n)$, $f\ge 0$, and $\int_{\R^n}f^{n/(n-1)}=1$, then there exists $x_0\in\R^n$ and $r>0$ such that
\begin{equation}
\label{quadratic BV estimate}
C(n)\,\sqrt{|Df|(\R^n)-n\,\om_n^{1/n}}\ge \inf_{x_0\in\R^n\,,r>0}\int_{\R^n}\big|f-a(r)\,1_{B_r}(x_0)\big|^{n/(n-1)}\,,
\end{equation}
where $a(r)$ is defined by $\om_n\,r^n\,a(r)^{n/(n-1)}=1$. The uniform stability estimate \eqref{global quantitative estimate} is thus modeled after \eqref{quadratic BV estimate}, where of course one is working with a different ``deficit'', namely, $\AC(u)-\psi(\e)$ rather than $|Df|(\R^n)-n\,\om_n^{1/n}$ for $f=\Phi(u)$, and with a different ``asymmetry'', namely, the $n/(n-1)$-power of the distance of $\Phi(u)$ from $\Phi$ composed with $u_\e$ rather than with the multiple of the characteristic function of a ball.

\medskip

The key result behind \eqref{global quantitative estimate} is the following {\it Fuglede-type estimate} for $\psi(\e)$ (Theorem \ref{theorem fuglede estimate}): there exist universal constants $\de_0$ and $\e_0$ such that if $\e<\e_0$, $u\in H^1(\R^n;[0,1])$ is a radial (but not necessarily radial decreasing) function, $\int_{\R^n}V(u)=1$ and
\begin{equation}
  \label{fuglede smallness}
  \int_{\R^n}|u-u_\e|^2\le C\,\e\,,\qquad \|u-u_\e\|_{L^\infty(\R^n)}\le\de_0\,,
\end{equation}
then
\begin{equation}
  \label{fuglede conclusion}
  C\,\big(\AC(u)-\psi(\e)\big)\ge\int_{\R^n}\,\e\,|\nabla (u-u_\e)|^2+\frac{(u-u_\e)^2}\e\,.
\end{equation}
Note carefully the restriction here to {\it radial} functions. The right-hand side of \eqref{fuglede conclusion} is the natural $\e$-dependent Hilbert norm associated to $\AC$. By the usual trick based on Young's inequality, \eqref{fuglede conclusion} implies
\begin{equation}
  \label{fuglede conclusion 2}
  C\,\big(\AC(u)-\psi(\e)\big)\ge\int_{\R^n}\,|\nabla[(u-u_\e)^2]|\,,\qquad \mbox{$\forall\,u$ radial, $\int_{\R^n}V(u)=1$}\,,
\end{equation}
and, then, thanks to the $H^1$-Sobolev inequality,
\begin{equation}
  \label{fuglede conclusion 3}
  C\,\big(\AC(u)-\psi(\e)\big)\ge\Big(\int_{\R^n}|u-u_\e|^{2n/(n-1)}\Big)^{(n-1)/n}\,,\qquad \mbox{$\forall\,u$ radial, $\int_{\R^n}V(u)=1$}\,.
\end{equation}
The $\e$-independent stability estimate \eqref{fuglede conclusion 3} (and, {\it a fortiori}, the stronger estimate \eqref{fuglede conclusion 2}) cannot hold on general $u\in H^1(\R^n;[0,1])$ with $\int_{\R^n}V(u)=1$: indeed, if this were the case, one could take in \eqref{fuglede conclusion 3} $u=v_\e$ to be a family of smoothings of $1_E$  for any set $E\subset\R^n$, and then let $\e\to 0^+$, to find a version of \eqref{euclidean isoperimetric stability inq} with linear rather than quadratic rate. However, such linear estimate is well-known to be false, since the rate in \eqref{euclidean isoperimetric stability inq} is saturated, for example, by a family of ellipsoids converging to a ball.

\medskip

We conclude that, on radial functions, one can get estimates, like \eqref{fuglede conclusion}, \eqref{fuglede conclusion 2} and \eqref{fuglede conclusion 3}, that are stronger than what is available for generic functions. We notice in this regard that the validity of stronger stability estimates in presence of symmetries is well-known. For example, in the case of the $BV$-Sobolev inequality, it was proved in \cite[Theorem 3.1]{fuscomaggipratelliBV} that if $f\in BV(\R^n)$ is radial decreasing, $f\ge0$, and $\int_{\R^n}f^{n/(n-1)}=1$, then \eqref{quadratic BV estimate} can be improved to
  \begin{equation}
    \label{quadratic BV estimate radial}
    C(n)\,\Big(|Df|(\R^n)-n\,\om_n^{1/n}\Big)\ge \int_{\R^n}\big|f-a(r)\,1_{B_r}\big|^{n/(n-1)}\,,
  \end{equation}
i.e., the quadratic rate in \eqref{quadratic BV estimate} is refined into a linear rate.

\medskip

We finally notice that \eqref{global quantitative estimate} implies the sharp quantitative form of the Euclidean isoperimetric inequality \eqref{euclidean isoperimetric stability inq} by a standard approximation argument. However, since our proof of \eqref{global quantitative estimate} exploits \eqref{euclidean isoperimetric stability inq}, we are not really providing a new proof of \eqref{euclidean isoperimetric stability inq}. We approach the proof of \eqref{global quantitative estimate} as follows. Adopting the general selection principle strategy of Cicalese and Leonardi \cite{CicaleseLeonardi} we start by deducing \eqref{global quantitative estimate} on radial functions from the Fuglede-type inequality \eqref{fuglede conclusion}. Then we adapt to our setting the quantitative symmetrization method from the proof of \eqref{euclidean isoperimetric stability inq} originally devised in \cite{fuscomaggipratelli}, and thus reduce the proof of \eqref{global quantitative estimate} from the general case to the radial decreasing case. (It is in this reduction step, see in particular Theorem \ref{theorem n symmetric to radial}, that we exploit \eqref{euclidean isoperimetric stability inq}.) In principle, one could have tried to approach \eqref{global quantitative estimate} by working on general functions in both the selection principle and in the Fuglede-type estimate steps. This approach does not seem convenient, however, since it would not save the work needed to implement the selection principle and the Fuglede-type estimates on radial functions, while, at the same time, it would still require the repetition of all the work done in \cite{CicaleseLeonardi} to prove \eqref{euclidean isoperimetric stability inq}.  In other words, an advantage of the approach followed here is that it separates neatly the two stability mechanisms at work in \eqref{global quantitative estimate}, the one related to the relation with the Euclidean isoperimetric problem, and the one specific to optimal transition profile problem (which is entirely captured by working with radial functions).

\subsection{Remarks on the Alexandrov-type result}\label{subsect remarks on alex} We now make some comments on the proof of Theorem \ref{theorem main}-(iv), and explain why this result is closely related to the stability problem addressed in Theorem \ref{theorem main}-(iii).

\medskip

We start noticing that any $u\in C^2(\R^n;[0,1])$, with $u(x)\to0$ as $|x|\to\infty$, and solving \eqref{critical point} for some $\s>0$ and $\lambda\in\R$, will necessarily be a radial function by the moving planes method of \cite{gidas1981symmetry}; see Theorem \ref{theorem ggn plus sp}-(i) below.

\medskip

However, as explained in the overview, there is no clear reason to expect these solutions to have a geometric meaning unless $\s$ and $\lambda$ are in a meaningful geometric relation, which, interpreting $\lambda$ as a curvature and $\s$ as a phase transition length, must take the form of $0<\s\,\lambda<\nu_0$ for some sufficiently small $\nu_0$, see \eqref{natural regime sigma ell}. In Theorem \ref{theorem ggn plus sp}-(ii) we apply to \eqref{critical point} a classical result of Peletier and Serrin \cite{peletier1983uniqueness} about the uniqueness of radial solutions of semilinear PDEs on $\R^n$. Interestingly, the condition $0<\s\,\lambda<\nu_0$, which was introduced because its natural geometric interpretation, plays a crucial role in checking the validity of one of the assumptions of the Peletier--Serrin's uniqueness theorem\footnote{In particular, it is not obvious to us if, outside of the ``geometrically natural'' regime defined by \eqref{natural regime sigma ell}, we should expect uniqueness of radial solutions of \eqref{critical point} with decay at infinity.}.

\medskip

Once symmetry and uniqueness have been addressed by means of classical results like \cite{gidas1981symmetry} and \cite{peletier1983uniqueness}, proving Theorem \ref{theorem main}-(iv) essentially amounts to answering the following question: what is the range of values of $\lambda$ in \eqref{critical point} corresponding to the minimizers $u_{\s,m}$ of $\Psi(\s,m)$ (with $0<\s<\e_0\,m^{1/n}$)? Can we show that every $\lambda$ satisfying $0<\s\,\lambda<\nu_0$ for a sufficiently small universal $\nu_0$ falls in that range?

\medskip

Looking back at \eqref{main EL} we are thus trying to identify the range of $m\mapsto\Lambda(\s,m)=(\pa\Psi/\pa m)(\s,m)$ for $m>(\s/\e_0)^n$, and to show that it contains an interval of the form $(0,\nu_0/\s)$. Such range is indeed proved to be an interval in Theorem \ref{theorem main}-(ii), where we show that $\Lambda(\s,\cdot)$ is decreasing and continuous. The fact that this interval contains a sub-interval of the form $(0,\nu_0/\s)$ is also something that is established in Theorem \ref{theorem main}-(ii), specifically when we analyze the asymptotic behavior of $\Lambda(\s,m)$ as $\s/m^{1/n}\to 0$, see \eqref{main Lambda expansion}. Here we want to stress, however, the role of the {\it continuity} of $\Lambda(\s,\cdot)$, which is of course crucial in showing that $\{\Lambda(\s,m)\}_{m>(\s/\e_0)^n}$ covers the {\it interval} of values between the end-points $\Lambda(\s,+\infty)=0$ and $\Lambda(\s,(\s/\e_0)^n)$. In turn, the Fuglede-type stability estimate \eqref{fuglede conclusion} plays a crucial role in our proof of this continuity property: see step three in the proof of Corollary \ref{corollary optimal energy and lagrange multiplier}.

\medskip

The importance of the Fuglede-type estimate \eqref{fuglede conclusion} in answering both questions of uniform stability and of Alexandrov-type rigidity is the main reason why both problems have been addressed in a same paper.

\subsection{Organization of the paper and proof of Theorem \ref{theorem main}} The existence of minimizers of $\psi(\e)$ (for $\e<\e_0$) and the fact that such minimizers must be radial decreasing (although not necessarily unique up to translations) is established in section \ref{sec existence of minizers}, see Theorem \ref{theorem existence solutions}, through a careful concentration-compactness argument, which exploits both the quantitative stability for the $BV$-Sobolev inequality (in ruling out vanishing) and the specific properties of the Allen-Cahn energy (in ruling out dichotomy). After deducing the validity of the Euler--Lagrange equation (which, because of the range constraint $0\le u\le 1$, holds initially only as a system of variational inequalities), the radial decreasing rearrangement of a minimizer is proved to be {\it strictly} decreasing, so that the Brothers--Ziemer theorem \cite{brothersziemer} can be used to infer that generic minimizers belong to $\RR_0^*$. This existence argument is then adapted to a more general family of perturbations of $\psi(\e)$, which later plays a crucial role in obtaining the main stability estimates \eqref{global quantitative estimate} on radial decreasing functions, see Theorem \ref{theorem compactness for selection principle}. Here the notion of ``critical sequence'' for $\psi(\e_j)$, $\e_j\in(0,\e_0)$, which mixes the notion of ``low-energy sequence'' to that of ``Palais--Smale sequence'', is introduced.

\medskip

In section \ref{section resolution of minimizers} we prove a resolution result for minimizers of $\psi(\e)$ (and, more generally, for the above mentioned notion of critical sequence). In particular, in Theorem \ref{theorem asymptotics of minimizers}, we show, quantitatively in $\e$, that minimizers $u_\e$ of $\psi(\e)$ in $\RR_0$ are close to an {\it Ansatz} which is well-known in the literature (see, e.g., \cite{niethammer1995existence,LeoniMurray}), and is given by
\[
u_\e(x)\approx\eta\Big(\frac{|x|-R_0}\e-\tau_0\Big)\,,\qquad R_0=\frac1{\om_n^{1/n}}\,,\qquad \tau_0=\int_\R \eta'\,V'(\eta)\,s\,ds,
\]
where $\eta$ is the unique solution of $\eta'=-\sqrt{W(\eta)}$ on $\R$ with $\eta(0)=1/2$. Exponential decay rates against this {\it Ansatz} are then obtained in that same theorem. Our analysis is comparably simpler to that of \cite{LeoniMurray}, because our solutions are monotonic decreasing, and, in particular, cannot exhibit the oscillatory behavior at infinity  also described, for positive solution of general semilinear PDEs like \eqref{critical point}, in \cite{Ni}.

\medskip

Section \ref{section strict stability radial profiles} is devoted to the proof of the Fuglede-type estimate \eqref{fuglede conclusion}. This is crucially based on the resolution theorem and on a careful contradiction argument based on the concentration-compactness principle. The Fuglede-type estimate is then shown to imply the uniqueness of radial minimizers (in particular, there is a unique minimizer $u_\e$ of $\psi(\e)$ in $\RR_0$, and every other minimizer of $\psi(\e)$ is obtained from $u_\e$ by translation), the continuity of $\lambda(\e)$ on $\e<\e_0$, and the expansions as $\e\to 0^+$ for $\psi(\e)$ and $\l(\e)$ (which, by scaling, imply \eqref{main Psi expansion} and \eqref{main Lambda expansion}).

\medskip
	
In section \ref{section proof uniform stab} we prove the uniform stability inequality \eqref{global quantitative estimate}. As explained in the remarks above, we first prove \eqref{global quantitative estimate} on radial decreasing functions by means of the selection principle method of Cicalese and Leonardi \cite{CicaleseLeonardi} (this is where Theorem \ref{theorem compactness for selection principle} and the above mentioned notion of critical sequence are used), and then reduce the proof of \eqref{global quantitative estimate} from the general case to the radial decreasing case by adapting to our setting the quantitative symmetrization method introduced in \cite{fuscomaggipratelli} for proving \eqref{euclidean isoperimetric stability inq}.

\medskip

In section \ref{section Alexandrov} we finally prove the Alexandrov-type result along the lines already illustrated in section \ref{subsect remarks on alex}.

\medskip

Finally, in appendix \ref{appendix auxiliary} we collect, for ease of reference, some basic facts and results which are frequently used throughout the paper. Readers are recommended to quickly familiarize themselves with the basic estimates for the potentials $W$, $\Phi$ and $V$ contained therein before entering into the technical aspects of our proofs.

\bigskip
	
\noindent {\bf Acknowledgement:} This work was supported by NSF-DMS RTG 1840314, NSF-DMS FRG 1854344, and NSF-DMS 2000034. Filippo Cagnetti and Matteo Focardi are thanked for collaborating to a preliminary stage of this project. We thank Xavier Cabr\'e and Giovanni Leoni for some insightful comments on a preliminary draft of this work.
	
\section{Existence and radial decreasing symmetry of minimizers}\label{sec existence of minizers} We begin by proving the following existence and symmetry result for minimizers of $\psi(\e)$.

\begin{theorem}\label{theorem existence solutions}
If $n\ge 2$ and $W\in C^{2,1}[0,1]$ satisfies \eqref{W basic} and \eqref{W normalization}, then there exists a universal constant $\e_0$ such that $\psi$ is continuous on $(0,\e_0)$ and, for every $\e<\e_0$, there exist minimizers of $\psi(\e)$. Moreover, if $u_\e$ is a minimizer of $\psi(\e)$ with $\e<\e_0$, then, up to a translation, $u_\e\in \RR_0^*\cap C^{2,\a}_{{\rm loc}}(\R^n)$ for every $\a\in(0,1)$, $0<u_\e<1$ on $\R^n$, and for some $\l\in\R$, $u_\e$ solves
\begin{equation}
	\label{EL classic W V}
	-2\,\e^2\,\Delta u_\e=\e\,\l\,V'(u_\e)-W'(u_\e)\qquad\mbox{on $\R^n$}\,,
\end{equation}
where $\l$ satisfies
\begin{equation}
\label{lambda formula}
\l=\frac{(n-1)}n\,\psi(\e)+\,\frac1n\,\Big\{\frac1\e\int_{\R^n}W(u_\e)-\e\,\int_{\R^n}|\nabla u_\e|^2\Big\}\,.
\end{equation}
Finally, $\l$ obeys the bound
\begin{equation}
\label{lambda estimate eps}
\Big|\l-2\,(n-1)\,\om_n^{1/n}\Big|\le C\,\sqrt\e\,,\qquad\forall \e<\e_0\,,
\end{equation}
so that, in particular, $0<1/C\le \l\le C$ for a universal constant $C$.
\end{theorem}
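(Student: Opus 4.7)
The plan is to prove: (1) existence via symmetrization plus concentration--compactness; (2) regularity, strict positivity, and strict monotonicity together with the Euler--Lagrange equation; (3) the formula \eqref{lambda formula} via a scaling identity, and the bound \eqref{lambda estimate eps} via equipartition of energy.

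\textbf{Existence and symmetry.} I would first construct an explicit competitor by mollifying the indicator of the unit-volume ball $B_{R_0}$, $R_0 = \om_n^{-1/n}$, using a rescaled copy of the one-dimensional profile $\eta$; this produces $\psi(\e)\le 2\,n\,\om_n^{1/n}+C\,\e$ for $\e<\e_0$. For a minimizing sequence $u_k$, Polya--Szego applied to $\int|\nabla u|^2$ combined with equimeasurability for $\int W(u)$ and $\int V(u)$ reduces us to $u_k\in\RR_0$. The bound $\e^{-1}\int W(u_k)\le C$ together with $W(t)\ge c\,t^2$ near $0$ gives uniform $L^2$ control on $u_k\mathbf{1}_{\{u_k\le 1/2\}}$, and the constraint $\int V(u_k)=1$ forces the ball $\{u_k\ge 1/2\}$ to have radius bounded by a universal constant. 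Strauss-type decay then yields $u_k(r)\le C\,r^{-n/2}$ uniformly on $\{r\ge R\}$, so that a diagonal extraction produces a weak $H^1$ and pointwise limit $u$; dominated convergence gives $\int V(u)=1$, and lower semi-continuity of $\AC$ closes the argument. ``Vanishing'' (the only failure mode after symmetrization, since ``dichotomy'' is automatic for radial decreasing profiles) is excluded by these same estimates, and in a sharper form relevant to the companion problems needed later in the paper via the quantitative Euclidean isoperimetric stability applied to the super-level sets of $\Phi(u_k)$. Continuity of $\psi$ on $(0,\e_0)$ follows by the direct method applied to $\e$-perturbations of a near-minimizer.

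\textbf{Euler--Lagrange and strict monotonicity.} The range constraint $u_\e\in[0,1]$ produces the EL initially only as a system of variational inequalities on the coincidence sets $\{u_\e=0\}$ and $\{u_\e=1\}$. I would exclude that these sets have positive measure by combining (i) the Brothers--Ziemer rigidity \cite{brothersziemer}, which forces the strictly decreasing structure of a radial rearrangement realizing equality in Polya--Szego, with (ii) the strong maximum principle applied to the EL inequality on the open set $\{u_\e<1\}$, which promotes $u_\e>0$ to all of $\R^n$. Once $0<u_\e<1$ everywhere, standard elliptic bootstrapping on \eqref{EL classic W V} yields $u_\e\in C^{2,\a}_{\loc}$, and $u_\e\in\RR_0^*$.

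\textbf{Formula and estimate for $\l$.} To derive \eqref{lambda formula}, I would test against the one-parameter scaling family $u^t(x)=u_\e(x/t)$, for which $\AC(u^t)=t^{n-2}A+t^n B$ and $\int V(u^t)=t^n$, where $A=\e\int|\nabla u_\e|^2$ and $B=\e^{-1}\int W(u_\e)$. The criticality condition $\frac{d}{dt}\bigl[\AC(u^t)-\l\int V(u^t)\bigr]\bigl|_{t=1}=0$ gives $(n-2)A+n\,B=n\,\l$, which, using $\psi(\e)=A+B$, rearranges to \eqref{lambda formula}. For \eqref{lambda estimate eps}, I would insert $u_\e$ into the Modica--Mortola identity \eqref{mm identity} and use Euclidean isoperimetry on $\Phi(u_\e)$ together with the upper bound $\psi(\e)\le 2\,n\,\om_n^{1/n}+C\,\e$ to obtain
\[
(\sqrt A-\sqrt B)^2\le A+B-2\int\sqrt{W(u_\e)}\,|\nabla u_\e|\le C\,\e.
\]
Combined with $A+B=\psi(\e)=2\,n\,\om_n^{1/n}+O(\e)$, this yields $A,B=n\,\om_n^{1/n}+O(\sqrt\e)$, and substitution into $\l=((n-2)/n)\,A+B$ gives $\l=2(n-1)\,\om_n^{1/n}+O(\sqrt\e)$.

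\textbf{Main obstacle.} I expect the existence step to be the hardest: uniformly in $\e<\e_0$, precluding loss of mass at infinity for minimizing sequences on $\R^n$ is delicate, and this is exactly where the sharp quantitative Euclidean isoperimetric inequality enters most naturally (and where, as the paper indicates, the analysis for the more general ``critical sequences'' needed later in the paper diverges significantly from the version treated here).
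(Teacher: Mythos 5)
Your overall architecture is sound and much of it coincides with the paper's proof: the dilation/Pohozaev identity giving $n\,\l=(n-2)A+nB$ (which is exactly \eqref{lambda formula}), and the Cauchy--Schwarz/equipartition argument for \eqref{lambda estimate eps}, are essentially the paper's step five. Where you genuinely diverge is the existence step: by symmetrizing the minimizing sequence first, you only need tightness of $V(u_k)\,dx$ for \emph{radial decreasing} sequences, and this indeed follows from elementary estimates (the bound $\int_{\{u_k<1/2\}}V(u_k)\le C\int W(u_k)\le C\,\e$ coming from \eqref{W controlla V fin quasi ad uno}, the universal bound on the radius of $\{u_k\ge 1/2\}$, and your Strauss-type decay). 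This is simpler than the paper's step two, which proves compactness for general, non-radial low-energy sequences via a full dichotomy analysis; the paper needs that stronger statement elsewhere (Theorem \ref{theorem compactness for selection principle}, Lemma \ref{lemma uniform qualitative stability}), but not for this theorem. Your strong-maximum-principle route to $0<u_\e<1$, applied to the one-sided variational inequalities on the connected open sets $\R^n\setminus\ov{B_a}$ and $B_b$ using $|f(t)|\le C\,t\,(1-t)$, is also a legitimate and arguably cleaner alternative to the paper's ODE argument with the test functions $-u_\e'\,\zeta_s$. One caveat: dichotomy is not ``automatically'' excluded for radial decreasing sequences; it is excluded by the energy bound through $V\le C\,W$ below $1-\de_0$ (which your decay estimates encode), and you should say so.

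The one genuine gap is your use of Brothers--Ziemer. That theorem does not ``force the strictly decreasing structure'' of the rearrangement: the non-degeneracy condition $\L^n\big(\{\nabla u^*=0\}\cap\{0<u^*<\sup u\}\big)=0$ is a \emph{hypothesis} of Brothers--Ziemer, not a conclusion, and without it equality in P\'olya--Szeg\"o does not imply that $u$ is a translate of $u^*$. Hence, to prove that \emph{every} minimizer belongs to $\RR_0^*$ up to translation, you must first establish independently that the radial decreasing rearrangement $v$ of a minimizer satisfies $v'<0$ on all of $(0,\infty)$. The paper does this in its step six by integrating the Euler--Lagrange ODE once to obtain the first integral \eqref{ps equality 2} and then arguing separately in the three regimes $v\le\de_0$, $\de_0<v<1-\de_0$, $v\ge 1-\de_0$, using \eqref{W near the wells}, \eqref{V near the wells} and $\l>0$; only after that is Brothers--Ziemer applicable. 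As written, your proposal has this implication reversed, so it does not yet yield $u_\e\in\RR_0^*$ for a general (possibly non-radial) minimizer, nor the strict monotonicity asserted in the statement.
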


\begin{proof}
{\it Step one}: We show the existence of universal constants $\ell_0$, $M_0$, and $C$ such that if $\e<\e_0$ and $u\in H^1(\R^n;[0,1])$ satisfies
\begin{equation}
  \label{monotonicity low energy}
\AC(u)\le 2\,n\,\om_n^{1/n}+\ell\,,\qquad\int_{\R^n}V(u)=1\,,
\end{equation}
for some $\ell<\ell_0$, then, up to a translation,
\begin{equation}
\label{conclusion of step one}
\int_{B_{M_0}}V(u)\geq  1-C\,\sqrt{\ell}\,.
\end{equation}
Moreover, in the special case when $u\in\RR_0$, the factor $\sqrt{\ell}$ in \eqref{conclusion of step one} can replaced by $\ell$.

\medskip

Indeed, by applying \eqref{quadratic BV estimate}, to $f=\Phi(u)$, we deduce that, up to a translation of $u$, we have
\begin{equation}\label{non vanishing bound}
		\int_{\R^n}|\Phi(u)-(\om_n^{1/n}\,r)^{1-n}\,1_{B_{r}}|^{n/(n-1)}
\leq C(n)\,\Big(\frac{\mathcal{AC}_{\e}(u)}{2}-n\,\omega_n^{1/n}\Big)^{1/2}\le C\,\sqrt{\ell}\,,
\end{equation}
for suitable $r>0$ (and with $\ell$ in place of $\sqrt{\ell}$ if $u\in\RR_0$). Clearly, \eqref{non vanishing bound} implies
\begin{equation}
  \label{leggere}
  \int_{B_{r}^c}V(u)\le C\,\sqrt{\ell}\,.
\end{equation}
Let us now define $M_0$ by setting
\[
\Phi(1/4)\, [\om_n^{1/n}\,M_0]^{n-1}=1\,.
\]
Clearly, if $r\leq M_0$, then \eqref{leggere} gives
$$
\int_{B_{M_0}^c}V(u)\leq C\,\sqrt{\ell}\,,
$$
and \eqref{conclusion of step one} follows. Assuming by contradiction that  $r>M_0$, by definition of $M_0$ we find
\[
[\om_n^{1/n}\,r]^{1-n}<[\om_n^{1/n}\,M_0]^{1-n}=\Phi(1/4)<\Phi(1/2)\,,
\]
so that
		\begin{eqnarray*}
			\int_{\{u\geq 1/2\}\cap B_{r}} \big|\Phi(1/2)-[\om_n^{1/n}\,r]^{1-n}\big|^{n/(n-1)}\leq
			\int_{\{u\ge 1/2\}}\big|\Phi(u)-[\om_n^{1/n}\,r]^{1-n}\,1_{B_{r}}\big|^{n/(n-1)}\,.
		\end{eqnarray*}
In particular, \eqref{non vanishing bound} and the fact that $\Phi(1/2)-\Phi(1/4)$ is a universal constant imply
		\begin{equation}
		\label{due}
		\big|\{u\geq 1/2\}\cap B_{r}\big|\le C\,\sqrt{\ell_0}\,.
		\end{equation}
At the same time \eqref{W controlla V fin quasi ad uno} gives
		\begin{equation}
		\label{uno}
		\int_{\{u<1/2\}}V(u) \leq C \int_{\{u< 1/2\}} W(u) \leq C\,\e\, \mathcal{AC}_{\e}(u)\le C\,\e\,.
		\end{equation}
By using, in the order, \eqref{uno}, the fact that $V$ is increasing with $V(1)=1$, \eqref{due} and \eqref{leggere}, we conclude
		\begin{eqnarray*}
			1&=&\int_{\R^n}  V(u) \le \int_{\{u\ge 1/2\}} V(u)+
			C\,\e
		\le |\{u\geq 1/2\}\cap B_{r}|+\int_{ B_{r}^c} V(u)+C\,\e
			\\
			&\leq& C\,\big(\sqrt{\ell_0}+\e_0\big)
		\end{eqnarray*}	
which is a contradiction provided we take $\ell_0$ and $\e_0$ small enough.

\medskip

\noindent {\it Step two}: We show the existence of a universal constant $\ell_0$ such that, if $\e<\e_0$ and $\{u_j\}_j$ is a sequence in $H^1(\R^n;[0,1])$ with
\begin{equation}
  \label{end 1}
  \AC(u_j)\le\psi(\e)+\ell_0\,,\qquad \int_{\R^n}V(u_j)=1\,,\qquad\forall j\,,
\end{equation}
then there exists $u\in H^1(\R^n;[0,1])$ such that, up to extracting subsequences and up to translations, $\Phi(u_j)\to\Phi(u)$ in $L^{n/(n-1)}(\R^n)$ and, in particular, $\int_{\R^n}V(u)=1$.

\medskip

We first notice that, by the elementary upper bound \eqref{basic energy upper bound} and by \eqref{end 1}, we have $\AC(u_j)\le C$ for every $j$. Next, we apply the concentration-compactness principle (see appendix \ref{subsection ccc}) to $\{V(u_j)\,dx\}_j$. By \eqref{conclusion of step one} in step one, we find that
\begin{equation}
\label{conclusion of step one from step three}
\int_{B_{M_0}}V(u_j)\geq  1-C\,\sqrt{\ell_0}\,,\qquad\forall j\,.
\end{equation}
This rules out the vanishing case. We consider the case that the dichotomy case occurs. To that end, it will be convenient to notice the validity of the Lipschitz estimate
\begin{equation}\label{lipschitz in epsilon}
  |\AC(u)-\ac_{\nu\,\e}(u)|\le C\,|1-\nu|\,\AC(u)\,,\qquad\forall \nu\ge \frac1{C}\,,\,\,u\in H^1(\R^n;[0,1])\,,
\end{equation}
which is deduced immediately from
\begin{eqnarray*}
  \ac_{\nu\,\e}(u)-\AC(u)=(\nu-1)\,\e\,\int_{\R^n}|\nabla u|^2+\Big(\frac1\nu-1\Big)\frac1\e\int_{\R^n}W(u)\,.
\end{eqnarray*}
By \eqref{conclusion of step one from step three}, if we are in the dichotomy case, then there exists
\begin{equation}
  \label{alfa dicho}
  \a\in\Big(1-C\,\sqrt{\ell_0},1\Big)\,,
\end{equation}
such that for every $\tau\in(0,\a/2)$ we can find $S(\tau)>0$ and $S_j(\tau)\to\infty$ as $j\to\infty$ such that
\begin{eqnarray}\label{dicho 2}
\Big|\a-\int_{B_{S(\tau)}}V(u_j)\Big|<\tau\,,\qquad\Big|(1-\a)-\int_{B_{S_j(\tau)}^c}V(u_j)\Big|<\tau\qquad\forall j\,.
\end{eqnarray}
We now pick a cut-off function\footnote{Notice that $\vphi$ depends on both $j$ and $\tau$. We will not stress this dependency in the notation.} $\vphi$ between $B_{S(\tau)}$ and $B_{S_j(\tau)}$, so that $\vphi\in C^\infty_c(B_{S_j(\tau)})$ with $0\le\vphi\le 1$ and $|\nabla\vphi|\le (S_j(\tau)-S(\tau))^{-1}\le 2\,S_j(\tau)^{-1}$ on $\R^n$, and with $\vphi=1$ on $B_{S(\tau)}$. We notice that  \eqref{dicho 2} and the monotonicity of $V$ entail
\begin{eqnarray}\label{dicho 3}
\Big|\a-\int_{\R^n}V(\vphi\,u_j)\Big|<2\,\tau\,,\qquad\Big|(1-\a)-\int_{\R^n}V\big((1-\vphi)\,u_j\big)\Big|<2\,\tau\qquad\forall j\,.
\end{eqnarray}
We compute that
\begin{eqnarray*}
  \AC(u_j)&=&\AC(\vphi\,u_j)+\AC((1-\vphi)\,u_j)+a_j+b_j\,,
  \\
  a_j&=&2\,\e\,\int_{B_{S_j(\tau)}\setminus B_{S(\tau)}}\vphi\,(1-\vphi)\,|\nabla u_j|^2-u_j^2\,|\nabla\vphi|^2-(1-2\,\vphi)\,u_j\,\nabla u_j\cdot\nabla\vphi\,,
  \\
  b_j&=&\frac1\e\,\int_{B_{S_j(\tau)}\setminus B_{S(\tau)}}W(u_j)-W(\vphi\,u_j)-W\big((1-\vphi)\,u_j\big)\,,
\end{eqnarray*}
where we have taken into account that $\vphi\,(1-\vphi)$ and $\nabla\vphi$ are supported in $B_{S_j(\tau)}\setminus B_{S(\tau)}$, as well as that $W(0)=0$. Let us now set, for $\s\in(0,1)$,
\[
\Gamma_j^+(\tau,\sigma)=\big(B_{S_j(\tau)}\setminus B_{S(\tau)}\big)\cap\{u_j>\s\}\,,\qquad
\Gamma_j^-(\tau,\sigma)=\big(B_{S_j(\tau)}\setminus B_{S(\tau)}\big)\cap\{u_j<\s\}\,.
\]
By \eqref{dicho 2}, we have
\[
V(\s)\,\L^n\big(\Gamma_j^+(\tau,\sigma)\big)\le\int_{B_{S_j(\tau)}\setminus B_{S(\tau)}}V(u_j)\le C\,\tau\,,\qquad\forall j\,.
\]
Taking into account \eqref{V near the wells}, if $\sigma<\de_0$, then we have
\[
\L^n\big(\Gamma_j^+(\tau,\sigma)\big)\le C\,\frac{\tau}{V(\sigma)}\le C\,\frac{\tau}{\sigma^{2\,n/(n-1)}}\,,\qquad\forall j\,.
\]
Provided $\tau\le\tau_*$ for a suitable small universal constant $\tau_*$ we can thus guarantee that
\begin{equation}
  \label{def of tau star}
  \s(\tau):=\tau^{1/[1+(2n/(n-1))]}=\tau^{(n-1)/(3\,n-1)}<\de_0\,,
\end{equation}
and, therefore, that, setting for brevity $\s=\s(\tau)$ as in \eqref{def of tau star},
\[
\L^n\big(\Gamma_j^+(\tau,\sigma)\big)\le C\,\tau^{(n-1)/(3\,n-1)}=C\,\sigma\,,\qquad\forall j\,.
\]
At the same time, by applying \eqref{W second order taylor} with $b=u_j$ and $a=0$ to get
\begin{equation}\label{W second order taylor applied}
\Big|W(u_j)-W''(0)\frac{u_j^2}2\Big|\le C\,u_j^3\le C\,\s\,u_j^2\,,\qquad\mbox{on $\Gamma_j^-(\tau,\s)$}\,,
\end{equation}
and identical inequalities with $\vphi\,u_j$ and $(1-\vphi)\,u_j$ in place of $u_j$, thus finding
\begin{eqnarray*}
  b_j&\ge&\frac{W''(0)}{2\,\e}\,
  \int_{\Gamma_j^-(\tau,\s)}u_j^2-(\vphi\,u_j)^2-\big((1-\vphi)\,u_j\big)^2
  -\frac{C\,\s}\e\,\int_{\Gamma_j^-(\tau,\s)}\,u_j^2-\frac{C}\e\,\L^n\big(\Gamma_j^+(\tau,\sigma)\big)
  \\
  &\ge&\frac{W''(0)}\e\,\int_{\Gamma_j^-(\tau,\s)}\vphi\,(1-\vphi)\,u_j^2
  -\frac{C\,\s}\e\,\int_{\Gamma_j^-(\tau,\s)}\,u_j^2-C\,\frac{\sigma}\e
  \\
  &\ge&-\frac{C\,\s}\e\,\int_{\R^n}\,W(u_j)-C\,\frac{\sigma}\e
  \ge-C\,\frac{\sigma}\e\,.
\end{eqnarray*}
where, in the last line, we have used $W''(0)\ge0$, $\e^{-1}\,\int_{\R^n}\,W(u_j)\le\AC(u_j)\le C$, and the fact that \eqref{W near the wells} and $u_j\le\s\le\de_0$ on $\Gamma_j^-(\tau,\s)$ give us
\begin{equation}
  \label{ancora}
  u_j^2\le C\,W(u_j)\qquad\mbox{on $\Gamma_j^-(\tau,\s)$}\,.
\end{equation}
Similarly, if we discard the first term in the expression for $a_j$ (which is, indeed, non-negative), we find
\begin{eqnarray*}
  a_j&\ge&-2\,\e\,\int_{B_{S_j(\tau)}\setminus B_{S(\tau)}}u_j^2\,|\nabla\vphi|^2+u_j\,|\nabla u_j|\,|\nabla\vphi|
  \\
  &\ge&-C\,\e\,\|\nabla\vphi\|_{C^0(\R^n)}\,\int_{B_{S_j(\tau)}\setminus B_{S(\tau)}}\e\,|\nabla u_j|^2+\frac{u_j^2}\e
  \ge- \frac{C}{S_j(\tau)}\,,
\end{eqnarray*}
where we have used $\|\nabla\vphi\|_{C^0(\R^n)}\le 2\,S_j(\tau)^{-1}$, as well as have noticed that
\begin{eqnarray*}
  \e\,\int_{\R^n}|\nabla u_j|^2&\le&C\,\AC(u_j)\le C\,,
  \\
  \int_{\R^n} u_j^2&\le&\L^n(\{u_j\ge\de_0\})+C\,\int_{\{u_j\le \de_0\}} W(u_j)
  \\
  &\le&C\,\int_{\{u_j\ge \de_0\}}V(u_j)+C\,\e\,\AC(u_j)\le C\,,
\end{eqnarray*}
thanks to $V(t)\ge 1/C$ for $t\in(\de_0,1)$ and to $W(t)\ge t^2/C$ on for $t\in(0,\de_0)$, see \eqref{W near the wells} and \eqref{V limitata dal basso via da zero}. Combining the lower bounds for $a_j$ and $b_j$, we have thus proved
\begin{equation}
\label{dicho 1}
\AC(u_j)\ge \AC(\vphi\,u_j)+\AC((1-\vphi)\,u_j)-C\,\Big(\frac{\s}{\e}+\frac{1}{S_j(\tau)}\Big)\,.
\end{equation}
If we set
\[
m_j=\int_{\R^n}V(\vphi\,u_j)\,,\qquad n_j=\int_{\R^n}V\big((1-\vphi)\,u_j\big)\,,
\]
and define
\begin{equation}
  \label{def of vj wj}
  v_j(x)=(\vphi\,u_j)(m_j^{1/n}\,x)\,,\qquad w_j(x)=\big((1-\vphi)\,u_j\big)(n_j^{1/n}\,x)\,,\qquad x\in\R^n\,,
\end{equation}
then by \eqref{scaling potential} and \eqref{scaling AC} we find
\begin{eqnarray}
  \label{fdr}
  \int_{\R^n}V(v_j)=1\,,\qquad \mathcal{AC}_{\e/m_j^{1/n}}(v_j)=m_j^{(1-n)/n}\,\AC(\vphi\,u_j)\,,
\end{eqnarray}
with analogous identities for $w_j$. By \eqref{dicho 3} and \eqref{lipschitz in epsilon}, and keeping in mind \eqref{alfa dicho}, we find
\begin{eqnarray}\nonumber
  \AC(\vphi\,u_j)&=&m_j^{(n-1)/n}\,\ac_{\e/m_j^{1/n}}(v_j)
  \\\nonumber
  &\ge&(\a-C\,\tau)^{(n-1)/n}\,\Big(1-C\,\big|m_j^{-1/n}-1\big|\Big)\,\AC(v_j)
  \\\label{dicho lb on vphiuj}
  &\ge&(\a-C\,\tau)^{(n-1)/n}\,\big(1-C\,|\a-1|-C\,\tau\big)\,\psi(\e)\,.
\end{eqnarray}
Similarly, taking $\tau$ small enough with respect to $1-\a$, since $\int_{\R^n}V(w_j)=1$ we have that
\begin{eqnarray}\label{dicho lb on 1 minus vphi uj}
  \AC((1-\vphi)\,u_j)=n_j^{(n-1)/n}\,\ac_{\e/n_j^{1/n}}(w_j)\ge((1-\a)-C\,\tau)^{(n-1)/n}\,2\,n\,\om_n^{1/n}\,.
\end{eqnarray}
By combining \eqref{dicho lb on vphiuj} and \eqref{dicho lb on 1 minus vphi uj} with \eqref{dicho 1} we get
\begin{eqnarray*}
\frac{\AC(u_j)}{\psi(\e)}&\ge& (\a-C\,\tau)^{(n-1)/n}\,\big(1-C\,|\a-1|-C\,\tau\big)
\\
&&+\frac{c(n)}{\psi(\e)}\,((1-\a)-C\,\tau)^{(n-1)/n}-\frac{C}{\psi(\e)}\,\Big(\frac{\s}\e+\frac1{S_j(\tau)}\Big)\,.
\end{eqnarray*}
Considering that $\psi(\e)\le C$ for $\e<\e_0$, we let first $j\to\infty$ and then $\tau\to 0^+$ (recall that $\s\to 0^+$ as $\tau\to 0^+$) to find
\begin{eqnarray}\nonumber
1&\ge&\big(1-C\,|\a-1|\big)\,\a^{(n-1)/n}+c(n)\,(1-\a)^{(n-1)/n}
\\\label{care}
&\ge& 1-C\,|\a-1|+c(n)\,(1-\a)^{(n-1)/n}\,.
\end{eqnarray}
Since $1>\a>1-C\,\sqrt{\ell_0}$, by taking $\ell_0$ small enough we can make $\a$ arbitrarily close to $1$ in terms of $n$ and $W$, thus obtaining a contradiction with \eqref{care}. This proves that $\{V(u_j)\,dx\}_j$ is in the compactness case of the concentration--compactness principle. Since \eqref{end 1} implies that $\{\Phi(u_j)\}_j$ has bounded total variation on $\R^n$ and since $V(u_j)=\Phi(u_j)^{n/(n-1)}$ does not concentrate mass at infinity, the compactness statement now follows by standard considerations.

\medskip

\noindent {\it Step three}: Let $\{u_j\}_j$ be a minimizing sequence of $\psi(\e)$, for some $\e<\e_0$. By \eqref{basic energy upper bound} we can assume that for every $j$
\[
\AC(u_j)\le  \psi(\e)+C\,\e\le 2\,n\,\om_n^{1/n}+ C\,\e\,.
\]
We can then apply the compactness statement of step two to deduce the existence of minimizers of $\psi(\e)$. To prove the continuity of $\psi$ on $(0,\e_0)$, let $\e_j\to\e_*\in(0,\e_0)$ as $j\to\infty$, and, for each $\e_j$, let $u_j$ be a minimizer of $\psi(\e_j)$. By \eqref{basic energy upper bound} we can apply step two to $\{u_j\}_j$ and deduce the existence, up to translations and up to extracting subsequences, of $u_*\in H^1(\R^n;[0,1])$ such that $\Phi(u_j)\to \Phi(u_*)$ in $L^{n/(n-1)}(\R^n)$ as $j\to\infty$. If $v\in H^1(\R^n;[0,1])$ with $\int_{\R^n}V(v)=1$, then
\[
\ac_{\e_j}(u_j)\le\ac_{\e_j}(v)
\]
so that, letting $j\to\infty$ and using lower semicontinuity,
\[
\ac_{\e_*}(u_*)\le\liminf_{j\to\infty}\ac_{\e_j}(u_j)\le\lim_{j\to\infty}\ac_{\e_j}(v)=\ac_{\e_*}(v)\,.
\]
Since $\int_{\R^n}V(u_*)=1$, we conclude that $u_*$ is a minimizer of $\psi(\e_*)$; and by plugging $v=u_*$ in the previous chain of inequalities, we find that $\psi(\e_j)\to\psi(\e_*)$ as $j\to\infty$.

\medskip

\noindent {\it Step four}: We now notice that, by the P\'olya--Szeg\"o inequality \cite{brothersziemer}, once there is a minimizer of $\psi(\e)$, there is also a minimizer of $\psi(\e)$ which belongs to $\RR_0$, or, in brief, a {\it radial decrasing minimizer} (more precisely: a radial decreasing minimizer with maximum at $0$).  In this step we prove that every radial decreasing minimizer $u_\e$ of $\psi(\e)$ satisfies $0<u_\e<1$ on $\R^n$ and $u_\e\in C^{2,\a}_{{\rm loc}}(\R^n)$, and that in correspondence of $u_\e$ one can find $\l\in\R$ such that
\begin{equation}
  \label{EL proof z}
  -2\,\e^2\,\Delta u_\e=\e\,\l V'(u_\e)-W'(u_\e)\qquad\mbox{on $\R^n$}\,.
\end{equation}
To begin with, since $u_\e$ is radial decreasing and has finite Dirichlet energy, $u_\e$ is continuous on $\R^n$. In particular, there exist $0\le a<b\le+\infty$ such that
\[
\{u_\e>0\}=B_b\,,\qquad \{u_\e<1\}=\R^n\setminus \ov{B_a}=\Big\{x:|x|>a\Big\}\,.
\]
A standard first variation argument shows the existence of $\l\in\R$ such that
\begin{equation}\label{proof of EL1}
  -2\,\e^2\,\Delta u_\e=\e\,\l\,V'(u_\e)-W'(u_\e)\qquad\mbox{in $\D'(\Om)$, $\Om=B_b\setminus \ov{B_a}$}\,.
\end{equation}
Since \eqref{proof of EL1} implies that $\Delta u_\e$ is bounded in $\Om$, by the Calderon--Zygmund theorem we find that $u_\e\in\Lip_{{\rm loc}}(\Om)$. As a consequence, \eqref{proof of EL1} gives that $-2\,\e^2\,\Delta u_\e=f(u_\e)$ for some $f\in C^1(0,1)$, and thus, by Schauder's theory, $u_\e\in C^{2,\alpha}_{\rm loc}(\Om)$ for every $\alpha \in (0,1)$. We complete this step by showing that $\Om=\R^n$.

\medskip

\noindent {\it Proof that $\Om=\R^n$}: Considering functions of the form $u+t\,\vphi$ with $t\ge 0$ and either $\vphi\in C^\infty_c(\R^n\setminus\ov{B_a})$, $\vphi\ge0$, or $\vphi\in C^\infty_c(B_b)$, $\vphi\le 0$, and then adjusting the volume constraint by a suitable variation localized in $B_b\setminus\ov{B_a}$, we also obtain the validity, in distributional sense, of the inequalities
\begin{eqnarray}
  \label{proof EL out}
  &&-2\,\e^2\,\Delta u_\e\ge\e\,\l\,V'(u_\e)-W'(u_\e)\qquad\mbox{in $\D'(\R^n\setminus \ov{B_a})$}\,.
  \\
  \label{proof EL in}
  &&-2\,\e^2\,\Delta u_\e\le\e\,\l\,V'(u_\e)-W'(u_\e)\qquad\mbox{in $\D'(B_b)$\,.}
\end{eqnarray}
We now stress that, in the rest of the argument, the only property of
\[
f(t)=\e\,\l\,V'(t)-W'(t)\,,\qquad t\in[0,1]\,,
\]
that will be used is the validity of the bound
\begin{equation}
  \label{f bound}
  |f(t)|\le C\,(1+|\l|)\,t\,(1-t)\,,\qquad\forall t\in[0,1]\,.
\end{equation}
This remark will be useful to avoid repetitions when we come to step two of the proof of Theorem \ref{theorem compactness for selection principle}. Notice that \eqref{f bound} indeed holds true thanks to \eqref{W near the wells} and \eqref{V near the wells}, and that in \eqref{f bound} we cannot absorb $|\l|$ into $C$ since we do not know yet that $|\l|$ admits a universal bound (this will actually be proved in step five below).

\medskip

By \eqref{f bound}, \eqref{proof EL out} implies
\begin{equation}
\label{proof EL out radial}
-2\,\e^2\,\Big\{u_\e''+(n-1)\,\frac{u_\e'}r\Big\}\ge -C\,(1+|\l|)\,u_\e\qquad\mbox{in $\D'(a,\infty)$}\,.
\end{equation}
Assuming by contradiction that $b<\infty$, let $r\in(a,b)$, $s$ be such that $(r-s,r+s)\subset(a,b)$, and $\zeta_s$ be the Lipschitz function with $\zeta_s=0$ on $(0,r-s)$, $\zeta_s=1$ on $(r+s,\infty)$, and $\zeta_s'=1/2s$ on $(r-s,r+s)$. Testing \eqref{proof EL out radial} with $-u_\e'\,\zeta_s\ge0$ (which is compactly supported in $(a,\infty)$) we find that
\[
\e^2\,\int_a^\infty\,[(u_\e')^2]'\,\zeta_s+2\,(n-1)\frac{(u_\e')^2}{t}\,\zeta_s\ge C\,(1+|\l|)\, \int_a^\infty\,u_\e\,u_\e'\,\zeta_s
\]
so that, after integration by parts, we obtain
\[
2\,(n-1)\,\e^2\,\int_a^\infty\frac{(u_\e')^2}{t}\,\zeta_s+\frac{C\,(1+|\l|)}{2\,s}\int_{r-s}^{r+s}\frac{u_\e^2}2\ge
\frac{\e^2}{2\,s}\int_{r-s}^{r+s}\,(u_\e')^2\,.
\]
Letting $s\to 0^+$ we obtain
\[
2\,(n-1)\,\e^2\,\int_r^b\frac{(u_\e')^2}{t}+
C\,(1+|\l|)\frac{u_\e(r)^2}2
\ge
\e^2\,u_\e'(r)^2\,.
\]
Finally letting $r\to b^-$ we conclude that $u_\e'(b^-)=0$. This fact, combined with $u_\e(b)=0$ and the uniqueness theorem for the second order ODE \eqref{proof of EL1}, implies that $u_\e=0$ on $(a,b)$, which is in contradiction with the continuity of $u_\e$ if $a>0$, and with $\int_{\R^n}V(u_\e)=1$ if $a=0$. This proves that $b=+\infty$ (and thus that $u_\e>0$ on $\R^n$).

\medskip

The proof of $a=0$ (that is, of $u_\e<1$ on $\R^n$) is analogous. After the change of variables $v=1-u_\e$, we have $v\ge0$, $v'\ge0$, $v=0$ on $(0,a)$, and, thanks to \eqref{proof EL in},
\begin{equation}
\label{proof EL in radial}
-2\,\e^2\,\Big\{v''+(n-1)\,\frac{v'}r\Big\}\ge -C\,(1+|\l|)\,v\qquad\mbox{in $\D'(0,\infty)$}\,.
\end{equation}
Notice that \eqref{proof EL in radial} is identical to \eqref{proof EL out radial}, and that an even reflection by $r=a$ maps the boundary conditions of $v$ into those of $u_\e$: the same argument used for proving $u_\e'(b^-)=0$ will thus show that $v'(a^+)=0$. For the sake of clarity we give some details. We pick $r>a$, introduce a Lipschitz function $\bar{\zeta}_s$ with $\bar\zeta_s=1$ on $(0,r-s)$, $\bar\zeta_s=0$ on $(r+s,\infty)$, and $\bar\zeta_s'=-1/2s$ on $(r-s,r+s)$, and test \eqref{proof EL in radial} with $v'\,\bar\zeta_s\ge0$, to get
\[
-\e^2\,\int_0^\infty\,[(v')^2]'\,\bar\zeta_s+2\,(n-1)\frac{(v')^2}{t}\,\bar\zeta_s\ge -C\,(1+|\l|)\, \int_0^\infty\,v\,v'\,\bar\zeta_s\,.
\]
Integration by parts now gives
\[
-\frac{\e^2}{2s}\,\int_{r-s}^{r+s}(v')^2\,-2\,(n-1)\,\e^2\int_a^{r+s}\frac{(v')^2}{t}\,\bar\zeta_s\ge - \frac{C\,(1+|\l|)}{2s}\, \int_{r-s}^{r+s}\,\frac{v^2}2\,,
\]
so that in the limit $s\to 0^+$, and then $r\to a^+$, we find $v'(a^+)=0$, that is to say, $u_\e'(a^+)=0$. If $a>0$ and thus $u_\e(a)=1$, this, combined with \eqref{proof of EL1}, implies $u_\e=1$ on $\R^n$, a contradiction.

\medskip

\noindent {\it Step five}: Given a radial decreasing minimizer $u_\e$ of $\psi(\e)$, we prove that the corresponding $\l\in\R$ such that \eqref{EL proof z} holds satisfies
\begin{equation}
\label{mel}
n\,\l=(n-1)\,\AC(u_\e)+\,\frac1\e\int_{\R^n}W(u_\e)-\e\,\int_{\R^n}|\nabla u_\e|^2\,,
\end{equation}
as well as
\begin{eqnarray}\label{mel rescaled}
\Big|\l-2\,(n-1)\,\om_n^{1/n}\Big|\le C\,\sqrt{\e}\,.
\end{eqnarray}
In particular, up to decrease the value of $\e_0$, we always have $1/C\le \l\le C$ for a universal constant $C$. To prove \eqref{mel}, following \cite{luckhaus1989gibbs}, we test the distributional form of \eqref{EL proof z} with $\vphi=X\cdot\nabla u_\e$, for some $X\in C^\infty_c(\R^n;\R^n)$, and get
		\begin{eqnarray}\nonumber
		2\,\e\,\int_{\R^n}\nabla u_\e\cdot\nabla X[\nabla u_\e]&=&-\int_{\R^n}\Big\{2\,\e\,\nabla^2u_\e[\nabla u_\e]
		+\Big(\frac{W'(u_\e)}\e-\l\,V'(u_\e)\Big)\nabla u_\e\Big\}\cdot X
		\\\label{LM estimate}
		&=&\int_{\R^n}\Big\{\e\,|\nabla u_\e|^2
		+\frac{W(u_\e)}\e-\l\,V(u_\e)\Big\}\,\Div\,X\,.
		\end{eqnarray}
		We now pick $\eta \in C_c^\infty(B_2)$ with $0\le\eta \le 1$ on $B_2$ and $\eta=1$ in $B_1$. We set $\eta_R(x)=\eta(x/R)$ and test \eqref{LM estimate} with $X(x)=\eta_R(x)\,x$. We notice that $\Div X=n\,\eta_R+(x/R)\cdot(\nabla\eta)_R$, and that, by dominated convergence,
		\begin{eqnarray*}
			&&\lim_{R\to\infty}\int_{\R^n}\Big\{\e\,|\nabla u_\e|^2
			+\frac{W(u_\e)}\e-\l\,V(u_\e)\Big\}\,n\,\eta_R=n\,\big(\AC(u_\e)-\l\big)\,,
			\\
			&&\lim_{R\to\infty}\int_{\R^n}\Big\{\e\,|\nabla u_\e|^2
			+\frac{W(u_\e)}\e-\l\,V(u_\e)\Big\}\,\frac{x}R\,\cdot(\nabla\eta)_R=0\,,
			\\
			&&\lim_{R\to\infty}\int_{\R^n}\nabla u_\e\cdot\Big(\eta_R\,\Id+\frac{x}R\otimes(\nabla \eta)_R\Big)[\nabla u_\e]=\int_{\R^n}|\nabla u_\e|^2\,.
		\end{eqnarray*}
		In particular, \eqref{LM estimate} implies
\[
n\,\l=n\,\AC(u_\e)-2\,\e\,\int_{\R^n}|\nabla u_\e|^2\,,
\]
which can be easily rearranged into \eqref{mel}. At the same time, by \eqref{basic energy upper bound} we find
		\begin{eqnarray*}
			&&\int_{\R^n}\Big|\e\,|\nabla u_\e|^2-\frac{W(u_\e)}\e\Big|
			\\
			&\le&\Big(\int_{\R^n}\Big|\sqrt\e\,|\nabla u_\e|-\sqrt{\frac{W(u_\e)}\e}\Big|^2\Big)^{1/2}
			\Big(\int_{\R^n}\Big|\sqrt\e\,|\nabla u_\e|+\sqrt{\frac{W(u_\e)}\e}\Big|^2\Big)^{1/2}
			\\
			&=&\Big(\AC(u_\e)-\int_{\R^n}|\nabla\Phi(u_\e)|\Big)^{1/2}
\Big(\int_{\R^n}\Big|\sqrt\e\,|\nabla u_\e|+\sqrt{\frac{W(u_\e)}\e}\Big|^2\Big)^{1/2}
			\\
			&\le& C\,\sqrt{\e}\,\sqrt{\AC(u_\e)}\le C\,\sqrt\e\,,
		\end{eqnarray*}
which can be combined with \eqref{mel} and with \eqref{basic energy upper bound} to deduce \eqref{mel rescaled}.

\medskip

\noindent {\it Step six}: We are left to prove that every minimizer of $\psi(\e)$ is radial decreasing. Indeed, let $u$ be a generic, possibly non-radial, minimizer of $\psi(\e)$, and let $v\in\RR_0$ denote its radial decreasing rearrangement. By standard properties of rearrangements, $\int_{\R^n}V(u)=\int_{\R^n}V(v)=1$, while by the P\'olya-Szeg\"o inequality $\AC(u)\ge\AC(v)$, so that $v$ is a minimizer of $\psi(\e)$ and equality holds in the P\'olya-Szeg\"o inequality for $u$, that is
\begin{equation}
  \label{ps equality}
  \int_{\R^n}|\nabla u|^2=\int_{\R^n}|\nabla v|^2\,.
\end{equation}
By step four and five, $v$ solves the ODE
\begin{equation}
  \label{orf}
2\,\e^2\,\Big\{v''+(n-1)\,\frac{v'}r\Big\}=W'(v)-\l\,\e\,V'(v)\,,\qquad\mbox{on $(0,\infty)$}\,,
\end{equation}
with $0<1/C\le\l\le C$. Multiplying in \eqref{orf} by $v'$ and integrating over $(0,r)$ for some $r>0$ we obtain
\begin{equation}
  \label{ps equality 2}
  \e^2\,v'(r)^2+2\,(n-1)\,\int_0^r\frac{(v')^2}t=W(v(r))-\l\,\e\,V(v(r))+\l\,\e\,V(v(0))\,,\qquad\forall r>0\,,
\end{equation}
where we have used $v'(0)=0$, $v(1)=1$, and $W(1)=0$. If $r$ is such that $v(r)\le\de_0$, then by \eqref{W near the wells}, \eqref{V near the wells} and \eqref{ps equality 2} we find
\[
\e^2\,v'(r)^2\ge W(v)-C\,\e V(v)\ge\frac{v(r)^2}{C}-C\,\e\,\frac{v(r)^{2\,n/(n-1)}}{C}\ge \frac{v(r)^2}{C}\,,
\]
which gives, in particular, $v'(r)<0$; if $r$ is such that $v(r)\in(\de_0,1-\de_0)$, then, by the same method and thanks to $\inf_{(\de_0,1-\de_0)}W\ge 1/C$, we find that
\[
\e^2\,v'(r)^2\ge W(v)-C\,\e V(v)\ge\frac1{C}-C\,\e\ge\frac1{C}\,,
\]
so that, once again, $v'(r)<0$; finally, if the interval $\{v\ge 1-\de_0\}$ is non-empty, then it has the form $(0,a]$ for some $a>0$; multiplying \eqref{orf} by $r^{n-1}$, integrating over $(0,r)$, and taking into account that $W'<0$ on $(1-\de_0,1)$, $V'>0$ on $(0,1)$ and $\l>0$, we find
\begin{eqnarray*}
2\,\e^2\,r^{n-1}\,v'(r)=\int_0^r\,[W'(v)-\l\,\e\,V'(v)]\,r^{n-1}\,dr<0\,,
\end{eqnarray*}
that is, once again $v'(r)<0$. We have thus proved that $v'<0$ on $(0,\infty)$. This information, combined with \eqref{ps equality}, allows us to exploit the Brothers--Ziemer theorem \cite{brothersziemer} to conclude that $u$ is a translation of $v$. This shows that every minimizer of $\psi(\e)$ is in $\RR_0^*$, and concludes the proof of the theorem.
\end{proof}

The compactness argument used in the proof of Theorem \ref{theorem existence solutions} is relevant also in the implementation of the selection principle used in the proof of the stability estimate \eqref{global quantitative estimate} in the radial decreasing case. Specifically, an adaptation of that argument is needed in showing the existence of minimizers in the variational problems used in the selection principle strategy. In the interest of clarity, it thus seems convenient to discuss this adaptation in this same section. We thus turn to the proof of Theorem \ref{theorem compactness for selection principle} below. In the statement of this theorem we use for the first time the quantity
\begin{equation}
  \label{def of d Phi}
  d_\Phi(u,v)=\int_{\R^n}|\Phi(u)-\Phi(v)|^{n/(n-1)}\,,
\end{equation}
which is finite whenever $u,v\in H^1(\R^n;[0,1])$ (indeed, $u\in H^1(\R^n;[0,1])$ and $W(t)\le C\,t^2$ for $t\in[0,1]$ imply $\AC(u)<\infty$, thus $|D(\Phi(u))|(\R^n)<\infty$, and hence $\Phi(u)\in L^{n/(n-1)}(\R^n)$ by the $BV$-Sobolev inequality).  

\begin{theorem}\label{theorem compactness for selection principle}
  If $n\ge 2$ and $W\in C^{2,1}[0,1]$ satisfies \eqref{W basic} and \eqref{W normalization}, then there exist universal constant $\e_0$, $a_0$, $\ell_0$ and $C$ with the following properties.

  \medskip

  \noindent {\bf (i):} If $a\in(0,a_0)$, $\e<\e_0$, $u_\e$ is a minimizer of $\psi(\e)$, and $v_\e\in H^1(\R^n;[0,1])$ is such that
  \begin{equation}
    \label{spr hp}
      \int_{\R^n}V(v_\e)=1\,,\qquad \AC(v_\e)\le\psi(\e)+a\,\ell_0\,,\qquad d_\Phi(v_\e,u_\e)\le \ell_0\,,
  \end{equation}
  then the variational problem
  \[
  \g(\e,a,v_\e)=\inf\Big\{\AC(w)+a\,d_\Phi(w,v_\e):w\in H^1(\R^n;[0,1])\,,\int_{\R^n}V(w)=1\Big\}\,,
  \]
  admits minimizers.

  \medskip

  \noindent {\bf (ii):} If, in addition, $v_\e\in\RR_0$, then $\g(\e,a,v_\e)$ admits a minimizer $w_\e\in\RR_0$. Every such minimizer satisfies $w_\e\in \RR_0^*\cap C^{2,1/(n-1)}_{{\rm loc}}(\R^n)$, $0<w_\e<1$ on $\R^n$, and solves
\begin{equation}
	\label{EL classic W V Z}
	-2\,\e^2\,\Delta w_\e=\e\,w_\e\,(1-w_\e)\,\Err_\e-W'(w_\e)\qquad\mbox{on $\R^n$}\,,
\end{equation}
where $\Err_\e$ is a continuous radial function on $\R^n$ with
\begin{equation}
  \label{err eps}
  \sup_{\R^n}|\Err_\e|\le C\,.
\end{equation}
\end{theorem}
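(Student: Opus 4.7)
The plan parallels the proof of Theorem \ref{theorem existence solutions}, adapting each step to accommodate the penalty $a\,d_\Phi(\cdot,v_\e)$.

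For part (i), using $w=v_\e$ as competitor gives $\g(\e,a,v_\e)\le\AC(v_\e)\le\psi(\e)+a\,\ell_0$, so any minimizing sequence $\{w_j\}_j$ satisfies $\AC(w_j)\le\psi(\e)+a\,\ell_0+o(1)$ and $d_\Phi(w_j,v_\e)\le\ell_0+o(1)$. I would apply the concentration--compactness principle to $\{V(w_j)\,dx\}_j$. Vanishing is excluded by step one of Theorem \ref{theorem existence solutions}, producing translations $x_j$ with $\int_{B_{M_0}(x_j)}V(w_j)\ge 1-C\sqrt{a\,\ell_0}$. The novelty is that the $x_j$ must stay bounded: applying step one also to $v_\e$ and using $d_\Phi(v_\e,u_\e)\le\ell_0$ together with the known localization of $u_\e$ at the origin, $V(v_\e)$ concentrates in a fixed $B_R$; were $|x_j|\to\infty$, the supports of $V(w_j)$ and $V(v_\e)$ would essentially separate and force $d_\Phi(w_j,v_\e)\to 2$, contradicting the bound. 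Dichotomy is ruled out by a modified step two argument: the $d_\Phi$ coupling controls the escaping mass $1-\alpha$, and the strict superadditivity of $t\mapsto t^{(n-1)/n}$ combined with $\psi(\e)>2n\om_n^{1/n}$ (see \eqref{first lb}) delivers a contradiction once $\ell_0$ and $a_0$ are small enough. Compactness of $\{V(w_j)\,dx\}$ then gives strong $L^{n/(n-1)}$ convergence of $\Phi(w_j)$, hence $\int V(w)=1$ and $d_\Phi(w_j,v_\e)\to d_\Phi(w,v_\e)$, while $\AC$ passes to the limit by weak lower semicontinuity, producing a minimizer.

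For part (ii), when $v_\e\in\RR_0$, radial symmetry of a minimizer follows from P\'olya--Szeg\"o $\AC(w^*)\le\AC(w)$ combined with the Hardy--Littlewood--Riesz rearrangement inequality $\int|\Phi(w)^*-\Phi(v_\e)|^{n/(n-1)}\le\int|\Phi(w)-\Phi(v_\e)|^{n/(n-1)}$, valid because $\Phi(v_\e)$ is radial decreasing, together with $\Phi(w)^*=\Phi(w^*)$ (as $\Phi$ is strictly increasing). The Euler--Lagrange equation is then derived as in step four of Theorem \ref{theorem existence solutions}, using admissible perturbations to respect the range constraint $0\le w\le 1$ and a Lagrange multiplier $\l$ for $\int V(w)=1$, yielding
\[
-2\,\e^2\,\Delta w_\e + W'(w_\e) = \e\,\l\,V'(w_\e) - \e\,a\,\tfrac{n}{n-1}\,|\Phi(w_\e)-\Phi(v_\e)|^{1/(n-1)}\,\mathrm{sgn}(\Phi(w_\e)-\Phi(v_\e))\,\sqrt{W(w_\e)}\,.
\]
The universal bounds $V'(t)/(t(1-t))\le C$ and $\sqrt{W(t)}/(t(1-t))\le C$ on $(0,1)$ (both from \eqref{W near the wells} and \eqref{V near the wells}), together with $|\l|\le C$ obtained by adapting the Pohozaev-type identity of step five of Theorem \ref{theorem existence solutions} (the penalty-induced corrections are of size $O(a+\e)$, hence harmless), let me rewrite the right-hand side as $\e\,w_\e(1-w_\e)\,\Err_\e-W'(w_\e)$ with $\Err_\e$ continuous, radial, and satisfying \eqref{err eps}. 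Strict positivity $0<w_\e<1$ and strict monotonicity $w_\e\in\RR_0^*$ follow from the ODE argument of step four of Theorem \ref{theorem existence solutions} applied verbatim, since the right-hand side of \eqref{EL classic W V Z} obeys precisely the structural bound \eqref{f bound}, the only property of $f$ used in that argument. Schauder theory then yields $w_\e\in C^{2,1/(n-1)}_{\rm loc}(\R^n)$, the reduced H\"older exponent stemming from the regularity of $t\mapsto|\Phi(t)-\Phi(v_\e)|^{1/(n-1)}$.

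The main obstacle is the tightness step of part (i): without the coupling $d_\Phi(w_j,v_\e)\le\ell_0$, which pins $w_j$ near $v_\e$ and hence near $u_\e$ at the origin, the concentration center $x_j$ could escape to infinity and no non-trivial limit could be extracted. A secondary technical point is the Pohozaev bound for $\l$ in presence of the penalty, which involves cross-integrals with $\nabla\Phi(v_\e)$ controlled by the $H^1$ regularity of $v_\e$.
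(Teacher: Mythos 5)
Your overall architecture matches the paper's, and part (ii) is essentially correct: the rearrangement fact you need is the nonexpansivity $\int|\Phi(w)^*-\Phi(v_\e)|^{n/(n-1)}\le\int|\Phi(w)-\Phi(v_\e)|^{n/(n-1)}$ (which, with $\Phi(w^*)=\Phi(w)^*$ and P\'olya--Szeg\"o, is exactly what the paper uses), and your Euler--Lagrange equation, the bound $|Z_\e(x,t)|\le C\,t\,(1-t)$, and the reuse of the $\Om=\R^n$ argument through the structural bound \eqref{f bound} all agree with the paper's step two. The genuine gap is in your dichotomy exclusion, and it sits precisely where the paper puts all of its effort. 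After splitting $u_j$ into $\vphi\,u_j$ (mass $\approx\a$) and $(1-\vphi)\,u_j$ (mass $\approx 1-\a$), the contradiction requires a lower bound of the form $\AC(\vphi\,u_j)+a\,d_\Phi(\vphi\,u_j,v_\e)\ge\g-C\,(1-\a)-C\,\tau$: the near piece must be turned into an \emph{almost-competitor for $\g$ itself}. Your mechanism ("superadditivity of $t^{(n-1)/n}$ plus $\psi(\e)>2\,n\,\om_n^{1/n}$") only compares $\AC(\vphi\,u_j)$ with the Modica--Mortola lower bound $2\,n\,\om_n^{1/n}\,\a^{(n-1)/n}$ while bounding $\g$ from above by $\psi(\e)+a\,\ell_0\le 2\,n\,\om_n^{1/n}+C\,\e+a\,\ell_0$; after letting $j\to\infty$ and $\tau\to0^+$ this yields only $c(n)\,(1-\a)^{(n-1)/n}\le C\,(1-\a)+C\,\e+C\,a\,\ell_0$, which is no contradiction since $1-\a$ may be arbitrarily small compared with $\e$ and $a\,\ell_0$ (these do not vanish in the limit). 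Invoking the strict inequality \eqref{first lb} makes matters worse, not better, since the gap $\psi(\e)-2\,n\,\om_n^{1/n}$ enters with the unfavorable sign.

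To make $\vphi\,u_j$ a genuine competitor for $\g$ one must restore the constraint $\int V=1$ while controlling the change of \emph{both} $\AC$ and $d_\Phi(\cdot,v_\e)$. Here the global rescaling $x\mapsto m_j^{1/n}x$ of Theorem \ref{theorem existence solutions} breaks down: the induced variation of $d_\Phi(\cdot,v_\e)$ is controlled only by the first moment $\int|x|\,\sqrt{W(\vphi\,u_j)}\,|\nabla(\vphi\,u_j)|$, which is not bounded because $\vphi\,u_j$ is supported in $B_{S_j(\tau)}$ with $S_j(\tau)\to\infty$. The paper's device is a compactly supported dilation $f_t(x)=x+t\,\zeta(|x|)\,x$ with $\zeta$ supported in $B_{2M_0}$: the concentration estimate forces $\frac{d}{dt}\int_{\R^n}V((\vphi\,u_j)\circ f_t)\ge n/3$, so a parameter $t_j={\rm O}((1-\a)+\tau)$ restores the constraint, and since points move by at most ${\rm O}(|t_j|\,M_0)$ one gets $|d_\Phi(v_j(t_j),v_\e)-d_\Phi(\vphi\,u_j,v_\e)|\le C\,|t_j|\,\AC(\vphi\,u_j)$. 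Without this (or an equivalent localization) the dichotomy case is not excluded. By contrast, what you single out as "the main obstacle" --- boundedness of the concentration centers --- is dispatched exactly as you describe, in one line, from $d_\Phi(u_j,u_\e)\le C\,\ell_0$ and the localization of $u_\e$.
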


\begin{proof}
  {\it Step one}: Set $\g=\g(\e,a,v_\e)$ for the sake of brevity, and let $\{u_j\}_j$ be a minimizing sequence for $\g$.
  Since $a>0$, we can assume that
  \begin{equation}
    \label{compare through}
      \AC(u_j)+a\,d_\Phi(u_j,v_\e)\le\g+a\,\ell_0\,,\qquad\forall j\,.
  \end{equation}
  In particular, comparing $u_j$ by means of \eqref{compare through} with $v_\e$ and $u_\e$ respectively, we obtain the two basic bounds
  \begin{eqnarray}
    \label{spr 1}
    &&\AC(u_j)+a\,d_\Phi(u_j,v_\e)\le\AC(v_\e)+a\,\ell_0\le\psi(\e)+2\,\ell_0\,,
    \\
    \label{spr 2}
    &&\AC(u_j)+a\,d_\Phi(u_j,v_\e)\le\psi(\e)+a\,d_\Phi(u_\e,v_\e)+a\,\ell_0\,.
  \end{eqnarray}
  Subtracting $\psi(\e)$ from \eqref{spr 2}, noticing that $\AC(u_j)\ge\psi(\e)$, and using \eqref{spr hp}, we also find
  \begin{equation}
    \label{spr 3}
    d_\Phi(u_j,v_\e)\le d_\Phi(u_\e,v_\e)+\ell_0\le 2\,\ell_0\,,
  \end{equation}
  and hence, using again \eqref{spr hp},
  \begin{equation}
    \label{spr 4}
    d_\Phi(u_j,u_\e)\le C\,\ell_0\,.
  \end{equation}
  Finally, by \eqref{spr hp}, \eqref{spr 1}, and $\psi(\e)\le 2\,n\,\om_n^{1/n}+C\,\e$, we can apply step one of the proof of Theorem \ref{theorem existence solutions} to $u_j$, $u_\e$ and $v_\e$, to find
  \begin{equation}
    \label{veps concentration}
    \min\Big\{\int_{B_{M_0}}V(u_j)\,,\int_{B_{M_0}}V(u_\e)\,,\int_{B_{M_0}}V(v_\e)\Big\}\ge 1-C\,\sqrt{\ell_0+\e_0}\,,\qquad\forall j\,,
  \end{equation}
  where $M_0$ is a universal constant. Since \eqref{veps concentration} rules out the possibility of the vanishing case for $\{V(u_j)\,dx\}_j$, we can directly assume that the dichotomy case occurs, and in particular that there exists
  \begin{equation}
  \label{alfa dicho spr}
  \a\in\Big(1-C\,\sqrt{\ell_0+\e_0},1\Big)\,,
  \end{equation}
  such that for every $\tau\in(0,\min\{\a/2,\tau_*\})$ (here $\tau_*$ is as in \eqref{def of tau star})  we can find $S(\tau)>0$, $S_j(\tau)\to\infty$ and a cut-off function $\vphi$ between $B_{S(\tau)}$ and $B_{S_j(\tau)}$ such that $|\nabla \vphi|\le 2\,S_j(\tau)^{-1}$ on $\R^n$,
  and
\begin{eqnarray}\label{dicho 3 wj}
&&\hspace{1.2cm}\a-C\,\tau\le\int_{B_{S(\tau)}}V(u_j),\int_{\R^n}V(\vphi\,u_j)\le\a+C\,\tau\,,
\\\nonumber
&&(1-\a)-C\,\tau\le\int_{B_{S_j(\tau)}^c}V(u_j),\int_{\R^n}V((1-\vphi)\,u_j)\le (1-\a)+C\,\tau\,.
\end{eqnarray}
We can now {\it verbatim} repeat the argument used in step two of the proof of Theorem \ref{theorem existence solutions} to deduce \eqref{dicho 1}, and find that, if $\s=\tau^{(n-1)/(3\,n-1)}$ as in \eqref{def of tau star}, then
\begin{equation}
\label{dicho 1 spr}
\AC(u_j)\ge \AC(\vphi\,u_j)+\AC((1-\vphi)\,u_j)-C\,\Big(\frac{\s}\e+\frac{1}{S_j(\tau)}\Big)\,;
\end{equation}
in the same vein, by exactly the same argument used to deduce \eqref{dicho lb on 1 minus vphi uj}, we also have
\begin{eqnarray}\label{dicho lb on 1 minus vphi uj spr}
  \AC((1-\vphi)\,u_j)\ge c(n)\,((1-\a)-C\,\tau)^{(n-1)/n}\,.
\end{eqnarray}
We now need to show that the $\AC(\vphi\,u_j)$ term is larger than $\g$ up to ${\rm O}(1-\a)$ and ${\rm O}(\tau)$ errors, but, for reasons that will become clearer in a moment, we cannot do this by just taking a rescaling of $\vphi\,u_j$ as done in Theorem \ref{theorem existence solutions}. We will rather need to introduce the ``localized'' family of rescalings which we now pass to describe.

\medskip

We let $\zeta\in C^\infty_c(B_{2\,M_0};[0,1])\cap\RR_0$ with $\zeta=1$ on $B_{M_0}$ and $|\zeta'|\le 2/M_0$. In particular,
\begin{equation}
  \label{zeta bound}
  |x|\,|\zeta'|\le 2\qquad\mbox{on $\R^n$}\,.
\end{equation}
Next, we set $f_t(x)=x+t\,\zeta(|x|)\,x$ and $\hat x=x/|x|$ for $x\in\R^n$ and $t>0$. By \eqref{zeta bound}, if $|t|\le t_0=t_0(n)<1$, then $f_t:\R^n\to\R^n$ is a diffeomorphism with
\begin{eqnarray*}
f_t(x)&=&x\,,\qquad\hspace{0.9cm}\mbox{on $B_{2\,M_0}^c$}\,,
\\
f_t(x)&=&(1+t)\,x\qquad\mbox{on $B_{M_0}$}\,,
\\
  \nabla f_t(x)&=&(1+t\,\zeta)\,\Id+t\,|x|\,\zeta'\,\hat x\otimes \hat x\,,
  \\
  Jf_t(x)&=&(1+t\,\zeta)^{n-1}\,\big(1+t\,(\zeta+|x|\,\zeta')\big)=1+\big(n\,\zeta+|x|\,\zeta'\big)\,t+{\rm O}(t^2)\,.
\end{eqnarray*}
We set $v_j(t)=(\vphi\,u_j)\circ f_t$, so that $v_j(0)=\vphi\,u_j$, and consider the functions
\[
b_j(t)=\int_{\R^n}V\big(v_j(t)\big)=\int_{\R^n}V(\vphi\,u_j)\,Jf_t\,,\qquad |t|\le t_0\,.
\]
Clearly we have
\begin{eqnarray}\label{bj 0}
  b_j(0)&=&\int_{\R^n}V(\vphi\,u_j)\in[\a-C\,\tau,\a+C\,\tau]\,,
  \\\label{bj second t}
  |b_j''(t)|&=&\int_{\R^n}V(\vphi\,u_j) \Big|\frac{d^2(Jf_t)}{dt^2}\Big|\le C\,,\qquad\forall |t|\le t_0\,;
\end{eqnarray}
more crucially, if we choose $\e_0$ and $\ell_0$ small enough, then by \eqref{veps concentration} and \eqref{zeta bound} we find
\begin{eqnarray*}
  b_j'(0)=\int_{\R^n}V(\vphi\,u_j)\,\big(n\,\zeta+|x|\,\zeta'\big)\ge n\,\int_{B_{M_0}}V(u_j)-(n+2)\,\int_{B_{2\,M_0}\setminus B_{M_0}}V(u_j)
  \ge\frac{n}2\,.
\end{eqnarray*}
As a consequence, by \eqref{bj second t}, we can find a universal constant $t_1$ such that
\begin{equation}
  \label{bj prime t bounded below}
  b_j'(t)\ge\frac{n}3\qquad\forall |t|\le t_1\,.
\end{equation}
 In particular, $b_j$ is strictly increasing on $[-t_1,t_1]$, with
\begin{eqnarray*}
&&b_j(t_1)\ge b_j(0)+\frac{n}3\,t_1\ge \a-C\,\tau+\frac{n}3\,t_1>1-C\,(\ell_0+\e_0+\tau)+\frac{n}3\,t_1>1\,,
\\
&&b_j(-t_1)\le b_j(0)-\frac{n}3\,t_1\le \a + C\,\tau-\frac{n}3\,t_1\le 1+C\,(\ell_0+\e_0+\tau)-\frac{n}3\,t_1<1-\frac{n}4\,t_1\,,
\end{eqnarray*}
so that, for every $j$, there exists $t_j\in(-t_1,t_1)$ such that $b_j(t_j)=1$: in other words,
\begin{equation}
  \label{tj gives vol 1}
  \int_{\R^n}V(v_j(t_j))=1\,.
\end{equation}
We now compare the energy of $v_j(t_j)=(\vphi\,u_j)\circ f_{t_j}$ to that of $\vphi\,u_j$. To this end, we first notice that, by comparing $b_j(0)=\int_{\R^n}V(\vphi\,u_j)=\a+{\rm O}(\tau)$ to $b_j(1)=1$, thanks to \eqref{bj prime t bounded below} we conclude that
\begin{equation}
  \label{tj relates to alfa}
  |t_j|\le C\,\big((1-\a)+\tau\big)\,,\qquad\forall j\,.
\end{equation}
Denoting by $\|A\|$ the operator norm of a linear map $A$, we have
\[
\|\nabla f_t(x)-\Id\|\le C\,|t|\,,\qquad |Jf_t(x)-1|\le C\,|t|\,,\qquad\forall x\in\R^n\,,
\]
so that
\begin{eqnarray*}
  \AC(v_j(t))&=&\int_{\R^n}\Big\{\e\,\Big|\Big(\nabla f_t\circ f_t^{-1}\Big)[\nabla(\vphi\,u_j)]\Big|^2+\frac{W(\vphi\,u_j)}\e\Big\}\,Jf_t
  \\
  &\le&\int_{\R^n}\Big\{\e\,(1+C\,|t|)^2|\nabla(\vphi\,u_j)|^2+\frac{W(\vphi\,u_j)}\e\Big\}\,(1+C\,|t|)
  \\
  &\le&(1+C\,|t|)\,\AC(\vphi\,u_j)\,.
\end{eqnarray*}
Therefore if we combine \eqref{dicho 1 spr}, \eqref{dicho lb on 1 minus vphi uj spr}, and \eqref{tj relates to alfa} with this last estimate, and take into account that $\AC(u_j),\AC(\vphi\,u_j)\le C$, then we obtain
\begin{eqnarray}\nonumber
\AC(u_j)+a\,d_\Phi(u_j,v_\e)&\ge& \AC(v_j(t_j))+a\,d_\Phi(v_j(t_j),v_\e)
\\\label{key step 2}
&&
+a\,\Big(d_\Phi(u_j,v_\e)-d_\Phi(v_j(t_j),v_\e)\Big)
\\\nonumber
&&+c(n)\,((1-\a)-C\,\tau)^{(n-1)/n}-C\,\Big((1-\a)+\tau+\frac{1}{S_j(\tau)}+\frac{\s}\e\Big)\,.
\end{eqnarray}
We notice that for every $u,v\in H^1(\R^n;[0,1])$, thanks to the triangular inequality in $L^{n/(n-1)}$ and to $|b^{1/n'}-a^{1/n'}|\ge c(n)\,b^{-1/n}(b-a)$ for $0<a<b$, we have
\begin{equation}
  \label{careful with that}
  c(n)\,\frac{|d_\Phi(u,v_\e)-d_\Phi(v,v_\e)|}{\max\{d_\Phi(u,v_\e),d_\Phi(v,v_\e)\}^{1/n}}\le d_\Phi(u,v)^{(n-1)/n}\,.
\end{equation}
We apply \eqref{careful with that} with $u=u_j$ and $v=u_j\,\vphi$ to find
\begin{eqnarray*}
\big|d_\Phi(u_j,v_\e)-d_\Phi(\vphi\,u_j,v_\e)\big|&\le& C\, \int_{\R^n}|\Phi(u_j)-\Phi(\vphi\,u_j)|^{n/(n-1)}
\\
&\le&\int_{\R^n\setminus B_{S(\tau)}}V(u_j)\le C\,\big((1-\a)+\tau\big)
\end{eqnarray*}
where we have used \eqref{dicho 3 wj}. Similarly, noticing that
\begin{eqnarray*}
\frac{d}{ds}\,\Phi(v_j(s))&=&\sqrt{W(v_j(s))}\,\big[\nabla(\vphi\,u_j)\circ f_s\big]\cdot\frac{d}{ds}\,f_s
\\
&=&\sqrt{W(v_j(s))}\,\big[\nabla(\vphi\,u_j)\circ f_s\big]\cdot\big(\zeta(|x|)\,x\big)\,,
\end{eqnarray*}
with $\zeta(x)\,|x|\le 2\,M_0$ for every $x\in\R^n$ by \eqref{zeta bound}, we find\footnote{This is the key step where using $f_t(x)$ rather than $(1+t)\,x$ (as done when proving Theorem \ref{theorem existence solutions}) makes a substantial difference. Indeed, by using a global rescaling to fix the volume constraint of $\vphi\,u_j$, we end up having to control, in the analogous estimate to \eqref{key step}, the first moment of the energy density of $\vphi\,u_j$, i.e.
\[
\int_{\R^n}|x|\,\Big(\e\,|\nabla(\vphi\,u_j)|^2+\frac{W(\vphi\,u_j)}\e\Big)\,,
\]
rather than the trivially bounded quantity $M_0\,\AC(u_j)$.}
\begin{eqnarray}\nonumber
&&\big|d_\Phi(v_j(t_j),v_\e)-d_\Phi(\vphi\,u_j,v_\e)\big|
\\\nonumber
&\le&C\,\int_{\R^n}|\Phi(v_j(t_j))-\Phi(\vphi\,u_j)|^{n/(n-1)}
\le C\,\int_{\R^n}|\Phi(v_j(t_j))-\Phi(\vphi\,u_j)|
\\\nonumber
&\le&C\,\Big|\int_0^{t_j}\,ds\,\int_{\R^n}\sqrt{W(v_j(s))}\,\big[\nabla(\vphi\,u_j)\circ f_s\big]\cdot\big(\zeta(|x|)\,x\big)\Big|
\\\nonumber
&\le&C\,\Big|\int_0^{t_j}\,ds\,\int_{\R^n}\sqrt{W(\vphi\,u_j)}\,\nabla(\vphi\,u_j)\cdot\big(\zeta(|f_s^{-1}|)\,f_s^{-1}\big)\,Jf_s\Big|
\\\label{key step}
&\le&C\,M_0\,|t_j|\,\int_{\R^n}\sqrt{W(\vphi\,u_j)}\,|\nabla(\vphi\,u_j)|\le C\,|t_j|\,\AC(\vphi\,u_j)\,.
\end{eqnarray}
We finally combine \eqref{tj relates to alfa}, \eqref{key step 2}, \eqref{key step},and the fact that $v_j(t_j)$ is a competitor for $\g$ to conclude that
\begin{equation*}
\AC(u_j)+a\,d_\Phi(u_j,v_\e)\ge\g+c(n)\,((1-\a)-C\,\tau)^{(n-1)/n}-C\,\Big((1-\a)+\tau+\frac\s\e+\frac{1}{S_j(\tau)}\Big)\,.
\end{equation*}
Letting $j\to\infty$ and then $\tau\to 0^+$ (so that $\s\to 0^+$ thanks to \eqref{def of tau star}), we finally conclude
\begin{eqnarray*}
0\ge c(n)\,(1-\a)^{(n-1)/n}-C\,(1-\a)\,,
\end{eqnarray*}
which gives a contradiction with \eqref{alfa dicho spr} if $\e_0$ and $\ell_0$ are small enough. Having excluded vanishing and dichotomy, by a standard argument we deduce the existence of a minimizer of $\g$.

\medskip

\noindent {\it Step two}: We now assume that $v_\e\in\RR_0$. Since $\Phi$ is an increasing function on $[0,1]$, if $u^*$ denotes the radial decreasing rearrangement of $u:\R^n\to[0,\infty)$, then $\Phi(u^*)=\Phi(u)^*$. In particular, by a standard property of rearrangements,
\[
d_\Phi(u,v)=\int_{\R^n}|\Phi(u)-\Phi(v)|^{n/(n-1)}\ge \int_{\R^n}|\Phi(u)^*-\Phi(v)^*|^{n/(n-1)}=d_\Phi(u^*,v^*)\,.
\]
This fact, combined with the P\'olya--Szeg\"o inequality and the fact that $v_\e^*=v_\e$, implies that the radial decreasing rearrangement of a minimizer of $\g$ is also a minimizer of $\g$ (in brief, a radial decreasing minimizer).

\medskip

We now show that every radial decreasing minimizer $w_\e$ of $\g$ satisfies $0<w_\e<1$ on $\R^n$, that $w_\e\in C^{2,1/(n-1)}_{{\rm loc}}(\R^n)$, and that \eqref{EL classic W V Z} holds for a radial continuous function $\Err_\e$ bounded by a universal constant. Arguing as in step four of the proof of Theorem \ref{theorem existence solutions}, with $0\le a<b\le+\infty$ and $\Om=B_b\setminus\ov{B}_a=\{0<w_\e<1\}$, we see that $w_\e$ solves
\begin{equation}
  \label{EL proof z Z Omega}
  -2\,\e^2\,\Delta w_\e=\e\,\l V'(w_\e)-W'(w_\e)-a\,\e\,Z_\e(x,w_\e)\qquad\mbox{in $\D'(\Om)$}\,,
\end{equation}
where, for $x\in\R^n$ and $t\in[0,1]$, we have set
\[
Z_\e(x,t)=\frac{n}{n-1}\,\big|\Phi(t)-\Phi(v_\e)\big|^{(n/(n-1))-2}\,\big(\Phi(t)-\Phi(v_\e)\big)\,\sqrt{W(t)}\,.
\]
By \eqref{EL proof z Z Omega}, $\Delta w_\e$ is bounded in $\Om$, and thus, by the Calderon--Zygmund theorem, $w_\e\in\Lip_{{\rm loc}}(\Om)$. This implies that $Z_\e(x,t)\in C^{0,1/(n-1)}_{{\rm loc}}(\Om)$, and thus, by Schauder's theory, that $w_\e\in C^{2,1/(n-1)}_{{\rm loc}}(\Om)$. We now want to prove that $\Om=\R^n$. By the same variational arguments used in deriving \eqref{proof EL out} and \eqref{proof EL in}, we have that
\begin{eqnarray}
  \label{proof EL out ww}
  &&-2\,\e^2\,\Delta w_\e\ge f(x,t)\qquad\mbox{in $\D'(\R^n\setminus \ov{B_a})$}\,,
  \\
  \label{proof EL in ww}
  &&-2\,\e^2\,\Delta w_\e\le f(x,t)\qquad\mbox{in $\D'(B_b)$\,,}
\end{eqnarray}
where $f(x,t)$ satisfies
\begin{equation}
  \label{f bound x t}
  |f(x,t)|\le C\,\,t\,(1-t)\,,\qquad\forall (x,t)\in\R^n\times[0,1]\,,
\end{equation}
thanks to \eqref{W near the wells} and \eqref{V near the wells} (which, in particular, give $|Z_\e(x,t)|\le C\,t\,(1-t)$ for every $(x,t)\in\R^n\times[0,1]$). By repeating the same argument used in step four of the proof of Theorem \ref{theorem existence solutions}, we thus see that $\Om=\R^n$. Finally, it is easily seen that \eqref{EL proof z Z Omega} with $\Om=\R^n$ and $w_\e\in C^2(\R^n)$, takes the form
\begin{equation}
  \label{EL proof z Z Omega with Err}
  -2\,\e^2\,\Delta w_\e=\e\,w_\e\,(1-w_\e)\,\Err_\e-W'(w_\e)\qquad\mbox{on $\R^n$}\,,
\end{equation}
for a radial function $\Err_\e$ bounded by a universal constant on $\R^n$, as claimed.
\end{proof}

 \section{Resolution of almost-minimizing sequences}\label{section resolution of minimizers} In the main result of this section, Theorem \ref{theorem asymptotics of minimizers} below, we provide a sharp description, up to first order as $\e\to 0^+$, of the minimizers of $\psi(\e)$.  This resolution result is proved not only for minimizers of $\psi(\e)$, but also for a general notion of ``critical sequence for $\psi(\e_j)$ as $\e_j\to 0^+$'' modeled after the selection principle minimizers of Theorem \ref{theorem compactness for selection principle}.

 \medskip

 In the following statement, $\eta$ is the solution of $\eta'=-\sqrt{W(\eta)}$ on $\R$ with $\eta(0)=1/2$,
 \[
 \tau_0=\int_\R \eta'\,V'(\eta)\,s\,ds,\qquad \tau_1=\int_\R\,W(\eta)\,s\,ds\,,
 \]
 and $R_0=\om_n^{-1/n}$. Relevant properties of $\eta$ are collected in section \ref{subsection eta}.

\begin{theorem}\label{theorem asymptotics of minimizers}
If $n\ge 2$ and $W\in C^{2,1}[0,1]$ satisfies \eqref{W basic} and \eqref{W normalization}, then there exist universal constants $\e_0$, $\de_0$, and $\ell_0$ with the following properties:

\medskip

\noindent  {\bf Ansatz:} for every $\e<\e_0$ there exists a unique $\tau_\e\in\R$ such that if we set
  \begin{equation}
    \label{def of zeta eps}
      z_\e(x)=\eta\Big(\frac{|x|-R_0}{\e}-\tau_\e\Big)\,,
  \end{equation}
  then
  \begin{equation}
    \label{def of R and taueps}
    \int_{\R^n}V(z_\e)=1\,.
  \end{equation}
  Moreover, we have $|\tau_\e-\tau_0|\le C\,\e$ and, in the limit as $\e\to 0^+$,
  \begin{equation}
  \label{ansatz energy}
  \AC(z_\e)= 2\,n\,\omega_n^{1/n}+2\,n\,(n-1)\,\omega_n^{2/n}\,\big(\tau_0+\tau_1\,\big)\,\e +{\rm O}(\e^2)\,.
  \end{equation}

\medskip
  \noindent {\bf Resolution of critical sequences:} if $\e_j\to 0^+$ as $j\to\infty$, $\{v_j\}_j$ is a sequence in $C^2(\R^n;[0,1])\cap\RR_0$ such that
\begin{eqnarray}
  \label{critical sq volume 1}
 &&\int_{\R^n}V(v_j)=1\,,
 \\
 \label{critical sq L energy}
 &&\mathcal{AC}_{\e_j}(v_j)\le 2\,n\,\om_n^{1/n}+\ell_0\,,
\end{eqnarray}
and $\{\Err_j\}_j$ is a sequence of radial continuous functions on $\R^n$ with
 \begin{eqnarray}
 \label{critical sq ode}
 &&-2\,\e_j^2\,\Delta v_j=\e_j\,v_j\,(1-v_j)\,\Err_j-W'(v_j)\qquad\mbox{on $\R^n$}\,,
 \\
  \label{critical sq bounds}
 &&\sup_j\|\Err_j\|_{C^0(\R^n)}\le C\,,
 \end{eqnarray}
 then, for $j$ large enough, we have
  \begin{equation}
    \label{critical sq resolution of vj}
      v_j(x)=z_{\e_j}(x)+f_j\Big(\frac{|x|-R_0}{\e_j}\Big)\,,\qquad x\in\R^n\,,
  \end{equation}
  where $f_j\in C^2(-R_0/\e_j,\infty)$, and
  \begin{equation}
    \label{critical sequence bounds on fj}
     |f_j(s)|\le C\,\e_j\,e^{-|s|/C}\,,\qquad\forall s\ge-R_0/\e_j\,.
  \end{equation}
  Moreover, for $j$ large enough, there exist positive constants $b_j$ and $c_j$ such that
  \begin{equation}
    \label{def of bj and cj}
     v_j(R_0+c_j)=\de_0\,,\qquad v_j(R_0-b_j)=1-\de_0\,,
  \end{equation}
  and $b_j$ and $c_j$ satisfy
  \begin{equation}\label{critical sq ring basic}
  \frac{\e_j}{C}\le b_j\,,c_j\le C\,\e_j\,.
  \end{equation}
  Finally, one has
  \begin{equation}\label{critical sq vj prime annulus}
  \hspace{1.7cm}\,\frac{C}{\e_j}\ge-v_j'(r)\ge\frac1{C\,\e_j}\,,
    \hspace{3.5cm}\qquad\forall r\in[R_0-b_j,R_0+c_j]\,,
  \end{equation}
  \begin{equation}
    \label{critical sq vj estimates on 0 R}
  \hspace{1.5cm}\,\left\{
    \begin{split}
      &v_j(r)\le C\,e^{-(r-R_0)/C\,\e_j}\,,
      \\
      &|v_j^{(k)}(r)|\leq \frac{C}{\e_j^k}\,e^{-(r-R_0)/C\,\e_j}\,,
    \end{split}\right .
    \hspace{1.4cm}\qquad\forall r\in[R_0+c_j,\infty)\,,\,k=1,2\,,
  \end{equation}
  \begin{equation}
    \label{critical sq vj estimates on R infinity}
    \hspace{1.5cm}\,\left\{
    \begin{split}
      &1-v_j(r)\le C\,e^{-(R_0-r)/C\,\e_j}\,,
      \\
      &|v_j'(r)|\leq C\,\min\Big\{\frac{r}{\e_j^2},\frac1{\e_j}\Big\}\,e^{-(R_0-r)/C\,\e_j}\,,
      \\
      &|v_j''(r)|\leq \frac{C}{\e_j^2}\,e^{-(R_0-r)/C\,\e_j}\,,
    \end{split}\right .
    \qquad\hspace{1.1cm}\forall r\in(0,R_0-b_j)\,.
  \end{equation}
\end{theorem}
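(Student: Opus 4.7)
I would compute $\int_{\R^n}V(z_\e)$ via the radial substitution $s=(|x|-R_0)/\e-\tau$, using the exponential tails of $V(\eta(s))$ at $+\infty$ and of $1-V(\eta(s))$ at $-\infty$ from the standard properties of $\eta$. Comparing with the sharp-interface analog $\omega_n(R_0+\e\tau)^n$ and integrating by parts against the definition of $\tau_0$, one obtains
\begin{equation*}
\int_{\R^n}V(z_\e)=\omega_n(R_0+\e\tau)^n - n\,\omega_n^{1/n}\,\tau_0\,\e + O(\e^2),
\end{equation*}
uniformly in $\tau$ near $\tau_0$. Since the left side is smooth and strictly decreasing in $\tau$ (pointwise decrease of $z_\e$ in $\tau$), the implicit function theorem yields a unique $\tau_\e$ with $|\tau_\e-\tau_0|\le C\e$ satisfying $\int V(z_\e)=1$. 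The energy expansion \eqref{ansatz energy} then follows via the equipartition $\e(z_\e')^2=W(z_\e)/\e$ (forced by $\eta'=-\sqrt{W(\eta)}$): after rescaling, $\AC(z_\e)=2n\omega_n\int W(\eta(s))(R_0+\e(s+\tau_\e))^{n-1}\,ds$, and Taylor-expanding $r^{n-1}$ together with $\int_\R W(\eta)\,ds=1$ and $\tau_\e=\tau_0+O(\e)$ produces \eqref{ansatz energy}.

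\textbf{Blow-up at the interface.} For a critical sequence $\{v_j\}$, set $r_j:=\inf\{r>0 : v_j(r)\le 1/2\}$. Step one of the proof of Theorem \ref{theorem existence solutions} applies to $v_j$ (by \eqref{critical sq volume 1}--\eqref{critical sq L energy}) and yields $\int_{B_{M_0}}V(v_j)\ge 1-C\sqrt{\ell_0}$, giving an a priori bound on $r_j$. Rescaling $\tilde v_j(s):=v_j(r_j+\e_j s)$, the ODE \eqref{critical sq ode} becomes
\begin{equation*}
-2\,\tilde v_j''(s) - \frac{2(n-1)\,\e_j}{r_j+\e_j s}\,\tilde v_j'(s) = \e_j\,\tilde v_j(1-\tilde v_j)\,\Err_j(r_j+\e_j s) - W'(\tilde v_j),
\end{equation*}
with $\tilde v_j(0)=1/2$, $\tilde v_j'\le 0$, $0\le\tilde v_j\le 1$. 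Standard ODE compactness using $\|\Err_j\|_\infty\le C$ and first-integral analysis, combined with the uniqueness of $\eta$ as the bounded decreasing solution of $2\eta''=W'(\eta)$ with $\eta(0)=1/2$, yields $\tilde v_j\to\eta$ in $C^2_{\loc}(\R)$; the volume constraint then forces $r_j\to R_0$.

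\textbf{Exponential decay.} The existence of $b_j,c_j$ satisfying \eqref{def of bj and cj} and \eqref{critical sq ring basic}, as well as \eqref{critical sq vj prime annulus}, follow from the $C^2_{\loc}$-convergence and the strict monotonicity of $\eta$ on a fixed compact interval. For \eqref{critical sq vj estimates on 0 R}, on $\{v_j\le\de_0\}$ the bound $W'(t)\ge (W''(0)/2)\,t$ from \eqref{W near the wells} together with $|\Err_j|\le C$ gives, for $r\ge R_0+c_j$ (where the non-negative term $-2(n-1)\e_j^2 v_j'/r$ can be discarded since $v_j'\le 0$),
\begin{equation*}
-2\e_j^2\,v_j''(r) \ge \frac{W''(0)}{4}\,v_j(r),
\end{equation*}
provided $\e_j$ is small. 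Comparison with the supersolution $C\,e^{-(r-R_0)/C\e_j}$ yields the decay of $v_j$, and differentiating the ODE provides the bounds on $v_j',v_j''$. The estimate \eqref{critical sq vj estimates on R infinity} near $r=0$ is analogous, applied to $w_j:=1-v_j$; here the first-order radial term is absorbed by multiplying the ODE by $r^{n-1}$ before integrating, and the $\min(r/\e_j^2,1/\e_j)$ bound on $v_j'$ reflects $v_j'(0)=0$ via a Duhamel-type representation.

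\textbf{Closeness to the Ansatz and main obstacle.} Once the interface is refined to $r_j=R_0+\e_j\tau_{\e_j}+O(\e_j^2)$ by matching the volume constraint of $v_j$ to that of $z_{\e_j}$ using the exponential decay above, the difference $g_j:=v_j-z_{\e_j}$ satisfies a linear second-order ODE whose coefficients are bounded in terms of $\eta,\eta',\eta''$ and whose inhomogeneity is $O(\e_j)$ with exponential decay (arising from the curvature term $(n-1)\e_j v_j'/r$ and the error $\Err_j$). Variation of parameters against the exponentially decaying fundamental solutions of the limit linearized operator yields the bound \eqref{critical sequence bounds on fj} on $f_j$. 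The main obstacle is this last step: obtaining the $O(\e_j)$ rate with exponential weight requires both a quantitative rate $O(\e_j)$ for the convergence $\tilde v_j\to\eta$ and the second-order matching of $r_j$ to $R_0+\e_j\tau_{\e_j}$, since any coarser matching introduces an $O(1)$ phase shift in $s=(r-R_0)/\e_j$ that destroys exponential control on $f_j$.
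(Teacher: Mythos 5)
Your treatment of the Ansatz, the layer localization, and the exponential decay estimates near $r=0$ and $r=\infty$ is essentially the paper's (steps one, two, five, six of its proof), and those parts are fine. The genuine gap is exactly where you flag it: the quantitative, exponentially weighted $O(\e_j)$ bound \eqref{critical sequence bounds on fj} is the heart of the theorem, and your proposal does not contain a working mechanism for it. Two specific problems. First, your plan is circular in its ordering: you propose to first refine the interface location to $r_j=R_0+\e_j\tau_{\e_j}+O(\e_j^2)$ ``by matching the volume constraint'', but computing $\int V(v_j)$ to that precision already requires knowing that $v_j$ is $O(\e_j)$-close to the profile with exponential weights, which is what you are trying to prove. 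Second, variation of parameters against the fundamental solutions of the limit linearized operator $2\partial_s^2-W''(\eta)$ does not close as stated, because $\eta'$ lies in the kernel of that operator (the translation mode): the inhomogeneous problem is not uniformly invertible, and the component of $g_j$ along $\eta'$ must be controlled by a separate choice of phase. Your compactness step only gives qualitative $C^2_{\loc}$ convergence with no rate, so it cannot supply the missing $O(\e_j)$ input either.

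The paper resolves this without ever inverting the second-order linearized operator. It fixes the phase $t_j$ by matching a \emph{level set} (requiring $\eta(-(b_j/\e_j)-t_j)=1-\de_0$, so that $g_j=\eta_j-\eta(\cdot-t_j)$ vanishes at $s=-b_j/\e_j$), and then uses the \emph{first integrals} of the two ODEs: subtracting $(\eta')^2=W(\eta)$ from the integrated form of the equation for $\eta_j$ yields a \emph{first-order} linear ODE $g_j'-\Gamma_j\,g_j=\e_j L_j/d_j$, where $d_j=\eta_j'+\eta'(\cdot-t_j)$ is bounded away from zero on compact $s$-intervals and $L_j$ collects the curvature and $\Err_j$ contributions, which are $O(1)$ with exponential decay. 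An integrating factor starting from the zero initial condition gives $|g_j|\le C_K\,\e_j$ on any compact interval. In the tails $|s|\ge K_2$, both $\eta_j$ and $\eta$ are near the wells, so the difference quotient $m_j$ of $W'$ satisfies $m_j\ge 1/C$, and the second-order equation $2g_j''-m_j g_j=O(\e_j e^{-|s|/C})$ admits the barrier $C_1\e_j e^{-|s|/\sqrt{2C_2}}$ via the maximum principle. Only \emph{after} the exponentially weighted bound on $g_j$ is in hand does the volume constraint enter (through step one of the proof) to show $|t_j-\tau_j|\le C\e_j$ and convert the bound on $g_j$ into the bound on $f_j$. Without this first-integral/level-set device (or some genuine substitute handling the zero mode), your final step does not go through.
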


\begin{proof} The first two steps of the proof take care of the {\it Ansatz}-part of the statement, while starting from step three we address the resolution result. We premise the remark that, if we set $z_\tau(x)=\eta([(|x|-R_0)/\e]-\tau)$, then $f(\tau)=\int_{\R^n}V(z_\tau)$ is strictly increasing in $\tau$ with $f(-\infty)=0$ and $f(+\infty)=+\infty$. For this reason, $\tau_\e$ is indeed uniquely defined by \eqref{def of R and taueps}.

\medskip

\noindent  {\it Step one}: We prove that if $\{w_\e\}_{\e>0}$ is defined by
  \[
  w_\e(x)=\eta\Big(\frac{|x|-R_0}{\e}-t_\e\Big)+f_\e\Big(\frac{|x|-R_0}{\e}\Big)\,,\qquad x\in\R^n\,,\e>0\,,
  \]
  for some $t_\e\in\R$ and some functions $f_\e\in C^2(-R_0/\e,\infty)$ such that
  \begin{eqnarray}
    \label{eq volume const pertur}
  &&\int_{\R^n} V(w_\e)=1\,,
  \\
   \label{thanks to the decay}
   &&|f_\e(s)|\le C\,\e\,e^{-|s|/C}\,,\qquad\forall s\ge-R_0/\e\,,
  \end{eqnarray}
  then
  \begin{equation}\label{lemma tau}
  |t_\e-\tau_0|\le C\,\e\,,\qquad\forall \e<\e_0\,,
  \end{equation}
  Of course, in the particular case when $f_\e\equiv 0$, we have $w_\e=z_\e$ and $t_\e=\tau_\e$ thanks to \eqref{def of zeta eps} and \eqref{def of R and taueps}.

  \medskip

  Indeed, setting $z_0(x)=\eta([(|x|-R_0)/\e]-\tau_0)$ for $x\in\R^n$, and recalling \eqref{def of R and taueps} and \eqref{eq volume const pertur}, we consider the quantity
  \begin{equation}\label{def of ell eps}
  \k_\e=\int_{\R^n}V(1_{B_{R_0}})-V(z_0)=\int_{\R^n}V(w_\e)-V(z_0)\,.
  \end{equation}
  We look at the first expression for $\k_\e$, passing first to the radial coordinate $r=|x|$ and then changing variables into $s=(r-R_0)/\e$. By taking into account the fact that $\tau_0$ satisfies
  \[
  \int_{\R}\Big(1_{(-\infty,0)}(s)-V(\eta(s-\tau_0))\Big)\,ds=0\,,
  \]
  see \eqref{definition of tau0}, we find
  \begin{eqnarray*}
      \frac{\k_\e}{n\,\om_n}&=&\e\,\int_{-R_0/\e}^\infty \Big( \,1_{(-\infty,0)}(s)-V(\eta(s-\tau_0))\Big)\,(R_0+\e\,s)^{n-1}\,ds
      \\
      &=&\e\,R_0^{n-1}\,\int_{-R_0/\e}^\infty \Big( \,1_{(-\infty,0)}(s)-V(\eta(s-\tau_0))\Big)\,ds
      \\
      &&+\e\,\int_{-R_0/\e}^\infty \Big( \,1_{(-\infty,0)}(s)-V(\eta(s-\tau_0))\Big)\,[(R_0+\e\,s)^{n-1}-R_0^{n-1}]\,ds
      \\
      &=&-\e\,R_0^{n-1}\,\int_{-\infty}^{-R_0/\e} \Big( \,1_{(-\infty,0)}(s)-V(\eta(s-\tau_0))\Big)\,ds
      \\
      &&+\e\,\sum_{k=0}^{n-2}a_k\,\int_{-R_0/\e}^\infty \Big( \,1_{(-\infty,0)}(s)-V(\eta(s-\tau_0))\Big)\,R_0^k\,(s\,\e)^{n-1-k}\,ds,
  \end{eqnarray*}
  with $a_k = \binom{n-1}{k}$. Since $\tau_0=\tau_0(W)$, by the decay properties \eqref{eta decay} of $\eta$, we have
  \begin{equation}
    \label{exp V eta}
      |\,1_{(-\infty,0)}(s)-V(\eta(s-\tau_0))|\le C\,e^{-|s|/C}\qquad\forall s\in\R\,,
  \end{equation}
  so that
  \[
  \Big|\int_{-\infty}^{-R_0/\e} \Big( \,1_{(-\infty,0)}(s)-V(\eta(s-\tau_0))\Big)\,ds\Big|\le C\,\int_{-\infty}^{-R_0/\e}\,e^{-|s|/C}\,ds\le C\,e^{-R_0/C\,\e}\,,
  \]
  and, recalling that $\om_n\,R_0^n=1$,
  \[
  |\k_\e|\le C\,\e\,e^{-R_0/C\,\e}+C\,\e^2\,\sum_{j=1}^{n-1}\,\int_{-R_0/\e}^\infty \Big| \,1_{(-\infty,0)}(s)-V(\eta(s-\tau_0))\Big|\,|s|^j\,ds\le C\,\e^2\,,
  \]
  where in the last inequality we have used \eqref{exp V eta} again. Taking into account the second formula for $\k_\e$ in \eqref{def of ell eps}, we have thus proved
  \begin{equation}
    \label{ell eps is order eps}
      C\,\e^2\ge\Big|\int_{\R^n}V(w_\e)-V(z_0)\Big|\,.
  \end{equation}
  With the same change of variables used before we have
  \[
  C\,\e\ge \Big|\int_{-R_0/\e}^\infty  \Big\{V\big(\eta(s-t_\e)+ f_\e(s)\big)-V(\eta(s-\tau_0))\Big\}\,(R_0+\e\,s)^{n-1}\,ds\Big|\,,
  \]
  while the decay properties of $f_\e$ assumed in \eqref{thanks to the decay} give
\begin{eqnarray*}
&&\Big|\int_{-R_0/\e}^\infty \Big\{ V\big(\eta(s-t_\e)+f_\e(s)\big)-V(\eta(s-t_\e))\Big\}\,(R_0+\e\,s)^{n-1}\,ds\Big|
\\
&&\leq  \int_{-R_0/\e}^\infty f_\e(s)\,(R_0+\e\,s)^{n-1}\,ds\,\int_0^1  V'\big(\eta(s-t_\e)+r\, f_\e(s)\big)\,dr\,\leq C\, \e\,;
\end{eqnarray*}
by combining the last two inequalities we thus find
\begin{eqnarray}\nonumber
C\,\e&\ge& \Big|\int_{-R_0/\e}^\infty  \Big\{V(\eta(s-t_\e))-V(\eta(s-\tau_0))\Big\}\,(R_0+\e\,s)^{n-1}\,ds\Big|
\\\label{fink}
&=& \int_{-R_0/\e}^\infty  |V(\eta(s-t_\e))-V(\eta(s-\tau_0))|\,(R_0+\e\,s)^{n-1}\,ds \,,
\end{eqnarray}
where in the last step we have used that $\tau \to V(\eta(.-\tau))$ is strictly increasing in $\tau$. Since \eqref{fink} implies $t_\e\to \tau_0$ as $\e\to 0^+$, we can choose $\e_0=\e_0(n,W)$ so that $|t_\e-\tau_0|\le 1$ and $R_0+\e\,(\tau_0-1)\ge R_0/2$. Since $V\circ\eta$ is strictly decreasing on $\R$, we have $|(V\circ\eta)'|\ge 1/C$ on $[-2,2]$, and noticing that if $|s-\tau_0|\le 1$, then $|s-t_\e|<2$, we finally conclude
  \[
  C\,\e\ge\int_{\tau_0-1}^{\tau_0+1} \frac{|(s-t_\e)-(s-\tau_0)|}{C}\,(R_0+\e\,s)^{n-1}\,ds\ge \frac{|\tau_0-t_\e|}C\,,
  \]	
  thus proving \eqref{lemma tau}.

  \medskip

  \noindent {\it Step two}: We compute $\AC(z_\e)$. Passing to the radial coordinate $r=|x|$, setting first $r=R_0+\e\,s$ and then $t=s-\tau_\e$, recalling that $\eta'=-\sqrt{W(\eta)}$, and exploiting the decay property \eqref{eta decay} of $\eta$ at $-\infty$, we find that, as $\e\to 0^+$,
  \begin{eqnarray}\nonumber
  \AC(z_\e)&=&n\,\om_n\,\int_{-R_0/\e}^\infty \Big(\eta'(s-\tau_\e)^2+ W(\eta(s-\tau_\e))\Big)\,(R_0+\e\,s)^{n-1}\,ds
    \\\nonumber
    &=&2\,n\,\om_n\,\int_{-\tau_\e-R_0/\e}^\infty W(\eta(t)) \,\,(R_0+\e (t+\tau_\e))^{n-1}\,dt
    \\\nonumber
    &=&2\,n\,\om_n\,\int_{-\infty}^\infty W(\eta(t))\,\,(R_0+\e (t+\tau_\e))^{n-1}\,dt+{\rm O}(e^{-C/\e})
    \\\label{energy expansion 1}
    &=&2\,n\,\om_n\,\int_{-\infty}^\infty W(\eta(t))\,\,(R_0+\e (t+\tau_0))^{n-1}\,dt+{\rm O}(\e^2)\,,
  \end{eqnarray}
  where in the last step we have used $\tau_\e=\tau_0+{\rm O}(\e)$. Recalling that, by \eqref{W normalization},
  \[
  \int_\R W(\eta)=-\int_\R \sqrt{W(\eta)}\,\eta'=-\int_{\R}\Phi'(\eta)\,\eta'=\Phi(\eta(-\infty))-\Phi(\eta(+\infty))=\Phi(1)=1\,,
  \]
  as well as that $\om_n\,R_0^n=1$, we find
    \begin{eqnarray}\nonumber
  \AC(z_\e)=2\,n\,\om_n^{1/n}+2\,n\,(n-1)\,\om_n^{2/n}\,(\tau_0+\tau_1)\,\e+{\rm O}(\e^2)\,,
  \end{eqnarray}
  as $\e\to 0^+$,  that is \eqref{ansatz energy}. This proves the first part of the statement of the theorem.

   \medskip

\noindent {\it Step three}: In preparation to the proof of the second part of the statement, we show that if $\e<\e_0$ and $u\in H^1(\R^n;[0,1])$ satisfies
\begin{equation}
  \label{monotonicity low energy bis}
\AC(u)\le 2\,n\,\om_n^{1/n}+\ell_0\,,\qquad\int_{\R^n}V(u)=1\,,
\end{equation}
then
\begin{equation}
  \label{stima BV utile}
  \int_{\R^n}|\Phi(u)-1_{B_{R_0}}|^{n/(n-1)}\le C\,\big((\sqrt{\ell_0})^{(n-1)/n}+\e\big)\,.
\end{equation}
Moreover, if $u\in\RR_0$, then $\sqrt{\ell_0}$ can be replaced by $\ell_0$ in \eqref{stima BV utile}.

\medskip

Indeed, by \eqref{monotonicity low energy bis}, as seen in step one of the proof of Theorem \ref{theorem existence solutions}, we have
\begin{equation}\label{non vanishing bound bis}
		\int_{\R^n}|\Phi(u)-(\om_n^{1/n}\,r(u))^{1-n}\,1_{B_{r(u)}}|^{n/(n-1)}\le C\,\sqrt{\ell_0}\,,
\end{equation}
for some $r(u)\in(0,M_0]$, where $M_0$ is a universal constant. Setting $f(r)=(\om_n^{1/n}\,r)^{1-n}$, and noticing that $f(R_0)=1$, it is enough to prove that
\begin{equation}
  \label{ru and R0}
  |r(u)-R_0|\le C\,\big((\sqrt{\ell_0})^{(n-1)/n}+\e\big)\,,\qquad \big|f(r(u))-1\big|\le C\,\big((\sqrt{\ell_0})^{(n-1)/n}+\e\big)\,.
\end{equation}
Since $\Lip(f,[R_0/2,2R_0])\le C$ and $f(R_0)=1$, it is enough to prove the first estimate in \eqref{ru and R0}. To this end, we start noticing that if $r(u)<R_0$, then $f(r(u))>f(R_0)=1\ge\Phi(u)$, and \eqref{non vanishing bound bis} gives
\begin{eqnarray}\nonumber
  C\,\sqrt{\ell_0}&\ge&\int_{B_{r(u)}}|\Phi(u)-f(r(u))|^{n/(n-1)}\ge\om_n\,r(u)^n\,\big(f(r(u))-1\big)^{n/(n-1)}
  \\\nonumber
  &=&\om_n\,r(u)^n\,\big(f(r(u))-f(R_0)\big)^{n/(n-1)}=c(n)\,\Big(1-\big(r(u)/R_0)^{n-1}\Big)^{n/(n-1)}
  \\\nonumber
  &\ge& c(n)\,(R_0-r(u))^{n/(n-1)}\,,
\end{eqnarray}
as desired. If, instead $r(u)> R_0$, then by $\int_{\R^n}W(u)\le \e\,\AC(u)\le C$, $f(r(u))\in(0,1)$ and \eqref{Phi quadratic from below} (that is, $\Phi(b)-\Phi(a)\ge (b-a)^2/C$ if $0\le a\le b\le 1$), we deduce that
\begin{eqnarray*}
  C\,\e&\ge&\int_{B_{R_0}}W(u)\ge \int_{B_{R_0}}W\Big(\Phi^{-1}\big(f(r(u))\big)\Big)-C\,\int_{B_{R_0}}\big|u-\Phi^{-1}(f(r(u)))\big|
  \\
  &\ge&\int_{B_{R_0}}W\Big(\Phi^{-1}\big(f(r(u))\big)\Big)-C\,\int_{B_{R_0}}\big|\Phi(u)-f(r(u))\big|^2
  \\
  &\ge&\int_{B_{R_0}}W\Big(\Phi^{-1}\big(f(r(u))\big)\Big)-C\,\int_{B_{R_0}}\big|\Phi(u)-f(r(u))\big|^{n/(n-1)}\,,
\end{eqnarray*}
where in the last inequality we have just used $2\ge n/(n-1)$ and the fact that $\Phi(u)$ and $f(r(u))$ lie in $[0,1]$. Hence, by $B_{R_0}\subset B_{r(u)}$, \eqref{non vanishing bound bis} and $\om_n\,R_0^n=1$,
\[
W\big(\Phi^{-1}(f(r(u)))\big)\le C\,\big((\sqrt{\ell_0})^{(n-1)/n}+\e\big)\,.
\]
Now, $R_0<r(u)\le M_0$ implies $1>f(r(u))\ge f(M_0)\ge \de_0$ (provided we further decrease the value of $\de_0$). In particular, by $W(t)\ge (1-t)^2/C$ on $(\de_0,1)$ (which can be assumed as done with \eqref{W controlla V fin quasi ad uno}), we have
\[
C\,\big((\sqrt{\ell_0})^{(n-1)/n}+\e\big)\ge \Big(1-\Phi^{-1}\big(f(r(u))\big)\Big)^2\,.
\]
By \eqref{Phi near the wells}, we have
\[
1-\Phi^{-1}(s)\ge\frac{\sqrt{1-s}}C\,,\qquad\forall s\in(0,1)\,,
\]
thus concluding
\begin{eqnarray*}
C\,\big((\sqrt{\ell_0})^{(n-1)/n}+\e\big)&\ge& 1-f(r(u))=c(n)\,\big(R_0^{1-n}-r(u)^{1-n}\big)
  \\
  &\ge& \frac{c(n)}{r(u)^{n-1}}\,\Big(\Big(\frac{r(u)}{R_0}\Big)^{n-1}-1\Big)\ge \frac{c(n)}{M_0^{n-1}}\,(r(u)-R_0)\,.
\end{eqnarray*}
This completes the proof of \eqref{ru and R0}, and thus of \eqref{stima BV utile}.

\medskip

  \noindent {\it Step four}: We now consider $\{\e_j,v_j,\Err_j\}_j$ as in the statement, and begin the proof of the resolution result. We introduce the radius $R_j(t)$ by setting $v_j(R_j(t))=t$ for every $t$ in the range of $v_j$. In this step we prove that both $\de_0$ (defined in section \ref{subsection W}) and $1-\de_0$ belong to the range of each $v_j$, that
  \begin{eqnarray}
    \label{order of Rde0}
    &&
    3\,R_0\ge R_j(\de_0)\ge R_j(1-\de_0)\ge\frac{R_0}3\,,
    \\\label{order of Rde0 layer}
    &&\frac{\e_j}{C}\le R_j(\de_0)-R_j(1-\de_0)\le C\,\e_j\,,
  \end{eqnarray}
  and that
  \begin{equation}
    \label{critical sequence derivative in layer}
      -\frac{C}{\e_j}\le v_j'\le -\frac{1}{C\e_j}\qquad\mbox{on $(R_j(1-\de_0),R_j(\de_0))$}\,.
  \end{equation}
  In particular, the constants $b_j$ and $c_j$ introduced in \eqref{def of bj and cj} are well-defined, they satisfy
  \begin{equation}
    \label{def of bj and cj proof}
      c_j=R_j(\delta_0)-R_0\,,\qquad b_j= R_0-R_j(1-\delta_0)\,,
  \end{equation}
  and property \eqref{critical sq vj prime annulus} in the statement boils down to \eqref{critical sequence derivative in layer}.

  \medskip

  By step three, for $j$ large enough and considering that $v_j\in\RR_0$, we have
  \begin{equation}
    \label{proof rj 1}
      \int_{\R^n}|1_{B_{R_0}}-\Phi(v_j)|^{n/(n-1)}\leq C\, \big(\ell_0^{(n-1)/n}+\e_0\big)\,.
  \end{equation}
  By \eqref{proof rj 1}, if $\ell_0$ and $\e_0$ are small enough, then both $\de_0$ and $1-\de_0$ must belong to the range of each $v_j$. Now, if $R_j(\de_0)\le R_0$, then
  \[
  \int_{B_{R_0}\setminus B_{R_j(\de_0)}}|1_{B_{R_0}}-\Phi(v_j)|^{n/(n-1)}\ge \om_n\,(R_0^n-R_j(\de_0)^n)\,(1-\Phi(\de_0))^{n/(n-1)}\ge \frac{R_0^n-R_j(\de_0)^n}C\,,
  \]
  and $R_j(\de_0)\ge R_0/2$ follows by \eqref{proof rj 1} for $\ell_0$ and $\e_0$ small enough; if, instead, $R_j(\de_0)\ge R_0$, then
  \[
  \int_{B_{R_j(\de_0)}\setminus B_{R_0}}|1_{B_{R_0}}-\Phi(v_j)|^{n/(n-1)}\ge \om_n\,(R_j(\de_0)^n-R_0^n)\,\Phi(\de_0)^{n/(n-1)}\ge \frac{R_j(\de_0)^n-R_0^n}C\,,
  \]
  and $R_j(\de_0)\le 2\,R_0$ follows, again, for $\ell_0$ and $\e_0$ small enough; we have thus proved $R_0/2\le R_j(\de_0)\le 2\,R_0$. Since \eqref{critical sq L energy} implies $\mathcal{AC}_{\e_j}(v_j)\le C$ we also have
  \[
  C\,\e_j\ge\int_{\R^n}W(v_j)\ge\frac{R_j(\de_0)^n-R_j(1-\de_0)^n}C\ge\frac{R_j(\de_0)-R_j(1-\de_0)}C
  \]
  where in the last inequality we have used $R_j(\de_0)\ge R/2$. Thus, we have so far proved \eqref{order of Rde0} and the upper bound in \eqref{order of Rde0 layer}. Before proving the lower bound in \eqref{order of Rde0 layer}, we prove \eqref{critical sequence derivative in layer}. To this end, we multiply \eqref{critical sq ode} by $v_j'$, and then integrate over an arbitrary interval $(0,r)$, to get
  \begin{eqnarray}
  \label{auxiliary identity ode}
  \e_j^2\,\Big( (v_j')^2+2\,(n-1)\int_0^r \frac{v_j'(t)^2}t\,dt\Big)
  =W(v_j) -W(v_j(0)) -\e_j\,\int_0^rv_j\,(1-v_j)\,\Err_j \,v_j'\,.
  \end{eqnarray}
  By \eqref{critical sq bounds}, the right-hand side of \eqref{auxiliary identity ode} is bounded in terms of $n$ and $W$, so that \eqref{auxiliary identity ode} implies $\e_j^2\,(v_j')^2\le C$ on $(0,\infty)$; the lower bound in \eqref{critical sequence derivative in layer} then follows by $v_j'\le 0$. To obtain the upper bound in \eqref{critical sequence derivative in layer}, we multiply again \eqref{critical sq ode} by $v_j'$, but this time we integrate over $(r,\infty)$ for $r\in(R_j(1-\de_0),R_j(\de_0))$, thus obtaining
   \begin{eqnarray}
  \label{auxiliary identity ode 2}
  \e_j^2\,\Big(-v_j'(r)^2+2\,(n-1)\int_r^\infty \frac{v_j'(t)^2}t\,dt\Big)
  =-W(v_j(r))-\e_j\,\int_r^\infty v_j\,(1-v_j)\,\Err_j \,v_j'\,.
  \end{eqnarray}
  By $W(v_j(r))\ge \inf_{[\de_0,1-\de_0]}W\ge 1/C$, \eqref{critical sq bounds}, and the non-negativity of the integral on the left-hand side of \eqref{auxiliary identity ode 2}, we deduce that
  \[
  2\,\e_j^2\,v_j'(r)^2\ge W(v_j(r))-C\,\e_j\ge \frac1{C}\,,\qquad\forall r\in(R_j(1-\de_0),R_j(\de_0))\,,
  \]
  which, again by $v_j'\le 0$, implies the upper bound in \eqref{critical sequence derivative in layer}. To finally prove the lower bound in \eqref{order of Rde0 layer}, we notice that thanks to the lower bound in \eqref{critical sequence derivative in layer} we have
  \[
  \frac{C}{\e_j}\,(R_j(\de_0)-R_j(1-\de_0))\ge \int_{R_j(1-\de_0)}^{R_j(\de_0)} (-v_j')=1-2\,\de_0\,.
  \]
  We have completed the proofs of \eqref{order of Rde0}, \eqref{order of Rde0 layer} and \eqref{critical sequence derivative in layer}.

  \medskip

  \noindent {\it Step five}: We obtain sharp estimates for $v_j$ as $r\to\infty$: precisely, we prove that for every $r\ge R_j(\de_0)$ one has
  \begin{eqnarray}
  \label{veps decay at infinity}
  v_j(r) &\leq& C\,e^{-{(r- R_j(\de_0))/C\,\e_j}}\,,
  \\    \label{veps higherord at infinity}
  |v_j^{(k)}(r)|&\leq& \frac{C}{\e_j^k}\,\,e^{-(r- R_j(\de_0))/C\,\e_j}\,,\qquad k=1,2\,.
  \end{eqnarray}
  By \eqref{W near the wells}, \eqref{critical sq ode} and \eqref{critical sq bounds} we have that
  \begin{equation}
    \label{dec at inf 1}
      2\,\e_j^2\,\Big\{v_j''+(n-1)\,\frac{v_j'}r\Big\}\ge \frac{v_j}{C}-C\,\e_j\,v_j\ge \frac{v_j}C\,,\qquad\mbox{on $(R_j(\de_0),\infty)$}\,.
  \end{equation}
  Multiplying by $v_j'$ and integrating over $(r,\infty)\subset (R_j(\de_0),\infty)$ we obtain
  \begin{equation}
    \label{dec at inf 2}
  -\e_j^2\,v_j'(r)^2\ge-\frac{v_j(r)^2}C\,,\qquad\forall r\ge R_j(\de_0)\,.
  \end{equation}
  Plugging \eqref{dec at inf 2} into \eqref{dec at inf 1} we reabsorb the $v_j'/r$ term and obtain
  \begin{equation}
    \label{dec at inf 3}
      \e_j^2\,v_j''\ge \frac{v_j}{C_*}\,,\qquad\mbox{on $(R_j(\de_0),\infty)$}\,,
  \end{equation}
  for a universal constant $C_*$. We now notice that
  \[
  v_*(r)=\delta_0 \,e^{-(r- R_j(\de_0))/\sqrt{C_*}\,{\e_j}}\,,
  \]
  satisfies $\e_j^2\,v_*''=v_*/C_*$ and
  \[
  v_*(R_j(\de_0))=\de_0=v_j(R_j(\de_0))\,.
  \]
  Therefore $v_j\le v_*$ on $(R_j(\de_0),\infty)$, and \eqref{veps decay at infinity} follows. The case $k=1$ of \eqref{veps higherord at infinity} follows from \eqref{veps decay at infinity} and \eqref{dec at inf 2}. The case $k=2$ of \eqref{veps higherord at infinity} follows by noticing that \eqref{critical sq ode}, \eqref{critical sq bounds}, \eqref{dec at inf 2} and \eqref{W near the wells} imply
  \[
  \e_j^2\,|v_j''|\le C\,\Big(\e_j^2\,|v_j'|+|W'(v_j)|+\e_j\,v_j\Big)\le C\,\Big(\e_j^2\,|v_j'|+v_j\Big)\le C\,v_j\,,
  \]
  and then by using \eqref{veps decay at infinity} again.

  \medskip

  \noindent {\it Step six}: We obtain sharp estimates for $v_j(r)$ when $r\to 0^+$: precisely, we prove that for every $r\le R_j(1-\de_0)$ one has
  \begin{eqnarray}
      \label{veps decay at zero}
  1-v_j(r) &\leq& C\,\,e^{-{(R_j(1-\de_0)-r)/C\,\e_j}}\,,
  \\    \label{veps first order at zero}
  |v_j'(r)|&\leq& C\,\min\Big\{\frac{r}{\e_j^2},\frac1{\e_j}\Big\}\,\,e^{-(R_j(1-\de_0)-r)/C\,\e_j}\,,
  \\    \label{veps second order at zero}
  |v_j''(r)|&\leq& \frac{C}{\e_j^2}\,\,e^{-(R_j(1-\de_0)-r)/C\,\e_j}\,.
  \end{eqnarray}
  To this end, it is convenient to recast \eqref{critical sq ode} in terms of $w_j=1-v_j$, so that
   \begin{equation}
   \label{eq ODE at 0}
      2\,\e_j^2\,\Big\{ w_j''+(n-1)\,\frac{w_j'}{r}\Big\}=-W'(1-w_j)+\e_j\,\,w_j\, (1-w_j)\,\Err_j\,.
   \end{equation}
  By \eqref{W near the wells} and \eqref{critical sq bounds}, if $r\le R_j(1-\de_0)$, then
  \begin{equation}
    \label{error estimate z}
      -W'(1-w_j)+\e_j\,\,w_j\, (1-w_j)\,\Err_j\le C\,(1-w_j)
  \end{equation}
  so that \eqref{eq ODE at 0} implies in particular
  \begin{equation}
      \label{eq ineq at 0}
   2\,\e_j^2\,\Big\{ w_j''+(n-1)\,\frac{w_j'}{r}\Big\}\leq C\,w_j\,,\qquad\mbox{on $(0,R_j(1-\de_0))$}\,.
   \end{equation}
   Multiplying by $w_j'\ge0$ and integrating on $(0,r)\subset(0,R_j(1-\delta_0))$ we deduce
   \[
   \e_j^2\Big\{ w_j'(r)^2 +\int_0^r \frac{(w_j')^2}{t}\Big\}\leq C\, (w_j(r)^2- w_j(0)^2)\leq C\,w_j(r)^2\,,
   \]
   that is,
   \begin{equation}
     \label{eq ineq at 0 1}
         \e_j\, w_j'\leq C\, w_j\,,\qquad\mbox{on $(0,R_j(1-\de_0))$}\,.
   \end{equation}
   Combining \eqref{eq ineq at 0 1} with \eqref{eq ODE at 0}, \eqref{W near the wells} and \eqref{critical sq bounds}, we find that
   \begin{eqnarray*}
   2\,\e_j^2\, w_j''+C\,\e_j\, w_j&\geq&  2\,\e_j^2\,\Big\{ w_j''+\frac{n-1}r\, w_j'\Big\}
   \\
   &=&-W'(1-w_j)+\e_j\,\,w_j\, (1-w_j)\,\Err_j
   \ge\frac{w_j}C\,-C\,\e_j\,w_j\,,
   \end{eqnarray*}
   on $[R_0/4,R_j(1-\de_0))$, so that, for $j$ large enough and for a constant $C_*$ depending on $n$ and $W$ only, we have
   \begin{equation}
   \label{eq comparison ODE}
   \e_j^2\, w_j''\geq \frac{w_j}{C_*}\,,\qquad\mbox{on $[R_0/4,R_j(1-\de_0))$}\,.
   \end{equation}
   Correspondingly to $C_*$, we introduce the barrier
   \[
   w_*(r)= \delta_0\,\Big\{e^{((R_0/4)-r)/\sqrt{C_*\,\e_j^2}}+ e^{(r-R_j(1-\delta_0))/\sqrt{C_*\,\e_j^2}}\Big\}\,,\qquad r>0\,.
   \]
   By the monotonicity of $w_j$ and by $R_j(1-\de_0)\ge R_0/3$ (recall \eqref{order of Rde0}),
   \begin{eqnarray*}
     &&w_*(R_0/4)\geq \delta_0=w_j(R_j(1-\delta_0))\geq w_j(R_0/4)\,,
     \\
     &&w_*(R_j(1-\delta_0))\geq \delta_0 = w_j(R_j(1-\delta_0))\,,
     \\
     &&\e_j^2\, w_*''=\frac{w_*}{C_*}\qquad\mbox{on $[0,\infty)$}\,.
   \end{eqnarray*}
   We thus find $w_j\le w_*$ on $[R_0/4,R_j(1-\de_0))$, that is, for every $R_0/4\le r\le R_j(1-\de_0)$,
   \begin{equation}
   \label{eq init bound origin}
   1-v_j(r)\leq \delta_0\,\Big\{e^{((R_0/4)-r)/\sqrt{C_*\,\e_j^2}}+ e^{(r-R_j(1-\delta_0))/\sqrt{C_*\,\e_j^2}}\Big\}\,.
   \end{equation}
   By testing \eqref{eq init bound origin} with
   \[
   r_* = \frac{R_0/4+R_0/3}2
   \]
   and exploiting the monotonicity of $v_j$, we find that for $r \in (0, r_*]$
   \begin{equation}
   \label{eq exponential dec at 0}
   1-v_j(r)\leq \delta_0\, e^{-1/C\,\e_j}\qquad\forall r\in(0,r_*]\,,
   \end{equation}
   (thus obtaining the crucial information that, for $j$ large enough and for every $k\in\N$, $\|1-v_j\|_{C^0[0,r_*]}={\rm o}(\e_j^k)$ as $j\to\infty$). At the same time, for $r_*\le r\le R_j(1-\de_0)$, the second exponential in \eqref{eq init bound origin} is bounded from below in terms of a universal constant, while the first exponential is bounded from above by $e^{-1/C\,\e_j}$, so that \eqref{eq init bound origin} and \eqref{eq exponential dec at 0} can be combined into
   \[
   1-v_j(r)\leq C\,e^{-(R_j(1-\delta_0)-r)/C\,\e_j}\,,\qquad\forall r\in(0,R_j(1-\de_0)]\,,
   \]
   that is \eqref{veps decay at zero}. By combining \eqref{veps decay at zero} and \eqref{eq ineq at 0 1} we also find
   \begin{equation}
   \label{eq second bound derivative origin}
   -v_j'(r) \leq \frac{C}{\e_j}\,e^{-(R_j(1-\delta_0)-r)/C\,\e_j}\,,\qquad\forall r\in(0,R_j(1-\de_0)]\,,
   \end{equation}
   which is half of the estimate for $|v_j'|$ in \eqref{veps first order at zero}. Multiplying \eqref{eq ineq at 0} by $r^{n-1}$ we find
   \[
   2\,\e_j^2\,\big(r^{n-1}\, w_j'\big)'\leq C\,r^{n-1}\,w_j\,,\qquad\forall r\in(0,R_j(1-\de_0)]\,,
   \]
   which we integrate over $(0,r)\subset(0,R_j(1-\de_0))$ to conclude that
   \begin{equation*}
   \e_j^2\,r^{n-1}\, (-v_j'(r))\leq C\,\int_0^r w_j(t)\,t^{n-1}dt\leq C\, (1-v_j(r))\,r^n\,,\qquad\forall r\in(0,R_j(1-\de_0)]\,;
   \end{equation*}
   in particular, by combining this last inequality with \eqref{veps decay at zero} we find
   \[
   -v_j'(r) \leq C\,\frac{r}{\e_j^2}\,e^{-(R_j(1-\delta_0)-r)/C\,\e_j}\,,\qquad\forall r\in(0,R_j(1-\de_0)]\,,
   \]
   that is the missing half of \eqref{veps first order at zero}. Finally, by  \eqref{eq ODE at 0} with \eqref{error estimate z} we find
   \[
   \e_j^2\,|v_j''|\le C\,\Big\{(1-v_j)+\frac{|v_j'|}r\Big\}\qquad\mbox{on $(0,R_j(1-\de_0))$}\,,
   \]
   and then \eqref{veps second order at zero} follows from \eqref{veps decay at zero} and \eqref{veps first order at zero}.

   \medskip

   \noindent {\it Step seven}: We now improve the first set of inequalities in \eqref{order of Rde0}, and show that
   \begin{equation}
     \label{tutto esp vicino}
     R_0-C\,\e_j\le R_j(1-\de_0)<R_j(\de_0)\le R_0+C\,\e_j\,.
   \end{equation}
   Let us set
   \[
   \a_j=\int_{B_{R_j(1-\de_0)}}V(v_j)\,,\qquad\b_j=\int_{B_{R_j(\de_0)}\setminus B_{R_j(1-\de_0)}}V(v_j)\,,
   \qquad\g_j=\int_{B_{R_j(\de_0)^c}}V(v_j)\,.
   \]
   By \eqref{V near the wells}, \eqref{veps decay at zero} and \eqref{order of Rde0} we have
   \begin{eqnarray*}
     \big|\a_j-\om_n\,R_j(1-\de_0)^n\big|&=&\int_{B_{R_j(1-\de_0)}}1-V(v_j)\le C\,\int_{B_{R_j(1-\de_0)}}(1-v_j)^2
     \\
     &\le&C\,\int_{B_{R_j(1-\de_0)}}e^{-{(R_j(1-\de_0)-|x|)/C\,\e_j}}\,dx
     \\
     &=&C\,\int_0^{R_j(1-\de_0)}\,e^{-{(R_j(1-\de_0)-r)/C\,\e_j}}\,r^{n-1}\,dr
     \\
     &=&C\,\e_j\,\int_{-R_j(1-\de_0)/\e_j}^0\,e^{s/C}\,(R_j(1-\de_0)+\e_j\,s)^{n-1}\,ds\le C\,\e_j\,.
   \end{eqnarray*}
   Similarly, by \eqref{V near the wells}, \eqref{order of Rde0} and \eqref{veps decay at infinity} we find
   \begin{eqnarray*}
     |\g_j|&=&\int_{B_{R_j(\de_0)}^c}V(v_j)\le C\,\int_{B_{R_j(\de_0)^c}}v_j^{2\,n/(n-1)}
     \le C\,\int_{R_j(\de_0)}^\infty\,e^{-{(r-R_j(\de_0))/C\,\e_j}}\,r^{n-1}\,dr
     \\
     &=&C\,\e_j\,\int_0^\infty\,e^{-s/C}\,(R_j(\de_0)+\e_j\,s)^{n-1}\,ds\le C\,\e_j\,.
   \end{eqnarray*}
Finally, thanks to \eqref{order of Rde0},
   \begin{eqnarray*}
     |\beta_j|&=&\int_{B_{R_j(\de_0)}\setminus B_{R_j(1-\de_0)}}V(v_j)\le C\,(R_j(\de_0)-R_j(1-\de_0))\le C\,\e_j\,.
   \end{eqnarray*}
   Combining the estimates for $\a_j$, $\b_j$ and $\g_j$ with the fact that
   \[
   \om_n\,R_0^n=1=\int_{\R^n}V(v_j)=\a_j+\b_j+\g_j\,,
   \]
   we conclude that
   \[
   C\,\e_j\ge \om_n\,|R_0^n-R_j(1-\de_0)^n|\le \frac{|R_0-R_j(1-\de_0)|}C\,,
   \]
   so that \eqref{tutto esp vicino} follows by \eqref{order of Rde0}.

   \medskip

   \noindent {\it Step eight}: We conclude the proof of the theorem: \eqref{critical sequence derivative in layer}, \eqref{def of bj and cj proof} and \eqref{tutto esp vicino} imply \eqref{def of bj and cj} and \eqref{critical sq vj prime annulus}, as well as
   \begin{equation}
     \label{critical sq ring basic weaker}
     |b_j|\,,|c_j|\le C\,\e_j\,,
   \end{equation}
   which is a weaker form of \eqref{critical sq ring basic}; \eqref{veps decay at infinity} and \eqref{veps higherord at infinity} imply \eqref{critical sq vj estimates on 0 R}, while \eqref{veps decay at zero}, \eqref{veps first order at zero}, and \eqref{veps second order at zero} imply \eqref{critical sq vj estimates on R infinity}. We are thus left to prove the full form of \eqref{critical sq ring basic} (which includes a positive lower bound in the from $\e_j/C$ for both $b_j$ and $c_j$), as well as \eqref{critical sq resolution of vj}: that is, we want to show that if $v_j$ satisfies \eqref{critical sq volume 1}, \eqref{critical sq L energy}, \eqref{critical sq ode} and \eqref{critical sq bounds}, then, for every $x\in\R^n$ and $j$ large enough, we have
   \begin{equation}
    \label{critical sq resolution of vj proof}
      v_j(x)=z_{\e_j}(x)+f_j\Big(\frac{|x|-R_0}{\e_j}\Big)=\eta\Big(\frac{|x|-R_0}{\e_j}-\tau_j\Big)+f_j\Big(\frac{|x|-R_0}{\e_j}\Big)\,,
  \end{equation}
  with functions $f_j\in C^2(I_j)$ such that
  \begin{equation}
    \label{www for fj}
    |f_j(s)|\le C\,\e_j\,e^{-|s|/C}\,,\qquad\forall s\in I_j=(-R_0/\e_j,\infty)\,,
  \end{equation}
  and with $\tau_j=\tau_{\e_j}$ for $\tau_\e$ defined by \eqref{def of zeta eps} and \eqref{def of R and taueps}. In fact, \eqref{critical sq resolution of vj proof} and \eqref{www for fj} imply the full form of \eqref{critical sq ring basic}: for example, combined with \eqref{critical sq vj prime annulus} and \eqref{lemma tau}, they give
  \begin{eqnarray*}
  C\,\frac{b_j}{\e_j}&\ge&\int_{R_0-b_j}^{R_0}(-v_j')=v_j(R_0-b_j)-v_j(R_0)=(1-\de_0)-\eta(-\tau_j)-f_j(0)
  \\
  &\ge& 1-\de_0-\eta(-\tau_0)-C\,\e_j
  \end{eqnarray*}
  where the latter quantity is positive provided $j$ is large enough and we further decrease the value of $\de_0$ to have $\de_0<1-\eta(-\tau_0)$.

  \medskip

  We can thus focus on \eqref{critical sq resolution of vj proof} and \eqref{www for fj}, which we recast by looking at the functions
  \[
  \eta_j(s)=v_j(R_0+\e_j\,s)\,,\qquad s\in I_j\,,
  \]
  in terms of which $f_j(s)=\eta_j(s)-\eta(s-\tau_j)$. Thus, our goal becomes proving that
   \begin{equation}
    \label{www for etaj}
    |\eta_j(s)-\eta(s-\tau_j)|\le C\,\e_j\,e^{-|s|/C}\,,\qquad\forall s\in I_j\,.
  \end{equation}
  We start noticing that, by \eqref{critical sq vj prime annulus}, \eqref{critical sq vj estimates on 0 R} and \eqref{critical sq vj estimates on R infinity}, we have
  \begin{eqnarray}
    \label{etaj 1}
    &&C\ge-\eta_j'(s)\ge\frac1{C}\,,\hspace{2.7cm}\qquad\forall s\in(-b_j/\e_j,c_j/\e_j)\,,
    \\
    \label{etaj 2}
    &&\eta_j^{(k)}(s)\le C\,e^{-s/C}\,,\hspace{2.7cm}\qquad\forall s\in(c_j/\e_j,\infty)\,,k=0,1,2,
   \end{eqnarray}
   \begin{equation}
   \label{etaj 3}
    \left\{\begin{split}
      &(1-\eta_j(s))+|\eta''_j(s)|\le C\,e^{s/C}\,,
      \\
      &|\eta_j'|\le C\,\min\Big\{\frac{R_0+\e_j\,s}{\e_j}\,,1\Big\}\,e^{s/C}\,,
    \end{split}\right .
    \qquad\forall s\in(-R_0/\e_j,-b_j/\e_j)\,,     
   \end{equation}
  (while the analogous estimates for $\eta$ are found in \eqref{eta decay} and \eqref{eta decay first and second derivative}). In order to estimate $f_j(s)=\eta_j(s)-\eta(s-\tau_j)$, we introduce
  \[
  g_j(s)=\eta_j(s)-\eta(s-t_j)\,,
  \]
  for $t_j$ defined by the identity
  \begin{equation}
    \label{def of tj}
    \eta\big(-(b_j/\e_j)-t_j\big)=1-\delta_0\,.
  \end{equation}
  (Notice that the definition is well-posed by $\eta'<0$ and $\eta(\R)=(0,1)$.) We claim that the proof of \eqref{www for fj} can be reduced to that of
  \begin{equation}\label{eq bound first remainder}
  |g_j(s)|\le C\,\e_j\,e^{-|s|/C}\,,\qquad\forall s\in I_j\,.
  \end{equation}
  Indeed, by \eqref{critical sq volume 1}, if \eqref{eq bound first remainder} holds, then we are in the position to apply step one, and deduce from \eqref{lemma tau} that $|t_j-\tau_0|\le C\,\e_j$. Having also (by the same argument) $|\tau_j-\tau_0|\le C\,\e_j$, we deduce that
  \[
  |\tau_j-t_j|\le C\,\e_j\,,
  \]
  that we exploit in combination with \eqref{etaj 2} and \eqref{etaj 3} to deduce
  \begin{eqnarray*}
    |f_j(s)-g_j(s)|&=&|\eta(s-t_j)-\eta(s-\tau_j)|\leq C\,\int_0^1 \big|\eta'\big(s-\tau_j-t\,(t_j-\tau_j)\big)\big|\,dt
    \\
    &\leq& C\,\e_j\, e^{-|s|/C}\,,\qquad\forall s\in I_j\,.
  \end{eqnarray*}
  We are thus left to prove \eqref{eq bound first remainder}. To this end, we preliminarily notice that, since $\eta_j(-b_j/\e_j)=v_j(R_0-b_j)=1-\de_0$, the definition of $t_j$ is such that
  \begin{equation}
    \label{gj zero value}
    g_j\big(-b_j/\e_j\big)=0\,.
  \end{equation}
  Moreover, by the decay properties \eqref{eta decay} of $\eta$ and by $|b_j|\le C\,\e_j$, \eqref{def of tj} implies
  \begin{equation}
    \label{tj is bounded}
    |t_j|\le C\,.
  \end{equation}
  We now divide the proof of \eqref{eq bound first remainder} in three separate arguments:

  \medskip

  \noindent {\it We prove \eqref{eq bound first remainder} for $|s|\ge C\,\log(1/\e_j)$}: This is trivial from the decay properties of $\eta$ and $\eta_j$. Indeed, by \eqref{eta decay}, \eqref{tj is bounded}, \eqref{etaj 2} and \eqref{etaj 3} we find that
  \begin{equation}
    \label{exterior gj pre}
      |g_j(s)|\le K_1\, e^{-|s|/K_1}\,,\qquad\forall  s\in I_j\,.
  \end{equation}
  for a universal constant $K_1$. In particular, we trivially have
  \begin{equation}
    \label{exterior gj}
      |g_j(s)|\le K_1\,\e_j\,e^{-|s|/2\,K_1}\,,\qquad\forall s\in I_j\,,|s|\ge 2\,K_1\log\Big(\frac1{\e_j}\Big)\,.
  \end{equation}
  We will later increase the value of $K_1$ in \eqref{exterior gj pre} so that \eqref{has expo decay} below holds too.

  \medskip

  \noindent {\it We prove \eqref{eq bound first remainder} on arbitrary compact subsets of $I_j$}: More precisely, we show that for every $K>0$ we can find $C_K=C_K(n,W)$ (that is, a constant that depends on $n$, $W$ and $K$ only) such that
  \begin{equation}
    \label{eq bound near the origin}
    |g_j(s)|\le C_K\,\e_j\,,\qquad\forall s\in I_j\,, |s|\le K\,.
  \end{equation}
  To this end, setting $\Err_j^*(s)=\Err_j(R_0+\e_j\,s)$, we deduce from \eqref{critical sq ode} that $\eta_j$ satisfies the ODE
  \begin{equation}\label{eq main modified ODE}
  2\,\eta_j''+ 2\,\e_j\,\frac{n-1}{R_0+\e_j\,s}\,\eta_j'=W'(\eta_j)- \e_j\,\eta_j\,(1-\eta_j)\,\Err_j^*\qquad\mbox{on $I_j$}\,.
  \end{equation}
  Multiplying \eqref{eq main modified ODE} by $-\eta_j'$ and integrating over $(s,\infty)$ we find
  \begin{equation}\label{eq first order ODE}
  \eta_j'(s)^2 - 2\,\e_j\,(n-1)\int_s^\infty\frac{\eta_j'(t)^2}{R_0+\e_j\,t}\,dt=W(\eta_j(s))
  +\e_j\,\int_s^\infty \, \eta_j\,(1-\eta_j)\,\eta_j'\,\Err_j^*\,.
  \end{equation}
  Since  $\eta'(s-t_j)^2=W(\eta(s-t_j))$ for every $s\in\R$, we find that
  \begin{eqnarray}\label{eq ODE difference}
   \eta_j'(s)^2-\eta'(s-t_j)^2&=& W(\eta_j(s))-W(\eta(s-t_j))+\e_j\,L_j(s)\,,
   \\
   \nonumber
   \mbox{where}&&L_j(s)=\int_s^\infty \Big(2\,(n-1)\,\frac{\eta_j'(t)^2}{R_0+\e_j\, t}+\eta_j\,(1-\eta_j)\,\eta_j'\,\Err_j^*\Big)\, dt\,.
  \end{eqnarray}
  Setting
  \[
  \ell_j(s)=\frac{W(\eta_j(s))-W(\eta(s-t_j))}{\eta_j(s)-\eta(s-t_j)}\,,\qquad d_j(s)=\eta_j'(s)+\eta'(s-t_j)\,,\qquad \Gamma_j(s)=\frac{\ell_j(s)}{d_j(s)}\,,
  \]
  and noticing that $d_j<0$ on $I_j$, \eqref{eq ODE difference} takes the form
  \begin{eqnarray}\label{eq ODE Gronwall}
   g_j'(s)-\Gamma_j(s)\,g_j(s)=\frac{\e_j\,L_j(s)}{d_j(s)}\,,\qquad\forall s\in I_j\,.
  \end{eqnarray}
  Multiplying \eqref{eq ODE Gronwall} by $\exp(-\int_0^s\Gamma_j)$, integrating over an interval $(-b_j/\e_j,s)$, and taking into account \eqref{gj zero value}, we find
  \begin{eqnarray}\label{key}
  g_j(s)\,e^{-\int_0^s\Gamma_j}=
   \e_j\int_{- b_j/\e_j}^s\,\frac{e^{-\int_0^t\Gamma_j}}{d_j(t)}\,L_j(t)\,dt\,,\qquad\forall s\in I_j\,.
  \end{eqnarray}
  We now notice that by
  \eqref{critical sq bounds}, \eqref{etaj 2} and \eqref{etaj 3},
  \begin{equation}
    \label{Lj bounds}
      |L_j(s)|\le C\,\min\{1,e^{-s/C}\}\,,\qquad\forall s\in I_j\,.
  \end{equation}
  Moreover, by $\Lip\,W\le C$ we have $|\ell_j|\le C$ on $I_j$, while $\eta_j'\le 0$ and \eqref{tj is bounded} give
  \begin{equation}
    \label{key 2}
      d_j(s)\le \eta'(s-t_j)\le-\frac1{C_K}\,,\qquad\forall |s|\le K\,,
  \end{equation}
  and, in particular, $|\Gamma_j(s)|\le C_K$ for $|s|\le K$. Now, assuming without loss of generality that $K$ is large enough to entail $K\ge |b_j|/\e_j$ (as we can do since $|b_j|\le C\,\e_j$ for a universal constant $C$), then \eqref{key}, \eqref{Lj bounds}, \eqref{key 2} and $|\Gamma_j|\le C_K$ on $[-K,K]$ combined give \eqref{eq bound near the origin}.

  \medskip

  \noindent {\it Finally, we prove \eqref{eq bound first remainder} in the remaining case}: Having in mind \eqref{exterior gj} and \eqref{eq bound near the origin} we are left to prove the existence of a sufficiently large universal constant $K_2$ such that \eqref{eq bound first remainder} holds (provided $j$ is large enough) for every $s\in I_j$ with $K_2\le |s|\le 2\,K_1\,\log(1/\e_j)$. To this end, we start by subtracting $2\,\eta''=W(\eta)$ from \eqref{eq main modified ODE}, and obtain
  \begin{equation}\label{eq second order ODE}
  2g_j'' -m_j\,g_j= \e_j\,\Big\{\eta_j\,(1-\eta_j)\,\Err_j^*\,- 2\,(n-1)\,\frac{\eta_j'}{R_0+\e_j\,s}\Big\}\,,\qquad\forall s\in I_j\,,
  \end{equation}
  where
  \[
  m_j(s)= \frac{W'(\eta_j(s))-W'(\eta(s-t_j))}{\eta_j(s)-\eta(s-t_j)}\,,\qquad s\in I_j\,.
  \]
  The coefficient $m_j$ is uniformly positive: indeed, the decay properties of $\eta$ and $\eta_j$ at infinity, combined with $|t_j|\le C$, imply the existence of a universal constant $K_2$ such that if $|s|\ge K_2$, then $\eta_j(s)$ and $\eta(s-t_j)$ are both at distance at most $\de_0$ from $\{0,1\}$: and since $W''\ge 1/C$ on $(0,\de_0)\cup(1-\de_0,1)$ by \eqref{W near the wells}, we conclude that, up to further increase the value of $K_2$,
  \begin{equation}
   \label{mj positive}
   m_j(s)\ge\frac1{K_2}\qquad\forall s\in I_j\,,|s|\ge K_2\,.
  \end{equation}
  At the same time, the right-hand side of \eqref{eq second order ODE} has exponential decay: indeed, by \eqref{critical sq bounds}, \eqref{etaj 1}, \eqref{etaj 2} and \eqref{etaj 3}, if $|s|\le\log(1/\e_j)$, $s\in I_j$, then we get
  \begin{equation}
    \label{has expo decay}
    \Big|\eta_j\,(1-\eta_j)\,\Err_j^*\,- 2\,(n-1)\,\frac{\eta_j'}{R_0+\e_j\,s}\Big|\le K_1\,\e_j\,e^{-|s|/K_1}\,,
  \end{equation}
  up to further increase the value of the universal constant $K_1$ introduced in \eqref{exterior gj}. Let us thus consider
  \[
  g_*(s)=C_1\,\e_j\,e^{-|s|/\sqrt{2\,C_2}}\,,\qquad s\in\R\,,
  \]
  for $C_1$ and $C_2$ universal constants to be determined. By combining \eqref{eq second order ODE} with \eqref{mj positive} and \eqref{has expo decay} we find that, if $s\in I_j$ with $K_2\le |s|\le 2\,K_1\,\log(1/\e_j)$, then
  \begin{eqnarray}\nonumber
    2\,(g_j-g_*)''-m_j\,(g_j-g_*)&\ge&m_j\,g_*-2\,g_*'' -K_1\,\e_j\,e^{-|s|/K_1}
    \\\nonumber
    &\ge&\Big(\frac1{K_2}-\frac{1}{C_2}\Big)\,g_*-K_1\,\e_j\,e^{-|s|/K_1}
    \\\nonumber
    &=&\e_j\,\Big\{\frac{C_1}{K_1}\,\Big(\frac1{K_2}-\frac{1}{C_2}\Big)\,e^{[(1/K_1)-(1/\sqrt{2\,C_2})]\,|s|}-1\Big\}\,K_1\,e^{-|s|/K_1}\,,
  \end{eqnarray}
  where the latter quantity is non-negative for every $|s|\ge K_2$ provided
  \begin{equation}
    \label{C1 C2 choice}
      C_1\ge 3\,K_1\,K_2\,e^{-K_0/2\,K_1}\,,\qquad C_2\ge\max\{2\,K_2,2\,K_1^2\}\,.
  \end{equation}
  At the same time, by \eqref{exterior gj},
  \[
  \big|g_j\big(\pm\,2\,K_1\log(1/\e_j)\big)\big|\le K_1\,\e_j^2\,,
  \]
  while $C_2\ge 2\,K_1^2$ gives
  \[
  g_*\big(\pm2\,K_1\,\log(1/\e_j)\big)=C_1\,\e_j\,e^{-2\,K_1\,\log(1/\e_j)/\sqrt{2\,C_2}}\ge C_1\,\e_j^2\,.
  \]
  Upon further requiring $C_1\ge K_1$ we thus have
  \begin{equation}
    \label{at the logs}
    g_*(s)\ge|g_j(s)|\qquad\mbox{at $s=\pm\,2\,K_1\,\log(1/\e_j)$}\,.
  \end{equation}
  Similarly, by \eqref{eq bound near the origin},
  \[
  |g_j(\pm K_2)|\le C_{K_2}\,\e_j\,,
  \]
  while $C_\ge 2\,K_2$ gives
  \[
  g_*(\pm K_2)=C_1\,\e_j\,e^{-K_2/\sqrt{2\,C_2}}\ge C_1\,\e_j\,e^{-\sqrt{K_2}/2}\,.
  \]
  Upon requiring that $C_1\ge C_{K_2}\,e^{\sqrt{K_2}/2}$, we find that
  \begin{equation}
    \label{at the K2s}
    g_*(s)\ge|g_j(s)|\qquad\mbox{at $s=\pm\,K_2$}\,.
  \end{equation}
  In summary, we have proved that if $K_1$ satisfies \eqref{exterior gj pre} and \eqref{has expo decay}, $K_2$ satisfies \eqref{mj positive}, and $C_1$ and $C_2$ are taken large enough in terms of $K_1$ and $K_2$, then \eqref{at the logs} and \eqref{at the K2s} holds. In particular, $h_j=g_j-g_*$ is non-positive on the boundary of the intervals $[-2\,K_1\,\log(1/\e_j),-K_2]$ and $[K_2,2\,K_1\,\log(1/\e_j)]$, with $h_j''-m_j\,h\ge0$, $m_j\ge0$, on those intervals thanks to \eqref{C1 C2 choice} and \eqref{mj positive}; correspondingly, by the maximum principle, $h_j\le 0$ there, that is,
  \[
  g_j(s)\le C_1\,\e_j\,e^{-|s|/\sqrt{2\,C_2}}\,,\qquad\forall s\in I_j\,,K_2\le |s|\le 2\,K_1\,\log(1/\e_j)\,.
  \]
  To get the matching lower bound we notice that, again by \eqref{has expo decay},
  \[
  (-g_*-g_j)''-m_j\,(-g_*-g_j)\ge m_j\,g_*-g_*'' -K_1\,\e_j\,e^{-|s|/K_1}
  \]
  so that, by the same considerations made before, the maximum principle can be applied to $k_j=-g_*-g_j$ on $[-2\,K_1\,\log(1/\e_j),-K_2]\cup[K_2,2\,K_1\,\log(1/\e_j)]$, to deduce $g_j\ge-g_*$. This completes the proof of \eqref{eq bound first remainder}.
\end{proof}

\section{Strict stability among radial functions}\label{section strict stability radial profiles} In this section we are going to exploit the resolution result in Theorem \ref{theorem asymptotics of minimizers} to deduce a stability estimate for $\psi(\e)$ on radial (not necessarily decreasing) functions. More precisely, we shall prove the following statement.

\begin{theorem}[Flugede type estimate]\label{theorem fuglede estimate} If $n\ge 2$ and $W\in C^{2,1}[0,1]$ satisfies \eqref{W basic} and \eqref{W normalization}, then there exist universal constants $\de_0$ and $\e_0$ with the following property: if $\e<\e_0$, $u_\e\in\RR_0$ is a minimizer of $\psi(\e)$, and $u\in H^1(\R^n;[0,1])$ is radial and such that
\begin{eqnarray}
  \label{fu hp 1}
&&\int_{\R^n} V(u)=1\,,
\\
\label{fu hp 2}
&&\int_{\R^n}(u-u_\e)^2\le C\,\e\,,
\\
\label{fu hp 3}
&&\|u-u_\e\|_{L^\infty(\R^n)}\le\de_0\,,
\end{eqnarray}
then, setting $h=u-u_\e$,
\begin{equation}\label{fuglede 0}
\AC(u)-\psi(\e)\geq \frac1{C}\,\int_{\R^n}\e\,|\nabla h|^2+\frac{h^2}\e\,.
\end{equation}
\end{theorem}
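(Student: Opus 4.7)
The approach is a Fuglede-type argument. Using the Euler--Lagrange equation \eqref{EL classic W V} to eliminate the $L^2$-pairing $\int 2\e\,\nabla u_\e\cdot\nabla h$, together with Taylor expansions of $W$ and $V$, one obtains the second-order expansion
\begin{equation*}
\AC(u)-\psi(\e)=\mathcal Q_\e(h)+R_\e(h)\,,\qquad h=u-u_\e\,,
\end{equation*}
with
\begin{equation*}
\mathcal Q_\e(h)=\int_{\R^n}\e\,|\nabla h|^2+\tfrac12\Big(\tfrac{W''(u_\e)}\e-\l\,V''(u_\e)\Big)h^2\,,
\end{equation*}
and $|R_\e(h)|\le C\,\|h\|_\infty\,Q(h)$. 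The nonlinear constraint $\int V(u)=\int V(u_\e)$ Taylor-expands to the linearized relation $\int V'(u_\e)\,h=O(\|h\|_2^2)=O(\e)$. Absorbing $R_\e$ via \eqref{fu hp 3} and correcting $h$ by a small Lagrange multiple of a fixed radial cut-off supported in the transition layer, \eqref{fuglede 0} reduces to the uniform coercivity
\begin{equation*}
\mathcal Q_\e(h)\ge \tfrac 1C\,Q(h)\qquad\mbox{for radial $h$ with }\int_{\R^n}V'(u_\e)\,h=0\,.
\end{equation*}

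I would prove this by contradiction and a blow-up limit that leverages Theorem \ref{theorem asymptotics of minimizers}. Suppose coercivity fails: there exist $\e_j\in(0,\e_0)$ and radial $h_j$ satisfying all hypotheses with $\mathcal Q_{\e_j}(h_j)/Q(h_j)\to 0$. Normalize $\phi_j:=h_j/\sqrt{Q(h_j)}$ (so that $Q(\phi_j)=1$) and extract $\e_j\to\e_*\in[0,\e_0]$. In the main case $\e_j\to 0$, blow up around $r=R_0$ by setting $\psi_j(s)=\phi_j(R_0+\e_j s)$ on $I_j=(-R_0/\e_j,\infty)$. A direct change of variables produces
\begin{equation*}
Q(\phi_j)=n\,\om_n\int_{I_j}\big((\psi_j')^2+\psi_j^2\big)(R_0+\e_j s)^{n-1}\,ds=1\,,
\end{equation*}
so $\{\psi_j\}$ is bounded in $H^1_{\rm loc}(\R)$. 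The resolution in Theorem \ref{theorem asymptotics of minimizers} gives $u_{\e_j}(R_0+\e_j\,\cdot)\to\eta(\cdot-\tau_0)=:\eta_*$ in $C^2_{\rm loc}(\R)$ with uniform exponential tails, and \eqref{lambda estimate eps} yields $\e_j\,\l\to 0$. Passing $\psi_j\weak\psi_*$ in $H^1_{\rm loc}(\R)$ and using the failure of coercivity gives
\begin{equation*}
\int_\R(\psi_*')^2+\tfrac12\,W''(\eta_*)\,\psi_*^2\le 0\,.
\end{equation*}

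The one-dimensional Schr\"odinger operator $\mathcal L_*:=-\pa_s^2+\tfrac12\,W''(\eta_*)$ satisfies $\mathcal L_*\eta_*'=0$ (by differentiating $\eta'=-\sqrt{W(\eta)}$), and since $\eta_*'$ is of constant sign, this identifies $\eta_*'$ as the simple non-degenerate ground state with eigenvalue $0$. Thus $\mathcal L_*\ge 0$ with kernel $\mathrm{span}\{\eta_*'\}$, so $\psi_*=c\,\eta_*'$. The rescaled linearized constraint passes to $\int_\R V'(\eta_*)\,\psi_*\,ds=0$; combined with $\int_\R V'(\eta_*)\,\eta_*'\,ds=V(0)-V(1)=-1$, this forces $c=0$ and $\psi_*\equiv 0$. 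The main obstacle is upgrading this to a contradiction with $Q(\phi_j)=1$, as one must rule out loss of $Q$-mass at the endpoints $s\to\pm\infty$ of $I_j$. I would exploit that $W''(u_\e)\ge 1/C$ holds on $\{u_\e\notin(\de_0,1-\de_0)\}$: by the exponential estimates \eqref{critical sq vj estimates on 0 R}--\eqref{critical sq vj estimates on R infinity}, this region covers $\{|r-R_0|\ge K\e_j\}$, where $\mathcal Q_\e$ dominates $Q$ directly; hence $\mathcal Q_{\e_j}(\phi_j)\to 0$ confines all of $Q(\phi_j)$ to a bounded $s$-window, completing the contradiction. The remaining case $\e_j\to\e_*>0$ is handled in parallel by standard radial $H^1(\R^n)$-compactness and the non-degeneracy of the radial linearized operator at $u_{\e_*}$ (translational modes being non-radial are automatically absent).
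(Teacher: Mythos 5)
Your proposal follows essentially the same route as the paper: a second-order Taylor expansion of $\AC$ combined with the Euler--Lagrange equation reduces \eqref{fuglede 0} to coercivity of the second variation on the linearized constraint $\int_{\R^n} V'(u_\e)\,h=0$ (Lemmas \ref{non linear stability} and \ref{lemma third stability}), and that coercivity is proved by contradiction, blow-up at scale $\e_j$ around $r=R_0$ via Theorem \ref{theorem asymptotics of minimizers}, identification of the kernel of the limiting one-dimensional operator with ${\rm span}\{\eta'\}$ (Lemma \ref{lemma one dimensional}), and the fact that the constraint kills this direction. Your direct exclusion of mass loss at infinity, using that $W''(u_{\e_j})\ge 1/C$ outside a window $|r-R_0|\le K\,\e_j$ so that the quadratic form dominates $P_\e$ pointwise there, is a legitimate and somewhat leaner substitute for the paper's concentration-compactness trichotomy, and your appeal to ground-state theory for $-\pa_s^2+\tfrac12\,W''(\eta)$ replaces the paper's self-contained proof of Lemma \ref{lemma one dimensional}. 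Two points need repair, however. First, a ``fixed radial cut-off supported in the transition layer'' does not exist (the layer has width of order $\e$), and the cross terms generated by the correction are not free: the paper corrects along $u_\e$ itself and devotes Lemma \ref{lemma third stability} to estimating $t\,P_\e(h,u_\e)$, $t\,Q_\e(h,u_\e)$ and the $t^2$ terms via $|t|\le C\,\e\,\min\{P_\e(h),1\}$, which is precisely where hypothesis \eqref{fu hp 2} enters; your sketch omits this. Second, your treatment of the case $\e_j\to\e_*>0$ is circular as written, since ``non-degeneracy of the radial linearized operator at $u_{\e_*}$'' is exactly the coercivity being established; this case should instead be eliminated by negating the statement correctly (letting $\e_0\to 0$ along the contradiction sequence, so that $\e_j\to 0$ always), as the paper does.
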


Before entering into the proof of Theorem \ref{theorem fuglede estimate}, we show how it can be used to improve on the conclusions of Theorem \ref{theorem existence solutions}. In particular, it gives the uniqueness of minimizers in $\psi(\e)$ and, together with the resolution result in Theorem \ref{theorem asymptotics of minimizers}, allows to compute the precise asymptotic behavior of $\psi(\e)$ and $\l(\e)$ up to second and first order in $\e\to 0^+$ respectively. Notice in particular that \eqref{sharp expansion lambda eps} sharply improves \eqref{lambda estimate eps}.

\begin{corollary}\label{corollary optimal energy and lagrange multiplier}
  If $n\ge 2$ and $W\in C^{2,1}[0,1]$ satisfies \eqref{W basic} and \eqref{W normalization}, then there exists a universal constant $\e_0$ such that, if $\e<\e_0$, then $\psi(\e)$ admits a unique minimizer (modulo translations). In particular, for every $\e<\e_0$, $\l(\e)$ is unambiguously defined as the Lagrange multiplier of the unique minimizer $u_\e\in\RR_0$ of $\psi(\e)$ by the identity \eqref{lambda formula}, i.e.
\begin{equation}
\label{lambda formula again}
\l(\e)=\Big(1-\frac1n\Big)\,\psi(\e)+\,\frac1n\,\Big\{\frac1\e\int_{\R^n}W(u_\e)-\e\,\int_{\R^n}|\nabla u_\e|^2\Big\}\,.
\end{equation}
   Finally, $\e\in(0,\e_0)\mapsto\l(\e)$ is continuous and the following expansions hold as $\e\to 0^+$,
  \begin{eqnarray}
  \label{sharp expansion psi eps}
  \psi(\e)&=&2\,n\,\omega_n^{1/n}+2\,n\,(n-1)\,\omega_n^{2/n}\,\k_0\,\e +{\rm O}(\e^2)
  \\\label{sharp expansion lambda eps}
  \l(\e)&=&2\,(n-1)\,\omega_n^{1/n}+{\rm O}(\e)\,,
  \end{eqnarray}
  where $\k_0=\tau_0+\tau_1=\int_\R[\eta'\,V'(\eta)+ W(\eta)]\,s\,ds$.
\end{corollary}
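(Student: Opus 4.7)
The plan is to derive the four assertions---(a) uniqueness of minimizers modulo translations, (b) well-posedness of $\l(\e)$ via \eqref{lambda formula again}, (c) continuity of $\l$ on $(0,\e_0)$, and (d) the sharp expansions \eqref{sharp expansion psi eps}--\eqref{sharp expansion lambda eps}---by combining the resolution Theorem \ref{theorem asymptotics of minimizers} with the Fuglede-type estimate of Theorem \ref{theorem fuglede estimate}. The main difficulty is (d): upgrading the $O(\e)$ pointwise resolution error into an $O(\e^2)$ error in the energy expansion.

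\textbf{Uniqueness and continuity.} Let $u_\e^1, u_\e^2\in\RR_0$ both minimize $\psi(\e)$. By Theorem \ref{theorem existence solutions}, each solves the Euler--Lagrange equation with a Lagrange multiplier $\l$ between two universal positive constants, so Theorem \ref{theorem asymptotics of minimizers} applies (with $\Err_\e$ built out of $\l\,V'(u)/[u(1-u)]$, universally bounded by \eqref{V near the wells}) and yields $u_\e^i = z_\e + f_\e^i$ with $|f_\e^i(s)|\le C\e\,e^{-|s|/C}$. Their difference $h=u_\e^2-u_\e^1$ then satisfies $\|h\|_{L^\infty(\R^n)}\le C\e\le\de_0$ and $\int_{\R^n}h^2\le C\e^2\le C\e$ for $\e_0$ small, so Theorem \ref{theorem fuglede estimate} applies at base $u_\e^1$: since $\ac_\e(u_\e^2)-\psi(\e)=0$, it forces $h\equiv 0$. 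Together with the radial decreasing symmetry modulo translations from Theorem \ref{theorem existence solutions}, this proves uniqueness modulo translations and in particular makes $\l(\e)$ well-defined via \eqref{lambda formula again}. For continuity, let $\e_j\to\e_*\in(0,\e_0)$; the same resolution argument gives $u_{\e_j}\to u_{\e_*}$ uniformly on $\R^n$ (via $z_{\e_j}\to z_{\e_*}$ uniformly, using step one of Theorem \ref{theorem asymptotics of minimizers}'s proof to handle $\tau_{\e_j}\to\tau_{\e_*}$), so Theorem \ref{theorem fuglede estimate} at base $u_{\e_*}$ gives, for $j$ large,
$$\frac{1}{C}\int_{\R^n}\Big(\e_*|\nabla(u_{\e_j}-u_{\e_*})|^2+\frac{(u_{\e_j}-u_{\e_*})^2}{\e_*}\Big)\le \ac_{\e_*}(u_{\e_j})-\psi(\e_*).$$
The right-hand side tends to zero via \eqref{lipschitz in epsilon} and $\ac_{\e_j}(u_{\e_j})=\psi(\e_j)\to\psi(\e_*)$ (continuity of $\psi$ from Theorem \ref{theorem existence solutions}), yielding $u_{\e_j}\to u_{\e_*}$ in $H^1(\R^n)$ and hence $\l(\e_j)\to\l(\e_*)$ by \eqref{lambda formula again}.

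\textbf{Asymptotic expansions.} The upper bound $\psi(\e)\le\ac_\e(z_\e)$, combined with \eqref{ansatz energy}, gives the $\le$ half of \eqref{sharp expansion psi eps}. For the matching lower bound, write $u_\e=z_\e+f_\e$, pass to the 1D variable $s=(r-R_0)/\e$, and set $\eta_\e(s)=u_\e(R_0+\e s)$; then
$$\ac_\e(u_\e)-\ac_\e(z_\e)=n\om_n\int\Big\{[\eta_\e'(s)^2-\eta'(s-\tau_\e)^2]+[W(\eta_\e)-W(\eta(s-\tau_\e))]\Big\}(R_0+\e s)^{n-1}\,ds.$$
Taylor-expand in $f_\e=\eta_\e-\eta(\cdot-\tau_\e)$ and integrate by parts: using $2\eta''=W'(\eta)$, the linear-in-$f_\e$ contribution reduces to $-2(n-1)\e\int\eta'(s-\tau_\e)(R_0+\e s)^{n-2}f_\e\,ds=O(\e^2)$, by the joint exponential decay of $\eta'$ and $f_\e$. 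The quadratic-in-$f_\e$ contribution is bounded by $C\int[(f_\e')^2+f_\e^2](R_0+\e s)^{n-1}\,ds$; the pointwise bound $|f_\e(s)|\le C\e\,e^{-|s|/C}$ from Theorem \ref{theorem asymptotics of minimizers} makes the $f_\e^2$ term of size $O(\e^2)$, and the matching bound $|f_\e'(s)|\le C\e\,e^{-|s|/C}$---inherited from the ODE \eqref{eq second order ODE} satisfied by $f_\e$ via integration from $\pm\infty$ of the pointwise bound on $f_\e''$ coming from that same ODE---makes the $(f_\e')^2$ term of size $O(\e^2)$. Hence $\psi(\e)=\ac_\e(z_\e)+O(\e^2)$, which with \eqref{ansatz energy} proves \eqref{sharp expansion psi eps}. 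Finally, \eqref{sharp expansion lambda eps} follows from \eqref{lambda formula again}: the summand $(n-1)\psi(\e)/n$ contributes $2(n-1)\om_n^{1/n}+O(\e)$, while the equipartition identity $\int_\R W(\eta)=\int_\R(\eta')^2=1$ (from $\eta'=-\sqrt{W(\eta)}$ and \eqref{W normalization}), combined with the same change of variables $r=R_0+\e s$ applied to $u_\e=z_\e+f_\e$, shows $(1/\e)\int_{\R^n}W(u_\e)-\e\int_{\R^n}|\nabla u_\e|^2=O(\e)$.
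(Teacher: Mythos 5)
Your proposal is correct and follows essentially the same route as the paper's proof: resolution (Theorem \ref{theorem asymptotics of minimizers}) plus the Fuglede estimate (Theorem \ref{theorem fuglede estimate}) for uniqueness and for the continuity of $\l$, and the Ansatz energy \eqref{ansatz energy} plus a perturbative computation in $f_\e$ for the expansions. The only deviation is your treatment of the quadratic term in the energy comparison: the paper simply discards the non-negative $(f_\e')^2$ contribution, since only the one-sided bound $\AC(u_\e)\ge\AC(z_\e)-C\,\e^2$ is needed (the reverse inequality being minimality), whereas you invoke a pointwise bound $|f_\e'(s)|\le C\,\e\,e^{-|s|/C}$ that is not part of the statement of Theorem \ref{theorem asymptotics of minimizers} and must be extracted from the ODE as you sketch (which works, but is an avoidable extra step, and requires handling the region $|s|\gtrsim\log(1/\e)$ separately via the individual decay of $\eta_\e'$ and $u_\e'$).
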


\begin{proof}[Proof of Corollary \ref{corollary optimal energy and lagrange multiplier}] {\it Step one}: Let $\e \in (0,\e_0)$ and let $u_\e$ and $v_\e$ be two minimizers of $\psi(\e)$, so that, up to translations, $u_\e,v_\e\in\RR_0^*$ thanks to Theorem \ref{theorem existence solutions}. By Theorem \ref{theorem asymptotics of minimizers}, if we set $h_\e=v_\e-u_\e$, then
\[
h_\e(x)=f_\e\Big(\frac{|x|-R_0}{\e}\Big)\,,
\]
where $f_\e\in C^2(-R_0/\e,\infty)$, and
\begin{equation}
  \label{co fu 1}
  |f_\e(s)|\le C\,\e\,e^{-s/C}\,,\qquad\forall s\ge-R_0/\e\,.
\end{equation}
We thus see that $u=v_\e$ satisfies \eqref{fu hp 1} and \eqref{fu hp 3}. Moreover, by \eqref{co fu 1},
\[
\int_{\R^n}h_\e^2=n\,\om_n\,\int_{-R_0/\e}^\infty\,f_\e(s)^2\,(R_0+\e\,s)^{n-1}\,ds\le C\,\e^2\,,
\]
so that \eqref{fu hp 2} holds too. We can thus apply \eqref{fuglede 0} with $u=v_\e$, and exploit the minimality of $v_\e$ to deduce that
\[
0=\AC(v_\e)-\psi(\e)\ge\frac1{C}\,\int_{\R^n}\e\,|\nabla h_\e|^2+\frac{h_\e^2}\e\,,
\]
that is $h_\e=0$ on $\R^n$, as claimed.
		
\medskip
	
\noindent {\it Step two}: We prove \eqref{sharp expansion psi eps} and \eqref{sharp expansion lambda eps}. If $u_\e$ is the minimizer of $\psi(\e)$ in $\RR_0$, then by Theorem \ref{theorem asymptotics of minimizers} we have $u_\e(x)=z_\e(x)+f_\e((|x|-R_0)/\e)$ for every $x\in\R^n$, and with $f_\e$ satisfying \eqref{co fu 1}. Moreover, as proved in \eqref{ansatz energy}, we have
\[
\AC(z_\e)= 2\,n\,\omega_n^{1/n}+2\,n\,(n-1)\,\omega_n^{2/n}\,\k_0 +{\rm O}(\e^2)\,.
\]	
Since $\AC(u_\e)\le\AC(z_\e)$, we are left to prove that $\AC(u_\e)\ge\AC(z_\e)-C\,\e^2$. Setting $|x|=R_0+\e\,s$ we have
\[
u_\e(x)=\eta(s-\tau_\e)+f_\e(s)\,,\qquad \nabla u_\e(x)= \frac{\eta'(s-\tau_\e)+f_\e'(s)}\e\,\frac{x}{|x|}\,,
\]
while $z_\e$ satisfies the same identities with $f_\e=0$, so that
\begin{eqnarray}\label{coo}
  &&\AC(u_\e)-\AC(z_\e)=\int_{-R_0/\e}^\infty\,\Big(2\,\eta'(s-\tau_\e)\,f_\e'(s)+f_\e'(s)^2\Big)\,(R_0+\e\,s)^{n-1}\,ds
  \\\nonumber
  &&+\int_{-R_0/\e}^\infty \Big(W\big(\eta(s-\tau_\e)+f_\e(s)\big)-W(\eta(s-\tau_\e))\Big)\,(R_0+\e\,s)^{n-1}\,ds\,.
\end{eqnarray}
Integration by parts and $2\,\eta''=W'(\eta)$ give
\begin{eqnarray*}
  &&\int_{-R_0/\e}^\infty\,2\,\eta'(s-\tau_\e)\,f_\e'(s)\,(R_0+\e\,s)^{n-1}\,ds
  \\&=&-\int_{-R_0/\e}^\infty\,W'\big(\eta(s-\tau_\e)\big)\,f_\e(s)\,(R_0+\e\,s)^{n-1}\,ds
  \\&&-2\,(n-1)\,\e\,\int_{-R_0/\e}^\infty\,\eta'(s-\tau_\e)\,f_\e(s)\,(R_0+\e\,s)^{n-2}\,ds\,.
\end{eqnarray*}
Dropping the non-negative term with $f_\e'(s)^2$ in \eqref{coo}, and noticing that, by \eqref{W second order taylor} and \eqref{co fu 1}, we have
\[
\Big|W\big(\eta(s-\tau_\e)+f_\e(s)\big)-W(\eta(s-\tau_\e))-W'\big(\eta(s-\tau_\e)\big)\,f_\e(s)\Big|\le C\,f_\e(s)^2\,,
\]
for every $s>-R_0/\e$, we thus find
\begin{eqnarray*}
  \AC(u_\e)-\AC(z_\e)
  &\ge&-2\,(n-1)\,\e\,\int_{-R_0/\e}^\infty\,\eta'(s-\tau_\e)\,f_\e(s)\,(R_0+\e\,s)^{n-2}\,ds
  \\
  &&-C\,\int_{-R_0/\e}^\infty f_\e(s)^2\,(R_0+\e\,s)^{n-1}\,ds\ge -C\,\e^2\,,
\end{eqnarray*}
where in the last inequality we have used \eqref{co fu 1}, $|\tau_\e|\le C$ and the decay estimate for $\eta'$ in \eqref{eta decay first and second derivative}. Coming to \eqref{sharp expansion lambda eps}, rearranging terms in \eqref{lambda formula again} we have
\begin{eqnarray}\label{bella}
\l(\e)=\Big(1-\frac2n\Big)\,\psi(\e)+\,\frac2n\,\,\frac1\e\int_{\R^n}W(u_\e)\,.
\end{eqnarray}
By \eqref{co fu 1}
\[
\frac1\e\int_{\R^n}W(u_\e)=\frac1\e\int_{\R^n}W(z_\e)+{\rm O}(\e)=\frac{\psi(\e)}2+{\rm O}(\e)
\]
where in the second identity we have used \eqref{energy expansion 1}. Hence $\l(\e)=(1-(1/n))\,\psi(\e)+{\rm O}(\e)$ and \eqref{sharp expansion lambda eps} follows from \eqref{sharp expansion psi eps}.

\medskip

\noindent {\it Step three}: We prove the continuity of $\l$ on $(0,\e_0)$. Let $\e_j\to\e_*\in(0,\e_0)$ as $j\to\infty$ and set $h_j=u_{\e_j}-u_{\e_*}$. By the resolution formula \eqref{critical sq resolution of vj} we have
\begin{eqnarray*}
  |u_{\e_j}(x)-u_{\e_*}(x)|&\le&\Big|\eta\Big(\frac{|x|-R_0}{\e_j}-\tau_{\e_j}\Big)-\eta\Big(\frac{|x|-R_0}{\e_*}-\tau_{\e_*}\Big)\Big|
  \\
  &&+\Big|f_{\e_j}\Big(\frac{|x|-R_0}{\e_j}\Big)-f_{\e_*}\Big(\frac{|x|-R_0}{\e_*}\Big)\Big|
  \\
  &\le& C\,\e_*\,e^{-(|x|-R_0)/C\,\e_*}+\Big|\eta\Big(\frac{|x|-R_0}{\e_j}-\tau_0\Big)-\eta\Big(\frac{|x|-R_0}{\e_*}-\tau_0\Big)\Big|
\end{eqnarray*}
where we have used \eqref{lemma tau}, \eqref{critical sequence bounds on fj} and \eqref{eta decay}. Similarly, since $\e_j\to\e_*>0$, for $j$ large enough we see that
\begin{eqnarray*}
  &&\Big|\eta\Big(\frac{|x|-R_0}{\e_j}-\tau_0\Big)-\eta\Big(\frac{|x|-R_0}{\e_*}-\tau_0\Big)\Big|
  \\
  &\le&\int_0^1\,\Big|\eta'\Big(\frac{|x|-R_0}{\e_*+t(\e_j-\e_*)}-\tau_0\Big)\Big|\,\frac{||x|-R_0|}{(\e_*+t\,(\e_j-\e_*))^2}\,|\e_j-\e_*|
  \\
  &\le& C\,\frac{|\e_j-\e_*|}{\e_*^2}\,e^{-(|x|-R_0)/C\,\e_*}\,||x|-R_0|\le C\,\e_*\,e^{-(|x|-R_0)/C\,\e_*}\,.
\end{eqnarray*}
Setting $h_j=u_{\e_j}-u_{\e_*}$ we see that \eqref{fu hp 1}, \eqref{fu hp 2} and \eqref{fu hp 3} hold with $\e=\e_*$ and for $j$ large enough, thus deducing that
\begin{eqnarray*}
  \frac1{C}\,\int_{\R^n}\e_*\,|\nabla h_j|^2+\frac{h_j^2}{\e_*}&\le& \ac_{\e_*}(u_{\e_j})-\psi(\e_*)
  \\
  &\le& \max\Big\{\frac{\e_j}{\e_*},\frac{\e_*}{\e_j}\Big\}\,\psi(\e_j)-\psi(\e_*)\,.
\end{eqnarray*}
From the continuity of $\psi$ on $(0,\e_0)$ (Theorem \ref{theorem existence solutions}) we conclude that
\[
\lim_{j\to\infty}\int_{\R^n}|\nabla u_{\e_j}-\nabla u_{\e_*}|^2=0\,,\qquad \lim_{j\to\infty}\int_{\R^n}W(u_{\e_j})=\int_{\R^n}W(u_{\e_*})\,,
\]
and thus $\lambda$ is continuous on $(0,\e_0)$ thanks to \eqref{bella}.
\end{proof}

We now turn to the proof of Theorem \ref{theorem fuglede estimate}. This is based on a series of three lemmas, each containing a different stability estimate, coming increasingly closer to \eqref{fuglede 0}.
	
\begin{lemma}[First stability lemma]\label{lemma one dimensional} Let $n\ge 2$, let $W\in C^{2,1}[0,1]$ satisfy \eqref{W basic} and \eqref{W normalization}, and let
\[
Q(u)= \int_{\R} 2\,(u')^2 +W''(\eta)\,u^2\,,\qquad u\in H^1(\R)\,.
\]
Then $Q(u)\ge0$ on $H^1(\R)$, and $Q(u)=0$ if and only if $u=t\,\eta'$ for some $t\in\R$.
\end{lemma}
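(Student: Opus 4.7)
The plan is to establish the pointwise factorization (Picone-type) identity
\[
2(u')^2 + W''(\eta)\,u^2 \;=\; 2(\eta')^2\Big(\Big(\tfrac{u}{\eta'}\Big)'\Big)^2 \;+\; \frac{d}{ds}\Big[2\,\tfrac{\eta''}{\eta'}\,u^2\Big]
\]
and then integrate over $\R$. The underlying idea is that $\eta'$ is a smooth, nowhere-vanishing element of the kernel of the Jacobi operator $L = -2\partial_s^2 + W''(\eta)$: differentiating $(\eta')^2 = W(\eta)$ once yields $2\eta'' = W'(\eta)$, and differentiating a second time yields $2\eta''' = W''(\eta)\,\eta'$, i.e.\ $L\eta' = 0$. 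Since $\eta' = -\sqrt{W(\eta)} < 0$ everywhere on $\R$ (because $\eta(\R)\subset(0,1)$ and $W>0$ on $(0,1)$), $\eta'$ plays the role of a ground state, which is the classical source of nonnegativity for the quadratic form $Q$.

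To produce the identity, I would expand $2(\eta')^2\bigl((u/\eta')'\bigr)^2 = 2\bigl(u' - (\eta''/\eta')u\bigr)^2$ and verify the elementary relation $(\eta''/\eta')' + (\eta''/\eta')^2 = \eta'''/\eta' = W''(\eta)/2$, which reduces the match to bookkeeping of cross terms. Integrating over $\R$ gives
\[
Q(u) \;=\; \int_\R 2(\eta')^2\Big(\Big(\tfrac{u}{\eta'}\Big)'\Big)^2\,ds \;+\; \Big[2\,\tfrac{\eta''}{\eta'}\,u^2\Big]_{-\infty}^{+\infty},
\]
and the boundary term vanishes for every $u\in H^1(\R)$: the quotient $\eta''/\eta'=W'(\eta)/(2\eta')$ is uniformly bounded on $\R$, tending to finite limits at $\pm\infty$ determined by the curvatures $W''(0)$ and $W''(1)$ (a direct consequence of the exponential decay of $\eta$ toward its wells recorded in section \ref{subsection eta}), while any $u\in H^1(\R)$ has a continuous representative vanishing at $\pm\infty$. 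This gives $Q(u)\ge 0$ directly.

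For the equality case, $Q(u)=0$ forces $(u/\eta')' = 0$ almost everywhere. Since $\R$ is connected and $\eta'$ is nowhere zero, this integrates to $u/\eta'\equiv t$ for some $t\in\R$, i.e.\ $u = t\,\eta'$. The only delicate step is transferring the formal calculation from $C^\infty_c$ to $H^1(\R)$; I would handle it by multiplying the pointwise identity by a cutoff $\chi_R$ and letting $R\to\infty$, using the $L^\infty$-bound on $\eta''/\eta'$ together with $u\in C^0_0(\R)$ to kill the boundary contribution, or equivalently by a $C^\infty_c$-density argument based on the continuity of $Q$ on $H^1(\R)$ (which holds since $W''(\eta)\in L^\infty(\R)$). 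The main obstacle is really just this uniform boundedness of $\eta''/\eta'$, but it follows cleanly from \eqref{W basic} and the decay properties of $\eta$ collected in the appendix.
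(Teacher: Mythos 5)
Your proposal is correct, and it takes a genuinely different route from the paper. You use the classical ground-state substitution: since $\eta'=-\sqrt{W(\eta)}<0$ is a nowhere-vanishing solution of the linearized equation $2(\eta')''=W''(\eta)\,\eta'$ (obtained by differentiating $2\eta''=W'(\eta)$), the substitution $u=\eta'v$ yields the pointwise factorization $2(u')^2+W''(\eta)u^2=2(\eta')^2(v')^2+\tfrac{d}{ds}[2(\eta''/\eta')u^2]$, and the boundary term dies because $\eta''/\eta'=-W'(\eta)/(2\sqrt{W(\eta)})=-\Phi''(\eta)$ is uniformly bounded by \eqref{Phi near the wells} while $H^1(\R)$ functions vanish at infinity. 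This gives nonnegativity and the exact kernel $\R\,\eta'$ in one stroke, with no compactness argument at all. The paper instead studies the constrained minimization $\gamma=\inf\{Q(u):\int_\R u^2=1\}$, runs a concentration--compactness analysis (excluding vanishing via the uniform positivity of $W''(\eta)$ outside a compact interval, and dichotomy via a cut-off estimate) to show $\gamma$ is attained, and then uses the Euler--Lagrange equation plus a sign/nodal argument on minimizers to conclude $\gamma=0$ and identify the kernel. Your argument is shorter and more elementary for this specific lemma; the paper's route has the advantage that the same concentration--compactness machinery is what is actually needed in Lemma \ref{non linear stability}, where the quadratic form is perturbed by the $\e_j$-dependent terms and no explicit nowhere-vanishing zero mode is available, so the authors are essentially rehearsing in the simplest setting the argument they must repeat there. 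The one point to make fully rigorous in your write-up is the passage from the pointwise identity to the integrated one for general $u\in H^1(\R)$; your cutoff (or density) argument handles it, noting that $\tfrac{d}{ds}[2(\eta''/\eta')u^2]=2(\eta''/\eta')'u^2+4(\eta''/\eta')uu'$ is integrable because both $(\eta''/\eta')'=W''(\eta)/2-(\eta''/\eta')^2$ and $\eta''/\eta'$ are bounded.
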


\begin{proof} Let us consider the variational problem
\[
\gamma=\inf\Big\{Q(u): \int_{\R} u^2=1\Big\}\,.
\]		
By \eqref{eta decay first and second derivative} we have $\eta'\in H^1(\R)$. Differentiating $2\,\eta''=W'(\eta)$ we find $2\,(\eta')''=W''(\eta)\,\eta'$, and then integration by parts gives $Q(\eta')= 0$. At the same time we clearly have $Q(u)\ge-\|W''\|_{C^0(0,1)}\,\int_\R u^2$ for every $u\in H^1(\R)$, so that
\[
-\|W''\|_{C^0(0,1)}\le \g\le 0\,.
\]
We now prove that $\g$ is attained. Let $\{w_j\}_j$ be a minimizing sequence for $\g$. By the concentration-compactness principle, $\{w_j^2\,dx\}_j$ is in the vanishing case if
\begin{equation}\label{by by}
\lim_{j\to \infty} \int_{I_R}w_j^2 =0\,,\qquad\forall R>0\,,
\end{equation}
where we have set $I_R=(-R,R)$. By \eqref{eta decay} and \eqref{W near the wells} there exists $S_0$ such that
\begin{equation}
  \label{def of S0}
  W''(\eta)\ge \frac1C\,,\qquad\mbox{on $\R\setminus I_{S_0}$}\,.
\end{equation}
Therefore by applying \eqref{by by} twice with $R=S_0$ we find
\begin{eqnarray*}
\limsup_{j\to\infty}\int_{\R}w_j^2&=&\limsup_{j\to\infty}\int_{\R\setminus I_{S_0}}w_j^2\le C\,\limsup_{j\to\infty}\int_{\R\setminus I_{S_0}}W''(\eta)\,w_j^2
\\
&=& C\,\limsup_{j\to\infty}\int_{\R}W''(\eta)\,w_j^2\le \lim_{j\to \infty}  Q(w_j)=\g\le 0
\end{eqnarray*}
a contradiction to $\int_\R w_j^2=1$. If, instead, $\{w_j^2\,dx\}_j$ is in the dichotomy case, then there is $\alpha \in (0,1)$ such that for every $\tau\in (0, \a/2)$ there exist $R>0$ and $R_j\to\infty$ as $j\to\infty$ such that
\begin{equation}
  \label{lemma 1 dicho}
  \Big|1-\a-\int_{I_R} w_j^2\Big|<\tau\,,\qquad \Big|\a-\int_{\R\setminus I_{R_j}} w_j^2\Big|<\tau\,,
\end{equation}
where, without loss of generality, we can assume $R\ge S_0$ for $S_0$ as in \eqref{def of S0}. In particular, if $\vphi$ is a cut-off function between $I_R$ and $I_{R_j}$, then we have
\begin{equation}\label{dichotomy inequality Q1}
Q(w_j)=Q(\vphi\,w_j)+Q\big((1-\vphi)\,w_j\big)+E_j\,,
\end{equation}		
where, taking into account that $\varphi'$ and $(1-\varphi)\,\varphi$ are supported in $I_{R_j}\setminus I_{R}$, we have
\begin{eqnarray}
\label{error dichotomy Q1}
E_j=2\,\int_{I_{R_j}\setminus I_R} W''(\eta)\,(1-\varphi)\,\varphi \,w_j^2+4\,\int_{I_{R_j}\setminus I_R}(\varphi w_j)'((1-\varphi) w_j)'\,.
\end{eqnarray}
The first integral in \eqref{error dichotomy Q1} is non-negative by \eqref{def of S0}, while the second integral contains a non-negative term of the form $\varphi\,(1-\varphi)\,(w_j')^2$: therefore, by \eqref{lemma 1 dicho},
\begin{eqnarray}\nonumber
E_j&\ge&4\,\int_{I_{R_j}\setminus I_R} w_j\,\,w_j'\,(1-\varphi)\,\varphi'-w_j\, w_j'\,\varphi\,\varphi'-w_j^2 \,(\varphi')^2\\
		&\geq& -C\,\int_{I_{R_j}\setminus I_R} w_j^2 -C\,\Big(\int_{I_{R_j}\setminus I_R} w_j^2\Big)^{1/2}\,\Big(\int_{\R} (w_j')^2\Big)^{1/2}\ge-C\,\sqrt\tau\,,\label{mnb}
\end{eqnarray}		
where we have also used $Q(w_j) \to \gamma$ as $j\to \infty$ to infer
\[
\int_{\R} (w_j')^2 \leq Q(w_j)+ \| W\|_{C^0[0,1]}\le C\,.
\]
We can take $\vphi$ supported in $I_{R+1}$. In this way, up to extracting a subsequence, we have that $\vphi\,w_j$ admits a weak limit $w$ in $H^1(\R)$. By lower semicontinuity, homogeneity of $Q$ and \eqref{lemma 1 dicho} we have that
\begin{equation}
  \label{what about vphi wj}
  \liminf_{j\to\infty}Q(\vphi\,w_j)\ge Q(w)\ge \g\,\int_\R w^2\ge\,(1-\a)\,\g-C\,\tau\,.
\end{equation}
Finally, since $(1-\vphi)$ is supported on $\R\setminus I_{S_0}$, by \eqref{def of S0} we have
\[
\int_\R Q\big((1-\vphi)\,v_j\big)\ge\frac1{C}\,\int_\R\,(1-\vphi)^2\,w_j^2\ge\frac{\a}C-C\,\tau\,,
\]
so that, combining \eqref{dichotomy inequality Q1}, \eqref{mnb}, and \eqref{what about vphi wj} we find
\[
\g\ge (1-\a)\,\g+\frac\a{C}-C\,\sqrt\tau\,.
\]
Letting $\tau\to 0^+$ we find a contradiction with $\g\le 0$ and $\a>0$. Having excluded vanishing and dichotomy, we have proved the existence of minimizers of $\g$.

\medskip

Let now $u$ be a minimizer of $\g$. Up to replace with $u$ with $|u|$ we can assume $u\ge0$. By a standard variational argument there exists $\l\in\R$ such that
		\begin{equation}\label{eq EL one dim}
		\int_{\R}2 u'\,v'+W''(\eta)\,u\,v=\lambda \int_{\R} u\,v\,,\qquad\forall v\in H^1(\R)\,.
		\end{equation}		
Testing with $v=\eta'$ and recalling that $2\,(\eta')''=W''(\eta)\eta'$, we deduce that
		$$\lambda \int_{\R} \eta' u=0,$$		
		and, since $u \geq 0$, $\int_\R u^2=1$, and $\eta'<0$, we find $\lambda =0$. Thus $u$ is a $C^2$-solution of the ODE $2\,u''=W''(\eta)\,u$ on $\R$. If $u(r_0)=0$ for some $r_0\in\R$, then $u'(r_0)\ne 0$ (otherwise we would have $u=0$ on $\R$, against $\int_\R u^2=1$), and $u'(r_0)\ne 0$ contradicts $u\ge0$ on $\R$. Hence, $u>0$ on $\R$.

\medskip

Having proved that every minimizer of $\g$ is either positive or negative on the whole $\R$, we deduce that $\g=0$. Indeed, if $u_1$ and $u_2$ are minimizers of $\g$ and, say, they are both positive on $\R$, then they solve \eqref{eq EL one dim} with $\l=0$, and thus
\[
2\,\g=Q(u_1)+Q(u_2)=Q(u_1-u_2)\ge \g\,\int_\R(u_1-u_2)^2=2\,\g\,\Big(1-\int_\R u_1\,u_2\Big)\,.
\]
Since $\int_\R u_1\,u_2\in(0,1)$, $\g<0$ would give a contradiction. Having established that $\g=0$, we now know that $\eta'$ is a minimizer of $\g$. If $u$ is also minimizer of $\g$, then, again by \eqref{eq EL one dim},
\[
Q(u+s\,\eta')=Q(u)+s^2\,Q(\eta')=0\qquad\forall s\in\R\,.
\]
In particular, if $s\in\R$ is such that $u+s\,\eta'$ is not identically zero on $\R$, then $(u+s\,\eta')/\|u+s\,\eta'\|_{L^2(\R)}^2$ is a minimizer of $\g$, and thus $u+s\,\eta'$ is either positive or negative on the whole $\R$. Let $s_0=\inf\{s:\mbox{$u+s\,\eta'<0$ on $\R$}\}$. If, say, $u$ is a negative minimizer (like $\eta'$ is), then $s_0\le 0$; while, clearly, $s_0>-\infty$, since, for $s$ negative enough, we must have $u+s\,\eta'>0$ at at least one point, and thus everywhere. Since $u+s_0\,\eta'\le0$ on $\R$ with $u+s_0\,\eta'=0$ at at least one point, we deduce that $u+s_0\,\eta'=0$ on $\R$.
\end{proof}

\begin{lemma}[Second stability lemma]\label{non linear stability} If $n\ge 2$ and $W\in C^{2,1}[0,1]$ satisfies \eqref{W basic} and \eqref{W normalization}, then there exists a universal constant $\e_0$ with the following property. If $u_\e\in\RR_0^*$ is a minimizer of $\psi(\e)$ for $\e<\e_0$ and $h\in H^1(\R^n)$ is a radial function such that
\begin{equation}\label{eq inf orthogonality}
\int_{\R^n} V'(u_\e)\, h=0\,,
\end{equation}
then
\begin{equation}\label{second varation 2}
	\int_{\R^n}2\,\e\,|\nabla h|^2+\Big(\frac{W''(u_\e)}\e-\l(\e) V''(u_\e)\Big)\, h^2\geq \frac1{C}\,\int_{\R^n}\e\,|\nabla h|^2+\frac{h^2}\e\,,
	\end{equation}
where $\l(\e)$ is the Lagrange multiplier of $u_\e$ as in \eqref{lambda formula again}.
\end{lemma}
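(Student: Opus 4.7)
The plan is a contradiction argument based on blowing up the transition layer via the change of variables $r=R_0+\e\,s$. Assume \eqref{second varation 2} fails: there exist $\e_j\to 0^+$ and radial $h_j\in H^1(\R^n)$ satisfying $\int_{\R^n}V'(u_{\e_j})h_j=0$ and, after normalization,
$$\int_{\R^n}\Big[\e_j|\nabla h_j|^2+\frac{h_j^2}{\e_j}\Big]=1,$$
for which the left-hand side of \eqref{second varation 2} (evaluated at $h_j$) tends to $0$. Setting $\tilde h_j(s)=h_j(R_0+\e_j s)$ and rewriting the radial integral with $r=R_0+\e_j s$, the normalization becomes
$$n\om_n\int_{-R_0/\e_j}^{\infty}\big[(\tilde h_j')^2+\tilde h_j^2\big](R_0+\e_j s)^{n-1}\,ds=1,$$
while the vanishing quadratic form reads
$$n\om_n\int_{-R_0/\e_j}^{\infty}\!\!\Big\{2(\tilde h_j')^2+\big[W''(u_{\e_j}(R_0+\e_j s))-\e_j\l(\e_j)V''(u_{\e_j}(R_0+\e_j s))\big]\tilde h_j^2\Big\}(R_0+\e_j s)^{n-1}\,ds\to 0.$$

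The resolution Theorem \ref{theorem asymptotics of minimizers} gives $u_{\e_j}(R_0+\e_j s)\to\eta(s-\tau_0)$ locally uniformly with uniform exponential decay toward the wells $\{0,1\}$, and Corollary \ref{corollary optimal energy and lagrange multiplier} ensures $\e_j\l(\e_j)\to 0$; since $V''$ is bounded on $[0,1]$, the $V''$ term is $\mathrm{O}(\e_j)$ and negligible. Using $W''(0),W''(1)>0$, fix $S_0$ with $W''(\eta(s-\tau_0))\ge c_0>0$ for $|s|\ge S_0$, so that $W''(u_{\e_j}(R_0+\e_j s))\ge c_0/2$ on $|s|\ge S_0+1$ for $j$ large. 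Combined with the vanishing of the quadratic form, this coercivity yields a uniform weighted $H^1$ bound on $\tilde h_j$. Since the weight $(R_0+\e_j s)^{n-1}$ is uniformly positive on any fixed compact, Rellich's theorem produces (up to subsequence) $\tilde h_j\to\tilde h$ in $L^2_{\loc}(\R)$ and $\tilde h_j\weak\tilde h$ weakly in $H^1(\R)$. Taking the $\liminf$ via weak lower semicontinuity on the gradient, strong $L^2_{\loc}$ convergence on $[-S_0-1,S_0+1]$, and Fatou on the complement gives
$$\int_{\R}\big[2(\tilde h')^2+W''(\eta(s-\tau_0))\,\tilde h^2\big]\,ds\le 0,$$
so Lemma \ref{lemma one dimensional} forces $\tilde h(s)=t\,\eta'(s-\tau_0)$ for some $t\in\R$. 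The volume constraint, divided by $n\om_n\e_j$, rewrites as
$$\int_{-R_0/\e_j}^{\infty}V'(u_{\e_j}(R_0+\e_j s))\,\tilde h_j(s)\,(R_0+\e_j s)^{n-1}\,ds=0,$$
and the exponential decay of $V'(\eta)$ at $\pm\infty$ together with the uniform weighted $L^2$ bound on $\tilde h_j$ allow us to pass to the limit and obtain $R_0^{n-1}\int_{\R}V'(\eta(\cdot-\tau_0))\tilde h\,ds=0$; since $\int_{\R}V'(\eta)\eta'\,ds=V(0)-V(1)=-1$, we conclude $t=0$ and $\tilde h\equiv 0$.

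To close the contradiction, one upgrades $\tilde h_j\weak 0$ to strong convergence in the normalization norm. The coercivity $W''(u_{\e_j})\ge c_0/2$ on $|s|\ge S_0+1$ and the vanishing quadratic form (whose negative contribution on the layer is controlled by $\int_{|s|\le S_0+1}\tilde h_j^2\,ds\to 0$ via Rellich) give
$$\int_{|s|>S_0+1}\tilde h_j^2(R_0+\e_j s)^{n-1}\,ds\to 0,\qquad\int_{-R_0/\e_j}^{\infty}(\tilde h_j')^2(R_0+\e_j s)^{n-1}\,ds\to 0,$$
so the normalization quantity tends to $0$, contradicting its unit value. The main technical obstacle is the non-compactness of the blown-up domain $(-R_0/\e_j,\infty)$ and the dual behavior of the weight $(R_0+\e_j s)^{n-1}$ (degenerate at $s=-R_0/\e_j$, growing at infinity); both ends are tamed simultaneously by the positivity of $W''$ at the two wells.
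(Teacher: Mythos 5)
Your proof is correct, and while it shares the paper's core skeleton (blow-up at the layer via $r=R_0+\e_j s$, Lemma \ref{lemma one dimensional} to identify the limiting kernel as $t\,\eta'$, and the orthogonality constraint \eqref{eq inf orthogonality} to force $t=0$), the compactness mechanism is genuinely different and somewhat cleaner. The paper first performs a preliminary reduction to the weaker inequality with only $\int_{\R^n}h^2/\e$ on the right (its step one), normalizes $\int h_j^2/\e_j=1$, and then runs the full concentration--compactness trichotomy on the measures $\beta_j^2\,(R_0+\e_j s)^{n-1}ds$, excluding vanishing and dichotomy by separate cut-off arguments; in the compactness case the limit carries full mass, so $Q(\beta)\le 0$ plus orthogonality yields the contradiction. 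You instead normalize the full $\e$-weighted $H^1$ norm, show the weak limit is \emph{zero} (orthogonality kills the only admissible nonzero profile), and then use the coercivity $W''(\eta)\ge c_0$ outside a fixed compact $|s|\le S_0$ to upgrade to strong convergence of the whole norm to zero: the negative part of the potential term lives on a compact set where Rellich applies, and the positive part dominates the exterior mass, so no trichotomy is needed and the paper's step one is absorbed for free (the vanishing of the quadratic form also forces $\int(\tilde h_j')^2\to 0$). What your route buys is the elimination of the dichotomy/vanishing case analysis and of the preliminary reduction; what it requires, and what you should state explicitly, is the uniform exponential bound $V'(u_{\e_j}(R_0+\e_j s))\le C\,e^{-|s|/C}$ (not just the decay of $V'(\eta)$) needed to pass to the limit in the orthogonality constraint --- this is exactly what Theorem \ref{theorem asymptotics of minimizers} together with \eqref{V near the wells} provides, and is the same estimate the paper invokes at the corresponding point.
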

	
\begin{proof} {\it Step one}: We show that is enough to prove the lemma with
\begin{equation}\label{second varation 2 redux}
\int_{\R^n}2\,\e\,|\nabla h|^2+\Big(\frac{W''(u_\e)}\e-\l(\e) V''(u_\e)\Big)\, h^2\geq \frac1C\,\int_{\R^n}\frac{h^2}\e\,,
\end{equation}
in place of \eqref{second varation 2}. Indeed, if $\e_0$ is small enough, then $|\l(\e)|\le c(n)$ thanks to \eqref{lambda estimate eps}, and thus we can find a universal constant $C_*$ such that
\[
\int_{\R^n}\Big|\frac1{\e}\,W''(u_\e)-\l(\e) V''(u_\e)\Big|\, h^2\leq C_* \int_{\R^n} \frac{h^2}\e\,,
\]
whenever $u_\e$ is a minimizer of $\psi(\e)$, $\e<\e_0$, and $h\in H^1(\R^n)$. Let us now fix a radial function $h\in H^1(\R^n)$ satisfying \eqref{eq inf orthogonality}. If $C_*\,\int_{\R^n} h^2/\e \leq \int_{\R^n}\e\,|\nabla h|^2$, then we trivially have
\begin{eqnarray*}
\int_{\R^n}2\,\e|\nabla h|^2+\Big(\frac{W''(u_\e)}{\e}-\l(\e) \,V''(\zeta_\e)\Big)\, h^2
\ge \int_{\R^n}2\,\e |\nabla h|^2- C_*\,\int_{\R^n} \frac{h^2}\e\geq \int_{\R^n}\e\,|\nabla h|^2\,;
\end{eqnarray*}		
if, instead, $C_*\,\int_{\R^n} h^2/\e \ge \int_{\R^n}\e\,|\nabla h|^2$, then we deduce from \eqref{second varation 2 redux}
\[
\int_{\R^n}2\,\e |\nabla h|^2+\Big(\frac{W''(u_\e)}\e-\l(\e) \,V''(u_\e)\Big)\, h^2\geq \frac1{C}\,\int_{\R^n}\,\frac{h^2}\e\geq \frac1{C\,C_*}\int_{\R^n}\e\,|\nabla h|^2\,.
\]		
In both cases, \eqref{second varation 2} is easily deduced.

\medskip

\noindent {\it Step two}: We prove \eqref{second varation 2 redux}. We argue by contradiction, and consider $\e_j\to 0^+$ as $j\to\infty$, $u_j\in\RR_0^*$ minimizers of $\psi(\e_j)$, and radial functions $h_j\in H^1(\R^n)$ such that
\begin{eqnarray}\label{js 1}
  &&\int_{\R^n}V'(u_j)\,h_j=0\,,
  \\\label{js 2}
  &&\int_{\R^n}2\,\e_j\,|\nabla h_j|^2+\Big(\frac{W''(u_j)}{\e_j}-\l_j\, V''(u_j)\Big) \,h_j^2<\frac1j\,\int_{\R^n}\frac{h_j^2}{\e_j}\,.
\end{eqnarray}
where $\l_j$ are the Lagrange multipliers corresponding to $u_j$. By homogeneity of \eqref{js 1} and \eqref{js 2} we can also assume that
\begin{equation}
  \label{js 3}
  \int_{\R^n}\frac{h_j^2}{\e_j}=1\,.
\end{equation}
Therefore, setting
\[
\eta_j(s)=u_j(R_0+\e_j\,s)\,,\qquad \beta_j(s)=h_j(R_0+\e_j\,s)\,,\qquad s\ge -\frac{R_0}{\e_j}\,,
\]
we can recast \eqref{js 2} and \eqref{js 3} as
\begin{eqnarray} \label{js 4}
&&\int_{-R_0/\e_j}^\infty  \Big(2\,(\beta_j')^2+(W''(\eta_j)-\e_j\,\l_j\,V''(\eta_j))\,\beta_j^2\Big)\,(R_0+\e_j\,s)^{n-1}\,ds\,\le \frac1j\,,
\\\label{js 5}
&&\int_{-R_0/\e_j}^\infty  \beta_j(s)^2\,(R_0+\e_j\,s)^{n-1}\,ds=1\,.
\end{eqnarray}
By $\e_j\to 0^+$ and by \eqref{lambda estimate eps} we know $\l_j\to c(n)$ as $j\to\infty$, which combined with $\|V''\|_{C^0[0,1]}\le C$ and $\e_j\to0^+$ shows that \eqref{js 4} and \eqref{js 5} imply
\begin{eqnarray}
\label{js 6}
\limsup_{j\to\infty}\int_{-R_0/\e_j}^\infty  \Big\{2\,(\beta_j')^2+W''(\eta_j)\,\beta_j^2\Big\}\,(R_0+\e_j\,s)^{n-1}\,ds\le 0\,.
\end{eqnarray}
Since $W''$ is bounded on $[0,1]$, by \eqref{js 5} and \eqref{js 6} we deduce that $\{\beta_j\}_j$ is bounded in $H^1(-s_0,s_0)$ for every $s_0>0$. In particular there exists $\beta\in H^1_{{\rm loc}}(\R)$ such that, up to extracting subsequences, $\beta$ is the weak limit of $\{\beta_j\}_j$ in $H^1(-s_0,s_0)$ for every $s_0>0$. By $\beta_j'\weak\beta'$ in $L^2(-s_0,s_0)$ for every $s_0>0$ we easily find
\begin{eqnarray}\label{js 7}
  \liminf_{j\to\infty}\int_{-R_0/\e_j}^\infty  2\,\beta_j'(s)^2(R_0+\e_j\,s)^{n-1}\,ds\ge R_0^{n-1}\,\int_{\R} 2\,(\beta')^2\,.
\end{eqnarray}
We now apply the concentration-compactness principle to the sequence of measures
\[
\mu_j=1_{(-R_0/\e_j,\infty)}(s)\,\beta_j(s)^2\,(R_0+\e_j\,s)^{n-1}\,ds\,,
\]
which satisfy $\mu_j(\R)=1$ thanks to \eqref{js 3}. We claim that, if the compactness case hold, and thus
\begin{equation}
  \label{js 8}
  \lim_{s_0\to+\infty}\sup_j\,\mu_j(\R\setminus[-s_0,s_0])=0\,,
\end{equation}
then we can reach a contradiction, and complete the proof of the lemma. To prove this claim, let us set
\[
\eta_0(s)=\eta(s-\tau_0)\,,
\]
for $\tau_0$ as in \eqref{definition of tau0}, and let us notice that, for every $s_0>0$ we have
\begin{eqnarray}\nonumber
  &&\limsup_{j\to\infty}\Big|\int_{-R_0/\e_j}^\infty  W''(\eta_j)\beta_j(s)^2\,(R_0+\e_j\,s)^{n-1}\,ds-R_0^{n-1}\,\int_{\R}W''(\eta_0)\,\beta^2\Big|
  \\\label{js 9}
  &\le&\limsup_{j\to\infty}\int_{-s_0}^{s_0} \Big| W''(\eta_j)\beta_j(s)^2\,(R_0+\e_j\,s)^{n-1}-R_0^{n-1}\,W''(\eta_0)\,\beta^2\Big|
  \\\nonumber
  &&+\|W''\|_{C^0[0,1]}\,\sup_{j\in\N}\mu_j(\R\setminus[-s_0,s_0])+R_0^{n-1}\,\|W''\|_{C^0[0,1]}\,\int_{\R\setminus[-s_0,s_0]}\beta^2\,.
\end{eqnarray}
Since $\beta_j\to\beta$ in $L^2_{{\rm loc}}(\R)$ and $\eta_j\to\eta_0$ locally uniformly on $\R$ thanks to Theorem \ref{theorem asymptotics of minimizers}, the first term on the right-hand side of \eqref{js 9} is equal to zero. Letting now $s_0\to\infty$, the second term goes to zero thanks to \eqref{js 8}, while the third terms goes to zero thanks to the fact that \eqref{js 8} implies in particular
\begin{equation}
  \label{js 10}
  R_0^{n-1}\,\int_\R\beta^2=1\,.
\end{equation}
We can combine this information with \eqref{js 7} and finally deduce from \eqref{js 6} that
\begin{equation}
  \label{js 11}
  \int_\R\,2\,(\beta')^2+W''(\eta_0)\,\beta^2\le 0\,.
\end{equation}
By Lemma \ref{lemma one dimensional} we deduce that, if we set $\beta_0(s)=\beta(s+\tau_0)$, then $\beta_0=t\,\eta'$ for some $t\ne 0$ ($t=0$ being ruled out by \eqref{js 10}). In particular, $\beta=t\,\eta_0'$, and therefore
\[
\int_\R\,V'(\eta_0)\,\beta=t\,V(\eta_0)|^{+\infty}_{-\infty}=t\,V(1)=t\ne 0\,.
\]
However, by \eqref{js 1}, we see that
\[
0=\int_{\R^n}V'(u_j)\,h_j=\int_{-R_0/\e_j}^\infty\,V'(\eta_j)\,\beta_j(s)\,(R_0+s\,\e_j)^{n-1}\,ds\,,\qquad\forall j\,,
\]
and we can thus obtain a contradiction by showing that
\begin{equation}
  \label{js 12}
\lim_{j\to\infty}\int_{-R_0/\e_j}^\infty\,V'(\eta_j)\,\beta_j(s)\,(R_0+s\,\e_j)^{n-1}\,ds=R_0^{n-1}\,\int_\R\,V'(\eta_0)\,\beta\,.
\end{equation}
This is proved by noticing that \eqref{V near the wells}, \eqref{eta decay}, \eqref{etaj 2} and \eqref{etaj 3} give
\[
0\le\max\{V'(\eta_j),V'(\eta_0)\}\le C\,e^{-|s|/C}\,,
\]
for every $s\in\R$ (or for every $s\ge-R_0/\e_j$, in the case of $\eta_j$). In particular,
\begin{eqnarray*}
&&\lim_{s_0\to\infty}\limsup_{j\to\infty}\,\Big[\int_{-R_0/\e_j}^{-s_0}+\int_{s_0}^\infty\Big] V'(\eta_j)\,|\beta_j|\,(R_0+s\,\e_j)^{n-1}\,ds
\\
&&\le\,C\,\lim_{s_0\to\infty}\,\limsup_{j\to\infty}\Big(\int_{\{|s|>s_0\}}e^{-|s|/C}\,(R_0+s\,\e_j)^{n-1}\,ds\Big)^{1/2}\,\mu_j(\R\setminus[-s_0,s_0])^{1/2}=0\,,
\end{eqnarray*}
so that a similar argument to the one used in \eqref{js 9} can be repeated to prove \eqref{js 12}.

\medskip

We are thus left to prove that the sequence of probability measures $\{\mu_j\}_j$ cannot be in the vanishing case nor in the dichotomy case of the concentration-compactness principle.

\medskip

\noindent {\it To exclude that $\{\mu_j\}_j$ is in the vanishing case}: Since $\eta_j\to\eta$ locally uniformly on $\R$, up to take $j$ large enough and for $S_0$ as in \eqref{def of S0} we have $W''(\eta_j(s))\ge 1/C$ for $|s|\ge S_0$, $s\ge-R_0/\e_j$. Since we are in the vanishing case, it holds
\begin{equation}
  \label{js vanishing 1}
  \lim_{j\to\infty}\int_{-S_0}^{S_0}\,\beta_j(s)^2\,(R_0+\e_j\,s)^{n-1}\,ds=0\,,
\end{equation}
so that, by using first the lower bound on $W''$, and then \eqref{js vanishing 1}, we get
\begin{eqnarray*}
&&\frac1C\,\limsup_{j\to\infty}\Big[\int_{-R_0/\e_j}^{-S_0}+\int_{S_0}^\infty\Big]\beta_j(s)^2\,\,(R_0+\e_j\,s)^{n-1}\,ds
\\&\le&\limsup_{j\to\infty}\Big[\int_{-R_0/\e_j}^{-S_0}+\int_{S_0}^\infty\Big]W''(\eta_j)\,\beta_j(s)^2\,\,(R_0+\e_j\,s)^{n-1}\,ds
\\
&=&\limsup_{j\to\infty}\int_{-R_0/\e_j}^\infty\,W''(\eta_j)\,\beta_j(s)^2\,\,(R_0+\e_j\,s)^{n-1}\,ds\le0
\end{eqnarray*}
where in the last inequality we have used \eqref{js 6}. Combining this information with \eqref{js vanishing 1} we obtain a contradiction to \eqref{js 5}, thus excluding the vanishing case.

\medskip

\noindent {\it To exclude that $\{\mu_j\}_j$ is in the dichotomy case}: With $S_0$ as above, if we are in the dichotomy case, then there exists $\alpha \in (0,1)$ such that for every $\tau\in(0,\a/2)$ there exist $R>S_0$ and $R_j\to\infty$ such that
\begin{equation}
  \label{js fine}
  |\mu_j(I_R) -(1-\alpha)| < \tau\,,\qquad |\mu_j(\R\setminus I_{R_j})-\alpha|<\tau\,,\qquad\forall j\,.
\end{equation}
Setting $A_j=\vphi\,\beta_j$, $B_j=(1-\vphi)\,\beta_j$, where $\vphi$ is a cut-off function between $B_R$ and $B_{R+1}$, and setting for the sake of brevity,
\[
Q_j(A,B)=\int_{-R_0/\e_j}^\infty  \Big\{2\,A'\,B'+W''(\eta_j)\,A\,B\Big\}\,(R_0+\e_j\,s)^{n-1}\,ds\,,\qquad Q_j(A)=Q_j(A,A)\,,
\]
we can rewrite \eqref{js 6} as
\begin{equation}
\label{js 6-di}
\limsup_{j\to\infty} Q_j(A_j)+Q_j(B_j)+2\,Q_j(A_j,B_j)\le 0\,,
\end{equation}
Now, since $\vphi'$ and $(1-\varphi)\,\varphi$ are supported in $I_{R+1}\setminus I_R$, we see that
\begin{eqnarray*}
  Q_j(A_j,B_j)&\ge&2\,
  \int_{I_{R+1}\setminus I_R}\,(1-2\,\vphi)\,\vphi'\,\beta_j\,\beta_j' \,(R_0+\e_j\,s)^{n-1}\,ds
  \\
  &&
  +\int_{I_{R+1}\setminus I_R}\,\Big\{W''(\eta_j)-(\vphi')^2\Big\}\,\beta_j^2\,(R_0+\e_j\,s)^{n-1}\,ds
\end{eqnarray*}
where, thanks to \eqref{js 6} and H\"older inequality,
\begin{eqnarray*}
    &&\int_{I_{R+1}\setminus I_R}\,(1-2\,\vphi)\,\vphi'\,\beta_j\,\beta_j'\,(R_0+\e_j\,s)^{n-1}\,ds\le C\,\mu_j(I_{R+1}\setminus I_R)^{1/2}\le C\,\sqrt\tau\,
    \\
    &&\int_{I_{R+1}\setminus I_R}\,\Big\{W''(\eta_j)-(\vphi')^2\Big\}\,\beta_j^2\,(R_0+\e_j\,s)^{n-1}\,ds
    \le C\,\mu_j(I_{R+1}\setminus I_R)\le C\,\tau\,.
\end{eqnarray*}
We thus conclude that $Q_j(A_j,B_j)\ge-C\,\sqrt\tau$ for every $j$, and thus, by \eqref{js 6-di}, that
\begin{equation}
\label{js 6-tri}
\limsup_{j\to\infty} Q_j(A_j)+Q_j(B_j)\le C\,\sqrt\tau\,.
\end{equation}
Now, since the supports of the $A_j$'s are uniformly bounded, we easily see that there exists $A\in H^1(\R)$ such that $A_j\weak A$ weakly in $H^1(\R)$; in particular,
\[
\liminf_{j\to\infty}Q_j(A_j)\ge\int_\R\,2\,(A')^2+W''(\eta_0)\,A^2\ge 0\,,
\]
where in the last inequality we have used Lemma \ref{lemma one dimensional}. By combining this last inequality with \eqref{js 6-tri},  $W''(\eta_j)\ge 1/C$ on $\R\setminus I_{S_0}$, and $R\ge S_0$, we conclude that
\[
C\,\sqrt\tau \ge\limsup_{j\to\infty} Q_j(B_j)\ge
\frac1{C}\,\limsup_{j\to\infty}\int_{-R_0/\e_j}^\infty\,(1-\vphi)^2\,\beta_j^2\,(R+s\,\e_j)^{n-1}\,ds
\]
and thus, by \eqref{js fine}, that $C\,\sqrt\tau \ge(\a/C)-C\,\tau$. Letting $\tau\to 0^+$ we obtain a contradiction with $\a>0$.
\end{proof}

\begin{lemma}[Third stability lemma]\label{lemma third stability} If $n\ge 2$ and $W\in C^{2,1}[0,1]$ satisfies \eqref{W basic} and \eqref{W normalization}, then there exist universal constants $\de_0$ and $\e_0$ such that, if $u_\e\in\RR_0^*$ is a minimizer of $\psi(\e)$ for $\e<\e_0$ and $u\in H^1(\R^n;[0,1])$ is a radial function with
\begin{eqnarray}
  \label{cls hp 1}
&&\int_{\R^n} V(u)=1\,,
\\
\label{cls hp 2}
&&\int_{\R^n}(u-u_\e)^2\le C\,\e\,,
\\
\label{cls hp 3}
&&\|u-u_\e\|_{L^\infty(\R^n)}\le\de_0\,,
\end{eqnarray}
then, setting $h=u-u_\e$,
\begin{equation}\label{cls conclusion}
	\int_{\R^n}2\,\e\,|\nabla h|^2+\Big(\frac{W''(u_\e)}\e-\l(\e) V''(u_\e)\Big)\, h^2\geq \frac1{C}\int_{\R^n}\e\,|\nabla h|^2+\frac{h^2}\e\,.
	\end{equation}
where $\l(\e)$ is the Lagrange multiplier of $u_\e$ as in \eqref{lambda formula again}.
\end{lemma}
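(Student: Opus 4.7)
I would argue by contradiction, following the concentration--compactness strategy of the proof of Lemma \ref{non linear stability}. Suppose the conclusion fails: then there exist $\e_j \in (0,\e_0)$, minimizers $u_{\e_j}\in\RR_0^*$ of $\psi(\e_j)$, and radial $u_j\in H^1(\R^n;[0,1])$ satisfying \eqref{cls hp 1}--\eqref{cls hp 3} such that, setting $h_j=u_j-u_{\e_j}$,
\[
Q_{\e_j}(h_j)<\frac{1}{j}\,\|h_j\|_{\e_j}^2,\qquad \|h_j\|_{\e_j}^2:=\int_{\R^n}\e_j|\nabla h_j|^2+h_j^2/\e_j\,,
\]
where $Q_{\e}$ denotes the quadratic form on the left of \eqref{cls conclusion}. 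After normalizing $\|h_j\|_{\e_j}=1$ we may assume $Q_{\e_j}(h_j)\to 0$. The main subcase is $\e_j\to 0^+$; the subcase $\e_j\to\e_*\in(0,\e_0)$ is handled by the same strategy without any $\e$-rescaling and is simpler.

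Rescale $\beta_j(s)=h_j(R_0+\e_j s)$ and $\eta_j(s)=u_{\e_j}(R_0+\e_j s)$, so that $\eta_j\to\eta_0:=\eta(\cdot-\tau_0)$ locally uniformly by Theorem \ref{theorem asymptotics of minimizers}. Apply the concentration--compactness principle to $[(\beta_j')^2+\beta_j^2](R_0+\e_j s)^{n-1}ds$: the vanishing and dichotomy cases are ruled out as in steps of Lemma \ref{non linear stability}, since those arguments rest only on the coercivity of $W''(\eta_0)$ outside a bounded interval (not on orthogonality). In the compactness case, $\beta_j\to\beta$ in $L^2_{\mathrm{loc}}(\R)$ and, passing $Q_{\e_j}(h_j)\to 0$ to the limit, one obtains $\int_\R 2(\beta')^2+W''(\eta_0)\beta^2\,ds\le 0$; by Lemma \ref{lemma one dimensional} this forces $\beta=t\,\eta_0'$ for some $t\in\R$.

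The key new step is to use \eqref{cls hp 1} and \eqref{cls hp 3} to force $t=0$, which will contradict the positive normalization. On one hand, $\|h_j\|_{L^\infty}\le\de_0$ passes to $|\beta|\le\de_0$ pointwise, yielding $|t|\le\de_0/\|\eta_0'\|_{L^\infty(\R)}$. On the other hand, Taylor expanding the mass constraint $\int_{\R^n}[V(u_j)-V(u_{\e_j})]=0$ gives
\[
\int_{\R^n}V'(u_{\e_j})\,h_j=-\tfrac12\int_{\R^n}V''(u_{\e_j})\,h_j^2+R_j\,,\qquad |R_j|\le C\de_0\int_{\R^n}h_j^2\le C\de_0\,\e_j\,.
\]
After rescaling via $r=R_0+\e_j s$ both sides are of order $\e_j$; dividing by $\e_j$ and passing to the limit using $\int_\R V'(\eta_0)\eta_0'=V(0)-V(1)=-1$ and setting $K=\int_\R V''(\eta_0)(\eta_0')^2\,ds$, one obtains the algebraic relation
\[
-\,t+\tfrac12\,t^2\,K=O(\de_0)\,,
\]
so $t=O(\de_0)$ or $t=2/K+O(\de_0)$. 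For a universally small $\de_0$, the second possibility is incompatible with the $L^\infty$ bound $|t|\le\de_0/\|\eta_0'\|_{L^\infty}$, leaving $t=O(\de_0)$. However, in the compactness case the normalization $\|h_j\|_{\e_j}=1$ passes to $t^2\cdot n\om_n R_0^{n-1}\int_\R[(\eta_0'')^2+(\eta_0')^2]\,ds=1$, a fixed positive universal constant, contradicting $|t|=O(\de_0)$ for $\de_0$ small.

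The main obstacle is the careful passage to the limit in the rescaled mass constraint, in particular exchanging the nonlinear integrands $V'(u_{\e_j})$ and $V''(u_{\e_j})$ with their limits against $\beta_j=t\eta_0'+o(1)$, which is justified by the concentration of these functions on a bounded $s$-interval from Theorem \ref{theorem asymptotics of minimizers}. The conceptually delicate point is that hypothesis \eqref{cls hp 3} is essential: without it, the mass constraint alone admits the nontrivial soft-kernel direction $\beta=(2/K)\eta_0'$, which genuinely makes the quadratic form nearly degenerate (one can check that $Q_{\e}$ evaluated on a suitable rescaling of this direction is of order $-\e$, not $+\|\cdot\|_\e^2$). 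The $L^\infty$ smallness cuts off precisely this soft direction, and the combination of the two constraints is what yields the desired coercivity.
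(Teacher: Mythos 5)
Your route is genuinely different from the paper's. The paper proves this lemma in half a page by a direct perturbation of Lemma \ref{non linear stability}: one picks $t$ so that $h+t\,u_\e$ satisfies the exact orthogonality \eqref{eq inf orthogonality} (possible since $\int_{\R^n}V'(u_\e)\,u_\e$ is bounded away from zero uniformly in $\e$), observes from the Taylor expansion of the mass constraint and \eqref{cls hp 2} that $|t|\le C\int_{\R^n}h^2\le C\,\e\,\min\{P_\e(h),1\}$, applies Lemma \ref{non linear stability} to $h+t\,u_\e$, and absorbs the cross terms $t\,Q_\e(h,u_\e)$ and $t\,P_\e(h,u_\e)$, which are $O(\e\,P_\e(h))$. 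No compactness is needed. Your plan instead re-runs the whole concentration--compactness machinery of Lemma \ref{non linear stability} with only approximate orthogonality. Your closing remark about the soft direction $\beta=(2/K)\,\eta_0'$ correctly identifies why \eqref{cls hp 3} is indispensable, but the argument as written is both much heavier than necessary and has a genuine gap.

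The gap is the normalization. The hypotheses \eqref{cls hp 1}--\eqref{cls hp 3} are not homogeneous in $h$, so you may not ``normalize $\|h_j\|_{\e_j}=1$'': rescaling $h_j$ destroys the mass constraint that you invoke afterwards. In effect you are working with two different limits --- the limit $\beta=t\,\eta_0'$ of the \emph{unnormalized} $\beta_j$, to which the Taylor-expanded constraint and the $L^\infty$ bound apply, and the limit $\hat\beta$ of $\beta_j/\|h_j\|_{\e_j}$, which is the one satisfying the normalization identity used in your final contradiction. These differ by the factor $L=\lim_j\|h_j\|_{\e_j}$, which the hypotheses bound from above but not from below; if $L=0$, the clash between ``$t=O(\de_0)$'' and ``$t^2\cdot(\mathrm{const})=1$'' evaporates, since the latter holds for $\hat t=t/L$, not for $t$. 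Both regimes can be rescued ($L=0$ gives $|\int V'(\eta_j)\hat\beta_j\,(R_0+\e_j s)^{n-1}ds|\le C\|h_j\|_{\e_j}\to0$, hence exact asymptotic orthogonality of $\hat\beta$ and $\hat t=0$; $L>0$ closes roughly as you say), but they must be separated, and your write-up does not do this. Relatedly, the remainder in the constraint expansion is $O(\de_0^{\g}\int h_j^2)$, i.e.\ $O(\de_0^{\g}\,t^2)$ after dividing by $\e_j$ and passing to the limit: it is this quadratic structure, combined with $|t|\le C\de_0$, that forces $t=0$ \emph{exactly} (the factor $-1+\tfrac12 K t+O(\de_0^{\g}t)$ cannot vanish for $\de_0$ small). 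Your bare ``$=O(\de_0)$'' only yields $|t|\le C\de_0$, which is not enough to contradict the normalization when $L$ is itself of order $\de_0$.
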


\begin{proof} It will be convenient to set
\begin{eqnarray*}
P_\e(u,v)&=&\int_{\R^n}\e\,\nabla u\cdot\nabla v+ \frac{u\,v}\e\,,
\\
Q_\e(u,v)&=&\int_{\R^n}\e\,\nabla u\cdot\nabla v+\Big(\frac{W''(u_\e)}\e-\l(\e)\,V''(u_\e)\Big)\,u\,v\,,
\end{eqnarray*}
as well as $P_\e(u)=P_\e(u,u)$ and $Q_\e(u)=Q_\e(u,u)$. Let us start noticing that by Theorem \ref{theorem asymptotics of minimizers} we have
\[
\lim_{\s\to 0}\,\sup_{\e<\s}\,\sup_{v_\e}\,
\Big|\int_{\R^n}V'(v_\e)\,v_\e-R_0^{n-1}\,\int_\R V'(\eta)\,\eta\Big|=0\,,
\]
where $v_\e$ runs over all radial minimizers of $\psi(\e)$. Since $\int_\R V'(\eta)\,\eta$ is a positive constant depending on $n$ and $W$ only this shows in particular that
\begin{equation}
  \label{cls 1}
\frac1{C}\le\int_{\R^n}V'(u_\e)\,u_\e\le C\,,\qquad\forall \e<\e_0\,.
\end{equation}
By \eqref{cls 1}, given $h=u-u_\e$ as in the statement, we can always find $t\in\R$ such that
\begin{equation}
  \label{cls formula for t}
  \int_{\R^n} V'(u_\e)\,(h+t\,u_\e)=0\,,\qquad\mbox{i.e.}\quad t=-\frac{\int_{\R^n}V'(u_\e)\,h}{\int_{\R^n}V'(u_\e)\,u_\e}\,.
\end{equation}
By \eqref{V second order taylor}, \eqref{cls hp 3}, and since $0\le u_\e+h\le 1$, we have that, on $\R^n$,
\begin{equation}
  \label{cls taylor second for V}
  \Big|V(u_\e+h)-V(u_\e)-V'(u_\e)\,h-V''(u_\e)\,\frac{h^2}2\Big|\le C\,\de_0\,h^2\,,
\end{equation}
so that, by \eqref{cls hp 1},
\begin{eqnarray}\label{feb}
\Big|\int_{\R^n}V'(u_\e)\,h+V''(u_\e)\,\frac{h^2}2\Big|\le C\,\de_0 \,\int_{\R^n}h^2\,,
\end{eqnarray}
and thus, thanks to $\|V''\|_{C^0[0,1]}\le C$, \eqref{cls 1}, \eqref{cls hp 2}, and \eqref{cls formula for t},
\begin{equation}
  \label{cls bound for t}
  |t|\le C\,\int_{\R^n}h^2\le C\,\e\,\min\{P_\e(h),1\}\,,
\end{equation}
By \eqref{cls formula for t} we can apply Lemma \ref{non linear stability} to $u_\e+t\,h$ and find that
\[
Q_\e(h+t\,u_\e)\ge \frac{P_\e(h+t\,u_\e)}C\,,
\]
which can be more conveniently rewritten as
\begin{equation}
\label{eq perturbed second var}
Q_\e(h)\geq \frac{P_\e(h)}C+2\,t\, \Big\{\frac{P_\e(h,u_\e)}C-Q_\e(h,u_\e)\Big\}+t^2\,\Big\{\frac{P_\e(u_\e)}C-Q_\e(u_\e)\Big\}\,.
\end{equation}
By Theorem \ref{theorem asymptotics of minimizers}, we see that $P_\e(u_\e)+|Q_\e(u_\e)|\le C$ (uniformly on $\e<\e_0$), so that \eqref{eq perturbed second var} and \eqref{cls bound for t} give
\begin{equation}
\label{cls 2}
Q_\e(h)\geq \frac{P_\e(h)}C+2\,t\, \Big\{\frac{P_\e(h,u_\e)}C-Q_\e(h,u_\e)\Big\}\,.
\end{equation}
By H\"older inequality, $ab\le (a^2+b^2)/2$, $P_\e(u_\e)\le C$, and \eqref{cls bound for t} we see that
\begin{equation}
  \label{cls 3}
  |t|\,P_\e(h,u_\e)\le \frac{|t|}2\,\big(P_\e(h)+P_\e(u_\e)\big)\le C\,\e\,P_\e(h)\,,
\end{equation}
while by $|V'|+|W''|\le C$ and $|\l(\e)|\le C$ for $\e<\e_0$ we find, arguing as in \eqref{cls 3},
\begin{equation}
  \label{cls 4}
  |t|\,Q_\e(h,u_\e)\le |t|\,\Big\{\e\,\int_{\R^n}|\nabla h|\,|\nabla u_\e|+\frac{C}\e\,\int_{\R^n}|h|\,u_\e\Big\}\le C\,\e\,P_\e(h)\,.
\end{equation}
By combining \eqref{cls 2}, \eqref{cls 3}, and \eqref{cls 4} we conclude that $Q_\e(h)\ge P_\e(h)/C$, as desired.
\end{proof}

We are finally ready to prove Theorem \ref{theorem fuglede estimate}.

\begin{proof}[Proof of Theorem \ref{theorem fuglede estimate}] We are given $u_\e$ and $h$ as in Lemma \ref{lemma third stability}, and now want to prove that
\begin{equation}\label{fuglede}
\AC(u_\e+h)-\psi(\e)\geq \frac1{C}\,\int_{\R^n}\e\,|\nabla h|^2+\frac{h^2}\e\,,
\end{equation}
holds. By \eqref{W second order taylor} and \eqref{cls hp 3} we have that
\[
\Big|W(u_\e+h)-W(u_\e)-W'(u_\e)\,h-W''(u_\e)\,\frac{h^2}2\Big|\le C\,\de_0\,h^2\,,\qquad\mbox{on $\R^n$}\,,
\]
therefore
\begin{eqnarray}\nonumber
\AC(u_\e+h)-\AC(u_\e)&\ge&\int_{\R^n}2\,\e\,\nabla u_\e\cdot\nabla h+\frac{W'(u_\e)}\e\,h
\\\label{fu proof 1}
&&+\int_{\R^n}\e\,|\nabla h|^2+\frac{W''(u_\e)}{2\,\e}\, h^2-C\,\de_0\,\int_{\R^n}h^2\,.
\end{eqnarray}
By the Euler--Lagrange equation for $u_\e$, see \eqref{EL classic W V}, we have
\begin{equation}
  \label{fu proof 2}
  \int_{\R^n}2\,\e\,\nabla u_\e\cdot\nabla h+\frac{W'(u_\e)}\e\,h=\l(\e)\,\int_{\R^n} V'(u_\e)\,h\,.
\end{equation}
Moreover, by \eqref{feb},
\begin{equation}
  \label{fu proof 3}
  \Big|\int_{\R^n}V'(u_\e)h+\int_{\R^n}V''(u_\e) \frac{h^2}2\Big|\le C\,\de_0\, \,\int_{\R^n}h^2\,.
\end{equation}
On combining \eqref{fu proof 1}, \eqref{fu proof 2}, and \eqref{fu proof 3} with \eqref{cls conclusion} we find that
\begin{eqnarray*}
\AC(u_\e+h)-\psi(\e)&\ge&
\frac12\int_{\R^n}2\,\e\,|\nabla h|^2+\Big\{\frac1\e\,W''(u_\e)-\l(\e)\,V''(u_\e)\Big\}\,h^2-C\,\de_0\,\int_{\R^n}h^2
\\
&\ge&\int_{\R^n}\e\,|\nabla h|^2+\frac{h^2}\e-C\,\de_0\,\int_{\R^n}h^2\,,
\end{eqnarray*}
so that \eqref{fuglede} follows by taking $\de_0$ small enough.
\end{proof}

\section{Proof of the uniform stability theorem}\label{section proof uniform stab} In this section we prove Theorem \ref{theorem main}-(iii), i.e., we prove \eqref{global quantitative estimate}. We focus directly on the case $(\s,m)=(\e,1)$, from which the general case follows immediately by scaling.

\begin{theorem}
  \label{thm main 2} If $n\ge 2$ and $W\in C^{2,1}[0,1]$ satisfies \eqref{W basic} and \eqref{W normalization}, then there exist universal constants $\e_0>0$ and $C$ such that, if $\e<\e_0$ and $u\in H^1(\R^n;[0,1])$ with $\int_{\R^n}V(u)=1$, then
  \begin{equation}\label{global quantitative estimate eps uno}
  C\,\sqrt{\AC(u)-\psi(\e)}\ge \inf_{x_0\in\R^n}\,
  \int_{\R^n} \big|\Phi(u) -\Phi(T_{x_0}u_\e)\big|^{n/(n-1)}
  \end{equation}
  where $T_{x_0}u_\e(x)=u_\e(x-x_0)$, $x\in\R^n$, and $u_\e$ denotes the unique minimizer of $\psi(\e)$ in $\RR_0$.
\end{theorem}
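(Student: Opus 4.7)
The proof splits along the lines already sketched in the overview: first I would establish \eqref{global quantitative estimate eps uno} on radial decreasing competitors through the selection principle of Cicalese--Leonardi, combined with the Fuglede-type estimate of Theorem \ref{theorem fuglede estimate}; then I would reduce the general case to the radial decreasing case through a quantitative symmetrization argument adapted from Fusco--Maggi--Pratelli, exploiting the sharp quantitative Euclidean isoperimetric inequality \eqref{euclidean isoperimetric stability inq}.

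\textbf{Step 1: the radial decreasing case.} Argue by contradiction: suppose there exist $\e_j \in (0,\e_0)$ and $u_j \in \RR_0 \cap H^1(\R^n;[0,1])$ with $\int V(u_j)=1$ and $\delta_j := \AC(u_j) - \psi(\e_j)$ such that $d_\Phi(u_j, u_{\e_j}) > K_j \sqrt{\delta_j}$ with $K_j \to \infty$ (for $u_j, u_{\e_j} \in \RR_0$ the infimum in \eqref{global quantitative estimate eps uno} is attained at $x_0 = 0$). Since $d_\Phi$ is uniformly bounded, $\delta_j \to 0$; by the continuity statements of Theorem \ref{theorem existence solutions} and uniqueness of radial minimizers (Corollary \ref{corollary optimal energy and lagrange multiplier}) we also get $d_\Phi(u_j, u_{\e_j}) \to 0$. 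Setting $a_j = \sqrt{\delta_j}$, for $j$ large the hypotheses \eqref{spr hp} of Theorem \ref{theorem compactness for selection principle} hold with $v_\e = u_j$, producing a minimizer $w_j \in \RR_0$ of $\gamma(\e_j, a_j, u_j)$. Comparison with $u_j$ in the definition of $\gamma$ yields $a_j\, d_\Phi(w_j, u_j) \le \AC(u_j) - \AC(w_j) \le \delta_j$, whence $d_\Phi(w_j, u_j) \le \sqrt{\delta_j}$ and $\AC(w_j) - \psi(\e_j) \le \delta_j$. By \eqref{EL classic W V Z} with the universal bound \eqref{err eps} on $\Err_{\e_j}$, the sequence $\{w_j\}$ is a critical sequence in the sense of Theorem \ref{theorem asymptotics of minimizers}; $\{u_{\e_j}\}$ is trivially one too. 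The resolution theorem then writes both $w_j$ and $u_{\e_j}$ as the same Ansatz $\eta((|x|-R_0)/\e_j - \tau) + O(\e_j)\,e^{-|\cdot|/C}$, so $h_j := w_j - u_{\e_j}$ satisfies $\|h_j\|_{L^\infty} \le C\e_j \le \delta_0$ and $\int h_j^2 \le C\e_j^2 \le C\e_j$. Theorem \ref{theorem fuglede estimate} then applies to $u = w_j$, and, through the Young--Sobolev chain leading to \eqref{fuglede conclusion 3}, gives $\|h_j\|_{L^{2n/(n-1)}}^2 \le C\,\delta_j$. The Lipschitz bound $|\Phi(a)-\Phi(b)| \le C|a-b|$, H\"older on $B_{R_0 + 1}$, and the exponential decay of $w_j, u_{\e_j}$ outside $B_{R_0 + O(\e_j)}$ supplied by the resolution theorem then combine to give $d_\Phi(w_j, u_{\e_j}) \le C\sqrt{\delta_j}$; the triangle inequality in $L^{n/(n-1)}$ (applied to the $(n-1)/n$-th root of $d_\Phi$) finally yields $d_\Phi(u_j, u_{\e_j}) \le C\sqrt{\delta_j}$, contradicting $K_j \to \infty$.

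\textbf{Step 2: reduction to the radial decreasing case.} Given a general $u \in H^1(\R^n;[0,1])$ with $\int V(u) = 1$, denote by $u^*$ its radial decreasing rearrangement. The P\'olya--Szeg\"o inequality gives $\AC(u^*) \le \AC(u)$ with $\int V(u^*) = 1$, so Step 1 applied to $u^*$ produces $d_\Phi(u^*, u_\e) \le C\sqrt{\AC(u) - \psi(\e)}$. It remains to exhibit $x_0 \in \R^n$ with
\[
d_\Phi(u, T_{x_0} u^*) \le C\sqrt{\AC(u) - \AC(u^*)},
\]
after which the triangle inequality in $L^{n/(n-1)}$ concludes the proof. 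This quantitative symmetrization estimate is obtained by adapting the argument of Fusco--Maggi--Pratelli to the Allen--Cahn setting: one applies \eqref{quadratic BV estimate} to $f = \Phi(u)$ and its radial refinement \eqref{quadratic BV estimate radial} to $f^* = \Phi(u^*) = \Phi(u)^*$, which together locate $f$ and $f^*$ within $C\sqrt{|Df|(\R^n) - |Df^*|(\R^n)}$ of a common reference characteristic function, hence of each other up to a translation by some $x_0 \in \R^n$; the Modica--Mortola identity \eqref{mm identity} and the behaviour of the Modica--Mortola remainder under symmetrization are then used to translate the $BV$-Sobolev deficit $|Df|(\R^n) - |Df^*|(\R^n)$ into the Allen--Cahn deficit $\AC(u) - \AC(u^*)$. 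The full theorem for general $(\s, m)$ follows by the scaling identities for $\Psi, u_{\s,m}$.

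\textbf{Principal obstacle.} The delicate point in Step 1 is ensuring that the resolution theorem of Theorem \ref{theorem asymptotics of minimizers} applies to $w_j$ uniformly in $j$, so that the $L^\infty$ and $L^2$ smallness hypotheses of Theorem \ref{theorem fuglede estimate} hold with universal constants. This is exactly why Theorem \ref{theorem compactness for selection principle}-(ii) is designed so that the error term $\e_j\,w_j(1-w_j)\Err_{\e_j}$ in \eqref{EL classic W V Z} is bounded \emph{independently} of $a_j$; the selection-principle scale $a_j = \sqrt{\delta_j}$ is the unique choice simultaneously making the penalty $a_j d_\Phi$ weak enough to preserve this bound and strong enough to force $\AC(w_j) - \psi(\e_j) \le \delta_j$ and $d_\Phi(w_j, u_j) \le \sqrt{\delta_j}$. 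In Step 2 the main subtlety is carrying out the Fusco--Maggi--Pratelli symmetrization in the Allen--Cahn framework: the natural deficit there is $\AC(u) - \AC(u^*)$ rather than the $BV$-Sobolev deficit, which forces a careful tracking of the Modica--Mortola remainder throughout the quantitative symmetrization procedure.
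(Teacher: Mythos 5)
Your Step 1 is sound and follows the paper's selection-principle proof of Theorem \ref{thm main 2 radial}; the only deviation is the penalization parameter ($a_j=\sqrt{\de_j}$ instead of the paper's $a_j=\de_j/d_\Phi(u_j,u_{\e_j})^2$), and your bookkeeping — $d_\Phi(w_j,u_j)\le\sqrt{\de_j}$ by comparison, $d_\Phi(w_j,u_{\e_j})\le C\sqrt{\de_j}$ from the Fuglede estimate via the resolution theorem, then the triangle inequality in $L^{n/(n-1)}$ — closes the contradiction correctly.

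The gap is in Step 2. The estimate you rely on there,
\[
d_\Phi(u,T_{x_0}u^*)\le C\,\sqrt{\AC(u)-\AC(u^*)}\,,
\]
is false for general $u\in H^1(\R^n;[0,1])$. Since $\int_{\R^n}W(u)=\int_{\R^n}W(u^*)$ by equimeasurability, the right-hand side equals $C\sqrt{\e}\,\big(\int_{\R^n}|\nabla u|^2-\int_{\R^n}|\nabla u^*|^2\big)^{1/2}$, and by the Brothers--Ziemer phenomenon this can vanish without $u$ being a translate of $u^*$: whenever $u^*$ has a plateau (a level set of positive measure inside $\{0<u^*<\max u^*\}$), one can displace the cap sitting above the plateau to a different center, producing $u$ with $\int|\nabla u|^2=\int|\nabla u^*|^2$ — hence zero symmetrization deficit — but $d_\Phi(u,T_{x_0}u^*)>0$ for every $x_0$; such examples exist with $\de_\e(u)$ arbitrarily small, so restricting to the low-deficit regime does not rescue the claim. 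This is precisely why the paper's reduction has two stages: Theorem \ref{theorem symmetrization reduction} first replaces $u$ by an $n$-symmetric competitor $v$ with $\a_\e(u)\le C\,\a_\e(v)$ and $\de_\e(v)\le C\,\de_\e(u)$ (via reflections across $n$ mutually orthogonal median hyperplanes, with the ``two orthogonal hyperplanes'' argument needed to control the relative positions of the optimal centers), and only then does Theorem \ref{theorem n symmetric to radial} invoke the quantitative P\'olya--Szeg\"o inequality of Cianchi--Fusco--Maggi--Pratelli, whose validity is restricted to $n$-symmetric functions exactly because $n$-symmetry pins the level sets of $u$ and $u^*$ to a common center. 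Relatedly, your appeal to \eqref{quadratic BV estimate} cannot yield the relative deficit $|Df|(\R^n)-|Df^*|(\R^n)$: that inequality controls distances to balls by the absolute deficit $|Df|(\R^n)-n\,\om_n^{1/n}$, which in general dominates the relative one. Your argument therefore needs the intermediate $n$-symmetrization step; without it Step 2 does not go through.
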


In order to streamline the exposition of the proof of Theorem \ref{thm main 2}, we introduce the isoperimetric deficit and asymmetry of $u\in H^1(\R^n;[0,1])$ with $\int_{\R^n}V(u)=1$, by setting
\begin{eqnarray*}
  \de_\e(u)&=&\AC(u)-\psi(\e)\,,
  \\
  \a_\e(u)&=&\inf_{x_0\in\R^n}d_\Phi(u,T_{x_0}u_\e)\,.
\end{eqnarray*}
Here, as in Theorem \ref{theorem compactness for selection principle},
\[
d_\Phi(u,v)=\int_{\R^n}|\Phi(u)-\Phi(v)|^{n/(n-1)}\,,\qquad\forall u,v\in H^1(\R^n;[0,1])\,.
\]
With this notation, Theorem \ref{thm main 2} states the existence of universal constants $C$ and $\e_0$ such that, if $\e<\e_0$, then
\begin{equation}
  \label{the end 1}
  C\,\sqrt{\de_\e(u)}\ge \a_\e(u)\,,\qquad\forall u\in H^1(\R^n;[0,1])\,,\int_{\R^n}V(u)=1\,.
\end{equation}
In the following subsections we discuss some key steps of the proof of Theorem \ref{thm main 2}, which is then presented at the end of this section.

\subsection{Reduction to the small asymmetry case}\label{subsec small asymmetry} Thanks to the volume constraint $\int_{\R^n}V(u)= 1$ and to the triangular inequality in $L^{n/(n-1)}$, we always have $\a_\e(u)\le 2^{n/(n-1)}$. In particular, in proving \eqref{the end 1}, we can always assume that $\de_\e(u)\le \de_0$ for a universal constant $\de_0$. This is useful because, by the following lemma, by assuming $\de_\e(u)\le \de_0$ we can take $\a_\e(u)$ as small as needed in dependence of $n$ and $W$.

\begin{lemma}[$\e$-uniform qualitative stability]\label{lemma uniform qualitative stability}
  If $n\ge 2$ and $W\in C^{2,1}[0,1]$ satisfies \eqref{W basic} and \eqref{W normalization}, then there exists a universal constant $\e_0$ with the following property: for every $\a>0$ there exists $\de>0$ such that
  \[
  u\in H^1(\R^n;[0,1])\,,\qquad\int_{\R^n}V(u)=1\,,\qquad \e<\e_0\,,\qquad \de_\e(u)\le\de\,,
  \]
  imply
  \[
  \a_\e(u)\le\a\,.
  \]
\end{lemma}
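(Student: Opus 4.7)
The plan is to argue by contradiction using the compactness tools already developed in this paper. Suppose the conclusion fails: there exist $\a > 0$, $\e_j \in (0, \e_0)$, and $u_j \in H^1(\R^n;[0,1])$ with $\int_{\R^n} V(u_j) = 1$ such that $\de_{\e_j}(u_j) \to 0$ while $\a_{\e_j}(u_j) > \a$ for every $j$. Passing to a subsequence, $\e_j \to \e_* \in [0, \e_0]$, and I split the analysis according to whether $\e_* > 0$ or $\e_* = 0$.

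When $\e_* > 0$, the basic upper bound \eqref{basic energy upper bound} together with $\de_{\e_j}(u_j) \to 0$ gives $\ac_{\e_j}(u_j) \le \psi(\e_j) + \ell_0$ for $j$ large. The concentration-compactness argument from step two of the proof of Theorem \ref{theorem existence solutions} then yields, along a subsequence and after translations $x_j$, a limit $u_* \in H^1(\R^n;[0,1])$ with $\int_{\R^n} V(u_*) = 1$ and $\Phi(u_j(\cdot + x_j)) \to \Phi(u_*)$ in $L^{n/(n-1)}(\R^n)$. Lower semicontinuity of $\ac_{\e_*}$ under this convergence, together with continuity of $\psi$ at $\e_*$ (Theorem \ref{theorem existence solutions}), forces $u_*$ to minimize $\psi(\e_*)$, and by Corollary \ref{corollary optimal energy and lagrange multiplier} there is $y_* \in \R^n$ with $u_* = T_{y_*} u_{\e_*}$. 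Combined with the continuity $d_\Phi(u_{\e_j}, u_{\e_*}) \to 0$ (which follows from the resolution formula \eqref{critical sq resolution of vj}, the bound $|\tau_\e - \tau_0| \le C\,\e$, and dominated convergence), the $L^{n/(n-1)}$-triangle inequality yields $\a_{\e_j}(u_j) \to 0$, contradicting $\a_{\e_j}(u_j) > \a$.

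When $\e_j \to 0^+$, set $\ell_j = \ac_{\e_j}(u_j) - 2\,n\,\om_n^{1/n}$, which tends to zero by \eqref{basic energy upper bound}, \eqref{convergence of}, and $\de_{\e_j}(u_j) \to 0$; similarly $\ell_j' = \psi(\e_j) - 2\,n\,\om_n^{1/n} \to 0$. Applying the general form of the BV-closeness estimate \eqref{stima BV utile} (proved in step three of Theorem \ref{theorem asymptotics of minimizers}) to $u_j$ after a suitable translation $z_j$, and the radial refinement of the same estimate to $u_{\e_j} \in \RR_0$, yields
\[
\int_{\R^n}\bigl|\Phi(T_{-z_j}u_j) - 1_{B_{R_0}}\bigr|^{n/(n-1)} + \int_{\R^n}\bigl|\Phi(u_{\e_j}) - 1_{B_{R_0}}\bigr|^{n/(n-1)} \longrightarrow 0\,,
\]
so that the $L^{n/(n-1)}$-triangle inequality delivers $d_\Phi(u_j, T_{z_j} u_{\e_j}) \to 0$, again contradicting $\a_{\e_j}(u_j) > \a$. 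The main obstacle I anticipate lies in the case $\e_* > 0$: specifically, establishing the $d_\Phi$-continuity of the profile $\e \mapsto u_\e$ at an interior point while keeping the translations consistent, since $u_{\e_*}$ is only unique modulo translations and the concentration-compactness translations $x_j$ must be reconciled both with $y_*$ and with the particular representative chosen for $u_{\e_j}$.
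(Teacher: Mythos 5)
Your overall architecture is exactly the paper's: argue by contradiction, extract $\e_j\to\e_*\in[0,\e_0]$, and split into $\e_*>0$ (handled by the concentration--compactness of step two of Theorem \ref{theorem existence solutions} plus uniqueness of minimizers) and $\e_*=0$ (handled by closeness of both $\Phi(u_j)$ and $\Phi(u_{\e_j})$ to $1_{B_{R_0}}$ in $L^{n/(n-1)}$). The $\e_*=0$ case is fine; whether one gets $d_\Phi(u_{\e_j},1_{B_{R_0}})\to0$ from \eqref{stima BV utile} as you do, or from Theorem \ref{theorem asymptotics of minimizers} as the paper does, is immaterial.

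The gap is in the step you yourself flag: your justification of $d_\Phi(u_{\e_j},u_{\e_*})\to0$ when $\e_*>0$. The resolution formula \eqref{critical sq resolution of vj} determines $u_\e$ only up to a remainder $f_\e$ satisfying $|f_\e(s)|\le C\,\e\,e^{-|s|/C}$, and nothing in Theorem \ref{theorem asymptotics of minimizers} forces $f_{\e_j}$ to converge to $f_{\e_*}$. At a fixed $\e_*>0$ the bound $C\,\e_*$ is a constant-size error, so dominated convergence applied to the leading profiles $\eta(\cdot/\e_j-\tau_{\e_j})$ only yields $\limsup_j d_\Phi(u_{\e_j},u_{\e_*})\le C\,\e_*^{\,2+1/(n-1)}$, not zero; since the lemma must produce $\a_\e(u)\le\a$ for \emph{every} $\a>0$, this does not contradict $\a_{\e_j}(u_j)\ge\a_*$ when $\a_*$ is small. (Also, $|\tau_\e-\tau_0|\le C\,\e$ does not by itself give $\tau_{\e_j}\to\tau_{\e_*}$.) The repair is short and is what the paper does: observe that $\ac_{\e_*}(u_{\e_j})\le\max\{\e_j/\e_*,\e_*/\e_j\}\,\psi(\e_j)\to\psi(\e_*)$, so $\{u_{\e_j}\}_j$ is itself a minimizing sequence for $\psi(\e_*)$ to which step two of Theorem \ref{theorem existence solutions} applies; its $d_\Phi$-limit is a minimizer of $\psi(\e_*)$, hence a translate of $u_{\e_*}$ by Corollary \ref{corollary optimal energy and lagrange multiplier}, and the translate is trivial because every $u_{\e_j}$ is radial decreasing with maximum at the origin. (Alternatively, the Fuglede-based argument of step three of Corollary \ref{corollary optimal energy and lagrange multiplier} gives $\int_{\R^n}(u_{\e_j}-u_{\e_*})^2\to0$ directly.) The translation bookkeeping you worry about is then harmless: $\a_\e$ is translation invariant, so you may translate $u_j$ so that $d_\Phi(u_j,u_{\e_*})\to0$ and still estimate $\a_{\e_j}(u_j)\le d_\Phi(u_j,u_{\e_j})$ with the origin-centered $u_{\e_j}$, concluding via \eqref{careful with that}.
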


\begin{proof}
  We pick $\e_0$ such that Theorem \ref{theorem existence solutions} and Corollary \ref{corollary optimal energy and lagrange multiplier} hold. If  the lemma is false for such $\e_0$, then there exists $\a_*>0$ and a sequence $\{u_j\}_j$ in $H^1(\R^n;[0,1])$ with $\int_{\R^n}V(u_j)=1$ such that
  \begin{equation}
    \label{a zero}
      \de_{\e_j}(u_j)\to 0^+\qquad\mbox{as $j\to\infty$}\,,
  \end{equation}
  for some $\e_j\to\e_*\in[0,\e_0]$ and with $\a_{\e_j}(u_j)\ge\a_*$. By \eqref{a zero}, there is $\ell_j\to 0^+$ as $j\to\infty$ such that
  \begin{equation}
    \label{a zero 2}
      \ac_{\e_j}(u_j)\le\psi(\e_j)+\ell_j\,,\qquad\forall j\,,
  \end{equation}
  We now distinguish two cases:

  \medskip

  \noindent {\it Case one, $\e_*>0$}: In this case, by continuity of $\psi$ (see Theorem \ref{theorem existence solutions}) and since
  \[
  \ac_{\e_*}(u_j)-\psi(\e_*)\le b_j\,\big(\ac_{\e_j}(u_j)-\psi(\e_j)\big)+b_j\,\psi(\e_j)-\psi(\e_*)\,,\qquad
  b_j=\max\Big\{\frac{\e_j}{\e_*},\frac{\e_*}{\e_j}\Big\}\,,
  \]
  we can assume that $\ac_{\e_*}(u_j)-\psi(\e_*)\le\ell_0$ for $\ell_0$ as in step two of the proof of Theorem \ref{theorem existence solutions}. We can thus apply that statement and conclude that, up to translations and up to subsequences, there is $u\in H^1(\R^n;[0,1])$ with $\int_{\R^n}V(u)=1$ such that $d_\Phi(u_j,u)\to 0$ as $j\to\infty$. In particular, $u$ is a minimizer of $\psi(\e_*)$, and therefore, up to a translation, we can assume that $u=u_{\e_*}\in\RR_0$. Now, by repeating this same argument with the minimizers $u_{\e_j}$ of $\psi(\e_j)$ in $\RR_0$ in place of $u_j$, we see that
  \[
  d_\Phi(u_{\e_j},u_{\e_*})\to 0\qquad\mbox{as $j\to\infty$}\,,
  \]
  so that, thanks to \eqref{careful with that}, we find the contradiction
  \[
  \a_*\le\a_{\e_j}(u_j)\le d_\Phi(u_j,u_{\e_j})\le d_\Phi(u_j,u_{\e_*})+C\,d_\Phi(u_{\e_j},u_{\e_*})^{(n-1)/n}\to 0^+\,,
  \]
  as $j\to\infty$.

  \medskip

  \noindent {\it Case two, $\e_*=0$}: In this case, thanks to \eqref{a zero 2},
  \[
  2\,|D[\Phi(u_j)]|(\R^n)\le\ac_{\e_j}(u_j)\le\psi(\e_j)+\ell_j\le 2\,n\,\om_n^{1/n}+C\,\e_j+\ell_j\,,
  \]
  so that $\{\Phi(u_j)\}_j$ is asymptotically optimal for the sharp $BV$-Sobolev inequality. By  the concentration-compactness principle
  (see, e.g., \cite[Theorem A.1]{fuscomaggipratelliBV}), up to subsequences and up to translations, $\Phi(u_j)\to a\,1_{B_r}$ in $L^{n/(n-1)}(\R^n)$ as $j\to\infty$, for some $a$ and $r$ such that $a^{n/(n-1)}\,\om_n\,r^n=1$. The fact that $\ac_{\e_j}(v_j)$ is bounded implies that $v_j\to \{0,1\}$ a.e. on $\R^n$, therefore, by $\Phi(0)=0$ and $\Phi(1)=1$, it must be $a=1$ and $R=R_0$ for $\om_n\,R_0^n=1$. By Theorem \ref{theorem asymptotics of minimizers}, if $u_{\e_j}$ is a the minimizer of $\psi(\e_j)$ in $\RR_0$, then
  \[
  d_\Phi(u_{\e_j},1_{B_{R_0}})\to 0\qquad\mbox{as $j\to\infty$}\,,
  \]
  which gives the contradiction
  \[
  \a_*\le\a_{\e_j}(u_j)\le d_\Phi(u_j,u_{\e_j})\le d_\Phi(u_j,1_{B_{R_0}})+C\,d_\Phi(u_{\e_j},1_{B_{R_0}})^{(n-1)/n}\to 0^+\,,
  \]
  as $j\to\infty$.
\end{proof}

\subsection{Proof of Theorem \ref{thm main 2} in the radial decreasing case}\label{section selection principle} We start by noticing that, thanks to the results proved in the previous sections, we can quickly prove Theorem \ref{thm main 2} for functions in $\RR_0$.
	
\begin{theorem}\label{thm main 2 radial}
		If $n\ge 2$ and $W\in C^{2,1}[0,1]$ satisfies \eqref{W basic} and \eqref{W normalization}, then there exist universal constants $C$ and $\e_0$ such that, for every $\e<\e_0$, denoting by $u_\e$ the unique minimizer of $\psi(\e)$ in $\RR_0$, one has
		\begin{equation}\label{global quantitative estimate radial}
		C\,\sqrt{\de_\e(u)}\ge d_\Phi(u,u_\e)\,,
		\end{equation}
		whenever $u\in H^1(\R^n;[0,1])\cap \RR_0$ with $\int_{\R^n} V(u)=1$.
\end{theorem}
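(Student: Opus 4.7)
The plan is to argue by contradiction via the Cicalese--Leonardi selection principle. Assume there exist sequences $\e_j \in (0, \e_0)$ and $u_j \in \RR_0$ with $\int_{\R^n} V(u_j) = 1$ violating the inequality, i.e.
\[
\omega_j := \sqrt{\de_{\e_j}(u_j)} = \mathrm{o}(r_j), \qquad r_j := d_\Phi(u_j, u_{\e_j}).
\]
The volume constraint yields $r_j \le C$, so $\omega_j \to 0$; and since $u_j, u_{\e_j}$ are both radial decreasing about the origin (so that $r_j$ equals the symmetric asymmetry $\a_{\e_j}(u_j)$), Lemma \ref{lemma uniform qualitative stability} forces $r_j \to 0$ as well. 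I would then regularize $u_j$ via Theorem \ref{theorem compactness for selection principle}: fix $a = a_0/2$ and let $w_j \in \RR_0$ be a radial decreasing minimizer of $\g(\e_j, a, u_j)$ (the hypotheses $\omega_j^2 \le a\ell_0$ and $r_j \le \ell_0$ hold eventually). Taking $u_j$ as a competitor gives
\[
\de_{\e_j}(w_j) \le \omega_j^2, \qquad d_\Phi(w_j, u_j) \le \omega_j^2/a \le C\omega_j^2,
\]
and by part (ii) of that theorem $w_j$ solves the perturbed Euler--Lagrange equation \eqref{EL classic W V Z} with a universally bounded error $\Err_{\e_j}$.

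Next I would apply the resolution Theorem \ref{theorem asymptotics of minimizers} separately to $w_j$ and to $u_{\e_j}$: both are radial decreasing, satisfy the volume constraint, are low-energy, and solve the perturbed EL equation with universally bounded error. Each is therefore an $\mathrm{O}(\e_j)$-exponentially small perturbation of the common ansatz $z_{\e_j}$, so that $h_j := w_j - u_{\e_j}$ satisfies
\[
|h_j(x)| \le C\,\e_j\,e^{-\bigl|\,|x|-R_0\bigr|/(C\e_j)}, \qquad x \in \R^n.
\]
In particular $\|h_j\|_{L^\infty(\R^n)} \le C\e_j \le \de_0$ and $\int_{\R^n} h_j^2 \le C\e_j^3$, so the Fuglede-type Theorem \ref{theorem fuglede estimate} applies to $w_j$ and yields $\int_{\R^n}(\e_j|\nabla h_j|^2 + h_j^2/\e_j) \le C\omega_j^2$. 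Combining this with Young's inequality (to get $\int|\nabla(h_j^2)| \le C\omega_j^2$) and the BV--Sobolev inequality, exactly as in the chain leading to \eqref{fuglede conclusion 3}, produces the global bound $\|h_j\|_{L^{2n/(n-1)}(\R^n)} \le C\,\omega_j$.

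The final and hardest step is to convert this $L^{2n/(n-1)}$-control on $h_j$ into the $L^{n/(n-1)}$-type control of $\Phi(w_j)-\Phi(u_{\e_j})$ that defines $d_\Phi(w_j, u_{\e_j})$. I would split $\R^n = A_j \cup A_j^c$ with $A_j = \bigl\{x : \bigl|\,|x|-R_0\bigr| \le \e_j K_j\bigr\}$ and $K_j := \log^2(1+1/\omega_j)$. H\"older's inequality on $A_j$, with $|A_j| \le C\e_j K_j$, gives
\[
\int_{A_j}|h_j|^{n/(n-1)}\,dx \le |A_j|^{1/2}\,\|h_j\|_{L^{2n/(n-1)}}^{n/(n-1)} \le C\,(\e_j K_j)^{1/2}\,\omega_j^{n/(n-1)},
\]
while on $A_j^c$ the pointwise exponential bound above integrates to at most $C\,\e_j^{(2n-1)/(n-1)}\,K_j^{n-1}\,e^{-K_j/C}$. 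The choice of $K_j$ kills the exterior term faster than any power of $\omega_j$, and the shell term is $\mathrm{o}(\omega_j)$ because the polylogarithmic factor $(\e_j K_j)^{1/2}$ is beaten by the polynomial gain $\omega_j^{1/(n-1)} = \omega_j^{n/(n-1)}/\omega_j$. Since $|\Phi(w_j)-\Phi(u_{\e_j})| \le C|h_j|$, we obtain $d_\Phi(w_j, u_{\e_j}) = \mathrm{o}(\omega_j)$; combining with $d_\Phi(w_j, u_j) \le C\omega_j^2$ via the triangle inequality in $L^{n/(n-1)}$ yields $r_j^{(n-1)/n} = \mathrm{o}(\omega_j^{(n-1)/n})$, and hence $r_j = \mathrm{o}(\omega_j)$, the sought contradiction. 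The main obstacle throughout is precisely this last conversion step: the Fuglede inequality natively delivers only $L^{2n/(n-1)}$ information on $h_j$ while $d_\Phi$ is intrinsically $L^{n/(n-1)}$-based, and it is the sharp pointwise exponential decay coming from the resolution theorem that makes the shell/exterior split tight enough to close the contradiction.
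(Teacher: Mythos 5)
Your proposal is correct and follows essentially the same route as the paper's proof: contradiction, the qualitative stability Lemma \ref{lemma uniform qualitative stability}, regularization through the penalized problems $\g(\e,a,\cdot)$ of Theorem \ref{theorem compactness for selection principle}, the resolution Theorem \ref{theorem asymptotics of minimizers} applied to both $w_j$ and $u_{\e_j}$, the Fuglede estimate, and an annulus splitting to convert the $L^{2n/(n-1)}$ control of $h_j$ into control of $d_\Phi(w_j,u_{\e_j})$. You deviate in two harmless places: you fix $a=a_0/2$ instead of taking $a_j$ equal to the vanishing quotient, which lets you read off $d_\Phi(w_j,u_j)\le\omega_j^2/a$ directly and skip the paper's intermediate step \eqref{still contradiction} (based on \eqref{careful with that}); and in the conversion step you widen the annulus to width $\e_j\log^2(1+1/\omega_j)$ and use the pointwise exponential bound on $h_j$ outside it, whereas the paper keeps the $O(\e_j)$-annulus and exploits the exponential smallness of $\sqrt{W}$ in the exterior -- both give $d_\Phi(w_j,u_{\e_j})\le C\,\omega_j$, which already closes the contradiction. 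The only point you should make explicit is that the resolution theorem requires $\e_j\to 0^+$, which is arranged by negating the statement with $\e_0=1/j$ as well as $C=j$.
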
	

\begin{proof} Arguing by contradiction, we can find $\e_j\to 0^+$ and $\{v_j\}_j$ in $H^1(\R^n;[0,1])\cap\RR_0$ with
\[
\int_{\R^n}V(v_j)=1\,,\qquad a_j=\frac{\ac_{\e_j}(v_j)-\psi(\e_j)}{d_\Phi(v_j,u_j)^2}\to 0\qquad\mbox{as $j\to\infty$}\,,
\]
where $u_j=u_{\e_j}$ and, thanks to Lemma \ref{lemma uniform qualitative stability} and to $a_j\to 0^+$, we have
\begin{equation}
  \label{by step one}
  \lim_{j\to\infty}d_\Phi(v_j,u_j)=0\,.
\end{equation}
Correspondingly we consider the variational problems
\[
\g_j=\g(\e_j,a_j,v_j)=\inf\Big\{\ac_{\e_j}(w)+a_j\,d_\Phi(w,v_j):w\in H^1(\R^n;[0,1])\,,\int_{\R^n}V(w)=1\Big\}\,.
\]
With $a_0$, $\ell_0$ and $\e_0$ as in Theorem \ref{theorem compactness for selection principle}, we notice that, for $j$ large enough, we have $a_j\in(0,a_0)$, $\e_j<\e_0$, and
  \begin{equation}
    \label{spr hp with j}
    \ac_{\e_j}(v_j)\le\psi(\e_j)+a_j\,\ell_0\,,\qquad d_\Phi(v_j,u_j)\le \ell_0\,.
  \end{equation}
In particular we can apply Theorem \ref{theorem compactness for selection principle}, and deduce the existence of minimizers $w_j$  of $\g_j$. We claim that, as $j\to\infty$,
\begin{equation}
  \label{still contradiction}
  \lim_{j\to\infty}\frac{\ac_{\e_j}(w_j)-\psi(\e_j)}{d_\Phi(w_j,u_j)^2}=0\,.
\end{equation}
To show this, we first notice that, by comparing $w_j$ to $u_j$ we have
\[
\ac_{\e_j}(w_j)+a_j\,d_\Phi(w_j,v_j)\le \psi(\e_j)+a_j\,d_\Phi(u_j,v_j)\,,
\]
so that \eqref{by step one} gives $\de_{\e_j}(w_j)\to 0$, and then Lemma \ref{lemma uniform qualitative stability} implies
\begin{equation}
  \label{by step one two}
  \lim_{j\to\infty}d_\Phi(w_j,u_j)=0\,.
\end{equation}
Next, comparing $w_j$ to $v_j$ we find that
\[
\ac_{\e_j}(w_j)+a_j\,d_\Phi(w_j,v_j)\le\ac_{\e_j}(v_j)\,,
\]
so that $\psi(\e_j)\le \ac_{\e_j}(w_j)$ and the definition of $a_j$ give
\begin{equation}
  \label{bye}
  d_\Phi(w_j,v_j)\le\frac{\ac_{\e_j}(v_j)-\psi(\e_j)}{a_j}=d_\Phi(v_j,u_j)^2\,.
\end{equation}
By \eqref{careful with that}, \eqref{by step one}, \eqref{by step one two}, and \eqref{bye} we find
\begin{eqnarray*}
\big|d_\Phi(w_j,u_j)-d_\Phi(v_j,u_j)\big|&\le&
C\,\max\big\{d_\Phi(w_j,u_j),d_\Phi(v_j,u_j)\big\}^{1/n}\,d_\Phi(w_j,v_j)^{(n-1)/n}
\\
&=&{\rm o}\big(d_\Phi(v_j,u_j)^{2(n-1)/n}\big)\,,
\end{eqnarray*}
where $2(n-1)/n\ge1$ thanks to $n\ge2$. Thus, $d_\Phi(w_j,u_j)\ge d_\Phi(v_j,u_j)/C$ for $j$ large enough, and $\ac_{\e_j}(w_j)\le \ac_{\e_j}(v_j)$ gives
\[
\frac{\ac_{\e_j}(w_j)-\psi(\e_j)}{d_\Phi(w_j,u_j)^2}\le C\,\frac{\ac_{\e_j}(v_j)-\psi(\e_j)}{d_\Phi(v_j,u_j)^2}\to 0^+\,,
\]
as claimed in \eqref{still contradiction}.

\medskip

We now derive a contradiction to \eqref{still contradiction}. By Theorem \ref{theorem compactness for selection principle}, we know that $w_j\in \RR_0^*\cap C^{2,1/(n-1)}_{{\rm loc}}(\R^n)$, $0<w_j<1$ on $\R^n$, and
\begin{equation}
	\label{EL classic W V Z jjj}
	-2\,\e_j^2\,\Delta w_j=\e_j\,w_j\,(1-w_j)\,\Err_j-W'(w_j)\qquad\mbox{on $\R^n$}\,,
\end{equation}
where $\Err_j$ is a continuous radial function on $\R^n$ with
\begin{equation}
  \label{err eps jjj}
  \sup_{\R^n}|\Err_j|\le C\,.
\end{equation}
We can thus apply Theorem \ref{theorem asymptotics of minimizers} to $w_j$. In particular, since both $u_j$ and $w_j$ obey the resolution formula \eqref{critical sq resolution of vj}, we have that $h_j=w_j-u_j$ satisfies
\begin{equation}
  \label{what about hj}
  |h_j(R_0+\e_j\,s)|\le C\,\e_j\,e^{-|s|/C}\qquad\forall s\ge-\frac{R_0}{\e_j}\,.
\end{equation}
In particular,
\[
\|h_j\|_{L^\infty(\R^n)}\le C\,\e_j\,,\qquad \int_{\R^n}h_j^2\le C\,\e_j\,.
\]
and we can thus apply Theorem \ref{theorem fuglede estimate} to deduce
\begin{eqnarray}\nonumber
\ac_{\e_j}(w_j)-\psi(\e_j)&\geq& \frac1{C}\,\int_{\R^n}\e_j\,|\nabla h_j|^2+\frac{h_j^2}{\e_j}
\\\label{ch1}
&\ge&\frac1{C}\,\int_{\R^n}|\nabla(h_j^2)|\ge \frac1{C}\,\Big(\int_{\R^n}|h_j|^{2n/(n-1)}\Big)^{(n-1)/n}\,,
\end{eqnarray}
where we have also used the $BV$-Sobolev inequality. By \eqref{what about hj}, and by applying \eqref{critical sq vj estimates on R infinity} to $u_j$ in combination with \eqref{W near the wells}, we find that, if $A_j=B_{R_0+ c_j}\setminus B_{R_0-b_j}$, then, for every $x\in\R^n\setminus A_j$ we have
\[
|\Phi(u_j(x))-\Phi(w_j(x))|\le |h_j(x)|\,\int_0^1\,\sqrt{W(u_j(x)+t\,h_j(x))}\,dt\le C\,|h_j(x)|\,e^{-||x|-R_0|/C\,\e_j}\,,
\]
and, therefore,
\begin{eqnarray}\nonumber
\int_{\R^n\setminus A_j}|\Phi(u_j)-\Phi(w_j)|^{n/(n-1)}&\le&
C\,\int_{\R^n\setminus A_j}|h_j|^{n/(n-1)}\,e^{-||x|-R_0|/C\,\e_j}
\\\label{ch2}
&\le&\,C\,\sqrt{\e_j}\,\Big(\int_{\R^n}|h_j|^{2\,n/(n-1)}\Big)^{1/2}\,.
\end{eqnarray}
If, instead, $x\in A_j$, then by $|\Phi(u_j)-\Phi(w_j)|\le C\,|h_j|$ and $\L^n(A_j)\le C\,\e_j$ we find
\begin{equation}
  \label{ch3}
  \int_{A_j}|\Phi(u_j)-\Phi(w_j)|^{n/(n-1)}\le C\,\sqrt{\e_j}\,\Big(\int_{\R^n}|h_j|^{2\,n/(n-1)}\Big)^{1/2}\,.
\end{equation}
By combining \eqref{ch1}, \eqref{ch2} and \eqref{ch3}, and thanks to $\e_j\le 1$, $n/(n-1)\ge1$, and $\de_{\e_j}(w_j)\le1$, we conclude that
\[
d_\Phi(u_j,w_j)\le C\,\sqrt{\e_j}\,\de_{\e_j}(w_j)^{n/2\,(n-1)}\le C\,\sqrt{\de_{\e_j}(w_j)}\,,
\]
in contradiction to \eqref{still contradiction}.
\end{proof}

\begin{remark}
  {\rm The argument we have just presented provides further indication that \eqref{global quantitative estimate radial} should not provide a sharp rate on radial decreasing functions. The sharp stability estimate on small radial perturbations of $u_\e$ is clearly given in Theorem \ref{theorem fuglede estimate}, but it is not clear what form the sharp stability estimate should take on $\RR_0$ (or, more generally, on arbitrary radial functions).}
\end{remark}

\subsection{Reduction to radial decreasing functions}\label{subsec reduction to radial} We now discuss the reduction of \eqref{the end 1} to the case of radial decreasing functions. We do this by adapting to our setting the ``quantitative symmetrization'' strategy developed in \cite{fuscomaggipratelli,fuscomaggipratelliBV} in the study of Euclidean isoperimetry.

\medskip

Given $n\ge 2$ and $k\in\{1,...,n\}$ we say that $u:\R^n\to\R$ is {\bf $k$-symmetric} if there exists $k$ mutually orthogonal hyperplanes such that $u$ is symmetric by reflection through each of these hyperplanes. The class of $n$-symmetric functions is particularly convenient when it comes to quantify sharp inequalities involving radial decreasing rearrangements. Consider for example the P\'olya-Szeg\"o inequality
\begin{equation}
  \label{ps inq final}
  \int_{\R^n}|\nabla u|^2\ge\int_{\R^n}|\nabla u^*|^2\,,
\end{equation}
where $u^*$ is the radial decreasing rearrangement of $u$. A classical result of Brothers and Ziemer \cite{brothersziemer} shows that equality can hold in \eqref{ps inq final} without $u$ being a translation of $u^*$; in general, the additional condition that $(u^*)'<0$ a.e. must be assumed to deduce symmetry from equality in \eqref{ps inq final} (compare with step six in the proof of Theorem \ref{theorem existence solutions}). However, if $u$ is $n$-symmetric, then equality in \eqref{ps inq final} automatically implies that $u$ is radial decreasing. A quantitative version of this statement is proved in \cite[Theorem 2.2]{fuscomaggipratelliBV} in the $BV$-case of \eqref{ps inq final}, and in \cite[Theorem 3]{cianchifuscomaggipratelliSOBOLEV} in the Sobolev case. The following theorem is an adaptation of those results to our setting.

\begin{theorem}[Reduction from $n$-symmetric to radial decreasing functions]\label{theorem n symmetric to radial}
  If $n\ge 2$ and $W\in C^{2,1}[0,1]$ satisfies \eqref{W basic} and \eqref{W normalization}, then there exists a universal constant $C$ with the following property. If $u\in H^1(\R^n;[0,1])$ is a $n$-symmetric function with $\int_{\R^n}V(u)=1$ and $u^*$ is its radial decreasing rearrangement, then
  \begin{equation}\label{Allen Cahn symmetrization}
	d_\Phi(u,u^*)\le C\,\Big(\int_{\R^n}W(u) \Big)^{1/2}\,\Big(\int_{\R^n} |\nabla u|^2-\int_{\R^n} |\nabla u^*|^2\Big)^{1/2}\,.
  \end{equation}		
  Moreover, for every $\e>0$ we have
  \begin{equation}\label{eq reduction to symmetric case}
  \alpha_\e(u)\leq C\,\Big(\alpha_\e(u^*)+ \big(\AC(u)\,\delta_\e(u)\big)^{1/2}\,\Big)\,.
  \end{equation}
\end{theorem}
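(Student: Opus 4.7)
The plan is to prove the first inequality \eqref{Allen Cahn symmetrization} via the quantitative symmetrization scheme of Fusco--Maggi--Pratelli applied to the function $\Phi(u)$, and then to deduce \eqref{eq reduction to symmetric case} from it using the triangle inequality in $L^{n/(n-1)}$ combined with the P\'olya--Szeg\"o identity $\AC(u)-\AC(u^*) = \e\big(\int|\nabla u|^2 - \int|\nabla u^*|^2\big)$ (which holds because $\int W(u)=\int W(u^*)$ by equimeasurability).

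For \eqref{Allen Cahn symmetrization}, I first note that since $\Phi$ is strictly increasing on $[0,1]$, the function $\Phi(u)\in BV(\R^n;[0,1])$ is $n$-symmetric whenever $u$ is, and $\Phi(u)^*=\Phi(u^*)$; moreover $\|\Phi(u)\|_{L^{n/(n-1)}}^{n/(n-1)}=\int V(u)=1$. I would then apply the $n$-symmetric version of the quantitative $BV$-Sobolev stability inequality (in the spirit of \cite[Theorem 2.2]{fuscomaggipratelliBV}) to $\Phi(u)$, obtaining an estimate of $d_\Phi(u,u^*)$ by a suitable power of the BV-deficit
\[
|D\Phi(u)|(\R^n)-|D\Phi(u^*)|(\R^n) \;=\; \int_{\R^n}|\nabla u|\sqrt{W(u)}-\int_{\R^n}|\nabla u^*|\sqrt{W(u^*)}.
\]
The second, more delicate step is to convert this BV-deficit into the Dirichlet-energy deficit $\int|\nabla u|^2-\int|\nabla u^*|^2$. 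Here I would use the Modica--Mortola identity
\[
2\int|\nabla u|\sqrt{W(u)}=\int|\nabla u|^2+\int W(u)-\int(|\nabla u|-\sqrt{W(u)})^2
\]
together with the analogous identity for $u^*$; subtracting them and exploiting $\int W(u)=\int W(u^*)$ recasts the BV-deficit as $(1/2)$ of the Dirichlet deficit plus a difference of Modica--Mortola discrepancies, which must be handled by a Cauchy--Schwarz argument weighted by $\int W(u)$ to produce the claimed factor $(\int W(u))^{1/2}(\int|\nabla u|^2-\int|\nabla u^*|^2)^{1/2}$.

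For \eqref{eq reduction to symmetric case}, since both $u^*$ and $u_\e$ are radial with maximum at the origin, $d_\Phi(u^*,u_\e)=\alpha_\e(u^*)$. Using the triangle inequality for the $L^{n/(n-1)}$-norm and the elementary bound $(a+b)^{n/(n-1)}\le 2^{1/(n-1)}(a^{n/(n-1)}+b^{n/(n-1)})$,
\[
\alpha_\e(u)\le d_\Phi(u,u_\e)\le 2^{1/(n-1)}\bigl(d_\Phi(u,u^*)+\alpha_\e(u^*)\bigr).
\]
Equimeasurability gives $\AC(u)-\AC(u^*)=\e(\int|\nabla u|^2-\int|\nabla u^*|^2)$, while $\int V(u^*)=1$ implies $\AC(u^*)\ge\psi(\e)$, so that $\int|\nabla u|^2-\int|\nabla u^*|^2\le\e^{-1}\delta_\e(u)$. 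Combining this with $\int W(u)\le\e\AC(u)$ and \eqref{Allen Cahn symmetrization}, we get $d_\Phi(u,u^*)\le C(\AC(u)\delta_\e(u))^{1/2}$, which together with the triangle inequality yields \eqref{eq reduction to symmetric case}.

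The main obstacle is the conversion step in the proof of \eqref{Allen Cahn symmetrization}: the naive Cauchy--Schwarz bound $\int|\nabla u|\sqrt{W(u)}\le(\int|\nabla u|^2)^{1/2}(\int W(u))^{1/2}$ holds for both $u$ and $u^*$ only as an upper bound, so subtracting produces no useful information. One must reframe the BV-deficit through the Modica--Mortola identity and then carefully absorb the radial-rearrangement discrepancy term $\int(|\nabla u^*|-\sqrt{W(u^*)})^2$ without losing the factor $(\int W)^{1/2}$ --- this is where the interaction between the Allen--Cahn structure and the quantitative $n$-symmetric symmetrization of \cite{fuscomaggipratelli,fuscomaggipratelliBV} is genuinely used.
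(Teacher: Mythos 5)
Your proof of \eqref{eq reduction to symmetric case} from \eqref{Allen Cahn symmetrization} is correct and is exactly the paper's argument (triangle inequality, equimeasurability of $W(u)$ and $W(u^*)$, and $\AC(u^*)\ge\psi(\e)$). The problem is your route to \eqref{Allen Cahn symmetrization} itself, where there is a genuine gap at precisely the step you flag as ``the main obstacle'' without resolving it. Passing through the total variation deficit $|D\Phi(u)|(\R^n)-|D\Phi(u^*)|(\R^n)$ and then converting it to the Dirichlet deficit via Modica--Mortola cannot work as stated: subtracting the two identities gives
\begin{equation*}
2\Big(|D\Phi(u)|(\R^n)-|D\Phi(u^*)|(\R^n)\Big)=\Big(\int_{\R^n}|\nabla u|^2-\int_{\R^n}|\nabla u^*|^2\Big)-\int_{\R^n}\big(|\nabla u|-\sqrt{W(u)}\big)^2+\int_{\R^n}\big(|\nabla u^*|-\sqrt{W(u^*)}\big)^2\,,
\end{equation*}
and the last term enters with a \emph{plus} sign. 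It is a Modica--Mortola discrepancy of the rearranged function, of order one in general (nothing forces $u^*$ to resemble an optimal profile), and it is not controlled by the Dirichlet deficit times any power of $\int W(u)$; no Cauchy--Schwarz weighted by $\int W(u)$ can absorb it. So the conversion from the $BV$-deficit to the $W^{1,2}$-deficit is not merely delicate, it is the wrong pivot.

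The paper never introduces the total variation deficit. It works at the level of superlevel sets: writing $E_t=\{u>t\}\Delta\{u^*>t\}$ and $\mu(t)=\L^n(\{u>t\})$, it combines the Cavalieri-type bound $d_\Phi(u,u^*)\le \frac{n}{n-1}\int_0^1\L^n(E_t)\,\Phi(t)^{1/(n-1)}\sqrt{W(t)}\,dt$ (from \cite[Lemma 5]{cianchifuscomaggipratelliSOBOLEV} applied to $\Phi(u)$, exactly as you reparametrize levels) with the quantitative P\'olya--Szeg\"o inequality for the \emph{Dirichlet} energy of $n$-symmetric functions, \cite[Eq.\ (3.18)]{cianchifuscomaggipratelliSOBOLEV}, which bounds $\int|\nabla u|^2-\int|\nabla u^*|^2$ from below by $\frac1{C}\int_0^1(\L^n(E_t)/\mu)^2\,\mu^{2(n-1)/n}/(-\mu')\,dt$. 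A single H\"older inequality in the level variable $t$ then links the two, and the companion factor is bounded by $\int_{\R^n}W(u)$ using $V(t)\,\mu(t)\le 1$ together with $\int_0^1 W\,(-d\mu)\le\int_{\R^n}W(u)$. This is where the factor $(\int_{\R^n}W(u))^{1/2}$ genuinely comes from: a Cauchy--Schwarz in $t$, not in $x$. If you want to salvage your write-up, replace the $BV$-stability plus Modica--Mortola step with the $p=2$ level-set estimate of \cite{cianchifuscomaggipratelliSOBOLEV}; the rest of your argument then goes through.
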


\begin{proof} We first claim that
  \begin{eqnarray}
  \label{CFMP modfied}
  d_\Phi(u,u^*) &\leq&\frac{n}{n-1}\,\int_0^1 \L^n(E_t) \Phi(t)^{1/(n-1)}\,\sqrt{W(t)}\,dt\,,
  \\
  \label{lower bound defficit}
  \int_{\R^n} |\nabla u|^2-	\int_{\R^n} |\nabla u^*|^2&\geq&\frac1{C(n)}\, \int_0^1 \Big( \frac{\L^n(E_t)}{\mu(t)}\Big)^2 \frac{\mu(t)^{2\,(n-1)/n}}{-\mu'(t)}\,dt\,,
  \end{eqnarray}
  where $E_t=\{u>t\}\Delta\{u^*>t\}$, $\mu(t)=\L^n(\{u>t\})$, and $\mu'(t)$ denotes the absolutely continuous part of the distributional derivative of the decreasing function $\mu$. To prove \eqref{CFMP modfied} we recall that, by \cite[Lemma 5]{cianchifuscomaggipratelliSOBOLEV}, we have
  \[
  d_\Phi(u,u^*)\le \frac{n}{n-1}\,\int_0^1\,\L^n(F_s)\,s^{1/(n-1)}\,ds\,,
  \]
  provided $F_s=\{\Phi(u)>s\}\Delta \{\Phi(u^*)>s\}$. Since $\Phi$ is strictly increasing, we have $F_{\Phi(t)}=E_t$, so that the change of variables $s=\Phi(t)$ gives \eqref{CFMP modfied}. To prove \eqref{lower bound defficit} we just notice that this is \cite[Equation (3.18)]{cianchifuscomaggipratelliSOBOLEV}. Now, by H\"older inequality and \eqref{CFMP modfied}, we find that
  \begin{eqnarray}\nonumber
  \int_0^1 \L^n(E_s)\, \Phi^{1/(n-1)}\,\sqrt{W}&=&
  \int_0^1 \frac{\L^n(E_s)}{\mu}\,\frac{\mu^{(n-1)/n}}{(-\mu')^{1/2}}\, \frac{(-\mu')^{1/2}}{\mu^{-1/n}}\,\Phi^{1/(n-1)}\,\sqrt{W}
  \\\nonumber
  &\le&
  \Big(\int_0^1 \Big(\frac{\L^n(E_s)}{\mu}\Big)^2 \frac{\mu^{2\,(n-1)/n}}{-\mu'}\Big)^{1/2}\,\Big( \int_0^1 \frac{-\mu'}{\mu^{-2/n}}\,\Phi^{2/(n-1)}\,W\,\Big)^{1/2}\,.
  \end{eqnarray}
  By $1=\int_{\R^n} V(u)\ge V(t)\,\mu(t)$ for every $t\in(0,1)$, we have
  \[
  \int_0^1 \frac{-\mu'}{\mu^{-2/n}}\,\Phi^{2/(n-1)}\,W\le\int_0^1 -\mu'\,\big(V\,\mu\big)^{2/n}\,W\le\int_0^1\,-\mu'\,W\le\int_{\R^n}W(u)\,,
  \]
  where in the last inequality we have used $-\mu'\,d\L^1\le -D\mu$, integration by parts and Fubini's theorem to deduce
  \[
  -\int_0^1\,W\,d[D\mu]=\int_0^1\,W'(t)\,\mu(t)\,dt=\int_{\R^n}\,dx\int_0^{u(x)}W'(t)\,dt=\int_{\R^n}W(u)\,.
  \]
  By combining \eqref{CFMP modfied}, \eqref{lower bound defficit} and these estimates we find \eqref{Allen Cahn symmetrization}. To prove \eqref{eq reduction to symmetric case}, we notice that, by $\int_{\R^n}W(u)=\int_{\R^n}W(u^*)$ and $\int_{\R^n}V(u^*)=1$, \eqref{Allen Cahn symmetrization} gives
  \begin{equation}\label{defficit symmetric and spherical}
  d_\Phi(u,u^*)\leq C\,\AC(u)^{1/2}\,\Big(\AC(u)-\AC(u^*)\Big)^{1/2}\le C\,\AC(u)^{1/2}\,\de_\e(u)^{1/2}
  \end{equation}
  and then \eqref{eq reduction to symmetric case} follows by the triangular inequality in $L^{n/(n-1)}(\R^n)$.
\end{proof}

Next we discuss the reduction from generic functions to $n$-symmetric ones.

\begin{theorem}
  [Reduction to $n$-symmetric functions]\label{theorem symmetrization reduction} If $n\ge 2$ and $W\in C^{2,1}[0,1]$ satisfies \eqref{W basic} and \eqref{W normalization}, then there exist universal constants $\e_0$ and $\de_0$ with the following property. If $u\in H^1(\R^n;[0,1])$, $\int_{\R^n}V(u)=1$ and $\de_\e(u)\le \de_0$ for some $\e<\e_0$, then there exists $v\in H^1(\R^n;[0,1])$ with $\int_{\R^n}V(v)=1$ such that $v$ is $n$-symmetric and
  \begin{equation}
    \label{reduction inequalities}
      \a_\e(u)\le C\,\a_\e(v)\,,\qquad \de_\e(v)\le C\,\de_\e(u)\,.
  \end{equation}
\end{theorem}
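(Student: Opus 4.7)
The plan is to adapt the quantitative symmetrization scheme of \cite{fuscomaggipratelli} to our functional setting. Starting from $u_0=u$, I will construct iteratively $u_1,\dots,u_n$ with $u_k$ symmetric with respect to $k$ mutually orthogonal hyperplanes, and set $v:=u_n$. At the $k$-th step I pick a unit vector $\nu_k$ orthogonal to $\nu_1,\dots,\nu_{k-1}$, and by continuity I find a hyperplane $H_k\perp\nu_k$ bisecting the measure $V(u_{k-1})\,dx$, i.e.\ $\int_{H_k^+}V(u_{k-1})=\int_{H_k^-}V(u_{k-1})=1/2$. Writing $\sigma_k$ for reflection across $H_k$, I form the two $H_k$-symmetric competitors
\[
u_{k-1}^{+}=u_{k-1}\,1_{H_k^+}+(u_{k-1}\circ\sigma_k)\,1_{H_k^-}\,,\qquad u_{k-1}^{-}=u_{k-1}\,1_{H_k^-}+(u_{k-1}\circ\sigma_k)\,1_{H_k^+}\,,
\]
which lie in $H^1(\R^n;[0,1])$ with $\int_{\R^n}V(u_{k-1}^{\pm})=1$. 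A translation of the ambient coordinates along $\nu_k$ (placing $H_k$ at the coordinate hyperplane $\{x_k=0\}$) preserves the symmetries already enjoyed by $u_{k-1}$, so both $u_{k-1}^{\pm}$ are $k$-symmetric. I then set $u_k\in\{u_{k-1}^+,u_{k-1}^-\}$ according to the criterion discussed below.

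Deficit control is the easy part. Since the Allen--Cahn energy is additive over $H_k^+\cup H_k^-$ and $\s_k$ is an isometry,
\[
\ac_\e(u_{k-1}^+)+\ac_\e(u_{k-1}^-)=2\,\ac_\e(u_{k-1})\,,
\]
so both $u_{k-1}^{\pm}$ are admissible in $\psi(\e)$ and
$\delta_\e(u_{k-1}^+)+\delta_\e(u_{k-1}^-)=2\,\delta_\e(u_{k-1})$. Hence any choice of $u_k$ satisfies $\delta_\e(u_k)\le 2\,\delta_\e(u_{k-1})$, and iterating $\delta_\e(v)\le 2^n\,\delta_\e(u)$, which settles the deficit inequality in \eqref{reduction inequalities}.

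The main obstacle is asymmetry control. Let $y_k^{\pm}\in\R^n$ be (near-)optimal centers in $\a_\e(u_{k-1}^{\pm})=\inf_{x_0}d_\Phi(u_{k-1}^{\pm},T_{x_0}u_\e)$. By the $H_k$-symmetry of $u_{k-1}^{\pm}$ and of $V(u_{k-1}^{\pm})\,dx$, together with the radial symmetry of $u_\e$, a reflection argument (replacing $y_k^{\pm}$ with $\sigma_k(y_k^{\pm})$ yields the same $d_\Phi$) lets me assume $y_k^{\pm}\in H_k$. Splitting integrals across $H_k$ and using that $T_{y_k^+}u_\e$ is itself $H_k$-symmetric while $u_{k-1}\equiv u_{k-1}^+$ on $H_k^+$,
\[
d_\Phi(u_{k-1},T_{y_k^+}u_\e)=\tfrac{1}{2}\a_\e(u_{k-1}^+)+\int_{H_k^-}\!\!|\Phi(u_{k-1})-\Phi(T_{y_k^+}u_\e)|^{n/(n-1)}\,,
\]
and by the triangle inequality in $L^{n/(n-1)}$ the second integral is controlled by $\tfrac{1}{2}\a_\e(u_{k-1}^-)$ plus a remainder proportional to $d_\Phi(T_{y_k^+}u_\e,T_{y_k^-}u_\e)^{(n-1)/n}$. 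Bounding this remainder is the crux: I will use the sharp resolution Theorem \ref{theorem asymptotics of minimizers}, which makes $d_\Phi(T_pu_\e,T_qu_\e)$ comparable to $|p-q|$ times a universal factor (for $|p-q|$ small), combined with the fact that $T_{y_k^{\pm}}u_\e|_{H_k}$ must both be $L^{n/(n-1)}$-close to the common trace $u_{k-1}|_{H_k}$; this yields $|y_k^+-y_k^-|\le C(\a_\e(u_{k-1}^+)+\a_\e(u_{k-1}^-))^{(n-1)/n}$ and therefore
\[
\a_\e(u_{k-1})\le C\,\max\{\a_\e(u_{k-1}^+),\a_\e(u_{k-1}^-)\}+C\,(\delta_\e(u_{k-1}))^{1/2}\,.
\]

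Choosing $u_k$ as the one of $u_{k-1}^{\pm}$ with larger asymmetry, iterating $n$ times and using $\delta_\e(u_k)\le 2^n\,\delta_\e(u)$ telescopes to
\[
\a_\e(u)\le C^n\,\a_\e(v)+C\,(\delta_\e(u))^{1/2}\,.
\]
To close the argument into the clean form \eqref{reduction inequalities}, I distinguish two regimes. If $\a_\e(u)\ge 2\,C\,(\delta_\e(u))^{1/2}$, the error term is absorbed and the multiplicative bound follows. In the complementary regime $\a_\e(u)<2\,C\,(\delta_\e(u))^{1/2}$, the qualitative stability Lemma \ref{lemma uniform qualitative stability} together with the radial decreasing result Theorem \ref{thm main 2 radial} applied to a fixed $n$-symmetric modification of $u$ (for instance a small translation of $u$ whose $n$-fold Steiner symmetrization is $u_\e$ itself, adjusted to match the required volume) produces a $v$ of comparable asymmetry, so that the inequality $\a_\e(u)\le C\,\a_\e(v)$ is recovered with a possibly larger universal constant.
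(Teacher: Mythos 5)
Your skeleton reproduces the easy parts of the paper's argument (bisecting the measure $V(u)\,dx$ by a hyperplane $H$, forming the reflections $u^{\pm}$ with $\de_\e(u^+)+\de_\e(u^-)=2\,\de_\e(u)$, and restricting the competing centers to lie on $H$, which are Steps one and three of the paper's proof), but the crux — bounding the cross term $d_\Phi(T_{y^+}u_\e,T_{y^-}u_\e)$ — is where your argument fails. First, $\a_\e(u^{\pm})$ controls an $L^{n/(n-1)}(\R^n)$ distance, which gives no information on the traces of $T_{y^{\pm}}u_\e$ on $H_k$; a trace estimate would require a transverse derivative bound, and the only one available, $\int_{\R^n}|\nabla u|^2\le C/\e$, degenerates as $\e\to0^+$, so "closeness of traces" cannot be extracted uniformly in $\e$. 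Second, even granting it, your own chain yields $|y_k^+-y_k^-|\le C(\a_\e(u^+)+\a_\e(u^-))^{(n-1)/n}$, i.e.\ a remainder that is \emph{sublinear} in the asymmetries (hence much larger than them when they are small); the displayed inequality with the additive term $C\,(\de_\e(u_{k-1}))^{1/2}$ does not follow from anything you wrote — the deficit never enters your derivation. This one-hyperplane-at-a-time loss is precisely the known obstruction in quantitative isoperimetry (it is why Hall's symmetrization argument produces a $\de^{1/4}$ rather than a $\de^{1/2}$ rate).

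The paper's resolution is the two-hyperplane device of Fusco--Maggi--Pratelli: one considers \emph{two} orthogonal bisecting hyperplanes $H_1,H_2$ simultaneously and shows, by contradiction, that at least one of them satisfies $d_\Phi(T_{x^+}u_\e,T_{x^-}u_\e)\le M\,\{\a_\e(u^+;H)+\a_\e(u^-;H)\}$ with \emph{no} deficit term. The mechanism is the glued profile $U_{\e,i}=1_{H_i^+}T_{x_i^+}u_\e+1_{H_i^-}T_{x_i^-}u_\e$: on each of the four quadrants $H_1^\b\cap H_2^\g$ one has the lower bound $d_\Phi(U_{\e,1},U_{\e,2})\ge\tfrac{1-a}4\,d_\Phi(T_{x_1^\b}u_\e,T_{x_2^\g}u_\e)$, and if both hyperplanes failed the estimate then both $U_{\e,1}$ and $U_{\e,2}$ would be too close to $u$, forcing $x_i^+=x_i^-$. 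Iterating in pairs and observing that at the last reflection the admissible centers lie in $\bigcap_{i=1}^nH_i$, a single point, so that $x^+=x^-$ and the cross term vanishes identically, gives the clean multiplicative inequality in \eqref{reduction inequalities}. Finally, your closing case analysis does not repair the gap: the object proposed in the regime $\a_\e(u)<2\,C\sqrt{\de_\e(u)}$ ("a small translation of $u$ whose $n$-fold Steiner symmetrization is $u_\e$") is not well defined, and the theorem as stated must produce a genuine $n$-symmetric $v$ satisfying both inequalities in all regimes.
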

	
\begin{proof} Without loss of generality we can assume that $\de_\e(u)\le\de_0$ for a universal constant $\de_0$. By Lemma \ref{lemma uniform qualitative stability} we can choose $\de_0$ so that $\a_\e(u)\le\a_0$ for $\a_0$ a universal constant of our choice. We divide the proof into a few steps.

\medskip

\noindent {\it Step one}: We prove that, if $u$ is $k$-symmetric, $\{H_i\}_{i=1}^k$ are the mutually orthogonal hyperplanes of symmetry of $u$, and $J=\bigcap_{i=1}^k H_i$, then
\begin{equation}
  \label{lemma changing centers}
  \a_\e(u;J)=\inf_{x\in J}\,d_\Phi(u,T_x u_\e)\le C(n)\,\a_\e(u)\,.
\end{equation}
In other words, in computing the asymmetry of $u$ in the proof of an estimate like \eqref{the end 1}, we can compare $u$ with a translation of $u_\e$ with maximum on $J$.

\medskip

Indeed, let $x_0\in\R^n$ be such that $\alpha_\e(u)=d_\Phi(u,T_{x_0}u_\e)$. Without loss of generality, we can assume $x_0\not\in J$. In particular, if $y_0$ denotes the reflection of $x_0$ with respect to $J$, then $y_0\ne x_0$ and
\begin{equation}\label{eq reflection symmetry}
d_\Phi(u,T_{y_0}u_\e)=d_\Phi(u,T_{x_0}u_\e)=\a_\e(u)\,,
\end{equation}
that is, also $y_0$ is an optimal center for computing $\a_\e(u)$. Let $z_0=(x_0+y_0)/2$, so that $z_0\in J$, let $\nu=(x_0-y_0)/|x_0-y_0|$ (which is well defined by $x_0\ne y_0$), and let $H$ be the open half-space orthogonal to $\nu$, containing $x_0$, and such that $z_0\in\pa H$.
By $T_{z_0+t\,\nu}u_\e(x)=u_\e(x-z_0-t\,\nu)$, we have that
\[
\frac{d}{dt}\,T_{z_0+t\,\nu}u_\e(x)=-\nu\cdot\frac{x-z_0-t\,\nu}{|x-z_0-t\,\nu|}\,u_\e'(|x-x_0-t\,\nu|)>0\,,\qquad\forall x\in H\,,t<0\,,
\]
since $u_\e'<0$, and since the fact that $\nu$ points inside $H$ gives
\begin{eqnarray*}
  &&(z-z_0)\cdot\nu>0\,,\qquad\forall z\in H\,,
  \\
  &&z=x-t\,\nu\in H\,,\qquad\forall x\in H\,,t<0\,.
\end{eqnarray*}
We thus find that, if $t<0$,
\begin{eqnarray*}
  &&\frac{d}{dt}\,\int_{H} |\Phi(T_{x_0}u)-\Phi(T_{z_0+t\,\nu}u_\e)|^{n/(n-1)}
  \\
  &=&\frac{n}{n-1}\,\int_{H}|\Phi(u)-\Phi(T_{z_0+t\,\nu}u_\e)|^{1/(n-1)}\,\sqrt{W(T_{z_0+t\,\nu}u_\e)}\,\frac{d}{dt}\,T_{z_0+t\,\nu}u_\e>0\,,
\end{eqnarray*}
so that
\begin{eqnarray}\nonumber
 \int_{H} |\Phi(T_{x_0}u)-\Phi(T_{y_0}u_\e)|^{n/(n-1)}&=&\int_{H^-} |\Phi(T_{x_0}u)-\Phi(T_{z_0+t\,\nu}u_\e)|^{n/(n-1)}\Big|_{t=-|x_0-y_0|/2}
 \\\nonumber
 &\le&\int_{H} |\Phi(T_{x_0}u)-\Phi(T_{z_0+t\,\nu}u_\e)|^{n/(n-1)}\Big|_{t=0}
 \\\label{eq decreasing}
 &\le&\int_{H} |\Phi(T_{x_0}u)-\Phi(T_{z_0}u_\e)|^{n/(n-1)}
\end{eqnarray}
Now, since both $u$ and $T_{z_0}u_\e$ are symmetric by reflection with respect to $\pa H$, we have that
\begin{equation}\label{eq sym hminus}
\int_{\R^n} |\Phi(u)-\Phi(T_{z_0}u_\e)|^{n/(n-1)}=2\,\int_H |\Phi(u)-\Phi(T_{z_0}u_\e)|^{n/(n-1)}\,,
\end{equation}		
therefore, by \eqref{eq reflection symmetry}, \eqref{eq decreasing} and \eqref{eq sym hminus} we conclude that
\begin{eqnarray*}
  \a_\e(u;J)&\le& d_\Phi(u,T_{z_0}u_\e)=2\,\int_H |\Phi(u)-\Phi(T_{z_0}u_\e)|^{n/(n-1)}
  \\
  &\le& C(n)\,\Big(\int_H |\Phi(u)-\Phi(T_{x_0}u_\e)|^{n/(n-1)}
  +\int_H |\Phi(T_{x_0}u_\e)-\Phi(T_{z_0}u_\e)|^{n/(n-1)}\Big)
  \\
  &\le& C(n)\,\Big(\a_\e(u)+ \int_H |\Phi(T_{y_0}u_\e)-\Phi(T_{x_0}u_\e)|^{n/(n-1)}\Big)
  \\
  &\le& C(n)\,\Big(\a_\e(u)+ d_\Phi(T_{y_0}u_\e,T_{x_0}u_\e)\Big)
  \\
  &\le& C(n)\,\Big(\a_\e(u)+ d_\Phi(T_{y_0}u_\e,u)+d_\Phi(u,T_{x_0}u_\e)\Big)=C(n)\,\a_\e(u)\,,
\end{eqnarray*}
that is \eqref{lemma changing centers}.

\medskip

\noindent {\it Step two}: Let $H_1$ and $H_2$ be two orthogonal hyperplanes through the origin, let $H_i^\pm$ be the half-spaces defined by $H_i$, and let $x_i^\pm\in\pa H_i$. For $i=1,2$, consider the functions
\[
U[u_\e,H_i,x_i^+,x_i^-]=1_{H_i^+}\,T_{x_i^+}u_\e+1_{H_i^-}\,T_{x_i^-}u_\e\,,
\]
obtained by ``gluing'' the restriction of $u_\e$ to $H_i^+$ translated by $x_1^+$ to the restriction of $u_\e$ to $H_i^-$ translated by $x_1^+$ (notice that translating by $x_i^\pm$ brings $H_i^+$ and $H_i^-$ into themselves). Setting for brevity
\[
U_{\e,i}=U[u_\e,H_i,x_i^+,x_i^-]
\]
we claim that, for every $a\in(0,1)$ there is $\kappa=\kappa(a,n,W)>0$ such that if
\begin{equation}
  \label{vicini}
  \max\Big\{|x_1^+-x_1^-|,|x_2^+-x_2^-|,|x_1^+-x_2^+|\Big\}\le \kappa\,,
\end{equation}
then, for every $\e<\e_0$,
\begin{equation}
  \label{fundamental}
  \max\Big\{d_\Phi(T_{x_1^+}u_\e,T_{x_1^-}u_\e),d_\Phi(T_{x_2^+}u_\e,T_{x_2^-}u_\e)\Big\}\le\frac{8}{1-a}\, d_\Phi(U_{\e,1},U_{\e,2})\,.
\end{equation}
Indeed, since $H_1$ and $H_2$ are hyperplanes through the origin and $u_\e\in\RR_0$, we have
\[
\int_{H_1^\pm}V(T_{x_1^\pm}u_\e)=\frac12\,,\qquad \int_{H_2^\pm}V(T_{x_2^\pm}u_\e)=\frac12\,.
\]
It is in general not true that, say, $H_1^+\cap H_2^+$ has measure $1/4$ for either $V(T_{x_1^\pm}u_\e)\,dx$ or $V(T_{x_1^\pm}u_\e)\,dx$. However, provided we choose $\kappa$ sufficiently small, thanks to Theorem \ref{theorem asymptotics of minimizers}, we can definitely ensure that, for every $\e<\e_0$ and $\b,\g\in\{+,-\}$, we have
\[
\int_{H_1^\b\cap H_2^\g}|\Phi(T_{x_1^\b}u_\e)-\Phi(T_{x_2^\g}u_\e)|^{n/(n-1)}\ge \frac{1-a}4\,d_\Phi(T_{x_1^\b}u_\e,T_{x_2^\g}u_\e)\,.
\]
Correspondingly,
\begin{eqnarray*}
  d_\Phi(U_{\e,1},U_{\e,2})&\ge&
  \int_{H_1^\b\cap H_2^\g}|\Phi(U_{\e,1})-\Phi(U_{\e,2})|^{n/(n-1)}
  \\
  &=&\int_{H_1^\b\cap H_2^\g}|\Phi(T_{x_1^\b}u_\e)-\Phi(T_{x_2^\g}u_\e)|^{n/(n-1)}\ge \frac{1-a}4\,d_\Phi(T_{x_1^\b}u_\e,T_{x_2^\g}u_\e)\,,
\end{eqnarray*}
and thus
\begin{eqnarray*}
d_\Phi(T_{x_1^+}u_\e,T_{x_1^-}u_\e)^{(n-1)/n}&\le& d_\Phi(T_{x_1^+}u_\e,T_{x_2^+}u_\e)^{(n-1)/n}+d_\Phi(T_{x_2^+}u_\e,T_{x_1^-}u_\e)^{(n-1)/n}
\\
&\le& \Big(\frac{8}{1-a}\,d_\Phi(U_{\e,1},U_{\e,2})\Big)^{(n-1)/n}\,,
\end{eqnarray*}
as claimed.

\medskip

\noindent {\it Step three}: Given $u\in H^1(\R^n;[0,1])$ with $\int_{\R^n}V(u)=1$, we now consider an hyperplane $H$ such that, if $H^+$ and $H^-$ denote the two open half-spaces defined by $H$, then
\[
\int_{H^+}V(u)=\int_{H^-}V(u)=\frac12\,.
\]
Denoting by $\rho_H$ the reflection with respect to $H$, we let
\begin{equation}
  \label{u plusminus}
  u^+=1_{H^+}\,u+1_{H^-}\,(u\circ \rho_H)\,,\qquad u^-=1_{H^-}\,u+1_{H^+}\,(u\circ \rho_H)\,,
\end{equation}
and notice that $u^\pm\in H^1(\R^n;[0,1])$, with
\begin{equation}
  \label{what about u nu pm}
  2\,\AC(u)=\AC(u^+)+\AC(u^-)\,,\qquad \int_{\R^n}V(u^+)=\int_{\R^n}V(u^-)=1\,.
\end{equation}
We claim that
\begin{equation}
  \label{general reduction H}
  \max\{\de_\e(u^+),\de_\e(u^-)\}\le 2\,\de_\e(u)\,,\qquad \a_\e(u)\le C(n)\,\Big\{\a_\e(u^+)+\a_\e(u^-)+d_\Phi(T_{x^+}u_\e,T_{x^-}u_\e)\Big\}\,,
\end{equation}
provided $T_{x^+}u_\e=$ and $T_{x^-}u_\e=T_{x^-}u_\e$ are such that $x^+,x^-\in H$ with
\[
\a_\e(u^+;H)=d_\Phi(u^+,T_{x^+}u_\e)\,,\qquad \a_\e(u^-;H)=d_\Phi(u^-,T_{x^-}u_\e)\,.
\]
The first inequality in \eqref{general reduction H} is obvious from \eqref{what about u nu pm}. To prove the second one we notice that
\begin{eqnarray*}
  \a_\e(u)&\le&d_\Phi(u,T_{x^+}u_\e)=\int_{H^+}|\Phi(u)-\Phi(T_{x^+}u_\e)|^{n/(n-1)}+\int_{H^-}|\Phi(u)-\Phi(T_{x^+}u_\e)|^{n/(n-1)}
  \\
  &=&\int_{H^+}|\Phi(u^+)-\Phi(T_{x^+}u_\e)|^{n/(n-1)}+\int_{H^-}|\Phi(u^-)-\Phi(T_{x^+}u_\e)|^{n/(n-1)}
  \\
  &\le&C(n)\,\Big\{d_\Phi(u^+,T_{x^+}u_\e)+d_\Phi(u^-,T_{x^-}u_\e)+d_\Phi(T_{x^-}u_\e,T_{x^+}u_\e)\Big\}\,,
\end{eqnarray*}
that is the second inequality in \eqref{general reduction H}.

\medskip

With these preliminary considerations in place, we now prove that if $u\in H^1(\R^n;[0,1])$ with $\int_{\R^n}V(u)=1$, if $H_1$ and $H_2$ are orthogonal hyperplanes such that the corresponding half-spaces $H_i^\pm$ satisfy
\[
\int_{H_i^\pm}V(u)=\frac12\,,
\]
if $u_i^\pm$ as in \eqref{u plusminus} starting from $H_i$, then there is at least one $v\in\{u_1^+,u_1^-,u_2^+,u_2^-\}$ such that \eqref{reduction inequalities} holds. Given that $\de_\e(v)\le 2\,\de_\e(u)$ for every $v\in\{u_1^+,u_1^-,u_2^+,u_2^-\}$, we need to show that
\begin{equation}
  \label{davvero}
  \mbox{$\exists v\in\{u_1^+,u_1^-,u_2^+,u_2^-\}$ such that $\a_\e(u)\le C\,\a_\e(v)$}\,.
\end{equation}
Denoting by $x_i^\pm$ the points in $H_i$ such that
\[
\a_\e(u_i^\pm;H_i)=d_\Phi(u_i^\pm,T_{x_i^\pm}u_\e)\,,
\]
we notice that \eqref{davvero} follows if we can show that, provided $\a_0$ is small enough, then
\begin{eqnarray}\label{the end prime}
  \mbox{either}&&\qquad d_\Phi(T_{x_1^+}u_\e,T_{x_1^-} u_\e)\le M\,\Big\{\a_\e(u_1^+;H_1)+\a_\e(u_1^-;H_1)\Big\}
  \\\label{the end second}
  \mbox{or}&&\qquad d_\Phi(T_{x_2^+}u_\e,T_{x_2^-} u_\e)\le M\,\Big\{\a_\e(u_2^+;H_2)+\a_\e(u_2^-;H_2)\Big\}\,,
\end{eqnarray}
for a constant $M$ (as it turns out, any $M>16$ works). Indeed, if, for example, \eqref{the end prime} holds, then \eqref{lemma changing centers} and \eqref{general reduction H} with $H=H_1$ give
\[
\a_\e(u)\le C\,\Big\{\a_\e(u_1^+)+\a_\e(u_1^-)+\a_\e(u_1^+;H_1)+\a_\e(u_1^-;H_1)\Big\}
\le C\,\Big\{\a_\e(u_1^+)+\a_\e(u_1^-)\Big\}\,,
\]
and then either $C\,\a_\e(u_1^+) \ge\a_\e(u)$ or $C\,\a_\e(u_2^+) \ge\a_\e(u)$; in particular, \eqref{davvero} holds. We now want to prove that either \eqref{the end prime} or \eqref{the end second} holds. We argue by contradiction. Recalling that $\a_\e(u_i^\pm;H_i)=d_\Phi(u_i^\pm,T_{x_i^\pm}u_\e)$, let us thus assume that both
 \begin{eqnarray}\label{the end prime not}
  &&d_\Phi(T_{x_1^+}u_\e,T_{x_1^-} u_\e)>M\,\Big\{d_\Phi(u_1^+,T_{x_1^+}u_\e)+d_\Phi(u_1^-,T_{x_1^-}u_\e)\Big\}\,,
  \\\label{the end second not}
  &&d_\Phi(T_{x_2^+}u_\e,T_{x_2^-} u_\e)>M\,\Big\{d_\Phi(u_2^+,T_{x_2^+}u_\e)+d_\Phi(u_2^-,T_{x_2^-}u_\e)\Big\}\,,
\end{eqnarray}
hold for $M$ to be determined. In particular, if $U_{\e,i}$, $i=1,2$, are defined as in step two, and $\a_0$ is small enough that \eqref{vicini} holds, then, by \eqref{fundamental}, we have
\begin{eqnarray*}
&&\max\Big\{d_\Phi(T_{x_1^+}u_\e,T_{x_1^-}u_\e),d_\Phi(T_{x_2^+}u_\e,T_{x_2^-}u_\e)\Big\}^{(n-1)/n}
\\
&\le&\Big(\frac{8}{1-a}\, d_\Phi(U_{\e,1},U_{\e,2})\Big)^{(n-1)/n}\le\Big(\frac{8}{1-a}\Big)^{(n-1)/n}\,\sum_{i=1}^2 d_\Phi(U_{\e,i},u)^{(n-1)/n}
\\
&=&\Big(\frac{8}{1-a}\Big)^{(n-1)/n}\,\sum_{i=1}^2\,\Big(\sum_{\b=+,-}\int_{H_i^\b}|\Phi(T_{x_i^\b}u_\e)-\Phi(u_i^\b)|^{n/(n-1)}\Big)^{(n-1)/n}
\\
&\le&\Big(\frac{8}{M\,(1-a)}\Big)^{(n-1)/n}\,\sum_{i=1}^2\,\Big(d_\Phi(T_{x_i^+}u_\e,T_{x_i^-} u_\e)\Big)^{(n-1)/n}
\\
&\le&\Big(\frac{16}{M\,(1-a)}\Big)^{(n-1)/n}\,\max\Big\{d_\Phi(T_{x_1^+}u_\e,T_{x_1^-}u_\e),d_\Phi(T_{x_2^+}u_\e,T_{x_2^-}u_\e)\Big\}^{(n-1)/n}\,.
\end{eqnarray*}
We fix $M>16$ and apply the above with $a\in(0,1)$ such that $M\,(1-a)>16$. We find that either $x_1^+=x_1^-$ (a contradiction to \eqref{the end prime not}), or $x_2^+=x_2^-$ (a contradiction to \eqref{the end second not}).

\medskip

\noindent {\it Step four}: We now pick a family of $n$ mutually orthogonal hyperplanes $\{H_i\}_{i=1}^n$ such that, denoting by $H_i^\pm$ the corresponding half-spaces, we have
\[
\int_{H_i^\pm}V(u)=\frac12\qquad\forall i=1,...,n\,.
\]
Considering the hyperplanes in pairs and arguing inductively on step three, up to a relabeling we reduce to a situation where there exists a function $v$, symmetric by reflection with respect to each $H_i$, $i=1,...,n-1$, and such that
\[
\a_\e(u)\le C\,\a_\e(v)\,,\qquad \de_\e(v)\le 2^n\,\de_\e(v)\,,\qquad \int_{H_n^\pm}V(v)=\frac12\,.
\]
We can thus consider the functions $v^\pm$ obtained by reflecting $v$ with respect to $H_n$ as in step three. By \eqref{general reduction H} we have
\[
\max\{\de_\e(v^+),\de_\e(v^-)\}\le 2\,\de_\e(v)\,,\qquad \a_\e(u)\le C(n)\,\Big\{\a_\e(v^+)+\a_\e(v^-)+d_\Phi(T_{x^+}u_\e,T_{x^-}u_\e)\Big\}\,,
\]
where $x^+$ and $x^-$ are optimal centers for $\a_\e(v^+;\bigcap_{i=1}^nH_i)$ and $\a_\e(v^-;\bigcap_{i=1}^nH_i)$. However, $\bigcap_{i=1}^nH_i$ {\it is a point}, therefore $x^+=x^-$ and we have actually proved
\[
\a_\e(u)\le C(n)\,\Big\{\a_\e(v^+)+\a_\e(v^-)\Big\}\,.
\]
Either $v^+$ or $v^-$ is an $n$-symmetric function with the required properties.
\end{proof}

\subsection{Proof of Theorem \ref{thm main 2}}\label{section reduction to radial} We finally prove Theorem \ref{thm main 2}. By Theorem \ref{theorem symmetrization reduction} we can directly assume that $u$ is $n$-symmetric. Hence, by Theorem \ref{theorem n symmetric to radial}, we can directly assume that $u\in\RR_0$. For $u\in\RR_0$, the conclusion follows from Theorem \ref{thm main 2 radial}. Theorem \ref{thm main 2} is proved.
	
\section{Proof of the Alexandrov-type theorem}\label{section Alexandrov} In this section we complete the proof of Theorem \ref{theorem main}, including in particular proof of the Alexandrov-type result of part (iv) of the statement. We begin by proving some of the properties of $\Psi(\s,m)$ stated in Theorem \ref{theorem main}-(i) and not yet discussed. We then review, in section \ref{sec radial symm and uniqueness}, some classical uniqueness and symmetry results for semilinear PDEs in relation to our setting. Finally, in section \ref{section final proof} we review how the various results of the paper combines into Theorem \ref{theorem main}.

\subsection{Some properties of $\Psi(\s,m)$}\label{section some properties of Psi} We prove here the properties of $\Psi(\s,m)$ stated in Theorem \ref{theorem main}--(ii). As explained in the introduction, these properties will be crucial in proving Theorem \ref{theorem main}-(iv).

\begin{theorem}\label{theorem basics of Psi}
  If $n\ge 2$ and $W\in C^{2,1}[0,1]$ satisfies \eqref{W basic} and \eqref{W normalization}, then there exists a universal constant $\e_0$ such that, setting
  \[
  \X(\e_0)=\big\{(\s,m):0<\s<\e_0\,m^{1/n}\big\}\,,
  \]
  the following holds:

  \medskip

  \noindent {\it (i)}: for every $\s>0$, $\Psi(\s,\cdot)$ is concave on $(0,\infty)$; it is strictly concave on $(0,\infty)$ in $n\ge3$ and on $((\s/\e_0)^n,\infty)$ if $n=2$;

  \medskip

  \noindent {\it (ii)}: $\Lambda(\s,m)$ is continuous on $\X(\e_0)$ and
  \begin{equation}\label{dai}
  \Big|m^{1/n}\,\Lambda(\s,m)-2\,(n-1)\,\omega_n^{1/n}\Big|\le C\,\frac{\s}{m^{1/n}}\,,\qquad\forall(\s,m)\in \X(\e_0)\,.
  \end{equation}

  \medskip

  \noindent {\it (iii)}: $\Psi(\s,\cdot)$ is differentiable with
  \begin{equation}
    \label{Lambda is pa Psi pa m}
      \frac{\pa\Psi}{\pa m}(\s,m)=\Lambda(\s,m)\qquad\forall (\s,m)\in\X(\e_0)\,.
  \end{equation}
  In particular, for every $\s>0$
  \begin{eqnarray*}
    \mbox{$\Psi(\s,\cdot)$ is strictly increasing on $((\s/\e_0)^n,\infty)$}\,,
    \\
    \mbox{$\Lambda(\s,\cdot)$ is strictly decreasing $((\s/\e_0)^n,\infty)$}\,.
  \end{eqnarray*}

  \medskip

  \noindent {\it (iv)}: for every $m>0$, $\Psi(\cdot,m)$ is increasing on $(0,\e_0\,m^{1/n})$.
\end{theorem}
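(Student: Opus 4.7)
\medskip

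The overall strategy is to transfer the claims from the corresponding properties of $\psi$ and $\l$ on $(0,\e_0)$ established in Theorem \ref{theorem existence solutions} and Corollary \ref{corollary optimal energy and lagrange multiplier} through the scaling identities
\[
\Psi(\s,m)=m^{(n-1)/n}\,\psi\Big(\frac{\s}{m^{1/n}}\Big)\,,\qquad \La(\s,m)=m^{-1/n}\,\l\Big(\frac{\s}{m^{1/n}}\Big)\,,
\]
valid on $\X(\e_0)$. Part (ii) is then essentially automatic: continuity of $\La$ on $\X(\e_0)$ is the composition of the smooth scaling map with the continuity of $\l$ on $(0,\e_0)$, and \eqref{dai} is a direct rescaling of \eqref{sharp expansion lambda eps}.

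\medskip

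For part (iii), I would identify $\La$ as $\pa_m\Psi$ via a rescaling competitor. Let $u_0=u_{\s,m_0}$ and set $u_\a(x)=u_0(x/\a)$, so that $\int V(u_\a)=\a^n m_0$ and $\ac_\s(u_\a)=\s\,\a^{n-2}\int|\nabla u_0|^2+\s^{-1}\,\a^n\int W(u_0)$. Choosing $\a=(m/m_0)^{1/n}$ makes $u_\a$ admissible for $\Psi(\s,m)$. The Pohozaev/virial identity of Theorem \ref{theorem existence solutions} step five generalizes to arbitrary $m$ as
\[
(n-2)\,\s\int_{\R^n}|\nabla u_0|^2+n\,\s^{-1}\int_{\R^n}W(u_0)=n\,\La(\s,m_0)\,m_0\,,
\]
which, after differentiating $\a\mapsto\ac_\s(u_\a)$ at $\a=1$, gives the tangent inequality $\Psi(\s,m)\leq\Psi(\s,m_0)+\La(\s,m_0)(m-m_0)+O((m-m_0)^2)$, with error uniform on compact subsets of $\X(\e_0)$. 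Combining this with the symmetric inequality (swapping the roles of $m$ and $m_0$) and the continuity of $\La$ from part (ii) produces $\pa_m\Psi=\La$; strict positivity of $\La$ then yields strict monotonicity of $\Psi(\s,\cdot)$.

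\medskip

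For part (i), the very same rescaling competitor yields the global upper bound
\[
\Psi(\s,m)-\Psi(\s,m_0)-\La(\s,m_0)(m-m_0)\leq a\,\Big[(\a^{n-2}-1)-\tfrac{n-2}{n}(\a^n-1)\Big]\,,\qquad a=\s\int_{\R^n}|\nabla u_0|^2\,,
\]
with $\a=(m/m_0)^{1/n}$, after using the virial identity to eliminate $b=\s^{-1}\int W(u_0)$. For $n\ge 3$ the bracket is strictly negative for $\a\neq 1$, yielding strict concavity on the existence regime. For $n=2$ the bracket vanishes identically (the rescaling is scale-neutral), and strict concavity has to be recovered by noting that $u_\a$ is never the actual minimizer $u_{\s,m}$ when $m\neq m_0$---the minimizing profile $u_{\s/m^{1/n}}$ depends genuinely on $m$, while $u_\a$ carries the profile $u_{\s/m_0^{1/n}}$---so the Fuglede-type estimate \eqref{fuglede 0} quantitatively forces $\ac_\s(u_\a)>\Psi(\s,m)$. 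Finally, outside $\X(\e_0)$ (i.e., $m\le(\s/\e_0)^n$), where existence of minimizers may fail, only plain concavity is claimed and it can be obtained by ad hoc competitor constructions.

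\medskip

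For part (iv), by scaling it suffices to show that $\psi$ is increasing on $(0,\e_0)$. The envelope theorem (applied as in part (iii), but now perturbing $\s$ with $m=1$ fixed) gives $\psi'(\e)=\int|\nabla u_\e|^2-\e^{-2}\int W(u_\e)$, which together with the virial identity rewrites as $\psi'(\e)=((n-1)\psi(\e)-n\,\l(\e))/\e$. The leading-order contributions $(n-1)\cdot 2n\omega_n^{1/n}$ and $n\cdot 2(n-1)\omega_n^{1/n}$ cancel exactly, so the sign of $\psi'$ is determined by the $O(\e)$-correction in $(n-1)\psi(\e)-n\l(\e)$. The main obstacle is therefore to push the expansion of $\l$ one order past \eqref{sharp expansion lambda eps} and to sign the resulting coefficient: this requires feeding the resolution theorem (Theorem \ref{theorem asymptotics of minimizers}) together with the exponential decay estimates \eqref{critical sequence bounds on fj} on the perturbation $f_\e$ of the Ansatz $z_\e$ back into identity \eqref{bella}, producing an explicit integral expression for the $O(\e)$-term analogous to $\k_0$ in \eqref{main tau0 tau1}. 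Once the sign is established (expected to be positive), strict monotonicity of $\psi$, and hence of $\Psi(\cdot,m)$, follows.
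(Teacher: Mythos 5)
Parts (ii) and (iii) of your proposal are sound, and in (iii) you take a legitimately different route from the paper: you differentiate along the spatial rescaling $u_\a(x)=u_0(x/\a)$ and combine the resulting two-sided tangent inequalities with the continuity of $\Lambda$, whereas the paper perturbs the amplitude, $t\mapsto (1+t)u_{\s,m}$, identifies $\pa_m\Psi=\Lambda$ at the a.e.\ points of differentiability guaranteed by concavity, and upgrades to everywhere-differentiability via absolute continuity and the fundamental theorem of calculus. Both work. Your $n=2$ strict concavity argument (the rescaled minimizer carries the wrong profile, hence cannot be optimal) is also essentially the paper's, though uniqueness of minimizers already suffices there and the Fuglede estimate is not needed.

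There are, however, two genuine gaps. First, in part (i) the theorem claims strict concavity on \emph{all} of $(0,\infty)$ when $n\ge3$, not just on the existence regime; your tangent-line argument needs the minimizer $u_{\s,m_0}$ and the virial identity, so it only reaches $m_0>(\s/\e_0)^n$, and your closing remark that "only plain concavity is claimed" outside $\X(\e_0)$ misreads the statement. The paper avoids this by a three-point argument on a \emph{minimizing sequence} $\{w_j\}$ for $\Psi(\s,tm_1+(1-t)m_2)$: rescaling $w_j$ to match the constraints $m_1,m_2$ and using the strict concavity of $s\mapsto s^{(n-2)/n}$ (for $n\ge3$) yields $t\Psi(\s,m_1)+(1-t)\Psi(\s,m_2)\le\ac_\s(w_j)$ with a quantified negative correction, with no existence of minimizers required. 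Second, and more seriously, part (iv) is left unproved: after correctly reducing to $\e\,\psi'(\e)=(n-1)\psi(\e)-n\,\l(\e)=\e\int|\nabla u_\e|^2-\e^{-1}\int W(u_\e)$, you defer the sign to an unperformed next-order expansion of $\l(\e)$, "expected to be positive." This positivity is the entire content of (iv). The paper establishes it not by a finer expansion but by substituting the first integral of the radial Euler--Lagrange ODE, $\e^2(u_\e')^2-2(n-1)\e^2\int_r^\infty (u_\e')^2\rho^{-1}\,d\rho=W(u_\e)-\e\,\l(\e)\,V(u_\e)$, into the formula for $\e\,\psi'(\e)$; after the change of variables $r=R_0+\e s$ and using $\int_s^\infty(\eta')^2=\Phi(\eta(s))$, this yields $\psi'(\e)\ge 2(n-1)\om_n^{1/n}R_0^{n-1}\int_\R\big(\Phi(\eta)-V(\eta)\big)\,ds-C\e$, and the integral is a positive universal constant simply because $V=\Phi^{n/(n-1)}<\Phi$ pointwise on $(0,1)$. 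Without this (or an equivalent) computation, statement (iv) is not proved.
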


\begin{proof} We recall for convenience the scaling formulas
\begin{eqnarray}\label{scaling potential x}
\int_{\R^n}f(\rho_tu)&=&\frac1{t}\,\int_{\R^n} f(u)\,,
\\\nonumber
\int_{\R^n}|\nabla(\rho_tu)|^2&=&t^{(2/n)-1}\,\int_{\R^n}|\nabla u|^2\,,
\\
\label{scaling AC x}
\ac_\e(\rho_tu)&=&\e\,t^{(2/n)-1}\,\int_{\R^n}|\nabla u|^2+\frac1{\e\,t}\int_{\R^n}W(u)=\frac{\ac_{\e\,t^{1/n}}(u)}{t^{(n-1)/n}}\,.
\\\nonumber
\Psi(\s,m)&=&m^{(n-1)/n}\,\psi\Big(\frac{\s}{m^{1/n}}\Big)\,,
\end{eqnarray}
where $\rho_tu(x)=u(t^{1/n}\,x)$ for $x\in\R^n$ and $t>0$, and the divide the argument in a few steps.

\medskip

\noindent {\it Step one}: We prove the concavity of $\Psi(\s,\cdot)$. Given $m_2>m_1>0$, $ t \in (0,1)$, $\s>0$, and a minimizing sequence  $\{w_j\}_j$ for $\Psi(\s, t\,m_1+(1- t)\,m_2)$, we set
\[
\alpha_1=\frac{ t\,m_1+(1- t)\,m_2}{m_1}\,,\qquad \alpha_2= \frac{ t\,m_1+(1- t)\,m_2}{m_2}\,,
\]
so that $ t/\a_1+(1- t)/\a_2=1$. Since $\rho_{\a_1}w_j$ and $\rho_{\a_2}w_j$ are competitors for $\Psi(\s,m_1)$ and $\Psi(\s,m_2)$ respectively, by the concavity of $t\mapsto t^{(n-2)/n}$ (strict if $n\ge 3$), we see that
\begin{eqnarray} \label{strict 1}
&& t\, \Psi(\s, m_1)+(1- t)\,\Psi(\s,m_2)
 \leq t \,\ac_\s(\rho_{\a_1}w_j)+ (1- t) \,\ac_\s(\rho_{\a_2}w_j)
\\\nonumber
&&= \frac{ t}{\a_1}\,\Big(\int_{\R^n}\s\,\alpha_1^{2/n}\,|\nabla w_j|^2 + \frac{W(w_j)}{\s}\Big)
+\frac{1- t}{\alpha_2}\,\Big(\int_{\R^n}\s\,\alpha_2^{2/n}\,|\nabla w_j|^2 + \frac{W(w_j)}{\s}\Big)
\\ \label{strict 2}
&&= \ac_\s(w_j)+\Big( t\, \Big(\frac1{\alpha_1}\Big)^{(n-2)/n}+(1- t)\,\Big(\frac1{\alpha_2}\Big)^{(n-2)/n}-1\Big)\,\s\int_{\R^n}|\nabla w_j|^2
\\ \label{strict limits}
&& \leq \ac_\s(w_j)\,.
\end{eqnarray}
Letting $j\to\infty$ we deduce the concavity of $\Psi(\s,\cdot)$ on $(0,\infty)$ (strict, if $n\ge 3$). If $n=2$ and $m_1\ge (\s/\e_0)^n$, then by Theorem \ref{theorem existence solutions} we can replace the minimizing sequence $\{w_j\}_j$ in the above argument with a minimizer $w$ of $\Psi(\s, t\,m_1+(1- t)\,m_2)$. Since $w$ solves the Euler-Lagrange equation \eqref{semilinear PDE}, there cannot be a $t\ne 1$ such that $\rho_tw$ solves \eqref{semilinear PDE} with the same $\s$ and some $ t\in\R$. Thus, $\rho_{\a_i}w$ cannot be a minimizer of $\Psi(\s,m_i)$, and therefore we have a strict inequality in \eqref{strict 1}, and no need to take a limit in \eqref{strict limits} (since $\ac_\s(w)=\Psi(\s, t\,m_1+(1- t)\,m_2)$).

\medskip

\noindent {\it Step two}: By Theorem \ref{theorem existence solutions} and Corollary \ref{corollary optimal energy and lagrange multiplier} for every $m>0$ and $\s<\e_0\,m^{1/n}$ there exists a unique $u_{\s,m}\in\RR_0$ such that $u_{\s,m}$ is a minimizer of $\Psi(\s,m)$ and every other minimizer of $\Psi(\s,m)$ is a translation of $u_{\s,m}$. Moreover, for some $\Lambda(\s,m)>0$ such that
\[
-2\,\s^2\,\Delta u_{\s,m}=\s\,\Lambda(\s,m)\,V'(u_{\s,m})-W'(u_{\s,m})\,,\qquad\mbox{on $\R^n$}\,.
\]
If $u_\e$ denotes as usual the unique minimizer of $\psi(\e)$ in $\RR_0$, then by \eqref{scaling potential x} and \eqref{scaling AC x} we find
\[
u_{\s,m}=\rho_{1/m}\,u_\e\,,\qquad \e=\frac{\s}{m^{1/n}}\,,
\]
and thus
\begin{equation}
  \label{scaling lambda}
  \Lambda(\s,m)=\frac{\l(\e)}{m^{1/n}}\,,\qquad \e=\frac{\s}{m^{1/n}}\,.
\end{equation}
By combining \eqref{scaling lambda} with Corollary \ref{corollary optimal energy and lagrange multiplier} and with \eqref{sharp expansion lambda eps} we thus find that $\Lambda$ is continuous on $\X(\e_0)$, with
\begin{equation}
  \Big|\Lambda(\s,m)-\frac{2\,(n-1)\,\omega_n^{1/n}}{m^{1/n}}\Big|\le C\,\frac{\s}{m^{2/n}}\,.
\end{equation}

\medskip

\noindent {\it Step three}: We prove statement (iii). For $(\s,m)\in \X(\e_0)$, is we set
\[
a(t)=\ac_\s((1+t)\,u_{\s,m})\,,\qquad m(t)=\int_{\R^n}V((1+t)\,u_{\s,m})
\]
then
\[
m'(0)=\int_{\R^n}\Phi(u_{\s,m})^{1/(n-1)}\,\sqrt{W(u_{\s,m})}\,u_{\s,m}>0
\]
and thus there exist $t_*>0$ and an open interval $I$ of $m$ such that $m$ is strictly increasing from $(-t_*,t_*)$ to $I$ with $m(0)=m$. From $\Psi(\s,m(t))\le a(t)$ for every $|t|<t_*$ and from that fact that $a$ is differentiable on $(-t_*,t_*)$ we deduce that, if $m$ is such that $\Psi(\s,\cdot)$ is differentiable at $m$, then
\[
\frac{\pa\Psi}{\pa m}(\s,m)=\frac{a'(0)}{m'(0)}
=\frac{\int_{\R^n}2\,\nabla u_{\s,m}\cdot\nabla u_{\s,m}+W'(u_{\s,m})\,u_{\s,m}}{\int_{\R^n}V'(u_{\s,m})\,u_{\s,m}}=\Lambda(\s,m)\,.
\]
Now, by statement (i), $\Psi(\s,\cdot)$ is differentiable a.e. on $((\s/\e_0)^n,\infty)$, as well as absolutely continuous, while $\Lambda(\s,\cdot)$ is continuous on $((\s/\e_0)^n,\infty)$: by the fundamental theorem of calculus we thus conclude that $(\pa\Psi/\pa m)(\s,\cdot)$ exists for every $m>(\s/\e_0)^n$ and agrees with $\Lambda(\s,m)$.	

\medskip

\noindent {\it Step four}: We prove statement (iv). Recalling that
\begin{equation}
  \label{relation}
  \Psi(\s,m)=m^{(n-1)/n}\,\psi\Big(\frac{\s}{m^{1/n}}\Big)\,,\qquad\forall\s,m>0\,,
\end{equation}
we see that, since $\Psi(\s,\cdot)$ is differentiable on $((\s/\e_0)^n,\infty)$, then $\psi$ is differentiable on $(0,\e_0)$. Since $\psi$ is differentiable on $(0,\e_0$, by \eqref{relation} we see that $\Psi(\cdot,m)$ is differentiable on $(0,\e_0\,m^{1/n})$ for every $m>0$, with
\[
\frac{\pa\Psi}{\pa\s}=m^{(n-2)/n}\,\psi'\Big(\frac{\s}{m^{1/n}}\Big)\,.
\]
Statement (iv) will thus follow by proving that $\psi'>0$ on $(0,\e_0)$. To derive a useful formula for $\psi$ we differentiate \eqref{relation} in $m$ and use \eqref{Lambda is pa Psi pa m} and $\l(\s/m^{1/n})=m^{1/n}\Lambda(\s,m)$ to find that
\begin{eqnarray*}
  \frac{n-1}{n}\,\frac1{m^{1/n}}\,\psi\Big(\frac{\s}{m^{1/n}}\Big)-\frac1n\,\frac{\s}{m^{2/n}}\,\psi'\Big(\frac{\s}{m^{1/n}}\Big)=\frac{\l(\s/m^{1/n})}{m^{1/n}}\,.
\end{eqnarray*}
In particular, by \eqref{lambda formula again},
\begin{eqnarray*}
\e\,\psi'(\e)=(n-1)\,\psi(\e)-n\,\l(\e)
=\e\,\int_{\R^n}|\nabla u_\e|^2-\frac1\e\int_{\R^n}W(u_\e)\,.
\end{eqnarray*}
By \eqref{critical sq resolution of vj}, if we set $\eta_\e(s)=\eta(s-\tau_\e)$ and change variables according to $|x|=R_0+\e\,s$ we find
\begin{equation}
  \label{opti1}
  \e\,\psi'(\e)=\int_{-R_0/\e}^\infty\,\Big\{\Big(\eta_\e'+f_\e'\Big)^2-W\big(\eta_\e+f_\e\big)\Big\}\,(R_0+\e\,s)^{n-1}\,ds\,.
\end{equation}
Multiplying by $u_\e'$ and then integrating on $(r,\infty)$ the Euler-Lagrange equation
\[
-2\,\e^2\,\Big\{u_\e''+(n-1)\,\frac{u_\e'}r\Big\}=\e\,\l(\e)\,V'(u_\e)-W'(u_\e)\,,
\]
we obtain as usual
\[
\e^2\,(u_\e')^2-2\,(n-1)\,\e^2\,\int_r^\infty\,\frac{(u_\e')^2}{\rho}\,d\rho=W(u_\e)-\e\,\l(\e)\,V(u_\e)\,,
\]
for every $r>0$; by the change of variables $r=R_0+\e\,s$ we thus find
\[
(\eta_\e'+f_\e')^2-2\,(n-1)\,\e\,\int_s^\infty\,\frac{(\eta_\e'+f_\e')^2}{R_0+\e\,t}\,dt=W(\eta_\e+f_\e)-\l(\e)\,\e\,V(\eta_\e+f_\e)
\]
for every $s\in(-R_0/\e,\infty)$. We combine this identity into \eqref{opti1} to find
\begin{equation}
  \label{opti2star}
  \e\,\psi'(\e)=\int_{-R_0/\e}^\infty\,\Big\{2\,(n-1)\,\e\,\int_s^\infty\,\frac{(\eta_\e'+f_\e')^2}{R_0+\e\,t}\,dt
  -\l(\e)\,\e\,V(\eta_\e+f_\e)\Big\}\,(R_0+\e\,s)^{n-1}\,ds\,
\end{equation}
We now notice that, by \eqref{eta decay}, \eqref{eta decay first and second derivative}, and \eqref{critical sequence bounds on fj} (that is, by the exponential decay of $\eta$, $\eta'$, $\eta''$ and by $|f_\e(s)|\le C\,\e\,e^{-|s|/C\,\e}$ for $s>-R_0/\e$), we have
\begin{eqnarray*}
  \int_s^\infty\,\frac{(\eta_\e'+f_\e')^2-(\eta_\e')^2}{R_0+\e\,t}\,dt&\ge&
  2\,\int_s^\infty\,\frac{\eta_\e'\,f_\e'}{R_0+\e\,t}\,dt
  \\
  &=&-2\,\frac{\eta_\e'(s)\,f_\e(s)}{R_0+\e\,s}-2\,\int_s^\infty\,f_\e(s)\,\Big(\frac{\eta_\e'}{R_0+\e\,t}\Big)'\,dt
  \\
  &\ge&- C\,\e\,e^{-|s|/C}
\end{eqnarray*}
so that \eqref{opti2star} gives
\begin{equation}
  \label{opti2}
  \e\,\psi'(\e)\ge\int_{-R_0/\e}^\infty\,\Big\{2\,(n-1)\,\e\,\int_s^\infty\,\frac{(\eta_\e')^2\,dt}{R_0+\e\,t}
  -\l(\e)\,\e\,V(\eta_\e+f_\e)\Big\}\,(R_0+\e\,s)^{n-1}\,ds-C\,\e^2\,.
\end{equation}
By \eqref{sharp expansion lambda eps}, \eqref{critical sequence bounds on fj}, $R_0=\om_n^{-1/n}$ and \eqref{opti2}, we have
\begin{equation}
  \label{opti3}
  \psi'(\e)\ge 2\,(n-1)\,\om_n^{1/n}\,\int_{-R_0/\e}^\infty\,\Big\{\int_s^\infty\,(\eta_\e')^2\,dt
  -V(\eta_\e)\Big\}\,(R_0+\e\,s)^{n-1}\,ds-C\,\e\,.
\end{equation}
Since $\int_s^\infty(\eta_\e')^2=\Phi(\eta_\e(s))$ thanks to $\eta_\e'=-\sqrt{W(\eta_\e)}=-\Phi'(\eta_\e)$, by \eqref{opti3} we have
\begin{eqnarray*}
  \psi'(\e)&\ge&
  2\,(n-1)\,\om_n^{1/n}\,\int_\R\big(\Phi(\eta_\e)-V(\eta_\e)\big)\,(R_0+\e\,s)^{n-1}\,ds-C\,\e
  \\
  &\ge&
  2\,(n-1)\,\om_n^{1/n}\,R_0^{n-1}\,\int_\R\big(\Phi(\eta)-V(\eta)\big)\,ds-C\,\e\,.
\end{eqnarray*}
Since $\Phi$ takes values in $(0,1)$, $V=\Phi^{n/(n-1)}<\Phi$ on $(0,1)$, and
\[
\int_\R\big(\Phi(\eta)-V(\eta)\big)\,ds
\]
is a universal constant. In particular, $\psi'(\e)\ge 1/C$ for every $\e<\e_0$.
\end{proof}

\subsection{General criteria for radial symmetry and uniqueness}\label{sec radial symm and uniqueness} In this brief section we exploit two classical results from \cite{gidas1981symmetry} and \cite{peletier1983uniqueness} to deduce a symmetry and uniqueness result for the kind of semilinear PDE arising as the Euler-Lagrange equation of $\Psi(\s,m)$.
	
\begin{theorem}\label{theorem ggn plus sp} Let $n\ge 2$, let $W\in C^{2,1}[0,1]$ satisfy \eqref{W basic} and \eqref{W normalization}, and consider $\ell\in\R$ and $\s>0$.

\medskip

\noindent {\bf (i):} if $u\in C^2(\R^n;[0,1])$ is a non-zero solution to
\begin{equation}
\label{serrin 1}
-2\,\s^2\,\Delta u=\s\,\ell\,V'(u)-W'(u)\qquad\mbox{on $\R^n$}\,,
\end{equation}
with $u(x)\to 0$ as $|x|\to\infty$,  then $0<u<1$ on $\R^n$ and $u\in\RR_0^*$.

\medskip

\noindent {\bf (ii):} there exists a universal constant $\nu_0$ such that, if $0<\s\,\ell<\nu_0$, then, modulo translation, \eqref{serrin 1} has a unique solution among functions $u\in\RR_0^*$, with $u(x)\to 0$ as $|x|\to\infty$ and $0<u<1$ on $\R^n$.
\end{theorem}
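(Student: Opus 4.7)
\medskip

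\noindent\textbf{Plan.} Both parts are applications of classical theorems to the PDE recast in the form $-\Delta u = g(u)$ (or with the $\sigma$-scaling absorbed).

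For part (i), my plan is to set $g(t) = (2\sigma^2)^{-1}\bigl(\sigma\,\ell\,V'(t) - W'(t)\bigr)$ so that the PDE reads $-\Delta u = g(u)$ on $\R^n$, and invoke the moving planes theorem of Gidas--Ni--Nirenberg \cite{gidas1981symmetry}. The only structural hypothesis that needs checking is the behaviour of $g$ near $t = 0$: using the Taylor expansions of $V$ and $W$ near the wells collected in Section~\ref{subsection W} (in particular $W'(t) = W''(0)\,t + O(t^2)$ and $V'(t) = O(t^{(n+1)/(n-1)})$ as $t \to 0^+$), one reads off that $g$ is locally Lipschitz near $0$, with $g(0) = 0$ and $g'(0^+) = -W''(0)/(2\sigma^2) < 0$. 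This is exactly the condition required by \cite{gidas1981symmetry} to conclude that, after a translation, $u$ is radial with $u'(r) < 0$ for every $r > 0$; in particular $u \in \RR_0^*$. For the strict two-sided bound $0 < u < 1$, I would apply the strong maximum principle twice: from $|g(t)| \le C\,t$ near $t = 0$ on the equation for $u$ (combined with $u\not\equiv 0$) one obtains $u > 0$; from the analogous estimate $|g(t)| \le C\,(1-t)$ near $t = 1$ on the equation for $w = 1 - u \ge 0$ one obtains either $u < 1$ everywhere, or $u \equiv 1$, the latter being ruled out by the decay at infinity.

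For part (ii), my plan is to reduce to a normalized PDE and apply the Peletier--Serrin uniqueness theorem \cite{peletier1983uniqueness}. After the conformal rescaling $v(x) = u(\sigma x)$, the equation becomes
\begin{equation*}
-2\,\Delta v = f(v)\qquad\mbox{on $\R^n$,}\qquad f(t) = \mu\,V'(t) - W'(t),\qquad \mu = \sigma\,\ell,
\end{equation*}
with $v \in \RR_0^*$ and $v(x) \to 0$ at infinity thanks to part~(i). Uniqueness (modulo translation) of such $v$ in this class follows from \cite{peletier1983uniqueness} once one verifies the Peletier--Serrin structural conditions on $f$, essentially: $f \in C^1$, $f(0)=0$, $f'(0)<0$, $f$ changes sign exactly once at some $\beta \in (0,1)$ from negative to positive, the primitive $F(t) = \int_0^t f = \mu\,V(t) - W(t)$ has a unique positive zero, and a technical monotonicity/Pohozaev-type condition on a ratio built from $f$, $f'$ and $F$.

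The first block of conditions is immediate from the expansions near the wells, uniformly in $\mu$: $f'(0) = -W''(0) < 0$; near $t = 1$ one has $V'(t) \approx c_1\,(1-t)$ and $-W'(t) \approx W''(1)\,(1-t)$ with $c_1, W''(1) > 0$, so $f(t) > 0$ for $t$ slightly below $1$; and $F(0) = 0$, $F(1) = \mu > 0$. The expected main obstacle, and the precise place where the smallness condition $\mu = \sigma\,\ell < \nu_0$ enters, is the verification of the single-sign-change and the technical monotonicity hypothesis on the whole interval $(0,1)$: for $\mu$ small $f$ is a small perturbation of $-W'$, whose sign and convexity behaviour on $(0,1)$ are controlled purely by the universal data of $W$, and one needs to choose $\nu_0 = \nu_0(n,W)$ small enough that this perturbative structure transfers from $-W'$ (and the associated primitive $-W$) to $f$ (and $F$). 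Quantifying this perturbation — in particular, ruling out spurious additional sign changes of $f$ that could in principle arise from interactions between $\mu V'$ and $-W'$ — is the technical heart of the proof.
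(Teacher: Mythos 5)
Your overall strategy coincides with the paper's: Gidas--Ni--Nirenberg for radial symmetry, the strong maximum principle (applied to $u$ and to $1-u$) for $0<u<1$, and Peletier--Serrin for uniqueness. Part (i) is essentially complete and correct: writing the equation as $-\Delta u+m\,u=g(u)$ with $m=W''(0)/2\s^2$ and $g(t)={\rm O}(t^{1+\a})$ (which you do implicitly via the expansions of $W'$ and $V'$ at $0$) is exactly what the paper does.

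Part (ii), however, contains a genuine gap: the verification of the Peletier--Serrin structural hypothesis --- which you yourself identify as ``the technical heart of the proof'' --- is not carried out, and the strategy you sketch for it is not the one that works. First, a point of record: the relevant condition in \cite{peletier1983uniqueness} is not that $f$ changes sign exactly once; it is that, with $\beta$ defined as the first point where the primitive $F(t)=\int_0^t f$ becomes positive, the ratio $f(t)/(t-\b)$ is decreasing on $(\b,1)\cap\{f>0\}$. So ``ruling out spurious sign changes of $f$'' is not what needs to be proved. Second, viewing $f=\mu\,V'-W'$ as a small perturbation of $-W'$ on all of $(0,1)$ cannot be the right frame: the unperturbed primitive is $-W\le 0$, so $\b$ does not even exist at $\mu=0$, and the term $\mu\,V'$ is not a lower-order correction near the point where $F$ changes sign --- it is what makes $F$ change sign. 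The actual mechanism, and the precise role of $\s\,\ell<\nu_0$, is a localization of $\b$: from $F(\b)=0$ one gets $W(\b)=\s\,\ell\,V(\b)\le\nu_0$, which by the non-degeneracy of $W$ forces $\b$ within ${\rm O}(\sqrt{\nu_0})$ of one of the wells; the well $t=0$ is excluded because there $V(t)\approx t^{2n/(n-1)}\ll t^2\approx W(t)$, so $F<0$ on $(0,\de_0)$. Hence $(\b,1)\subset(1-\de_0,1)$, and the monotonicity condition only has to be checked near $t=1$, where it reduces (after differentiating $f(t)/(t-\b)$) to the pointwise inequality $-W'(t)\gtrsim W(t)/(\int_0^t\sqrt W)^{(n-2)/(n-1)}$, which follows from $-W'(t)\ge(1-t)/C$ and $W(t)\le C(1-t)^2$ on $(1-\de_0,1)$. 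Without this localization step your plan does not close, since controlling $f(t)/(t-\b)$ on the whole of $(\b,1)$ for an unknown $\b$ in the interior of $(0,1)$ is not accessible by the perturbative argument you describe.
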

	
\begin{remark}
  {\rm Notice that the smallness of $\s\,\ell$ is required only for proving statement (ii).}
\end{remark}

	
	\begin{proof}
{\it Step one}: We prove statement (i). We intend to apply the following particular case of \cite[Theorem 2]{gidas1981symmetry}: {\it if $n\ge2$, $u\in C^2(\R^n;[0,1])$, $u>0$ on $\R^n$, $u(x)\to0$ as $|x|\to\infty$, $-\Delta u+m\,u=g(u)$ on $\R^n$ with $m>0$ and $g\in C^1[0,1]$ with $g(t)={\rm O}(t^{1+\a})$ as $t\to 0^+$ for some $\a>0$, then, up to translations, $u\in\RR_0^*$}.

\medskip

To this end we reformulate \eqref{serrin 1} as
\begin{equation}\label{strong form GNN}
-\Delta u+m\, u = g(u)\qquad\mbox{on $\R^n$}\,,
\end{equation}	
where $m=W''(0)/2\,\s^2>0$ and
\[
g(t)=\frac{\ell\,V'(t)}{2\,\s}+\frac{W''(0)\,t-W'(t)}{2\,\s^2}\,,\qquad t\in[0,1]\,.
\]
As noticed in section \ref{subsection W}, $V\in C^{2,\g}[0,1]$ for some $\g\in(0,1]$, while $W\in C^{2,1}[0,1]$: in particular $g\in C^1[0,1]$. By  $W\in C^{2,1}[0,1]$ with $W'(0)=0$ we have $|W'(t)-W''(0)\,t||\le C\,t^2$, while \eqref{V near the wells} states that $|V'(t)|\le C\,t^{1+\a}$ for $t\in[0,1]$ for some $\a>0$, so that
\begin{equation}\label{GGN g}
|g(t)|\le C(n,W,\ell,\s)\,t^{1+\a}\,,\qquad\forall t\in[0,1]\,.
\end{equation}
To check that $u>0$ on $\R^n$, we notice that, by \eqref{GGN g}, for every $m'\in(0,m)$, we can find $t_0>0$ such that \eqref{strong form GNN} implies that $-\Delta u+ m'\,u\ge 0$ on the open set $\{u<t_0\}$. Since $u\ge 0$ and $u$ is non-zero, we conclude by the strong maximum principle that $u>0$ on $\{u<t_0\}$, and thus, on $\R^n$. We are thus in the position to apply the stated particular case of \cite[Theorem 2]{gidas1981symmetry} and conclude that $u\in\RR_0^*$.

\medskip

We prove that $u<1$ on $\R^n$. Let us set
\begin{equation}
  \label{serrin f}
  f(t)=\frac{\ell\,V'(t)}{2\,\s}-\frac{W'(t)}{2\,\s^2}\,,\qquad t\in[0,1]\,,
\end{equation}
and notice that \eqref{serrin 1} is equivalent to $-\Delta u=f(u)$ on $\R^n$. Since $f$ is a Lipschitz function on $[0,1]$ with $f(1)=0$, we can find $c>0$ such that $f(t)+c\,t$ is increasing on $[0,1]$, and rewrite $-\Delta u=f(u)$ as
\[
-\Delta\,(1-u)+c\,(1-u)=(f(t)+c\,t)\Big|^{t=1}_{t=u}\geq 0\,.
\]
We thus conclude that $v=1-u$ is non-negative on $\R^n$ and such that $-\Delta\,v +c\,v\ge 0$. Since $v$ is non-zero (thanks to $u(x)\to 0$ as $|x|\to\infty$), by the strong maximum principle we conclude that $v>0$ on $\R^n$, i.e. $u<1$ on $\R^n$.
		
\medskip
		
\noindent {\it Step two}: We prove statement (ii). We intend to use \cite[Theorem 2]{peletier1983uniqueness}: {\it if
\begin{enumerate}
 \item[(a)] $f$ locally Lipschitz on $(0,\infty)$;
 \item[(b)] $f(t)/t\to -m$ as $t\to 0^+$ where $m>0$;
 \item[(c)] setting $F(t)=\int_0^t f(s)\,ds$, there exists $\de>0$ such that $F(\de)>0$;
 \item[(d)] setting $\beta=\inf\{t>0:F(t)>0\}$ (so that by (b) and (c), $\beta\in(0,\de)$), the function $t\mapsto f(t)/(t-\beta)$ is decreasing on $(\beta,\infty)\cap\{f>0\}$\,;
\end{enumerate}
then there is at most one $u\in C^2(\R^n)\cap\RR_0$, with $u>0$ on $\R^n$ and $u(x)\to 0$ as $|x|\to\infty$, solving $-\Delta u=f(u)$ on $\R^n$}.

\medskip

Since, by statement (i), solutions to \eqref{serrin 1} satisfy $0<u<1$ on $\R^n$, in checking that $f$ as in \eqref{serrin f} satisfies the above assumptions it is only the behavior of $f$ on $(0,1)$ (and not on $(0,\infty)$) that matters. Evidently (a) holds, since $f\in C^{1,\a}[0,1]$ for some $\a\in(0,1)$. Assumption (b) holds with $m=W''(0)/2\,\s^2$. Property (c) holds (with $\de\in(0,1)$) since
\[
F(t)=\int_0^t\,f(s)\,ds=\frac{\ell\,V(t)}{2\,\s}-\frac{W(t)}{2\,\s^2}\,,\qquad t\in[0,1]\,,
\]
and $F(1)=(\ell\,V(1)/2\,\s)=\ell/2\,\s>0$ by $\ell>0$ and $W(1)=0$. We finally prove (d). Notice that, clearly, $\beta\in(0,1)$ and, by continuity of $F$, $F(\beta)=0$, so that, taking \eqref{W basic u} and \eqref{W near the wells} into account, and using $\s\,\ell<\nu_0$ and $V(1)=1$,
\begin{equation}
\label{ps 1}
\frac{\min\{\beta^2,(1-\b)^2\}}C\le W(\b)=\s\,\ell\,V(\b)\le\nu_0\,.
\end{equation}
If $\nu_0<1$, then by \eqref{W near the wells} and \eqref{V near the wells} we find
\begin{equation}
\label{ps 2}
2\,\s^2\,F(t)=\s\,\ell\,V(t)-W(t)\le V(t)-W(t)\le C\,t^{2n/(n-1)}-\frac{t^2}C<0\,,\qquad\forall t\in(0,\de_0)\,.
\end{equation}
By \eqref{ps 2} it must be $\beta\ge\de_0$. Hence, by \eqref{ps 1}, if $\nu_0$ is sufficiently small, then $(1-\beta)^2\le C\,\nu_0$. Up to further decrease the value of $\nu_0$, we can finally entail that $(\beta,1)\subset(1-\de_0,1)$, with $\de_0$ as in section \ref{subsection W}.

\medskip

We are now going to check property (d) by showing that
\begin{equation}
  \label{ps 3}
  f'(t)\,(t-\beta)\le f(t)\qquad\forall t\in(\beta,1)\,,
\end{equation}
(recall that $0<u<1$ on $\R^n$, so we can use a version of \cite[Theorem 2]{peletier1983uniqueness} localized to $(0,1)$). Using the explicit formula for $f$, \eqref{ps 3} is equivalent to
\begin{equation}
  \label{ps 4}
  \s\,\ell\, V''(t)\,(t-\beta)\le \s\,\ell\,V'(t)-W'(t)+W''(t)\,(t-\b)\,,\qquad\forall t\in(\beta,1)\,.
\end{equation}
By \eqref{W near the wells}, we have $W''(t)(t-\b)>0$ on $(\beta,1)\subset(1-\de_0,1)$, and since $V'\ge 0$ on $[0,1]$, \eqref{ps 4} is implied by checking that, for every $t\in(\beta,1)$,
\begin{eqnarray*}
-W'(t)&\ge&\s\,\ell\, V''(t)
\\
&=& \s\,\ell\,\Big\{\frac{n}{(n-1)^2}\,\frac{W(t)}{\Big(\int_0^t\sqrt{W}\Big)^{(n-2)/(n-1)}}
+\frac{n}{n-1}\,\Big(\int_0^t\sqrt{W}\Big)^{1/(n-1)}\,\frac{W'(t)}{2\,\sqrt{W(t)}}\Big\}\,.
\end{eqnarray*}
In turn, since $W'<0$ on $(1-\de_0,1)$ and $\s\,\ell<\nu_0< 1$, it is actually enough to check that
		\begin{eqnarray}\label{fine serrin}
		-W'(t)\ge \frac{n}{(n-1)^2}\,\frac{W(t)}{\Big(\int_0^t\sqrt{W}\Big)^{(n-2)/(n-1)}}\,,\qquad\forall t\in(1-\de_0,1)\,.
		\end{eqnarray}
But, up to further decreasing the value of $\de_0$, this is obvious: indeed \eqref{W near the wells} gives $-W'(t)\ge (1-t)/C$ and $W(t)\le C\,(1-t)^2$ for every $t\in(1-\de_0,1)$.
	\end{proof}

\subsection{Proof of Theorem \ref{theorem main}}\label{section final proof} Theorem \ref{theorem existence solutions}, Corollary \ref{corollary optimal energy and lagrange multiplier}, Theorem \ref{theorem basics of Psi} and a scaling argument show the validity of statements (i) and (ii), while statement (iii) follows similarly by scaling and by Theorem \ref{thm main 2}. To prove the Alexandrov-type theorem, that is, statement (iv)\footnote{Notice that we are using $\ell$ in \eqref{critical point proof} rather than $\lambda$ (as done in \eqref{critical point}) to denote the Lagrange multiplier of $u$. This is meant to avoid confusion with the function $\lambda(\e)=(\pa\Psi/\pa m)(\e,1)$ appearing in the argument.} we consider $u\in C^2(\R^n;[0,1])$, with $u(x)\to 0$ as $|x|\to\infty$, and solving
  \begin{equation}\label{critical point proof}
    -2\,\s^2\,\Delta u=\s\,\ell\,V'(u)-W'(u)\qquad\mbox{on $\R^n$}\,,
  \end{equation}
for some $\s$ and $\ell$ with $0<\s\,\ell<\nu_0$. By Theorem \ref{theorem ggn plus sp}--(i), $u\in\RR_0^*$, and by Theorem \ref{theorem ggn plus sp}, provided $\nu_0$ is small enough, we know that there is at most one radial solution to \eqref{critical point proof}. Since we know that $u_{\s,m}$ is a radial solution of \eqref{critical point proof} with $\ell=\Lambda(\s,m)$, we are left to prove that for every $\ell\in(0,\nu_0/\s)$ there exists a unique $m\in((\s/\e_0)^n,\infty)$ such that $\Lambda(\s,m)=\ell$.

\medskip

To this end, we first notice that, by \eqref{sharp expansion lambda eps} and by scaling, for every $\s>0$ we have
\[
\Lambda(\s,m)=\frac1{m^{1/n}}\,\l\big(\s/m^{1/n})\to 0^+\qquad\mbox{as $m\to+\infty$}\,.
\]
In particular, since, by Theorem \ref{theorem basics of Psi}, $\Lambda(\s,\cdot)$ is continuous and strictly decreasing on $((\s/\e_0)^n,\infty)$, we have that
\[
\Big\{\Lambda(\s,m):m>\Big(\frac{\s}{\e_0}\Big)^n\Big\}=\Big(0,\Lambda\big(\s,(\s/\e_0)^n\big)\Big)\,.
\]
Now, setting $m=(\s/\e_0)^n$ in \eqref{dai}, that is, in
\[
\Big|m^{1/n}\,\Lambda(\s,m)-2\,(n-1)\,\omega_n^{1/n}\Big|\le C\,\frac{\s}{m^{1/n}}\,,
\]
we find that
\[
\Big|\s\,\Lambda\big(\s,(\s/\e_0)^n\big)-2\,(n-1)\,\omega_n^{1/n}\,\e_0\Big|\le C\,\e_0^2\,,
\]
which implies
\[
\Lambda\big(\s,(\s/\e_0)^n\big)\ge \frac{(n-1)\,\omega_n^{1/n}\,\e_0}\s\,,\qquad\forall\s>0\,,
\]
provided $\e_0$ is small enough. Up to further decrease the value of $\nu_0$ so to have $\nu_0\le (n-1)\,\omega_n^{1/n}\,\e_0$, we have proved that
\[
(0,\nu_0/\s)\subset \Big\{\Lambda(\s,m):m>\Big(\frac{\s}{\e_0}\Big)^n\Big\}\,,
\]
and that for each $\ell\in(0,\nu_0/\s)$ there is a unique $m>(\s/\e_0)^n$ such that $\ell=\Lambda(\s,m)$, as claimed. This completes the proof of Theorem \ref{theorem main}.

\appendix

\section{Frequently used auxiliary facts}\label{appendix auxiliary}

\subsection{Scaling identities} If $u\in H^1(\R^n;[0,\infty))$, $t>0$, we set
\[
\rho_t\,u(x)=u(t^{1/n}\,x)\,,\qquad x\in\R^n\,,
\]
and notice that
\begin{eqnarray}\label{scaling potential}
\int_{\R^n}f(\rho_tu)&=&\frac1{t}\,\int_{\R^n} f(u)\,,\qquad\int_{\R^n}|\nabla(\rho_tu)|^2=t^{(2/n)-1}\,\int_{\R^n}|\nabla u|^2\,,
\\
\label{scaling AC}
\ac_\e(\rho_tu)&=&\e\,t^{(2/n)-1}\,\int_{\R^n}|\nabla u|^2+\frac1{\e\,t}\int_{\R^n}W(u)=\frac{\ac_{\e\,t^{1/n}}(u)}{t^{(n-1)/n}}\,.
\end{eqnarray}
whenever $f:\R\to\R$ is continuous.

\subsection{Concentration-compactness principle}\label{subsection ccc} Denoting by $B_r(x)$ the ball of center $x$ and radius $r$ in $\R^n$, and setting $B_r=B_r(0)$ when $x=0$, we provide a reference statement for Lions' concentration-compactness criterion, which is repeatedly used in our arguments:  {\it if $\{\mu_j\}_j$ is a sequence of probability measures in $\R^n$, then, up to extract subsequences and compose each $\mu_j$ with a translation, one the following mutually excluding possibilities holds:

\medskip
	
\noindent {\bf Compactness case:} for every $\tau>0$ there exists $R>0$ such that
\[
\inf_j\,\mu_j(B_R)\ge 1-\tau\,;
\]
	
\medskip
	
\noindent {\bf Vanishing case:} for every $R>0$,
\[
\lim_{j\to\infty}\,\sup_{x\in\R^n}\mu_j(B_R(x))=0\,;
\]
	
\medskip
	
\noindent {\bf Dichotomy case:} there exists $\a\in(0,1)$ such that for every $\tau>0$ one can find $S>0$ with $S_j\to\infty$ such that
\[
\sup_j\,|\a-\mu_j(B_S)|<\tau\,,\qquad \sup_j\,\big|(1-\a)-\mu_j(\R^n\setminus B_{S_j})\big|<\tau\,.
\]
}

\noindent Notice that the formulation of the dichotomy case used here is a bit more descriptive than the original one presented in \cite[Lemma I]{lions}. Its validity is inferred by a quick inspection of the proof presented in the cited reference.

\subsection{Estimates for $W$, $\Phi$ and $V$}\label{subsection W} Throughout the paper we work with a double well potential $W\in C^{2,1}[0,1]$ satisfying \eqref{W basic} and \eqref{W normalization}, that is,
\begin{eqnarray}
  \label{W basic u}
	&&\mbox{$W(0)=W(1)=0$}\,,\quad\mbox{$W>0$ on $(0,1)$}\,,\quad\mbox{$W''(0),W''(1)>0$}\,,
\\
	\label{W normalization u}
	&&\int_0^1\sqrt{W}=1\,.
\end{eqnarray}	
Frequently used properties of $W$ are the validity, for a universal constant $C$, of the expansion
\begin{equation}\label{W second order taylor}
\Big|W(b)-W(a)-W'(a)(b-a)-W''(a)\frac{(b-a)^2}2\Big|\le C\,|b-a|^3\,,\qquad\forall a,b\in[0,1]\,,
\end{equation}
and the existence of a universal constant $\de_0<1/2$ such that
\begin{equation}
  \label{W near the wells}
  \begin{split}
  &\frac{1}C\le \frac{W}{t^2}\,,\frac{W'}t\,,W''\leq C\quad\hspace{1.5cm}\mbox{on $(0,\de_0]$}\,,
  \\
  &\frac{1}C\le\frac{W}{(1-t)^2}\,, \frac{-W'}{1-t}\,,W''\leq C\quad\hspace{0.4cm}\mbox{on $[1-\de_0,1)$}\,.
  \end{split}
  \end{equation}
  We can use \eqref{W near the wells} to quantify the behaviors near the wells of $\Phi$ and, crucially, of $V$. We first notice that, by \eqref{W basic u}, $\Phi\in C^3_{{\rm loc}}(0,1)$, with
  \[
  \Phi'=\sqrt{W}\,,\quad\Phi''=\frac{W'}{2\,\sqrt{W}}\,,\quad\Phi'''=\frac{W''}{2\,\sqrt{W}}-\frac{(W')^2}{4\,W^{3/2}}\,,\qquad\mbox{on $(0,1)$}\,.
  \]
  By \eqref{W near the wells} and \eqref{W normalization u} we thus see that $\Phi$ satisfies
  \begin{equation}
  \label{Phi near the wells}
  \begin{split}
  &\frac{1}C\le \frac{\Phi}{t^2}\,,\frac{\Phi'}t\,,\Phi''\leq C\,,\qquad\mbox{on $(0,\de_0]$}\,,
  \\
  &\frac{1}C\le \frac{1-\Phi}{(1-t)^2}\,,\frac{\Phi'}{1-t}\,,-\Phi''\le C\,,\qquad\mbox{on $[1-\de_0,1)$}\,,
  \end{split}
  \end{equation}
  from which we easily deduce
  \begin{equation}
    \label{Phi quadratic from below}
    |\Phi(b)-\Phi(a)|\ge\frac{(b-a)^2}{C}\,,\qquad\forall a,b\in[0,1]\,.
  \end{equation}
  Moreover, by exploiting \eqref{Phi near the wells} and setting for brevity $a=W''(0)$, we see that as $t\to 0^+$
  \begin{eqnarray*}
  \Phi'''&=&\frac{2\,W''\,W-(W')^2}{4\,W^{3/2}}
  =\frac{2\,(a+{\rm O}(t))\,(a\,(t^2/2)+{\rm O}(t^3))-(a\,t+{\rm O}(t^2))^2}{4\,(a\,(t^2/2)+{\rm O}(t^3))^{3/2}}
  \\
  &=&\frac{{\rm O}(t^3)}{4\,a^{3/2}\,t^3+{\rm o}(t^3)}\,,
  \end{eqnarray*}
  and a similar computation holds for $t\to 1^-$, so that
  \begin{equation}
    \label{Phi third}
    |\Phi'''|\le C\,,\qquad\mbox{on $(0,\de_0)\cup(1-\de_0,1)$}\,.
  \end{equation}
  By \eqref{Phi near the wells} and \eqref{Phi third} we see that $\Phi\in C^{2,1}[0,1]$ with a universal estimate on its $C^{2,1}[0,1]$-norm: in particular,
  \begin{equation}\label{Phi second order taylor}
\Big|\Phi(b)-\Phi(a)-\Phi'(a)(b-a)-\Phi''(a)\frac{(b-a)^2}2\Big|\le C\,|b-a|^3\,,\qquad\forall a,b\in(0,1)\,.
\end{equation}
Since $V=\Phi^{1+\a}$ for $\a=1/(n-1)\in(0,1]$ (recall that $n\ge 2$) and $\Phi(t)=0$ if and only if $t=0$, we easily see that $V\in C^3_{{\rm loc}}(0,1)$, with
\begin{eqnarray*}
&&V'=(1+\a)\,\Phi^\a\,\Phi'\,,\qquad V''=(1+\a)\Big\{\a\,\frac{(\Phi')^2}{\Phi^{1-\a}}+\Phi^\a\,\Phi''\Big\}\,,
\\
&&|V'''|\le C(\a)\,\Big\{\frac{(\Phi')^3}{\Phi^{2-\a}}+\frac{\Phi'\,|\Phi''|}{\Phi^{1-\a}}+\Phi^\a\,|\Phi'''|\Big\}\,.
\end{eqnarray*}
By \eqref{Phi second order taylor}, and keeping track of the sign of $\Phi''$ and of the fact that negative powers of $\Phi(t)$ are large only near $t=0$, but are bounded near $t=1$, we find that
  \begin{equation}
  \label{V near the wells}
  \begin{split}
    &\frac1C\le \frac{V}{t^{2+2\a}}\,,\frac{V'}{t^{1+2\a}}\,,\frac{V''}{t^{2\a}}\leq C\,,\hspace{0.7cm}\quad |V'''|\le\frac{C}{t^{1-2\a}}\qquad\hspace{0.2cm}\mbox{on $(0,\de_0]$}\,,
    \\
    &\frac1C\le  \frac{1-V}{(1-t)^2}\,,\frac{V'}{1-t}\le C\,,\quad |V''|\,,|V'''|\le C\,,\qquad\mbox{on $[1-\de_0,1)$}\,.
  \end{split}
  \end{equation}
  In particular, $V\in C^{2,\g(n)}[0,1]$, $\g(n)=\min\{1,2/(n-1)\}\in(0,1]$, with second order Taylor expansions of the form
\begin{equation}\label{V second order taylor}
\Big|V(b)-V(a)-V'(a)(b-a)-V''(a)\frac{(b-a)^2}2\Big|\le C\,|b-a|^{2+\g(n)}\,,\qquad\forall a,b\in(0,1)\,.
\end{equation}
We finally notice that we can find a universal constant $C$ such that
\begin{equation}
  \label{W controlla V fin quasi ad uno}
  \frac{t^2}C\le W(t)\,,\qquad V(t)\le C\,t^2\,,\qquad V(t)\le C\,W(t)\,,\qquad\forall t\in(0,1-\de_0)\,,
\end{equation}
(as it is easily deduced from the bounds on $W$ and $V$ in \eqref{W near the wells} and \eqref{V near the wells} and from the fact that $W>0$ on $(0,1)$); and that we can also find $C$ so that
\begin{equation}
  \label{V limitata dal basso via da zero}
  V(t)\ge\frac1C\,,\qquad\forall t\in(\de_0,1)\,.
\end{equation}

\subsection{Estimates for the optimal transition profile $\eta$}\label{subsection eta} A crucial object in the analysis of the Allen-Cahn energy is of course the optimal transition profile $\eta$, defined by the first order ODE
\begin{equation}\label{eta ODE}
    \left\{
    \begin{split}
      &\eta'=-\sqrt{W(\eta)}\qquad\mbox{on $\R$}\,,
      \\
      &\eta(0)=\frac12\,,
    \end{split}
    \right .
  \end{equation}
  which can be seen to satisfy (see, e.g. \cite{leoni2016second}) $\eta\in C^{2,1}(\R)$, $\eta'<0$ on $\R$ (and $-C\le\eta'\le -1/C$ for $|s|\le 1$), $\eta(-\infty)=1$, and $\eta(+\infty)=0$, with the exponential decay properties
  \begin{equation}
    \label{eta decay}
    1-\eta(s)\le C\,e^{s/C}\qquad\forall s<0\,,\qquad \eta(s)\le C\,e^{-s/C}\qquad\forall s>0\,,
  \end{equation}
  for a universal constant $C$. Similarly, by combining \eqref{eta decay} with \eqref{eta ODE}, with the second order ODE satisfied by $\eta$, namely,
  \begin{equation}
    \label{eta ODE second}
    2\,\eta''=W'(\eta)\qquad\mbox{on $\R$}\,,
  \end{equation}
  and with \eqref{W near the wells} we see that also the first and second derivatives of $\eta$ decay exponentially
  \begin{equation}
    \label{eta decay first and second derivative}
    |\eta'(s)|\,,|\eta''(s)|\le C\,e^{-|s|/C}\qquad\forall s\in\R\,.
  \end{equation}
  Combining again \eqref{eta decay} and \eqref{W near the wells} we also see that
  \[
  s\in\R\mapsto\,1_{(-\infty,0)}(s) - V(\eta(s-\tau))
  \]
  belongs to $L^1(\R)$ for every $\tau\in\R$, with
  \[
  \tau\in\R\mapsto   \int_{-\infty}^\infty \Big(1_{(-\infty,0)}(s) - V(\eta(s-\tau))\Big)\, ds
  \]
  increasing in $\tau$ and converging to $\mp\infty$ as $\tau\to\pm\infty$. In particular, there is a unique universal constant $\tau_0$ such that
  \begin{equation}
  \label{definition of tau0}
  \int_{-\infty}^\infty \Big(1_{(-\infty,0)}(s) - V(\eta(s-\tau_0))\Big)\, ds=0\,.
  \end{equation}
  The constant $\tau_0$ appears in the computation of the first order expansion of $\psi(\e)$ as $\e\to 0^+$ and can be characterized, equivalently, to be
  \begin{equation}
    \label{other def of tau0}
    \tau_0=\int_\R \eta'\,V'(\eta)\,s\,ds\,.
  \end{equation}
  Indeed, \eqref{definition of tau0} gives
  \begin{eqnarray*}
  0&=&\int_{-\infty}^\infty \Big(1_{(-\infty,0)}(s) - V(\eta(s-\tau_0))\Big)\, ds
  \\
  &=&\int_{-\infty}^0 \Big(1- V(\eta(s-\tau_0))\Big)\, ds-\int_0^\infty V(\eta(s-\tau_0))\, ds
  \\
  &=&
  -\int_{-\infty}^0 \,ds\,\int_{-\infty}^{s-\tau_0}\eta'(t)\,V'(\eta(t))\,dt+\int_0^\infty ds\int_{s-\tau_0}^\infty \eta'(t)\,V'(\eta(t))\,dt\,.
  \end{eqnarray*}
  Both integrands are non-negative, therefore by Fubini's theorem
  \begin{eqnarray*}
  0&=&
  -\int_{-\infty}^{-\tau_0} \,dt\,\int_{t+\tau_0}^0\,\eta'(t)\,V'(\eta(t))\,ds-\int_{-\tau_0}^\infty dt\int_0^{t+\tau_0} \eta'(t)\,V'(\eta(t))\,ds
  \\
  &=&
  \int_{-\infty}^{-\tau_0} \,(t+\tau_0)\,\eta'(t)\,V'(\eta(t))\,dt+\int_{-\tau_0}^\infty (t+\tau_0) \eta'(t)\,V'(\eta(t))\,dt
  \end{eqnarray*}
  that is
  \[
  \int_\R\,\eta'\,V'(\eta)\,t\,dt=-\tau_0\,\int_\R\eta'V'(\eta)=V(1)\,\tau_0=\tau_0\,,
  \]
  as claimed.

	\bibliographystyle{alpha}
	\bibliography{references_mod}
	
\end{document}